\newcommand{\AuthorEmailOne}{\texttt{michael.heins@mathematik.uni-wuerzburg.de}}
\newcommand{\AuthorEmailTwo}{\texttt{roth@mathematik.uni-wuerzburg.de}}
\newcommand{\AuthorEmailThree}{\texttt{stefan.waldmann@mathematik.uni-wuerzburg.de}}
\newcommand{\AuthorOne}{\textbf{Michael Heins}}
\newcommand{\AuthorTwo}{\textbf{Oliver Roth}}
\newcommand{\AuthorThree}{\textbf{Stefan Waldmann}}
\author{\AuthorOne\thanks{\AuthorEmailOne},
  \addtocounter{footnote}{2}%
  \;
  \AuthorTwo\thanks{\AuthorEmailTwo},
  \;
  \textbf{and}
  \;
  \addtocounter{footnote}{2}%
  \AuthorThree\thanks{\AuthorEmailThree}\\[0.5cm]
  Julius Maximilian University of Würzburg \\
  Institute of Mathematics \\
  Würzburg, Germany
}
\newcommand{\liegroup}[1]{\operatorname{#1}}
\newcommand{\stdrep}{\varrho_{\std}}
\newcommand{\krep}{\varrho_\kappa}
\newcommand{\weylrep}{\varrho_{\Weyl}}
\newcommand{\starstd}{\star_\std}
\newcommand{\stark}{\star_\kappa}
\newcommand{\starweyl}{\star_\Weyl}
\newcommand{\staralg}{\star_\mathfrak{g}}
\newcommand{\SymR}[1][{R'}]{\Sym^{\bullet}_{\! #1}}
\newcommand{\SymRC}[1][{R'}]{\hat{\Sym}_{\! #1}^{\bullet}}
\newcommand{\Entire}[1][{\! R}]{\mathscr{E}_{#1}}
\newcommand{\EntireC}[1][{\! R}]{\hat{\mathscr{E}}_{#1}}
\newcommand{\Orbit}[1]{\langle #1 \rangle}
\newcommand{\PolR}[1][{R, R'}]{\Pol^{\bullet}_{#1}}
\newcommand{\PolRC}[1][{R, R'}]{\widehat{\Pol}^{\bullet}_{#1}}
\newcommand{\Sh}{\mathrm{Sh}}
\newcommand{\eps}{\varepsilon}
\newcommand{\Taylor}{\operatorname{T}}
\newcommand{\Maj}{\operatorname{F}}
\renewcommand{\Holomorphic}{\mathcal{H}}
\title{Convergent Star Products on Cotangent Bundles of Lie Groups}
\date{July 2021}
\begin{document}

\selectlanguage{english}

%
%

\maketitle

%
%

\begin{abstract}
    For a connected real Lie group $G$ we consider the canonical
    standard-ordered star product arising from the canonical global
    symbol calculus based on the half-commutator connection of
    $G$. This star product trivially converges on polynomial functions
    on $T^*G$ thanks to its homogeneity. We define a nuclear Fréchet
    algebra of certain analytic functions on $T^*G$, for which the
    standard-ordered star product is shown to be a well-defined
    continuous multiplication, depending holomorphically on the
    deformation parameter $\hbar$. This nuclear Fréchet algebra is
    realized as the completed (projective) tensor product of a nuclear
    Fréchet algebra of entire functions on $G$ with an appropriate
    nuclear Fréchet algebra of functions on $\mathfrak{g}^*$. The
    passage to the Weyl-ordered star product, i.e. the Gutt star
    product on $T^*G$, is shown to preserve this function space,
    yielding the continuity of the Gutt star product with holomorphic
    dependence on $\hbar$.
\end{abstract}

%
%

\tableofcontents
\newpage

%
%

\section{Introduction}
\label{sec:Introduction}%

Formal deformation quantization as introduced in
\cite{bayen.et.al:1978a} is one of the very successful quantization
schemes for Hamiltonian mechanical systems. The basic idea is to
deform the commutative algebra of smooth functions $\Cinfty(M)$ on a
Poisson manifold $M$ into a noncommutative algebra
$\Cinfty(M)\formal{\hbar}$ by introducing a formal star product
$\star$: this is an associative product bilinear over the formal power
series in $\hbar$ such that the zeroth order in $\hbar$ is the
undeformed pointwise product of functions and the first order
commutator equals the Poisson bracket. Additional requirements are
that each order of $\star$ consists of bidifferential operators on
$M$.

The existence of such formal star products was first shown on
symplectic manifolds \cite{dewilde.lecomte:1983b, fedosov:1994a} and
then by Kontsevich for the general case of Poisson manifolds
\cite{kontsevich:2003a}. While these results provide spectacular
successes with many further developments and applications, for honest
physical applications one has to overcome the formal power series
formulation: the deformation parameter $\hbar$ is to be interpreted as
Planck's constant. Thus one is interested in \emph{strict} versions of
deformation quantization.

One scenario to obtain reasonable definitions and results for
strictness is to use $C^*$-algebraic deformations instead of formal
deformations. This has been introduced by Rieffel, see in particular
\cite{rieffel:1989a, rieffel:1993a}, and used by many others in the
sequel, see e.g. \cite{landsman:1998a, bieliavsky.gayral:2015a,
  bieliavsky.detournay.spindel:2009a, bieliavsky.massar:2001b,
  bieliavsky:2002a}. The basic ingredient is to use (oscillatory)
integral formulas for the star product, which then admit good enough
estimates to arrive at constructions of $C^*$-norms. While giving
strong results, the main difficulty with these approaches is that
unfortunately there is no general construction of star products via
oscillatory integrals available.

Thus a different approach was proposed, namely to use the formal star
products and investigate their convergence directly. It turns out that
in various classes of examples the following strategy is successful:
first one needs to understand the example well enough to find a small
subalgebra of functions for which the star product converges for
some trivial reasons. In the examples considered so far, the star
products simply terminate after finitely many terms on e.g. polynomial
functions on a vector space. Here no general results are available and
one is restricted to classes of examples. In a second step one then
tries to establish a locally convex topology for the small subalgebra
in such a way that the star product becomes continuous. Again, also in
this step no general results are available, but examples show promising
cases. Having succeeded, a completion of the small subalgebra then
gives a hopefully large and interesting locally convex algebra,
typically a Fréchet algebra, which then can be investigated further.

In finite dimensions this program might not seem more promising than
the previous ones, as it also lacks general existence
theorems. However, different types of examples can be covered,
yielding e.g. analogs of unbounded operator algebras. Moreover,
infinite-dimensional examples are very well possible, where oscillatory
integrals definitely are no longer available. Thus this approach can
be seen as complementing the previous strict deformation quantizations
by new and different examples. A detailed overview on these ideas can
be found in the review \cite{waldmann:2019a}, the original results are
in \cite{esposito.schmitt.waldmann:2019a, schmitt.schoetz:2019a:pre,
  schmitt2019a:pre, kraus.roth.schoetz.waldmann:2019a,
  schoetz.waldmann:2018a, esposito.stapor.waldmann:2017a,
  beiser.waldmann:2014a, waldmann:2014a}.

Of indisputable interest for geometric mechanics are the cotangent
bundles with their canonical Poisson structure. Their quantization is
known to be strongly related to various symbol calculi for
pseudo-differential operators. In fact, the asymptotic expansion of
the corresponding integrals yield star products when interpreted
correctly. Important for us is the other direction: one can directly
construct (global) star products on cotangent bundles $T^*Q$ out of a
covariant derivative on the configuration space $Q$, see e.g.
\cite{bordemann.neumaier.pflaum.waldmann:2003a, fedosov:2001a,
  pflaum:2000a, pflaum:1998b, pflaum:1998c,
  bordemann.neumaier.waldmann:1998a,
  bordemann.neumaier.waldmann:1999a}. One of their crucial features
is the homogeneity with respect to the Euler vector field, which causes
the functions $\Pol(T^*Q)$ polynomial in the fibers to form a
subalgebra, on which the star product trivially converges. Thus we
have found a good starting point for the above program.

A particular case of cotangent bundles is obtained for a Lie group $G$
as configuration space. This highly symmetric situation admits a
distinguished covariant derivative, the half-commutator connection,
which is entirely Lie-theoretic. The corresponding (standard-ordered)
star product $\starstd$ has been introduced already in
\cite{gutt:1983a} and was further investigated in
\cite{bordemann.neumaier.waldmann:1998a}. Using a left-invariant
volume form on $G$ one then can pass to a Weyl ordered star product,
as well. The left-invariant polynomial functions
$\Pol^\bullet(T^*G)^G \cong \Sym^\bullet(\liealg{g})$ are in linear
bijection to the symmetric algebra over the Lie algebra. The star
product $\starstd$ restricts and yields the Gutt star product on
$\Sym^\bullet(\liealg{g})$, thereby quantizing the linear Poisson
structure on $\liealg{g}^*$. For this star product, the above
convergence program has been carried through in
\cite{esposito.stapor.waldmann:2017a} by establishing a nuclear
locally convex topology on $\Sym^\bullet_{R'}(\liealg{g})$ such that
$\starstd$ becomes continuous. Here $R' \ge 1$ is a parameter.  The
completion is explicitly given by certain real-analytic functions on
the vector space $\liealg{g}^*$ with controlled growth at infinity and
becomes largest for the limiting case $R' = 1$.

Using the trivialization $T^*G \cong G \times \liealg{g}^*$ we arrive
at the first main result of this paper: We define a subspace
$\Entire(G)$ of real-analytic functions on a connected Lie group $G$
together with a suitable nuclear Fréchet topology, depending on a
parameter $R \ge 0$ in such a way that
$\Entire(G) \tensor \Sym^\bullet_{R'}(\liealg{g})$ becomes a
subalgebra of $\Pol(T^*G)$ for which the star product $\starstd$ is
continuous. Here the tensor product is equipped with the projective
topology. The completion $\widehat{\Pol}_{R,R'}(T^*G)$ is a nuclear Fréchet
algebra with largest completion for $R = 0$ and $R' = 1$. The
assumption to have a connected Lie group is convenient as then
real-analytic functions are determined by their Taylor expansion
at the unit element.

While the precise size of $\Entire(G)$ and $\Pol_{R, R'}(T^*G)$ is not
easy to grasp, the representative functions on $G$ always belong to
$\Entire(G)$ as soon as $0 \le R < 1$, thus guaranteeing a nontrivial
algebra of functions. Moreover, the continuity properties of
$\starstd$ immediately imply the continuity of the standard-ordered
quantization. This results in a symbol calculus for differential
operators on $G$ with coefficient functions in $\Entire(G)$ acting
continuously on $\Entire(G)$.

The second result is that the star product of two functions in the
completion $\Pol_{R, R'}(T^*G)$ depends holomorphically on
$\hbar \in \field{C}$. This way, the star product becomes a convergent
series in $\hbar$ as wanted.  As a consequence, also the
standard-ordered quantization is holomorphic in $\hbar$ and yields not
only differential operators but certain pseudo-differential operators
for which the composition is holomorphic in $\hbar$.

Finally, the passage from the standard-ordered star product to the
physically more appealing Weyl-ordered star product is compatible with
the above topologies: the equivalence transformation preserves
$\Pol_{R, R'}(T^*G)$, is continuous, and depends holomorphically on
$\hbar$ itself. Thus the Weyl-ordered star product inherits all the
nice properties of $\starstd$ with the additional feature that the
complex conjugation is now a $^*$-involution and the corresponding
Weyl-ordered quantization is a $^*$-representation.

While these results yield another large class of examples for the
aforementioned program to construct convergent star products, we also
mention the following list of further questions and possible
continuations:
\begin{itemize}
\item Having a $^*$-algebra we can ask for its normalized positive
    functionals, i.e. its states. Here one first question is whether
    each classical state, i.e. a positive Borel measure on $T^*G$, can
    be deformed into a state of the Weyl-ordered star product algebra?
    Ideally, this can be accomplished in a way with good dependence on
    $\hbar$. Note that, unlike in
    \cite{kraus.roth.schoetz.waldmann:2019a, beiser.waldmann:2014a},
    such a deformation is expected to be necessary. In the case of
    formal star products this is known to be possible in general
    \cite{bursztyn.waldmann:2005a}.
\item The standard-ordered or Weyl-ordered representation gives now
    certain pseudo-differential operators which can be studied by
    means of the symbols in $\Pol_{R, R'}(T^*G)$. The strong analytic
    framework should help to establish functional-analytic properties
    like self-adjointness in the same spirit as this was done in
    \cite{kraus.roth.schoetz.waldmann:2019a, schoetz.waldmann:2018a}.
\item Since the star product $\starstd$ has all needed symmetry
    properties this raises the question whether we can construct
    further classes of examples of convergent star products by means
    of phase space reduction starting with $T^*G$. In view of the
    examples \cite{schmitt2019a:pre,
      kraus.roth.schoetz.waldmann:2019a} one expects a more
    complicated dependence on $\hbar$ after reduction with
    singularities reflecting the geometry of the reduced phase spaces.
\end{itemize}

The paper is organized as follows: In Section~\ref{sec:StarProducts}
we recall the basic construction of $\starstd$ and $\starweyl$ on
$T^*G$ and establish formulas which allow for efficient estimations.
Section~\ref{sec:TopologiesTensor} contains the construction of the
topology on $\Sym^\bullet(\liealg{g})$. We recall some of the basic
properties of the resulting
algebra. Section~\ref{sec:TopologiesEntire} is at the heart of the
paper. We define the entire functions on $G$ by means of their
Lie-Taylor coefficients and study first properties of the resulting
space $\Entire(G)$. In particular, we show that representative
functions belong to $\Entire(G)$ for $0 \le R < 1$. In
Section~\ref{sec:TopologiesObservable} we combine the entire functions
$\Entire(G)$ on $G$ with the polynomials $\SymR(\liealg{g})$ to
the observable algebra $\Pol_{R, R'}(T^*G)$, whose completion will
then be studied in the final Section~\ref{sec:Continuity}. Here the
continuity of the star products is established. In two appendices we
recall the general construction of star products on cotangent bundles
and explain some combinatorial aspects of the Leibniz rule.

%
%

\textbf{Acknowledgements:} We would like to thank Pierre Bieliavsky
for a valuable remark leading to
Proposition~\ref{proposition:TstarGActsAsWell}.

%
%

\section{Star Products on $T^*G$}
\label{sec:StarProducts}%

%
%

In this section, we specialize the constructions of a global symbol
calculus and the corresponding star products on cotangent bundles
\cite{bordemann.neumaier.waldmann:1999a,
  bordemann.neumaier.waldmann:1998a,
  bordemann.neumaier.pflaum.waldmann:2003a} to the cotangent bundle of
a Lie group $G$, see also
Appendix~\ref{sec:StarProductsCotangentBundles} for a brief
introduction to the general situation. The main idea is that having a
\emph{global} frame of left invariant vector fields simplifies many of
the formulas and allows us to use previously local formulas now
globally. Essentially, all formulas we present are known from
\cite{gutt:1983a} as well as \cite{bordemann.neumaier.waldmann:1998a},
but given in slightly different form, making it necessary to adapt
them to our needs.

%
%

\subsection{The Standard-Ordered Star Product on $T^*G$}
\label{subsec:Std}%

Let $G$ be an $n$-dimensional Lie group with Lie algebra
$\liealg{g} = T_{\mathrm{e}} G$. We write $X_\xi \in \Secinfty(TG)$
for the left invariant vector field with $X_\xi(\E) = \xi$ and
$\theta^\alpha \in \Secinfty(T^*G)$ for the left invariant one-form
with $\theta^\alpha(\E) = \alpha$, where $\xi \in \liealg{g}$ and
$\alpha \in \liealg{g}^*$. Then the natural pairing
$\theta^\alpha(X_\xi) = \alpha(\xi) \in \Cinfty(G)$ yields a constant
function on $G$.

After once and for all choosing a basis
$(\basis{e}_1, \ldots, \basis{e}_n)$ of the Lie algebra $\liealg{g}$
with corresponding dual basis $(\basis{e}^1, \ldots, \basis{e}^n)$ of
$\liealg{g}^*$, we write shorthand
\begin{equation}
    \label{eq:LeftInvariantFrames}
    X_i
    =
    X_{\basis{e}_i}
    \quad \textrm{and} \quad
    \theta^i
    =
    \theta^{\basis{e}_i}
\end{equation}
for $i = 1, \ldots, n$ in the sequel. Following
\cite{bordemann.neumaier.waldmann:1998a}, see also
Appendix~\ref{sec:StarProductsCotangentBundles}, to construct a
standard-ordered star product on the cotangent bundle $T^*G$, we have
to specify a torsion-free covariant derivative on $G$ first. The
perhaps first surprising observation is that the most natural
covariant derivative, the half-commutator connection $\nabla$ on $G$,
is \emph{not} the Levi-Civita connection for a Riemannian metric in
general. It would be the Levi-Civita connection of a biinvariant
pseudo Riemannian metric. However, a positive definite one might not
exist at all. Since we have a trivial tangent bundle, it suffices to
specify $\nabla$ on left invariant vector fields. One sets
\begin{equation}
    \label{eq:HalfCommutator}
    \nabla_{X_\xi}
    X_\eta
    =
    \frac{1}{2}
    X_{[\xi, \eta]},
\end{equation}
which is torsion-free, as taking left invariant vector fields is a Lie
algebra morphism by the very definition of the Lie algebra. This then
induces covariant derivatives on the various tensor bundles and their
complexifications, as usual. This is the only covariant derivative we
shall use in the sequel, wherefore we stick to the simple notation
$\nabla$.

The covariant derivatives of the global frames $X_1, \ldots, X_n$ and
$\theta^1, \ldots, \theta^n$ are thus given by the structure constants
$c^k_{ij} = \basis{e}^k([\basis{e}_i, \basis{e}_j])$ of the Lie
algebra $\liealg{g}$, i.e. we have
\begin{equation}
    \label{eq:Christoffel}
    \nabla_{X_i}
    X_j
    =
    \frac{1}{2} c_{ij}^k X_k
    \quad
    \textrm{and}
    \quad
    \nabla_{X_i}
    \theta^k
    =
    - \frac{1}{2} c_{ij}^k \theta^j.
\end{equation}
Here and in the following we shall use Einstein's summation
convention.  The antisymmetry of the structure constants now gives the
following result for the powers of the symmetrized covariant
derivative from \eqref{eq:SymDDef}:
\begin{lemma}
    \label{lemma:SymD}%
    Let $G$ be a Lie group.
    \begin{lemmalist}
    \item \label{item:SymDGlobalFrame}%
        Let $\alpha \in \Secinfty(\Sym^k T^*_\field{C} G)$. Its
        symmetrized covariant derivative is given by the global
        formula
        \begin{equation}
            \label{eq:SymDGlobalFrame}
            \SymD \alpha
            =
            \theta^i
            \vee
            \big(
            \nabla_{X_i}
            \alpha
            \big),
        \end{equation}
        where $\vee$ denotes the symmetric tensor product as usual.
    \item \label{item:SymDPowers}%
        For the $k$-th power of $\SymD$ acting on a function
        $\psi \in \Cinfty(G)$ we have the global formula
        \begin{equation}
            \label{eq:SymDPowers}
            \SymD^k \psi
            =
            \Big(\Lie_{X_ {i_1}} \cdots \Lie_{X_{i_k}} \psi \Big)
            \cdot
            \theta^{i_1} \vee \cdots \vee \theta^{i_k}.
        \end{equation}
    \end{lemmalist}
\end{lemma}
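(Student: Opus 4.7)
The proof splits naturally into the two parts of the lemma. For part (i), the formula $\SymD\alpha = \theta^i \vee \nabla_{X_i}\alpha$ is just the coordinate-free expression of the symmetrized covariant derivative written in a dual pair of a local frame and coframe. On a general manifold one would be restricted to writing this locally, but a Lie group $G$ is parallelizable and the left-invariant fields $(X_1,\ldots,X_n)$ with dual $(\theta^1,\ldots,\theta^n)$ form a \emph{global} frame and coframe. Invoking the general definition \eqref{eq:SymDDef} of $\SymD$ in this particular global frame therefore immediately yields the claimed formula on all of $\Secinfty(\Sym^k T^*_\field{C} G)$.

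For part (ii) the natural strategy is induction on $k$. The base case $k=0$ is the tautology $\SymD^0\psi=\psi$; for $k=1$ part (i) gives $\SymD\psi = \theta^i \vee \nabla_{X_i}\psi = (\Lie_{X_i}\psi)\,\theta^i$, using that $\nabla$ reduces to the Lie derivative on functions. For the inductive step I would use part (i) to write $\SymD^{k+1}\psi = \theta^j \vee \nabla_{X_j}(\SymD^k\psi)$, substitute the formula at level $k$, and expand $\nabla_{X_j}$ via the Leibniz rule applied to the product of the scalar $\Lie_{X_{i_1}}\cdots\Lie_{X_{i_k}}\psi$ with the symmetric tensor $\theta^{i_1}\vee\cdots\vee\theta^{i_k}$. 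The term in which $\nabla_{X_j}$ hits the scalar produces exactly $(\Lie_{X_j}\Lie_{X_{i_1}}\cdots\Lie_{X_{i_k}}\psi)\,\theta^j\vee\theta^{i_1}\vee\cdots\vee\theta^{i_k}$, which is the desired expression at level $k+1$. What remains is a correction term collecting the covariant derivatives of the $\theta^{i_l}$.

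The crux of the argument, and where the actual work lives, is to show that this correction vanishes. Using \eqref{eq:Christoffel}, each summand has the form $c_{jm}^{i_l}\,\theta^j \vee \theta^{i_1} \vee \cdots \vee \theta^m \vee \cdots \vee \theta^{i_k}$, where $\theta^m$ occupies the $l$-th slot and both $j$ and $m$ are summation indices. The structure constants $c_{jm}^{i_l}$ are antisymmetric under the swap $j \leftrightarrow m$, while the symmetric tensor product containing $\theta^j$ and $\theta^m$ is symmetric under this swap, and the scalar prefactor $\Lie_{X_{i_1}}\cdots\Lie_{X_{i_k}}\psi$ carries no $j$- or $m$-dependence. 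Hence the joint sum over $j$ and $m$ cancels, the correction drops out, and the induction closes. The bookkeeping of the Leibniz rule over $l=1,\ldots,k$ is the only real subtlety; once the antisymmetric-symmetric pairing is isolated the cancellation is immediate.
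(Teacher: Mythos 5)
Your proof is correct and takes the same route the paper indicates: part (i) is the observation that the local formula \eqref{eq:SymDLocalFrame} becomes global on a parallelizable manifold, and part (ii) is the induction the paper announces as ``straightforward.'' You have in fact supplied the substance the paper leaves implicit — the Leibniz expansion of $\nabla_{X_j}$ and the vanishing of the correction terms via the pairing of the antisymmetry $c^{i_l}_{jm} = -c^{i_l}_{mj}$ with the $j \leftrightarrow m$ symmetry of $\theta^j \vee \cdots \vee \theta^m \vee \cdots$ — which is precisely the step that makes \eqref{eq:Christoffel} relevant. The argument is sound; the cancellation holds because the scalar prefactor $\Lie_{X_{i_1}} \cdots \Lie_{X_{i_k}}\psi$ is independent of $j$ and $m$, so for fixed $i_1,\ldots,i_k$ the double sum over $j,m$ contracts an antisymmetric quantity against a symmetric one.
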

\begin{proof}
    We have already noted \ref{item:SymDGlobalFrame} in
    \eqref{eq:SymDLocalFrame} with the crucial feature that now we
    have a global frame. The second statement is a straightforward
    induction based on \eqref{eq:SymDGlobalFrame} and
    \eqref{eq:Christoffel}.
\end{proof}

Since we have a global frame for $TG$, we can use it to identify the
invariant polynomial functions on $T^*G$ with the complexified
symmetric algebra over the Lie algebra $\liealg{g}$:
\begin{lemma}
    \label{lemma:PolynomialFactorization}%
    Let $G$ be a Lie group.
    \begin{lemmalist}
    \item \label{item:InvariantPolynomials} We have the canonical
        isomorphism
        \begin{equation}
            \label{eq:PolInvariantSg}
            \Sym^\bullet_{\field{C}}(\liealg{g})
            \cong
            \Secinfty\big(\Sym^\bullet_{\field{C}} TG\big)^{G}
            \stackrel{\mathcal{J}}{\longrightarrow}
            \Pol^\bullet(T^*G)^G
        \end{equation}
        between the symmetric algebra of the Lie algebra and the
        invariant polynomials on $T^*G$.
    \item We have the isomorphisms
        \begin{equation}
            \label{eq:PolynomialFactorization}
            \Cinfty(G) \tensor \Sym^\bullet(\liealg{g})
            \cong
            \Secinfty
            \big(
            \Sym_{\field{C}}^\bullet(TG)
            \big)
            \cong
            \Pol(T^*G)
        \end{equation}
        of graded algebras induced by the pullback $\pi^*$
        with the cotangent bundle projection $\pi$.
    \end{lemmalist}
\end{lemma}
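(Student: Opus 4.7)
The plan is to deduce both isomorphisms from the global parallelism of $TG$ provided by the left-invariant vector fields. The key tool is the map $\mathcal{J}$, which sends a symmetric contravariant $k$-tensor field $S \in \Secinfty(\Sym^k_{\field{C}} TG)$ to the fiberwise homogeneous polynomial $\mathcal{J}(S)(\alpha_g) = \langle \alpha_g^{\vee k}, S(g) \rangle$ on $T^*G$. Smoothness on $T^*G$ and polynomial fiber homogeneity of degree $k$ are immediate, and at each point the canonical duality $\Sym^k_{\field{C}} T_g G \cong \bigl(\Sym^k_{\field{C}} T^*_g G\bigr)^*$ ensures bijectivity. Moreover, the fiberwise identity $\langle \alpha^{\vee(k+l)}, S \vee T\rangle = \langle \alpha^{\vee k}, S\rangle \cdot \langle \alpha^{\vee l}, T\rangle$ shows that $\mathcal{J}$ intertwines the symmetric tensor product with the pointwise product of functions, making it a graded algebra isomorphism $\Secinfty(\Sym^\bullet_{\field{C}} TG) \cong \Pol^\bullet(T^*G)$.

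For part \ref{item:InvariantPolynomials}, I would next verify $G$-equivariance. Left translation lifts to actions on both $TG$ and $T^*G$, and $\mathcal{J}$ manifestly intertwines the induced actions on sections and on polynomial functions, so invariance is preserved. A $G$-invariant section $S$ of $\Sym^k_{\field{C}} TG$ is determined by its value at the unit element, which is an element of $\Sym^k_{\field{C}} \liealg{g}$; conversely, any such element extends uniquely by left translation to a left-invariant smooth section. This yields the first isomorphism in \eqref{eq:PolInvariantSg}, and composing with $\mathcal{J}$ produces the identification with the invariant polynomials.

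For part (ii), I expand an arbitrary section $S \in \Secinfty(\Sym^k_{\field{C}} TG)$ in the global left-invariant frame as
\begin{equation*}
    S = \frac{1}{k!}\, S^{i_1 \cdots i_k} \, X_{i_1} \vee \cdots \vee X_{i_k}
\end{equation*}
with symmetric coefficient functions $S^{i_1\cdots i_k} \in \Cinfty(G)$. Identifying $X_{i_1} \vee \cdots \vee X_{i_k}$ with $\basis{e}_{i_1} \vee \cdots \vee \basis{e}_{i_k} \in \Sym^k(\liealg{g})$ via evaluation at the unit element gives the isomorphism $\Cinfty(G) \tensor \Sym^\bullet(\liealg{g}) \cong \Secinfty(\Sym^\bullet_{\field{C}} TG)$; the algebraic tensor product suffices here since each $\Sym^k(\liealg{g})$ is finite-dimensional. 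This is a morphism of graded algebras because pointwise symmetric products in the frame reduce to pointwise products of coefficient functions times $\vee$-products of basis elements. Composing with $\mathcal{J}$ then yields the second isomorphism in \eqref{eq:PolynomialFactorization}, where the $\Cinfty(G)$-factor corresponds to the pullback under $\pi$ and each $\basis{e}_i$ is sent to the momentum function $\alpha_g \mapsto \alpha_g(X_i(g))$.

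I do not anticipate any real obstacle: the argument amounts to organizing three standard identifications (frame trivialization of $TG$, fiberwise polarization turning symmetric tensors into polynomials, and the cotangent lift of left translations), with all compatibility between the algebra structures being essentially tautological. The only place where some care is needed is the book-keeping of the graded algebra structures under $\mathcal{J}$, since the symmetric product $\vee$ on one side must match the ordinary pointwise product of polynomial functions on the other.
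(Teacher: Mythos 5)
Your proof is correct and follows essentially the same approach the paper intends: the paper itself gives no explicit proof but only points to the canonical algebra isomorphism $\mathcal{J}$ from the appendix, and your argument fills in the standard details via the global left-invariant frame and evaluation at the unit. In particular, your handling of $G$-equivariance of $\mathcal{J}$ for part \ref{item:InvariantPolynomials} and the frame-expansion for part (ii) is exactly the intended reasoning.
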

Here $\mathcal{J}$ is the canonical algebra isomorphism
\eqref{eq:CanonicalIsoJ}. This factorization will be used extensively
in the sequel. Note that we do not have to complexify the symmetric
algebra in \eqref{eq:PolynomialFactorization}, but doing so would not
change the resulting algebra. We will switch between these points of
view, whenever it is convenient to do so in the sequel.

Using this observation and Lemma~\ref{lemma:SymD}, we get the
following surprisingly simple formula for the standard-ordered
quantization map:
\begin{proposition}[Standard-ordered quantization map]
    \label{proposition:StdRep}%
    Let $G$ be a Lie group.
    \begin{propositionlist}
    \item \label{item:StdRepExplicitInvariantStuff}%
        The standard-ordered quantization map on invariant polynomial
        functions is globally given by
        \begin{equation}
            \label{eq:StdRepExplicit}
            \stdrep
            \big(
            \mathcal{J}
            (X_{\xi_1} \vee \cdots \vee X_{\xi_k})
            \big)
            =
            \bigg(\frac{\hbar}{\I}\bigg)^{k}
            \frac{1}{k!}
            \sum_{\sigma \in S_k}
            \Lie_{X_{\xi_{\sigma(1)}}} \cdots \Lie_{X_{\xi_{\sigma(k)}}}
        \end{equation}
        for $\xi_1, \ldots, \xi_k \in \liealg{g}$.
    \item \label{item:StdRepIsoOnInvariantThings}%
        It provides an isomorphism
        \begin{equation}
            \label{eq:StdRepIso}
            \stdrep
            \colon
            \Sym^{(\bullet)}_{\field{C}}(\liealg{g})
            \longrightarrow
            \Diffop^{(\bullet)}(G)^G
        \end{equation}
        between the complexification of the symmetric algebra over the
        Lie algebra and the invariant differential operators on $G$,
        both viewed as filtered vector spaces.
    \item \label{item:StdRepGeneralFunction} For $\phi \in \Cinfty(G)$
        and $\xi_1, \ldots, \xi_k \in \liealg{g}$ one has
        \begin{equation}
            \label{eq:StdRepGeneral}
            \stdrep
            \big(
            \pi^*(\phi)
            \mathcal{J}
            (X_{\xi_1} \vee \cdots \vee X_{\xi_k})
            \big)
            =
            \phi
            \cdot
            \stdrep
            \big(
                \mathcal{J}
                (X_{\xi_1} \vee \cdots \vee X_{\xi_k})
            \big),
        \end{equation}
        i.e. the smooth function $\phi$ acts as a multiplication
        operator.
    \end{propositionlist}
\end{proposition}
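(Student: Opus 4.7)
The plan is to derive all three parts from the general definition of the standard-ordered quantization on cotangent bundles recalled in Appendix~\ref{sec:StarProductsCotangentBundles}. Recall that for a homogeneous polynomial of the form $f = \pi^*(\phi) \cdot \mathcal{J}(Y)$ with $\phi \in \Cinfty(G)$ and $Y \in \Secinfty(\Sym^k_{\field{C}} TG)$, the standard-ordered quantization is given by
\[
    \stdrep(f)\psi
    \; = \;
    \phi \cdot \Big(\frac{\hbar}{\I}\Big)^{\!k}
    \frac{1}{k!}\,
    \bigl\langle Y,\, \SymD^k \psi \bigr\rangle,
\]
where $\langle\,\cdot\,,\,\cdot\,\rangle$ denotes the canonical fiberwise pairing between a symmetric contravariant tensor and a symmetric covariant tensor of the same degree. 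Part~\ref{item:StdRepGeneralFunction} is then immediate: the factor $\phi$ enters only as an overall scalar multiplication, so it commutes out of the whole expression, confirming that pulled-back functions act as multiplication operators.

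For part~\ref{item:StdRepExplicitInvariantStuff}, I would plug $Y = X_{\xi_1} \vee \cdots \vee X_{\xi_k}$ into the above formula and use Lemma~\ref{lemma:SymD}~\ref{item:SymDPowers} to expand
\[
    \SymD^k \psi
    =
    \bigl(\Lie_{X_{i_1}} \cdots \Lie_{X_{i_k}} \psi\bigr)\,
    \theta^{i_1} \vee \cdots \vee \theta^{i_k}.
\]
The symmetric pairing of $X_{\xi_1} \vee \cdots \vee X_{\xi_k}$ with $\theta^{i_1} \vee \cdots \vee \theta^{i_k}$ unfolds to a sum over $S_k$ of products of the duality pairings $\theta^{i_{\sigma(j)}}(X_{\xi_j})$, which are constants on $G$. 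Contracting this sum into the summation over the indices $i_1, \ldots, i_k$ collapses each $\theta^{i_{\sigma(j)}}$ against the dual frame and precisely reassembles the sum $\sum_{\sigma \in S_k} \Lie_{X_{\xi_{\sigma(1)}}} \cdots \Lie_{X_{\xi_{\sigma(k)}}}$ displayed in~\eqref{eq:StdRepExplicit}.

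For part~\ref{item:StdRepIsoOnInvariantThings}, I would invoke the Poincaré-Birkhoff-Witt theorem. The right-hand side of~\eqref{eq:StdRepExplicit} is, up to the nowhere-vanishing rescaling $(\hbar/\I)^k$ on the $k$-th homogeneous component, the classical symmetrization map $\Sym^\bullet_{\field{C}}(\liealg{g}) \to U(\liealg{g})_{\field{C}}$ followed by the canonical identification $U(\liealg{g})_{\field{C}} \cong \Diffop^\bullet(G)^G$ sending $\xi \in \liealg{g}$ to the first-order differential operator $\Lie_{X_\xi}$. PBW guarantees the symmetrization is a filtered linear isomorphism, and the identification with left-invariant differential operators is a classical filtered algebra isomorphism, so the composition (and hence $\stdrep$) inherits the isomorphism property on each filtered component.

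The main obstacle is bookkeeping the combinatorial factors hidden in the conventions for $\vee$, for the symmetric insertion, and for $\SymD^k$. The normalizations fixed in the appendix are tuned so that the $\frac{1}{k!}$ appearing in~\eqref{eq:StdRepExplicit} is exactly the one needed to convert the pairing with $\theta^{i_1} \vee \cdots \vee \theta^{i_k}$ into a symmetric average; verifying this compatibility is the only genuine calculation, everything else being a direct assembly of the pieces already in place.
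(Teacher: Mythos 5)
Your proposal is correct, but it reaches the statement by a partially different route than the paper. For parts \ref{item:StdRepExplicitInvariantStuff} and \ref{item:StdRepGeneralFunction} the paper does essentially no work: the explicit formula \eqref{eq:StdRepExplicit} is quoted from \cite[Prop.~11]{bordemann.neumaier.waldmann:1998a}, and \eqref{eq:StdRepGeneral} is dismissed as the left $\Cinfty(G)$-linearity of $\stdrep$ recorded in Appendix~\ref{sec:StarProductsCotangentBundles}; your direct computation from \eqref{eq:stdrepExplicit} (only the $k$-th term of the exponential survives under $\iota^* \circ \inss(X)$, then Lemma~\ref{lemma:SymD}, \ref{item:SymDPowers}, and the symmetric pairing reassemble the symmetrized Lie derivatives) is precisely the argument hiding behind that citation, and your observation that $\phi$ factors out is the same $\Cinfty(G)$-linearity, so here you merely unfold what the paper outsources. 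The genuine divergence is in part \ref{item:StdRepIsoOnInvariantThings}: the paper argues by a leading-symbol induction --- an invariant $D \in \Diffop^k(G)^G$ has an invariant leading symbol $\sigma_k(D) \in \Sym^k_{\field{C}}(\liealg{g})$, quantizing it reproduces that symbol, so $D - \stdrep(\sigma_k(D))$ has strictly lower order, and induction together with the injectivity of $\stdrep$ (available from the general cotangent-bundle calculus for $\hbar \neq 0$) gives the filtered isomorphism --- whereas you invoke the Poincaré--Birkhoff--Witt theorem plus the classical identification of $\Universal_{\field{C}}(\liealg{g})$ with the left-invariant differential operators. Both are valid: your route is shorter and makes transparent that, degree by degree, $\stdrep$ is nothing but the symmetrization map rescaled by the invertible factor $(\hbar/\I)^k$, at the price of importing two standard Lie-theoretic theorems; the paper's induction stays self-contained within the symbol calculus it has already set up and never needs $\Universal_{\field{C}}(\liealg{g}) \cong \Diffop^\bullet(G)^G$ as an input. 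Note that both arguments tacitly require $\hbar \neq 0$ (in yours the rescaling must be invertible, in the paper's one needs injectivity of $\stdrep$), and that the combinatorial normalization you flag in part \ref{item:StdRepExplicitInvariantStuff} is indeed the only place where an actual calculation has to be checked.
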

\begin{proof}
    The first part was obtained in
    \cite[Prop.~11]{bordemann.neumaier.waldmann:1998a}. For
    \ref{item:StdRepIsoOnInvariantThings} we note that
    $\stdrep(\mathcal{J} (X_{i_1} \vee \cdots \vee X_{i_k}))$ is
    clearly an invariant differential operator. Conversely, if
    $D \in \Diffop^k(G)^G$ is invariant, it has an invariant leading
    symbol
    $\sigma_k(D) \in \Secinfty(\Sym^k_{\field{C}} TG)^G \cong
    \Sym^k_{\field{C}}(\liealg{g})$. Quantizing this symbol via
    $\stdrep$ gives an invariant differential operator with the same
    leading symbol, thus $D - \stdrep(\sigma_k(D))$ is of strictly
    lower order. A simple induction on $k$ then proves the isomorphism
    \eqref{eq:StdRepIso}, since we already know that $\stdrep$ is
    injective. The last statement is clear, as $\stdrep$ is left
    $\Cinfty(G)$-linear in general.
\end{proof}

As the standard-ordered quantization map is the quantization map for
the standard-ordered star product $\starstd$ in the sense of
\eqref{eq:starstdDef}, the strategy is now to use
\eqref{eq:StdRepExplicit} to derive a formula for the standard-ordered
star product suitable for estimation.

Thanks to Lemma~\ref{lemma:PolynomialFactorization}, we can compute
the star products for $\Cinfty(G) \tensor \Sym^\bullet(\liealg{g})$
directly. The star product of two functions from $\Cinfty(G)$ is the
commutative pointwise product, a feature which holds for all cotangent
bundles and not only for $T^*G$. The next combination we are
interested in are two elements of
$\Sym^\bullet_{\field{C}}(\liealg{g})$. Since the covariant derivative
$\nabla$ we use to construct $\starstd$ is left invariant, their star
product is an invariant polynomial, i.e. an element of
$\Sym^\bullet_{\field{C}}(\liealg{g})$ again. From
\cite[Lem.~10]{bordemann.neumaier.waldmann:1998a} we infer that
$\starstd$ coincides with the Gutt star product \cite{gutt:1983a} on
$\Sym^\bullet_{\field{C}}(\liealg{g})$, which is obtained from the
linear Poincaré-Birkhoff-Witt isomorphism
\begin{equation}
    \label{eq:PBWIso}
    \Sym^\bullet_{\field{C}}(\liealg{g})
    \cong
    \mathrm{U}_{\field{C}}(\liealg{g})
\end{equation}
to the universal enveloping algebra via symmetrization. Incorporating
the correct powers of the formal parameter into the definition then
yields the star product $\staralg$ for
$\Sym^\bullet_{\field{C}}(\liealg{g})$, where we follow the sign
conventions from \cite{esposito.stapor.waldmann:2017a}. Finally, we
have to take care of the mixed products: the property of a standard
ordered star product immediately gives
$(\phi \tensor 1) \starstd (\psi \tensor \xi) = (\phi \psi) \tensor
\xi$ for all $\phi, \psi \in \Cinfty(G)$ and
$\xi \in \Sym^\bullet(\liealg{g})$. Thus it is the opposite order
which needs to be computed. We summarize the result from
\cite[Prop.~11]{bordemann.neumaier.waldmann:1998a} in the following
proposition, adapting it to our present notation:
\begin{proposition}
    \label{proposition:StdFactorizations}%
    Let $G$ be a Lie group.
    \begin{propositionlist}
    \item \label{item:TrivialFromLeft}%
        Functions act trivially from the left, i.e. we have
        \begin{equation}
            \label{eq:StdFactorizationsLeft}
            \big(\phi \tensor 1 \big)
            \starstd
            \big( \psi \tensor \xi \big)
            =
            (\phi \cdot \psi) \tensor \xi
        \end{equation}
        for all $\phi, \psi \in \Cinfty(G)$ and
        $\xi \in \Sym^\bullet_{\field{C}}(\liealg{g})$.
    \item \label{item:xietaLieAlg} %
        Products of invariant polynomials are given by the Lie algebra
        star product $\staralg$, i.e. we have
        \begin{equation}
            \label{eq:StdFactorizationsRight}
            \big(1 \tensor \xi \big)
            \starstd
            \big(1 \tensor \eta \big)
            =
            1 \tensor \big( \xi \staralg \eta \big)
        \end{equation}
        for $\xi, \eta \in \Sym^\bullet_{\field{C}}(\liealg{g})$.
    \item \label{item:xiStarphi}%
        The remaining combination of interest is
        \begin{align}
            \nonumber
            \big(
            1 &\tensor \xi_{1} \vee \cdots \vee \xi_{k}
            \big)
            \starstd
            \big(
            \phi \tensor 1
            \big) \\
            &=
            \label{eq:StdExplicitMixed}
            \sum_{p=0}^k
            \bigg(\frac{\hbar}{\I}\bigg)^{p}
            \frac{1}{p! \; (k-p)!}
            \sum_{\sigma \in S_k}
            \Big(
            \Lie_{X_{\xi_{\sigma(1)}}}
            \cdots
            \Lie_{X_{\xi_{\sigma(p)}}} \phi
            \Big)
            \tensor
            \xi_{\sigma(p+1)} \vee \cdots \vee \xi_{\sigma(k)},
        \end{align}
        where $\xi_1, \ldots, \xi_k \in \liealg{g}$ and
        $\phi \in \Cinfty(G)$.
    \item \label{item:GeneralStarstdGeneral}%
        In general, one has
        \begin{align}
            \nonumber
            &(\phi \tensor \xi_1 \vee \cdots \vee \xi_k)
            \starstd
            (\psi \tensor \eta) \\
            &\quad=
            \label{eq:StdFactorization}
            (\phi \tensor 1)
            \starstd
            (\mathbb{1} \tensor \xi_1 \vee \cdots \vee \xi_k)
            \starstd
            (\psi \tensor 1)
            \starstd
            (\mathbb{1} \tensor \eta) \\
            &\quad=
            \label{eq:StdExplicit}
            \sum_{p=0}^k
            \bigg(\frac{\hbar}{\I}\bigg)^{p}
            \frac{\phi}{p! \; (k-p)!}
            \sum_{\sigma \in S_k}
            \Lie_{X_{\xi_{\sigma(1)}}} \cdots \Lie_{X_{\xi_{\sigma(p)}}} \psi
            \tensor
            \big(\xi_{\sigma(p+1)} \vee \cdots \vee \xi_{\sigma(k)}\big)
            \staralg
            \eta
        \end{align}
        for $\phi, \psi \in \Cinfty(G)$,
        $\xi_1, \ldots, \xi_k \in \liealg{g}$ and
        $\eta \in \Sym^\bullet(\liealg{g})$.
    \end{propositionlist}
\end{proposition}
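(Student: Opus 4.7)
The plan is to verify the four items in order, each reducing to the data of Proposition~\ref{proposition:StdRep} together with the injectivity of $\stdrep$ (which holds by Proposition~\ref{proposition:StdRep}~\ref{item:StdRepIsoOnInvariantThings} and the $\Cinfty(G)$-linearity stated in~\ref{item:StdRepGeneralFunction}, extending to the algebra isomorphism \eqref{eq:PolynomialFactorization}). Since $\starstd$ is by construction characterized by $\stdrep(f \starstd g) = \stdrep(f) \circ \stdrep(g)$, every identity will be proven by applying $\stdrep$ and comparing the resulting differential operators on $G$.

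For~\ref{item:TrivialFromLeft}, I would compute $\stdrep(\phi \tensor 1) \circ \stdrep(\psi \tensor \xi)$ using Proposition~\ref{proposition:StdRep}~\ref{item:StdRepGeneralFunction}: both $\stdrep(\phi \tensor 1)$ and $\stdrep(\psi \tensor \xi)$ factor as multiplication by a function composed with $\stdrep(1 \tensor \xi)$, so their composition equals $M_{\phi \psi} \circ \stdrep(1 \tensor \xi) = \stdrep((\phi \psi) \tensor \xi)$, and injectivity of $\stdrep$ yields the claim. For~\ref{item:xietaLieAlg}, the left-invariance of $\nabla$ implies that $\starstd$ restricts to an invariant product on $\Sym^\bullet_\field{C}(\liealg{g})$; that this restriction is exactly $\staralg$ is the content of \cite[Lem.~10]{bordemann.neumaier.waldmann:1998a}, which I would simply invoke.

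The core technical step is~\ref{item:xiStarphi}. I would apply $\stdrep$ to both sides and use Proposition~\ref{proposition:StdRep}~\ref{item:StdRepExplicitInvariantStuff}: the left-hand side becomes
\begin{equation*}
  \left(\frac{\hbar}{\I}\right)^k \frac{1}{k!}
  \sum_{\sigma \in S_k} \Lie_{X_{\xi_{\sigma(1)}}} \cdots \Lie_{X_{\xi_{\sigma(k)}}} \circ M_\phi,
\end{equation*}
acting on a test function $\psi \in \Cinfty(G)$. Iterating the Leibniz rule for the derivations $\Lie_{X_{\xi_i}}$ (the ordered-subset formula discussed in the combinatorial appendix) yields
\begin{equation*}
  \Lie_{X_{\xi_{\sigma(1)}}} \cdots \Lie_{X_{\xi_{\sigma(k)}}}(\phi \psi)
  =
  \sum_{I \subseteq \{1, \ldots, k\}}
  \Bigl(\prod_{i \in I}^{\nearrow} \Lie_{X_{\xi_{\sigma(i)}}} \phi\Bigr)
  \cdot
  \Bigl(\prod_{j \in I^c}^{\nearrow} \Lie_{X_{\xi_{\sigma(j)}}} \psi\Bigr).
\end{equation*}
The key combinatorial observation is that for each fixed subset $I$ of cardinality $p$, the map $\sigma \mapsto \sigma \circ \tau_I$ (with $\tau_I$ the order-preserving bijection sending $\{1,\dots,p\}$ to $I$ and $\{p+1,\dots,k\}$ to $I^c$) reparameterizes $S_k$; thus the $I$-term of the sum, after averaging over $\sigma$, is independent of $I$. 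Summing $\binom{k}{p}$ identical copies and collecting the factor $\frac{1}{k!}\binom{k}{p} = \frac{1}{p!(k-p)!}$ reduces the expression to precisely $\stdrep$ of the right-hand side of~\eqref{eq:StdExplicitMixed}, after re-splitting the factor $(\hbar/\I)^k = (\hbar/\I)^p (\hbar/\I)^{k-p}$ and identifying the second product as $\stdrep(\xi_{\sigma(p+1)} \vee \cdots \vee \xi_{\sigma(k)})$ up to an inner symmetrization $\tau \in S_{\{p+1,\ldots,k\}}$ whose summation absorbs a further factor $(k-p)!$. Injectivity of $\stdrep$ concludes.

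Finally, for~\ref{item:GeneralStarstdGeneral}, I would not recompute anything: by~\ref{item:TrivialFromLeft} we have $\phi \tensor \xi = (\phi \tensor 1) \starstd (1 \tensor \xi)$ and $\psi \tensor \eta = (\psi \tensor 1) \starstd (1 \tensor \eta)$, so associativity of $\starstd$ gives the four-factor expression~\eqref{eq:StdFactorization}. Applying~\ref{item:xiStarphi} to the middle pair $(1 \tensor \xi_1 \vee \cdots \vee \xi_k) \starstd (\psi \tensor 1)$, then left-multiplying by $(\phi \tensor 1)$ via~\ref{item:TrivialFromLeft}, and finally right-multiplying by $(1 \tensor \eta)$ via a second application of~\ref{item:TrivialFromLeft} combined with~\ref{item:xietaLieAlg}, yields~\eqref{eq:StdExplicit}. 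The main obstacle throughout is the bookkeeping in the symmetrized Leibniz argument of~\ref{item:xiStarphi}; once the ordered-subset expansion and the subset-reparameterization trick are in place, the remaining items are purely algebraic consequences of associativity and $\Cinfty(G)$-linearity.
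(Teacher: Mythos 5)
Your argument is correct, and it takes a genuinely different route from the paper's. The paper's proof is a one-line citation of \cite[Prop.~11]{bordemann.neumaier.waldmann:1998a}, simply observing that the formulas there become those stated once one passes through the factorization \eqref{eq:PolynomialFactorization}. You instead give a self-contained derivation from the data of Proposition~\ref{proposition:StdRep}, reducing everything to the operator identity $\stdrep(f\starstd g)=\stdrep(f)\circ\stdrep(g)$ and the injectivity of $\stdrep$. Item~\ref{item:TrivialFromLeft} then falls out of $\Cinfty(G)$-linearity, item~\ref{item:xietaLieAlg} is still a citation (unavoidable, since $\staralg$ is only characterized by reference to the PBW symmetrization), and item~\ref{item:GeneralStarstdGeneral} follows by associativity. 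The heart of your derivation is item~\ref{item:xiStarphi}: applying $\stdrep$ to both sides, expanding $\Lie_{X_{\xi_{\sigma(1)}}}\cdots\Lie_{X_{\xi_{\sigma(k)}}}(\phi\psi)$ by the noncommutative higher Leibniz rule of Proposition~\ref{proposition:HigherLeibniz}, and using the $\sigma\mapsto\sigma\circ\tau_I$ reparametrization to show all $\binom{k}{p}$ subsets of a fixed cardinality $p$ contribute equally --- which is exactly the symmetrized Leibniz rule \eqref{eq:HigherLeibnizSymmetrized}. Your accounting of the factor $(k-p)!$ absorbed when identifying the ordered product on $\psi$ with $\stdrep(1\tensor\xi_{\sigma(p+1)}\vee\cdots\vee\xi_{\sigma(k)})\psi$ is correct: the inner symmetrization in $\stdrep$ is a $(k-p)!$-to-one cover of the outer sum over $S_k$, cancelling the extra $\frac{1}{(k-p)!}$. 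What the paper's approach buys is brevity; what yours buys is a self-contained verification that makes visible exactly where the combinatorial appendix is used, and why the explicit formula \eqref{eq:StdRepExplicit} is the right input --- which is arguably more useful given that the proposition is otherwise a black box reference.

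Two small points of care if you were to write this out in full. First, the appendix states the Leibniz rule with the convention $D_n\cdots D_1$ (rightmost applied first), while \eqref{eq:StdRepExplicit} and \eqref{eq:StdExplicitMixed} read $\Lie_{X_{\xi_{\sigma(1)}}}\cdots\Lie_{X_{\xi_{\sigma(k)}}}$ left to right; the sum over $S_k$ makes the two conventions interchangeable, but a careless application of Proposition~\ref{proposition:HigherLeibniz} without the relabeling $\sigma\mapsto\sigma\circ\mathrm{rev}$ would put the derivatives on $\phi$ and $\psi$ in the opposite order from the target formula. Second, the injectivity of $\stdrep$ on all of $\Pol(T^*G)$ (not just on the invariant part) is what you need; this follows from $\Cinfty(G)$-linearity in~\ref{item:StdRepGeneralFunction} together with the isomorphism in~\ref{item:StdRepIsoOnInvariantThings}, but it is worth saying so explicitly since the proposition only asserts the isomorphism on the invariant subalgebra.
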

\begin{proof}
    The presented formulae are obtained from
    \cite[Prop.~11]{bordemann.neumaier.waldmann:1998a} after the
    suitable identification of polynomial functions with elements in
    $\Cinfty(G) \tensor \Sym^\bullet_{\field{C}}(\liealg{g})$. We list
    them here, since directly working with the symmetric algebra will
    be easier for continuity estimates down the line.
\end{proof}

%
%

\subsection{Weyl Ordering and The Neumaier Operator}
\label{subsec:Neumaier}

This completes our algebraic considerations for the standard-ordered
star product. In a next step, we turn towards other ordering
prescriptions. More precisely, we are going to simplify the general
formulas for the Neumaier operator \eqref{eq:NeumaierOperatorTstarQ}
by utilizing the Lie-theoretic situation. As for the standard-ordered
star product, having a global frame allows us to obtain considerably
more explicit formulas, see again
\cite[Sect.~8]{bordemann.neumaier.waldmann:1998a}.

The main idea is that we use the half-commutator connection to lift
the left-invariant global frame $X_1, \ldots, X_n$ to vector fields
\begin{equation}
    \label{eq:YiDef}
    Y_i = X_i^\hor
    \in
    \Secinfty\big(T(T^*G)\big)
\end{equation}
on the cotangent bundle. Together with the vertical lifts of the
global frame $\theta^1, \ldots, \theta^n$, denoted by
\begin{equation}
    \label{eq:ZiDef}
    Z^i
    =
    (\theta^i)^\ver
    \in
    \Secinfty\big(T(T^*G)\big),
\end{equation}
one thus obtains a global frame $Y_1, \ldots, Y_n, Z^1, \ldots Z^n$
for the tangent bundle of $T^*G$. Having a global frame it is of
course advantageous to express differential operators like the
Laplacian $\Laplace_0$ from \eqref{eq:LaplaceTstarQ} of the pseudo
Riemannian metric $g_0$ by iterated Lie derivatives with respect to
the frame vector fields instead of covariant derivatives.

Since we also need the choice of a volume density $\mu$ on $G$ to
construct the Weyl ordering, we use the left-invariant volume form
$\mu = \theta^1 \wedge \cdots \wedge \theta^n$. The required one-form
$\alpha$ with $\nabla_X \mu = \alpha(X) \mu$ is then given by
$\alpha(X_\xi) = - \tfrac{1}{2} \tr(\ad_\xi)$ for $\xi \in \liealg{g}$
and hence
\begin{equation}
    \label{eq:alphaGlobally}
    \alpha
    =
    \frac{1}{2} c_{ik}^i \theta^k \in \Secinfty(T^*G).
\end{equation}
Note that in general $\alpha \ne 0$ unless the Lie algebra is
unimodular. The vertical lift of $\alpha$ gives
$\alpha^\ver = \frac{1}{2} c_{ik}^i Z^k$. For the operator $N$ we need
the combination $\Laplace_0 + \Lie_{\alpha^\ver}$, wherefore we
compute the action of this operator on factorizing tensors explicitly:
\begin{proposition}
    \label{proposition:TheTrueLaplacian}%
    For $\phi \in \Cinfty(G)$ and $\xi_1, \ldots, \xi_k \in
    \liealg{g}$ one has
    \begin{equation}
        \label{eq:NeumaierLaplacian}
        \big(
            \Laplace_0 + \Lie_{\alpha^\ver}
        \big)
        (\phi \tensor \xi_1 \vee \cdots \vee \xi_k)
        =
        \sum_{\ell=1}^k
        \Lie_{X_{\xi_\ell}} \phi
        \tensor
        \xi_1 \vee \overset{\xi_\ell}{\cdots} \vee \xi_k
        +
        2 \Lie_{\alpha^\ver},
    \end{equation}
    where we identify elements of
    $\Cinfty(G) \tensor \Sym(\liealg{g})$ with polynomial functions
    $\Pol(T^*G)$ as before.
\end{proposition}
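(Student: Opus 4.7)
The plan is to express $\Laplace_0$ entirely in the global frame $(Y_1,\dots,Y_n,Z^1,\dots,Z^n)$ and then expand its action on the factorizing polynomial $\pi^*\phi \cdot \mathcal{J}(X_{\xi_1}\vee\cdots\vee X_{\xi_k})$ by the Leibniz rule. By construction of $g_0$ (see \eqref{eq:LaplaceTstarQ}), the horizontal and vertical distributions are paired, so
\[
\Laplace_0 = \Lie_{Y_i}\Lie_{Z^i} + (\text{Christoffel corrections}),
\]
where the corrections are determined by the induced connection on $T(T^*G)$ and hence, via \eqref{eq:Christoffel}, by the structure constants $c_{ij}^k/2$. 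Working out these corrections explicitly in terms of $Y_i$, $Z^j$ is the first preparatory step.

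Second, I compute how each frame vector field acts on the two factors separately. On the base factor, $\Lie_{Y_i}\pi^*\phi = \pi^*\Lie_{X_i}\phi$ and $\Lie_{Z^j}\pi^*\phi = 0$. On the fiber-polynomial factor $\mathcal{J}(X_{\xi_1}\vee\cdots\vee X_{\xi_k})$, the vertical lift $Z^j$ acts as a derivation that contracts one factor against $\basis{e}^j(\xi_\ell)$, lowering the symmetric degree by one, while the horizontal lift $Y_i$ acts through the half-commutator connection on $\Sym^\bullet(\liealg{g})$, producing terms weighted by $\tfrac12 c_{ij}^k$. Combining this with the Leibniz rule for the product $\pi^*\phi\cdot\mathcal{J}(\cdots)$ gives a clean expansion of $\Lie_{Y_i}\Lie_{Z^i}$ applied to the full factorizing tensor.

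Third, I sort the resulting terms into three families:
\begin{itemize}
\item \emph{main} terms where $Z^i$ contracts a single $\xi_\ell$ and $Y_i$ differentiates $\pi^*\phi$, yielding exactly $\sum_{\ell}\Lie_{X_{\xi_\ell}}\phi\otimes\xi_1\vee\overset{\xi_\ell}{\cdots}\vee\xi_k$;
\item \emph{cross} terms where $Y_i$ hits the polynomial factor producing $c_{ij}^k$ and $Z^i$ contracts a further index, which by the antisymmetry $c_{ij}^k=-c_{ji}^k$ cancel pairwise;
\item \emph{trace} terms, both from $\Lie_{Y_i}\Lie_{Z^i}$ and from the Christoffel correction, of the form $c_{ik}^i$ times a single vertical contraction.
\end{itemize}
Using \eqref{eq:alphaGlobally}, the trace terms assemble precisely into a multiple of $\Lie_{\alpha^\ver}(\phi\tensor\xi_1\vee\cdots\vee\xi_k)$. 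Adding the separate $\Lie_{\alpha^\ver}$ on the left of \eqref{eq:NeumaierLaplacian} then produces the coefficient $2$ on the right, finishing the identity.

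The main obstacle is the bookkeeping in the second step: one must carefully distinguish the action of $\Lie_{Y_i}$ on $\mathcal{J}(X_{\xi_1}\vee\cdots\vee X_{\xi_k})$, viewed as a polynomial function on $T^*G$, from the Christoffel contribution baked into $\Laplace_0$ itself, so that no contribution is double-counted. Once this is set up cleanly, the cancellation of the cross terms and the reassembly of the trace terms into $2\Lie_{\alpha^\ver}$ is a purely algebraic consequence of the antisymmetry of $c_{ij}^k$ and the definition of $\alpha$.
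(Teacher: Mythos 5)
Your plan is sound and arrives at the right formula, though it is organized differently from the paper's proof. The paper invokes the identity $\Laplace_0 \circ \mathcal{J} = \mathcal{J} \circ \divergence_\nabla$ from \cite{bordemann.neumaier.waldmann:1998a}, which moves the entire computation down to the configuration space: one evaluates the covariant divergence $\divergence_\nabla = \inss(\theta^i)\nabla_{X_i}$ on $\phi\, X_{\xi_1}\vee\cdots\vee X_{\xi_k}$ via its Leibniz rule, uses the antisymmetry $c_{ij}^k = -c_{ji}^k$ to see that on the polynomial part only the trace contribution $-\tfrac{1}{2}\inss(\tr\ad)$ survives, and then pushes forward through $\mathcal{J}$. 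You instead propose to express $\Laplace_0$ directly on $T^*G$ in the frame $(Y_i, Z^j)$ and expand there. The two computations are the same in substance --- the cross-term cancellation and the identification of the surviving trace with $\alpha$ from \eqref{eq:alphaGlobally} form the common algebraic core --- but the paper's route is shorter because the factorization through $\divergence_\nabla$ avoids the ``first preparatory step'' you correctly flag as nontrivial, namely deriving a frame expression of $\Laplace_0$.

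One point in your sorting deserves a warning, since it would be easy to double-count. With the ordering $\Lie_{Y_i}\Lie_{Z^i}$, the $Z^i$-contraction acts first and removes one factor, so the subsequent $Y_i$-differentiation can never hit the removed factor; this piece therefore produces only main and cross terms, and no trace terms at all. The entire trace contribution comes from the Christoffel correction, which, once computed, is precisely $\Lie_{\alpha^\ver}$: indeed
\begin{equation*}
    \big[\Lie_{Z^i}, \Lie_{Y_i}\big]\mathcal{J}(X)
    =
    -\mathcal{J}\big(\inss(\nabla_{X_i}\theta^i)X\big)
    =
    \mathcal{J}\big(\inss(\alpha)X\big)
    =
    \Lie_{\alpha^\ver}\mathcal{J}(X)
\end{equation*}
using $\nabla_{X_i}\theta^i = -\tfrac{1}{2}c_{ij}^i\theta^j = -\alpha$, so on polynomial functions $\Laplace_0 = \Lie_{Y_i}\Lie_{Z^i} + \Lie_{\alpha^\ver} = \Lie_{Z^i}\Lie_{Y_i} = \mathcal{J}\circ\divergence_\nabla\circ\mathcal{J}^{-1}$. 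Your claim that trace terms arise ``both from $\Lie_{Y_i}\Lie_{Z^i}$ and from the Christoffel correction'' is therefore inaccurate; the arithmetic gives the coefficient $2$ in $\Laplace_0 + \Lie_{\alpha^\ver}$ only because one of the two alleged sources is in fact empty.
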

\begin{proof}
    In general, the covariant divergence
    $\divergence_\nabla \colon \Secinfty(\Sym^k TQ) \longrightarrow
    \Secinfty(\Sym^{k-1} TQ)$ on an arbitrary manifold $Q$ with
    covariant derivative $\nabla$ is given by the local formula
    \begin{equation}
        \label{eq:DivergenceFrame}
        \divergence_\nabla\at[\Big]{U}
        =
        \inss(\frames{e}^i) \nabla_{\frames{e}_i},
        \tag{$*$}
    \end{equation}
    where $\frames{e}_1, \ldots, \frames{e}_n \in \Secinfty(TU)$ is a
    local frame on an open subset $U \subseteq M$ with corresponding
    dual frame
    $\frames{e}^1, \ldots, \frames{e}^n \in \Secinfty(T^*U)$ and
    $\inss(\argument)$ denotes the symmetric insertion derivation. As
    one easily verifies, this provides a global definition independent
    of the local frame. Directly from the definition one infers the
    Leibniz rule
    \begin{equation}
        \label{eq:DivergenceLeibniz}
        \divergence_\nabla(\phi X)
        =
        \inss(\D \phi)X + \phi \divergence_\nabla(X)
        \tag{$**$}
    \end{equation}
    for all $\phi \in \Cinfty(Q)$ and $X \in \Secinfty(\Sym^k TQ)$.
    On polynomial functions $\mathcal{J}(X) \in \Pol^k(T^*Q)$ with
    $X \in \Secinfty(\Sym^k TQ)$ the Lie derivatives with respect to
    horizontal and vertical lifts act like
    \begin{equation*}
        \Lie_{Y^\hor} \mathcal{J}(X)
        =
        \mathcal{J}(\nabla_Y X)
        \quad
        \textrm{and}
        \quad
        \Lie_{\beta^\ver} \mathcal{J}(X)
        =
        \mathcal{J}(\inss(\beta) X),
    \end{equation*}
    where $Y \in \Secinfty(TQ)$ and $\beta \in \Secinfty(T^*Q)$. Since
    $\mathcal{J}$ is an algebra homomorphism this can be easily
    checked on generators. Using the local expression
    \eqref{eq:LaplaceTstarQ} for the Laplacian with respect to the
    $g_0$ as well as the local formulas for the horizontal and
    vertical lifts, one verifies
    \begin{equation*}
        \Laplace_0 \circ \mathcal{J}
        =
        \mathcal{J} \circ \divergence_\nabla,
    \end{equation*}
    see also \cite[Eq.~(111)]{bordemann.neumaier.waldmann:1998a}.  Now
    we focus on the Lie group case. Here we first notice that
    $\divergence_\nabla(X_\xi) = - \tfrac{1}{2} \tr \ad_\xi$ for all
    $\xi \in \liealg{g}$. Note that \eqref{eq:DivergenceFrame} becomes
    a global formula once we use the global frame $X_1, \ldots, X_n$.
    From the antisymmetry of the structure constants we get the
    divergence of higher polynomials as
    \begin{equation*}
        \divergence_\nabla
        (X_{\xi_1} \vee \cdots \vee X_{\xi_k})
        =
        - \tfrac{1}{2}
        \inss(\tr\ad) (X_{\xi_1} \vee \cdots \vee X_{\xi_k}).
    \end{equation*}
    Together with the Leibniz rule \eqref{eq:DivergenceLeibniz} we
    arrive at the explicit formula
    \begin{equation*}
        \divergence_\nabla
        (\phi X_{\xi_1} \vee \cdots \vee X_{\xi_k})
        =
        \sum_{\ell = 1}^k
        \Lie_{X_{\xi_\ell}} \phi
        \cdot
        X_{\xi_1} \vee \stackrel{\ell}{\cdots} \vee X_{\xi_k}
        -
        \tfrac{1}{2}
        \phi
        \inss(\tr\ad)(X_{\xi_1} \vee \cdots \vee X_{\xi_k}).
    \end{equation*}
    Applying the algebra isomorphism $\mathcal{J}$ turns the
    divergence into the Laplacian and the insertion of the modular
    one-form into the Lie derivative in direction of the vertical lift
    of $\alpha$, finally proving \eqref{eq:NeumaierLaplacian}.
\end{proof}

From this explicit description of the Laplacian $\Laplace_0$ we see
that it might be advantageous to focus on the combination
\begin{equation}
    \label{eq:TheTrueLaplacian}
    \Laplace
    =
    \Laplace_0 - \Lie_{\alpha^\ver}
\end{equation}
acting on polynomial functions as
\begin{equation}
    \label{eq:TrueLaplaceActsNicely}
    \Laplace
    (\phi \tensor \xi_1 \vee \cdots \vee \xi_k)
    =
    \sum_{\ell=1}^k
    \Lie_{X_{\xi_\ell}} \phi
    \tensor
    \xi_1 \vee \overset{\xi_\ell}{\cdots} \vee \xi_k.
\end{equation}

The vertical Lie derivative $\Lie_{\alpha^\ver}$ is now easily shown
to commute with both operators $\Laplace_0$ and $\Laplace$. Thus the
Neumaier operator $\mathcal{N}_\kappa$ factorizes
\begin{equation}
    \label{eq:FactorizeNeumaier}
    \mathcal{N}_\kappa
    =
    \exp\left(
        - \I\kappa \hbar (\Laplace_0 + \Lie_{\alpha^\ver})
    \right)
    =
    \exp\left(
        - \I\kappa \hbar \Laplace
    \right)
    \circ
    \exp\left(
        2\I\kappa\hbar \Lie_{\alpha^\ver}
    \right).
\end{equation}
As observed in \cite[Lem.~11]{bordemann.neumaier.waldmann:1998a}, the
second factor $\exp(2\I\kappa\hbar \Lie_{\alpha^\ver})$ is an
automorphism of $\starstd$ for all $\kappa$. Thus the simpler operator
\begin{equation}
    \label{eq:TrueNeumaier}
    N_\kappa
    =
    \exp\left(
        - \I\kappa \hbar \Laplace
    \right)
\end{equation}
with $\Laplace$ as in \eqref{eq:TrueLaplaceActsNicely} is still an
equivalence transformation from $\starstd$ to $\stark$ for all
$\kappa \in \field{R}$. The reason to use $N_\kappa$ instead of
$\mathcal{N}_\kappa$ is that the simpler formulas are easier to
estimate later on.
\begin{corollary}
    \label{corollary:LaplaceViaPoissonBracket}%
    The Laplacian is given by the mixed Poisson bracket, i.e. we have
    \begin{equation}
        \label{eq:NeumaierLaplacianIsPoisson}
        \Laplace
        (\phi \tensor \xi_1 \vee \cdots \vee \xi_k)
        =
        \big\{
        \mathbb{1} \tensor \xi_1 \vee \cdots \vee \xi_k,
        \phi \tensor 1
        \big\}.
    \end{equation}
\end{corollary}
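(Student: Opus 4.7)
The plan is to compute the right-hand side of \eqref{eq:NeumaierLaplacianIsPoisson} directly using the Leibniz rule for the canonical Poisson bracket and compare with the formula \eqref{eq:TrueLaplaceActsNicely} already established in Proposition~\ref{proposition:TheTrueLaplacian}. In other words, I would regard the corollary as an immediate interpretation of $\Laplace$ rather than an independent computation: the work has essentially been done.

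The first step is to translate the left factor into its description as a polynomial function. Under the isomorphism $\mathcal{J}$ of Lemma~\ref{lemma:PolynomialFactorization}, the element $\mathbb{1} \tensor \xi_1 \vee \cdots \vee \xi_k$ corresponds to the product $\mathcal{J}(X_{\xi_1}) \cdots \mathcal{J}(X_{\xi_k})$ of the momentum functions of the left invariant vector fields $X_{\xi_\ell}$, since $\mathcal{J}$ is an algebra morphism. Likewise $\phi \tensor 1$ is the base pullback $\pi^*\phi$, which is fiber-constant.

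The second step is to expand the Poisson bracket. Because the canonical Poisson bracket on $T^*G$ is a biderivation, the Leibniz rule yields
\begin{equation*}
    \bigl\{ \mathcal{J}(X_{\xi_1}) \cdots \mathcal{J}(X_{\xi_k}),\, \pi^*\phi \bigr\}
    =
    \sum_{\ell = 1}^k
    \mathcal{J}(X_{\xi_1}) \cdots \widehat{\mathcal{J}(X_{\xi_\ell})} \cdots \mathcal{J}(X_{\xi_k})
    \cdot
    \bigl\{ \mathcal{J}(X_{\xi_\ell}),\, \pi^* \phi \bigr\}.
\end{equation*}
The only input I then need is the well-known identity $\{\mathcal{J}(Y), \pi^*\phi\} = \pi^*(\Lie_Y \phi)$ for an arbitrary vector field $Y \in \Secinfty(TG)$ and a function $\phi \in \Cinfty(G)$. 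This is an elementary consequence of the fact that momentum functions generate the flow by cotangent lifts, and will just be quoted.

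Putting the two steps together and translating back along $\mathcal{J}$, the right hand side becomes
\begin{equation*}
    \sum_{\ell=1}^k \Lie_{X_{\xi_\ell}} \phi \tensor \xi_1 \vee \overset{\xi_\ell}{\cdots} \vee \xi_k,
\end{equation*}
which coincides precisely with \eqref{eq:TrueLaplaceActsNicely}. There is essentially no obstacle here; the only subtlety is matching sign conventions for the canonical symplectic form on $T^*G$ so that the bracket relation $\{\mathcal{J}(Y), \pi^*\phi\} = \pi^*(\Lie_Y \phi)$ holds with the correct sign, which is a one-line verification on generators.
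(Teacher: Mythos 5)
Your argument is correct, but it proceeds by a genuinely different route than the paper. You compute the canonical Poisson bracket on $T^*G$ geometrically: you identify $\mathbb{1} \tensor \xi_1 \vee \cdots \vee \xi_k$ with the product of momentum functions $\mathcal{J}(X_{\xi_1}) \cdots \mathcal{J}(X_{\xi_k})$, expand by the Leibniz rule, and quote the standard identity $\{\mathcal{J}(Y), \pi^*\phi\} = \pi^*(\Lie_Y \phi)$, then compare with \eqref{eq:TrueLaplaceActsNicely}. The paper instead stays entirely inside the algebraic machinery it has already set up: the Poisson bracket is read off as the first-order term in $\hbar$ of the $\starstd$-commutator, using the explicit mixed-product formula \eqref{eq:StdExplicitMixed} from Proposition~\ref{proposition:StdFactorizations} together with \eqref{eq:StdFactorizationsLeft}, and then matched against Proposition~\ref{proposition:TheTrueLaplacian}. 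The paper's route has the advantage that the sign and normalization conventions are automatically the ones fixed by the star product (since that is how the bracket enters everywhere else in the paper), so no external convention-matching is needed; your route is more elementary and geometric, but the one loose end you correctly flag — verifying that the canonical symplectic conventions reproduce $\{\mathcal{J}(Y), \pi^*\phi\} = +\pi^*(\Lie_Y\phi)$ rather than its negative — is precisely the step the paper's argument renders moot. Since $\starstd$ quantizes the canonical symplectic structure, the two brackets coincide, and your comparison with \eqref{eq:TrueLaplaceActsNicely} closes the proof.
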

\begin{proof}
    Indeed, the Poisson bracket of a function $\phi \in \Cinfty(G)$
    with left-invariant vector fields can be directly obtained from
    Proposition~\ref{proposition:TheTrueLaplacian} and the first order
    commutator of $\starstd$ as in
    Proposition~\ref{proposition:StdFactorizations}. Note that such a
    formula is only possible since we can factorize elements in
    $\Pol(T^*G)$ into $\Cinfty(G)$ and $\Sym^\bullet(\liealg{g})$.
\end{proof}
\begin{corollary}
    \label{corollary:LaplacianPowers}%
    Let $\ell \le k$. The powers of $\Laplace$ act as
    \begin{equation}
        \label{eq:NeumaierLaplacianPowers}
        \big(\Laplace\big)^{\ell}
        (\phi \tensor \xi_1 \vee \cdots \vee \xi_k)
        =
        \frac{1}{(k-\ell)!}
        \sum_{\sigma \in S_k}
        \Lie_{X_{\xi_{\sigma(1)}}}
        \cdots
        \Lie_{X_{\xi_{\sigma(\ell)}}}
        \phi
        \tensor
        \xi_{\sigma(\ell+1)}
        \vee
        \cdots
        \vee
        \xi_{\sigma(k)}.
    \end{equation}
    For $\ell \ge k$ the result is zero for degree reasons.
\end{corollary}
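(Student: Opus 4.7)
The plan is to prove the identity by induction on $\ell$, using \eqref{eq:TrueLaplaceActsNicely} as the fundamental one-step rule together with a careful bookkeeping argument on permutations.

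For the base case $\ell = 1$, I rewrite the RHS for $\ell=1$ by noting that for each fixed index $i \in \{1, \dots, k\}$, there are exactly $(k-1)!$ permutations $\sigma \in S_k$ with $\sigma(1) = i$, and for each such $\sigma$ the symmetric product $\xi_{\sigma(2)} \vee \cdots \vee \xi_{\sigma(k)}$ equals $\xi_1 \vee \overset{\xi_i}{\cdots} \vee \xi_k$ by commutativity of $\vee$. Thus the RHS collapses to $\sum_{i=1}^{k} \Lie_{X_{\xi_i}} \phi \tensor \xi_1 \vee \overset{\xi_i}{\cdots} \vee \xi_k$, which is precisely the formula \eqref{eq:TrueLaplaceActsNicely}.

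For the inductive step, assume the formula holds for $\ell \le k-1$, and apply $\Laplace$ to both sides. Using \eqref{eq:TrueLaplaceActsNicely} term-by-term, with $\Lie_{X_{\xi_{\sigma(1)}}} \cdots \Lie_{X_{\xi_{\sigma(\ell)}}} \phi$ in the role of the scalar factor, yields
\[
\Laplace^{\ell+1}(\phi \tensor \xi_1 \vee \cdots \vee \xi_k)
=
\frac{1}{(k-\ell)!} \sum_{\sigma \in S_k} \sum_{j=\ell+1}^{k}
\Lie_{X_{\xi_{\sigma(j)}}} \Lie_{X_{\xi_{\sigma(1)}}} \cdots \Lie_{X_{\xi_{\sigma(\ell)}}} \phi
\tensor
\xi_{\sigma(\ell+1)} \vee \overset{\xi_{\sigma(j)}}{\cdots} \vee \xi_{\sigma(k)}.
\]
Now I reorganise the double sum into a sum over new permutations $\tau \in S_k$ defined by $(\tau(1), \tau(2), \dots, \tau(\ell+1)) := (\sigma(j), \sigma(1), \dots, \sigma(\ell))$, leaving $\tau(\ell+2), \dots, \tau(k)$ to range over orderings of the complementary indices. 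The summand depends on $\sigma$ and $j$ only through the ordered tuple $(\sigma(j), \sigma(1), \dots, \sigma(\ell))$ together with the unordered set of remaining indices (by symmetry of $\vee$), hence only through $\tau(1), \dots, \tau(\ell+1)$ and the multiset $\{\tau(\ell+2), \dots, \tau(k)\}$.

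The heart of the argument is the counting: fixing such a tuple, the index $j$ ranges over $k-\ell$ values, and for each choice of $j$ the remaining entries $\sigma(\ell+1), \dots, \widehat{\sigma(j)}, \dots, \sigma(k)$ can be any permutation of the complementary indices, giving $(k-\ell-1)!$ possibilities; hence each ordered tuple of $\ell+1$ elements is produced by exactly $(k-\ell)! = (k-\ell) \cdot (k-\ell-1)!$ pairs $(\sigma, j)$. Moreover, each such ordered tuple corresponds to exactly $(k-\ell-1)!$ permutations $\tau \in S_k$ (via the possible orderings of the last $k-\ell-1$ entries), so summing over $\tau \in S_k$ overcounts by a factor of $(k-\ell-1)!$. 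Combining these counts, the prefactor becomes $\frac{1}{(k-\ell)!} \cdot \frac{(k-\ell)!}{(k-\ell-1)!} = \frac{1}{(k-\ell-1)!}$, matching exactly the RHS of \eqref{eq:NeumaierLaplacianPowers} at level $\ell+1$. The vanishing for $\ell \ge k$ is then immediate since $\Laplace$ lowers the polynomial degree in the fibres by one. The only delicate point is the combinatorial reindexing above, which I expect to be the main (though purely bookkeeping) obstacle; the non-commutativity of the Lie derivatives is harmless here because all orderings appear on the nose in the sum over $S_k$.
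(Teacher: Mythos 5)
Your proof is correct. The paper does not supply a proof of this corollary (it is stated without a proof block, evidently regarded as an immediate induction from \eqref{eq:TrueLaplaceActsNicely}), and your argument is exactly the natural one: induct on $\ell$, apply \eqref{eq:TrueLaplaceActsNicely} term-by-term, and verify that the double sum over $(\sigma,j)$ reorganises into a sum over $\tau\in S_k$ with the prefactor $1/(k-\ell-1)!$; the combinatorial counting you give is sound, in particular the factor $(k-\ell)!$ of pairs $(\sigma,j)$ per ordered $(\ell+1)$-tuple against the factor $(k-\ell-1)!$ of permutations $\tau$ per tuple. One small remark, inherited from the paper rather than introduced by you: the vanishing really kicks in only for $\ell>k$, since at $\ell=k$ the formula yields $\sum_{\sigma\in S_k}\Lie_{X_{\xi_{\sigma(1)}}}\cdots\Lie_{X_{\xi_{\sigma(k)}}}\phi\tensor 1$, which is a nonzero degree-zero polynomial in general; the paper's phrase ``for $\ell\ge k$ the result is zero'' should be read as $\ell>k$.
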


To show the continuity of the $\kappa$-Neumaier operators later on, we
require a more explicit formula. Remarkably,
\eqref{eq:NeumaierLaplacianIsPoisson} exponentiates very nicely: it
turns out that the square $N^2$ of Neumaier operator is given by the
mixed star product from Theorem~\ref{proposition:StdFactorizations}:
\begin{proposition}
    \label{proposition:NeumaierIsMixedStd}%
    Let $G$ be a Lie group. For $\kappa = \frac{1}{2}$, the square of
    the Neumaier operator $N = N_{\frac{1}{2}}$ is given by
    \begin{equation}
        \label{eq:NeumaierIsMixedStd}
        N^2
        (\phi \tensor \xi_1 \vee \cdots \vee \xi_k)
        =
        (\mathbb{1} \tensor \xi_1 \vee \cdots \vee \xi_k)
        \starstd
        (\phi \tensor 1)
    \end{equation}
    for $\phi \in \Cinfty(G)$ and $\xi_1, \ldots, \xi_k \in \liealg{g}$.
\end{proposition}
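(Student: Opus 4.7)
The plan is a direct comparison of two explicit series expansions: the Taylor series of $N^2 = \exp(-\I\hbar\Laplace)$ and the mixed standard-ordered star product formula \eqref{eq:StdExplicitMixed}. Both sides are already available in closed form from the preceding machinery, so the proof reduces to matching coefficients.

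First I would expand the left-hand side via the exponential series,
\begin{equation*}
    N^2(\phi \tensor \xi_1 \vee \cdots \vee \xi_k)
    =
    \sum_{\ell=0}^{\infty}
    \frac{(-\I\hbar)^\ell}{\ell!}
    \Laplace^\ell(\phi \tensor \xi_1 \vee \cdots \vee \xi_k),
\end{equation*}
and then invoke Corollary~\ref{corollary:LaplacianPowers} to substitute the explicit expression for $\Laplace^\ell$ on a factorizing element. Since $\Laplace^\ell$ annihilates the input as soon as $\ell > k$, the sum truncates at $\ell = k$, giving
\begin{equation*}
    N^2(\phi \tensor \xi_1 \vee \cdots \vee \xi_k)
    =
    \sum_{\ell=0}^{k}
    \frac{(-\I\hbar)^\ell}{\ell!\,(k-\ell)!}
    \sum_{\sigma \in S_k}
    \Lie_{X_{\xi_{\sigma(1)}}} \cdots \Lie_{X_{\xi_{\sigma(\ell)}}} \phi
    \tensor
    \xi_{\sigma(\ell+1)} \vee \cdots \vee \xi_{\sigma(k)}.
\end{equation*}

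Next I would use the elementary identity $(-\I\hbar)^\ell = (\hbar/\I)^\ell$ to rewrite the prefactor, after which the expression is term-by-term identical to the right-hand side of \eqref{eq:StdExplicitMixed} from Proposition~\ref{proposition:StdFactorizations}~\ref{item:xiStarphi}, which is precisely $(\mathbb{1} \tensor \xi_1 \vee \cdots \vee \xi_k) \starstd (\phi \tensor 1)$. This completes the proof.

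There is essentially no obstacle: the nontrivial combinatorial content sits in Corollary~\ref{corollary:LaplacianPowers} (which is itself an easy induction from \eqref{eq:TrueLaplaceActsNicely}) and in the closed formula for the mixed star product. The only thing to double-check is the constant $\kappa = \tfrac{1}{2}$ in the definition $N = N_{1/2}$: squaring gives exponent $-\I\hbar\Laplace$, which is exactly what is needed to cancel the $1/\ell!$ from the exponential against the $1/p!$ in \eqref{eq:StdExplicitMixed} with the correct power of $\hbar/\I$. One might wish to remark that this provides a conceptual explanation of Corollary~\ref{corollary:LaplaceViaPoissonBracket} at finite order: the Neumaier operator (up to the unimodular correction already absorbed) is the exponentiation of the first-order mixed bidifferential operator of $\starstd$, acting on one slot.
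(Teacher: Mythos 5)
Your argument is correct and is exactly the paper's proof, which simply notes that comparing \eqref{eq:NeumaierLaplacianPowers} with \eqref{eq:StdExplicitMixed} confirms the claim; you merely spell out the exponential expansion, the truncation at $\ell = k$, and the identity $(\hbar/\I)^p = (-\I\hbar)^p$ explicitly. No gaps.
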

\begin{proof}
    Taking another look at \eqref{eq:StdExplicitMixed} and
    \eqref{eq:NeumaierLaplacianPowers} confirms our claim.
\end{proof}

Notably, incorporating $\kappa \neq 1$ is not that easy here. Down the
line, the trick will thus be to absorb it into the $\hbar$ dependence,
at which point we can employ
Proposition~\ref{proposition:NeumaierIsMixedStd} again.

%
%

\section{The $R'$-Topologies on the Symmetric Algebra}
\label{sec:TopologiesTensor}%

%
%

In view of the factorization
$\Pol(T^*G) \cong \Cinfty(G) \tensor \Sym^\bullet(\liealg{g})$, we
want to define a suitable locally convex topology on the symmetric
algebra $\Sym^\bullet(\liealg{g})$ in such a way that the star product
$\star_{\liealg{g}}$ is continuous. This has been accomplished and
studied in detail in \cite{esposito.stapor.waldmann:2017a}. We briefly
recall the construction based on the earlier work
\cite{waldmann:2014a} and recollect some of the crucial features.

Let $V$ be a locally convex space over $\field{K} = \field{R}$ or
$\field{K} = \field{C}$. We fix a parameter $R' \in \field{R}$. Then
for a seminorm $\seminorm{p}$ on $V$ and a weight $c \ge 0$ one
defines the seminorm $\seminorm{p}_{R', c}$
\begin{equation}
    \label{eq:RSeminorm}
    \seminorm{p}_{R', c}
    \colon
    \Sym^\bullet(V)
    \longrightarrow
    \field{R}_0^+,
    \quad
    \seminorm{p}_{R', c}
    =
    \sum_{k=0}^\infty
    k!^{R'} \; c^k \;
    \seminorm{p}^{k},
\end{equation}
where $\seminorm{p}^{k}$ denotes the $k$-fold projective tensor power
of the seminorm $\seminorm{p}$ acting on
$\Sym^k(V) \subseteq \Tensor^\bullet(V)$. By convention,
$\Sym^0(V) = \field{K} = \Tensor^0(V)$ and $\seminorm{p}^0$ is the
absolute value on $\field{K}$.

Let $\mathcal{P}$ be a defining system of seminorms for $V$. The
locally convex topology on $\Sym^\bullet(V)$ induced by the set of
seminorms
$\big( \seminorm{p}_{R', c} \big)_{\seminorm{p} \in \mathcal{P}, \; c
  \ge 0}$ is called the $R'$-topology. We write $\SymR(V)$ for
$\Sym^\bullet(V)$ equipped with the $R'$-topology. It is independent
of the chosen defining system of seminorms and thus intrinsic to
$V$. However, it depends on $R'$ in a very sensitive way. We note the
obvious inequalities
\begin{equation}
    \label{eq:RSeminormInequalities}
    \seminorm{p}_{R', c}
    \le
    \seminorm{p}_{S', c}
    \quad \textrm{and} \quad
    \seminorm{p}_{R', c}
    \le
    \seminorm{p}_{R', d}
    \quad \textrm{and} \quad
    \seminorm{p}_{R', c}
    \le
    \seminorm{q}_{R', c},
\end{equation}
whenever $R' \le S'$, $0 \le c \le d$ and
$\seminorm{p} \le \seminorm{q}$. This implies that the inclusion (in
fact equality)
\begin{equation}
    \label{eq:ContinuousInclusionStoR}
    \SymR[S'](V)
    \subseteq
    \SymR(V)
\end{equation}
is continuous. Thus it extends to a continuous inclusion for the
completions.

In \cite{waldmann:2014a}, all continuous seminorms were chosen. In
this case, there is no need for the parameter $c \ge 0$ as with
$\seminorm{p}$ also $c \seminorm{p}$ is continuous. Nevertheless, a
smaller collection is sometimes convenient: Let $V = \liealg{g}$ be
the Lie algebra of a Lie group $G$. In the sequel we will always
choose a basis of $\liealg{g}$ and equip it with the corresponding
$\ell^1$-topology. This then induces an $R'$-topology on
$\Sym^\bullet(\liealg{g})$ via $\mathcal{P} = \{\norm{\argument}_1\}$,
i.e. the system consists of a single norm. Note, however, that the
topology for $\SymR(\liealg{g})$ is not at all normable. In general,
we also note
\begin{equation}
    \label{eq:RSeminormNotation}
    \big(
        \lambda \cdot \seminorm{p}
    \big)_{R', c}
    =
    \seminorm{p}_{R', \lambda c}
\end{equation}
for all $c \ge 0$, $\lambda \ge 0$ and $R' \in \field{R}$. Another
consequence of having all polynomial weights at our disposal is that
instead of the $\ell^1$-like seminorms \eqref{eq:RSeminorm} we could
have used the $\ell^\infty$-like seminorms
\begin{equation}
    \label{eq:RSeminormLinfty}
    \seminorm{p}_{R', c, \infty}
    \colon
    \Sym^\bullet(V)
    \longrightarrow \field{R}_0^+, \quad
    \seminorm{p}_{R', c, \infty}
    =
    \sup_{k \in \field{N}_0}
    k!^{R'} \;
    c^k \;
    \seminorm{p}^{k}(v_k)
\end{equation}
with $c \ge 0$. The mutual estimates between them show that the
resulting locally convex topology stays the same, see
\cite[Lem.~3.4]{waldmann:2014a}. We collect a few less obvious
properties of the $R'$-topology from \cite{waldmann:2014a}:
\begin{proposition}
    \label{proposition:RTopology}%
    Let $V$ be a locally convex vector space with defining system of
    seminorms $\mathcal{P}$ and $R' \in \field{R}$.
    \begin{propositionlist}
    \item \label{item:RTopologyProjective}%
        Let $k \ge 0$. The subspace topology induced by the inclusion
        $\Sym^k(V) \subset \Sym^\bullet(V)$ is the projective tensor
        power topology and the inclusion is continuous.
    \item \label{item:RTopologyDirectSum}%
        The $R'$-topology is coarser than the locally convex direct
        sum topology.
    \item \label{item:RTopologyCartesian}%
        The $R'$-topology is finer than the subspace topology induced
        by the Cartesian product topology.
    \item \label{item:RTopologyHausdorff}%
        The locally convex space $\SymR(V)$ is Hausdorff iff $V$ is
        Hausdorff.
    \item \label{item:RTopologyFirstCountable}%
        The locally convex space $\SymR(V)$ is first countable iff $V$
        is first countable.
    \item \label{item:RTopologyNuclear}%
        Let $R' \ge 0$. The locally convex space $\SymR(V)$ is nuclear
        iff $V$ is nuclear.
    \item \label{item:RTopologyCompletion}%
        The completion $\hat{\Sym}_{R'}^\bullet(V)$ of $\SymR(V)$
        is explicitly given by
        \begin{equation}
            \label{eq:RTopologyCompletion}
            \SymRC(V)
            =
            \bigg\{
            v \in \prod_{k=0}^\infty \hat{\Sym}^k(V)
            \; \bigg\vert \;
            \seminorm{p}_{R', c}(v) < \infty
            \; \text{for all} \;
            c \ge 0
            \; \textrm{and} \;
            \seminorm{p} \in \mathcal{P}
            \bigg\},
        \end{equation}
        where $\hat{\Sym}^k(V)$ denotes the completion of $\Sym^k(V)$
        with respect to the projective tensor product topology.
    \item \label{item:RTopologyMontel}%
        Let $R' \ge 0$ and $V$ be nuclear, Hausdorff as well as first countable.
        Then the completion $\SymRC(V)$ is nuclear Fréchet, Montel,
        separable, and reflexive.
    \end{propositionlist}
\end{proposition}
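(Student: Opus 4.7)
The plan is to prove each item by reducing to either standard facts about projective tensor products and direct sums or to the detailed study of $R'$-topologies carried out in \cite{waldmann:2014a}. The universal observation, which underlies almost every item, is that the restriction of a seminorm $\seminorm{p}_{R',c}$ to a single symmetric power $\Sym^k(V)$ equals the scalar multiple $k!^{R'}c^k \seminorm{p}^{k}$ of the projective tensor power seminorm.

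From this observation items \ref{item:RTopologyProjective}--\ref{item:RTopologyFirstCountable} follow quickly. For \ref{item:RTopologyProjective}, as $\seminorm{p} \in \mathcal{P}$ and $c \ge 0$ vary the resulting seminorms $k!^{R'}c^k \seminorm{p}^k$ form a defining system for the projective tensor power topology on $\Sym^k(V)$. Item \ref{item:RTopologyDirectSum} holds because the locally convex direct sum topology on $\bigoplus_k \Sym^k(V)$ admits a fundamental system of seminorms of the shape $\sum_k \seminorm{p_k}^k$ with arbitrary choices of seminorms $\seminorm{p_k}$ on each summand, and each $\seminorm{p}_{R',c}$ is of this form. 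Item \ref{item:RTopologyCartesian} is dual: the bound $\seminorm{p}^k \le (k!^{R'}c^k)^{-1}\seminorm{p}_{R',c}$ for $c > 0$ makes every Cartesian projection continuous. Items \ref{item:RTopologyHausdorff} and \ref{item:RTopologyFirstCountable} transfer along the identification $V = \Sym^1(V)$, which is simultaneously a continuous subspace inclusion and, via the projection, a continuous retract of $\Sym^\bullet(V)$ in the $R'$-topology.

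The heart of the proof lies in \ref{item:RTopologyNuclear} and \ref{item:RTopologyCompletion}. For the completion, item \ref{item:RTopologyCartesian} forces any Cauchy net to converge componentwise in $\prod_k \hat{\Sym}^k(V)$; evaluating $\seminorm{p}_{R',c}$ on the Cauchy net and passing to the lower-semicontinuous limit yields the summability condition \eqref{eq:RTopologyCompletion}, while the converse direction approximates any such element by its finite truncations. For nuclearity one direction is immediate: $V = \Sym^1(V)$ is a complemented subspace of $\SymR(V)$, so nuclearity of $\SymR(V)$ descends to $V$. For the forward direction each $\Sym^k(V)$ is nuclear as a continuous-linear image of the projective tensor power of the nuclear space $V$, and the weighted $\ell^1$-direct sum with weights $k!^{R'}c^k$ preserves nuclearity; here the assumption $R' \ge 0$ is essential to guarantee a nuclear summable majorization of $\seminorm{p}_{R',c}$ by a seminorm of the form $\seminorm{q}_{R',c'}$ with $c' > c$. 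This last permanence statement is the only genuinely nontrivial step and I expect it to be the main obstacle; fortunately it is supplied by the corresponding theorem in \cite{waldmann:2014a} and need only be quoted.

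Finally, \ref{item:RTopologyMontel} is a consequence of general Fréchet space theory applied to the completion: combining \ref{item:RTopologyFirstCountable} and \ref{item:RTopologyCompletion} shows that $\SymRC(V)$ is a complete first-countable locally convex space, hence Fréchet, while \ref{item:RTopologyNuclear} together with the stability of nuclearity under completion makes it nuclear Fréchet, hence automatically Montel and therefore reflexive. Separability is obtained by combining countable dense subsets of each $\hat{\Sym}^k(V)$ (which are separable as nuclear Fréchet spaces) with finite truncations, which are dense by the completion description \eqref{eq:RTopologyCompletion}.
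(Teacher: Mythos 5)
Your proposal is correct in substance, but it follows a genuinely different route from the paper. The paper's proof of this proposition is essentially a citation: items \ref{item:RTopologyProjective}--\ref{item:RTopologyCompletion} are taken verbatim from \cite{waldmann:2014a}, and only \ref{item:RTopologyMontel} is argued, by observing that nuclearity, Hausdorffness and first countability pass from $V$ to $\SymR(V)$ by the earlier items, that all of this survives completion (citing \cite[(50.3)]{treves:1967a} for nuclearity), and then quoting that nuclear Fr\'echet spaces are Montel, nuclear Montel spaces are separable, and Montel spaces are reflexive. You instead reconstruct sketches of \ref{item:RTopologyProjective}--\ref{item:RTopologyCompletion} from the single observation that $\seminorm{p}_{R',c}$ restricts on $\Sym^k(V)$ to the multiple $k!^{R'}c^k\,\seminorm{p}^k$ of the projective power seminorm, deferring only the genuinely hard permanence statement in \ref{item:RTopologyNuclear} to \cite{waldmann:2014a} -- which is exactly where the paper also gets it -- and you obtain separability in \ref{item:RTopologyMontel} by hand, combining finite truncations (dense by your description of the completion) with countable dense subsets of the separable nuclear Fr\'echet spaces $\hat{\Sym}^k(V)$, rather than invoking the cited theorem of Jarchow. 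Your route is more self-contained degree-by-degree; the paper's is shorter because it outsources everything except \ref{item:RTopologyMontel}.

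One caveat in your sketch of \ref{item:RTopologyHausdorff} and \ref{item:RTopologyFirstCountable}: the retract/subspace argument via $V=\Sym^1(V)$ only yields the ``only if'' directions. For the converses you still need that, when $V$ is Hausdorff, the seminorms $\seminorm{p}^k$ separate points of each $\Sym^k(V)$ (Hausdorffness of projective tensor powers), so that the $R'$-topology is Hausdorff via \ref{item:RTopologyCartesian}; and, when $V$ is first countable, that a countable defining system $\mathcal{P}$ together with integer weights $c\in\field{N}$ is cofinal among the seminorms $\seminorm{p}_{R',c}$ by \eqref{eq:RSeminormInequalities}. Both are easy and standard, so this is a presentational gap rather than a flaw in the approach, but as written the cited retract property does not deliver those directions.
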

\begin{proof}
    All statements except for the last one have been obtained in
    \cite{waldmann:2014a}. The earlier parts of our theorem guarantee
    that nuclearity, the Hausdorff property and first countability get
    inherited by $\SymR(V)$. Moreover, everything passes to the
    completion $\SymRC(V)$, as well (see \cite[(50.3)]{treves:1967a}
    for the nuclearity). By
    \cite[Prop.~50.2]{treves:1967a} every nuclear Fréchet space is
    Montel. Nuclear Montel spaces are separable by \cite[Section~11.6,
    Thm.~2]{jarchow:1981a} and by \cite[Cor.~36.9]{treves:1967a} every
    Montel space is reflexive.
\end{proof}

From \cite{waldmann:2014a} we also get the following continuity
statements for the symmetric tensor product:
\begin{proposition}
    \label{proposition:PropertiesSR}%
    Let $V$ be a locally convex vector space, $c \ge 0$ and
    $R' \in \field{R}$.
    \begin{propositionlist}
    \item \label{item:RTopologyProductsSymmetric}%
        The symmetric tensor product
        \begin{equation}
            \label{eq:SymmetricTensorProduct}
            \vee
            \colon
            \SymR(V)
            \times
            \SymR(V)
            \longrightarrow
            \SymR(V)
        \end{equation}
        is continuous. More precisely, for $R' \ge 0$ we have
        \begin{equation}
            \label{eq:RTopologyProduct}
            \seminorm{p}_{R',c}(v \vee w)
            \le
            \seminorm{p}_{R', 2^{R'} c}(v)
            \cdot
            \seminorm{p}_{R', 2^{R'} c}(w)
        \end{equation}
        for all $v, w \in \Sym^\bullet(V)$ and the seminorms
        $\seminorm{p}_{R', c}$ are submultiplicative for $R' \le 0$.
    \item \label{item:RTopologyEvaluationsContinuity}%
        Let $R' \ge 0$. For $\varphi \in V'$ the evaluation
        functionals
        \begin{equation}
            \label{eq:RTopologyEvaluations}
            \delta_\varphi
            \colon
            \SymR(V)
            \longrightarrow
            \field{C}, \quad
            \delta_\varphi(v)
            =
            \sum_{k=0}^\infty
            \varphi^{k}(v_k)
        \end{equation}
        are continuous algebra characters.
    \item \label{item:RTopologyEvaluationsDiscontinuity}%
        Let $R' < 0$ and $\varphi \in V^*$ with continuous
        $\delta_\varphi$ as in \eqref{eq:RTopologyEvaluations}. Then
        we have $\varphi = 0$.
    \end{propositionlist}
\end{proposition}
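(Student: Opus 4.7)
My plan is to prove the three parts in order, working directly from the definition \eqref{eq:RSeminorm}. For part~(i) I first record the basic estimate $\seminorm{p}^{k+\ell}(v_k \vee w_\ell) \le \seminorm{p}^k(v_k) \seminorm{p}^\ell(w_\ell)$ for homogeneous components $v_k \in \Sym^k(V)$ and $w_\ell \in \Sym^\ell(V)$; this holds because $v_k \vee w_\ell$ is the symmetrization of $v_k \otimes w_\ell$, and the symmetrizer is a contraction in the projective tensor seminorm. Decomposing $v \vee w$ by total degree and applying the triangle inequality for $\seminorm{p}^n$ yields
\begin{equation*}
    \seminorm{p}_{R', c}(v \vee w)
    \le
    \sum_{k, \ell \ge 0}
    (k+\ell)!^{R'} \, c^{k+\ell} \,
    \seminorm{p}^k(v_k) \, \seminorm{p}^\ell(w_\ell),
\end{equation*}
which I then factorize separately in the two regimes. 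For $R' \ge 0$ the binomial bound $(k+\ell)! \le 2^{k+\ell} k! \ell!$ gives $(k+\ell)!^{R'} \le 2^{R'(k+\ell)} k!^{R'} \ell!^{R'}$, and the sum splits as the product $\seminorm{p}_{R', 2^{R'} c}(v) \cdot \seminorm{p}_{R', 2^{R'} c}(w)$, which is exactly \eqref{eq:RTopologyProduct}. For $R' \le 0$ the reversed bound $(k+\ell)! \ge k! \ell!$ flips to $(k+\ell)!^{R'} \le k!^{R'} \ell!^{R'}$, yielding outright submultiplicativity of $\seminorm{p}_{R',c}$ without the factor $2^{R'}$.

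For part~(ii), the algebra-character property follows from the standard identity $\varphi^k(x_1 \vee \cdots \vee x_k) = \varphi(x_1) \cdots \varphi(x_k)$, which gives $\delta_\varphi(v \vee w) = \delta_\varphi(v) \delta_\varphi(w)$ on homogeneous components and hence on all of $\Sym^\bullet(V)$. For continuity, continuity of $\varphi \in V'$ provides $\seminorm{p} \in \mathcal{P}$ and $C > 0$ with $|\varphi(x)| \le C \seminorm{p}(x)$ for all $x \in V$, so $|\varphi^k(v_k)| \le C^k \seminorm{p}^k(v_k)$. Since $R' \ge 0$ implies $k!^{R'} \ge 1$, summing over $k$ gives $|\delta_\varphi(v)| \le \seminorm{p}_{R', C}(v)$.

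For part~(iii), I test a hypothetical continuity bound $|\delta_\varphi(v)| \le \seminorm{p}_{R', c}(v)$ against the diagonal test elements $v = x^{\vee k}$ for $x \in V$ and $k \ge 1$. The left side equals $|\varphi(x)|^k$ while the right side is bounded by $k!^{R'} c^k \seminorm{p}(x)^k$, so after taking $k$-th roots
\begin{equation*}
    |\varphi(x)|
    \le
    k!^{R'/k} \, c \, \seminorm{p}(x).
\end{equation*}
Stirling's formula gives $k!^{1/k} \to \infty$, hence $k!^{R'/k} \to 0$ for $R' < 0$, forcing $\varphi(x) = 0$ whenever $\seminorm{p}(x) > 0$. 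The degenerate case $\seminorm{p}(x) = 0$ is handled directly by the original inequality at any fixed $k \ge 1$, which then reads $|\varphi(x)|^k \le 0$. Hence $\varphi = 0$.

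The main obstacle I expect is the combinatorial bookkeeping in part~(i): the contractivity of symmetrization, the separation of variables in the double sum, and the correct scaling factor $2^{R'}$ all have to be threaded carefully to recover exactly \eqref{eq:RTopologyProduct}, and the sign of $R'$ decides which of the two binomial inequalities one should invoke. Parts~(ii) and~(iii) are then essentially a matter of choosing the right test elements, namely the pure tensor powers $x^{\vee k}$, which optimally probe how the factorial weight $k!^{R'}$ competes against the geometric weight $c^k$ along the diagonal of $\Sym^\bullet(V)$.
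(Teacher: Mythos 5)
Your argument for part~\emph{\ref{item:RTopologyEvaluationsDiscontinuity}} is essentially the same as the paper's: both test the continuity estimate against the pure powers of a single vector and observe that the resulting inequality pits super-geometric factorial growth ($R'<0$ making $n!^{-R'}\to\infty$ faster than any fixed $C^n$) against a geometric bound. The only cosmetic difference is that you take $k$-th roots while the paper phrases the contradiction directly; you also spell out the degenerate case $\seminorm{p}(x)=0$, which the paper leaves implicit (it is of course immediate there too). For parts~\emph{\ref{item:RTopologyProductsSymmetric}} and~\emph{\ref{item:RTopologyEvaluationsContinuity}} the paper simply refers to \cite{waldmann:2014a}; your direct arguments — contractivity of the symmetrizer for the projective tensor seminorm, the binomial bound $(k+\ell)!\le 2^{k+\ell}k!\,\ell!$ for $R'\ge 0$ versus $(k+\ell)!\ge k!\,\ell!$ for $R'\le 0$, and $k!^{R'}\ge 1$ for the evaluation functionals — are precisely the standard proofs from that reference and are correct.
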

\begin{proof}
    The only new statement is
    \ref{item:RTopologyEvaluationsDiscontinuity}: let $R' < 0$ and
    $\varphi \in V^*$ with continuous $\delta_\varphi$, i.e. we find a
    continuous seminorm $\seminorm{p}$ on $V$ and $c \ge 0$ such that
    we have
    \begin{equation*}
        \label{eq:RTopologyEvaluationsProof}
        \abs[\bigg]
        {\sum_{k=0}^\infty
        \varphi^{k}(v_k)}
        =
        \abs[\big]
        {\delta_\varphi (v)}
        \le
        \seminorm{p}_{R', c}(v)
        =
        \sum_{k=0}^\infty
        k!^{R'} \; c^k \;
        \seminorm{p}^k(v_k)
        \tag{$*$}
    \end{equation*}
    for all $v \in V$. Assume that we had $\varphi \neq 0$.  Then
    there exists a $v \in V$ with $\varphi(v) \neq 0$.  Consider now
    $v^{\tensor n} \in \Tensor^n(V)$ for all $n \in \field{N}$. Then
    \eqref{eq:RTopologyEvaluationsProof} implies
    \begin{equation*}
        \abs[\big]{\varphi(v)}^n
        =
        \abs[\bigg]
        {\sum_{k=0}^\infty
            \varphi^{k}(v^{\tensor n})}
        \le
        \seminorm{p}_{R', c}
        \big( v^{\tensor n} \big)
        =
        n!^{R'} \cdot c^n
        \cdot
        \seminorm{p}^n
        \big(v^{\tensor n}\big)
        =
        n!^{R'} \cdot c^n
        \cdot
        \seminorm{p}(v)^n
    \end{equation*}
    for all $n \in \field{N}$. However, this inequality is absurd:
    factorial growth in $n$ can not be estimated by the \emph{fixed}
    base
    $\Big( \tfrac{c \cdot \seminorm{p}(v)}{\abs{\varphi(v)}}
    \Big)^{-(R'^{-1})}$ to the power of $n$.
\end{proof}

%
%

\section{The $R$-Entire Functions}
\label{sec:TopologiesEntire}%

The purpose of this section is to introduce and study Fréchet
subalgebras $\Entire(G) \subseteq \Cinfty(G)$ depending on another
parameter $R \in \field{R}$. These Fr\'echet algebras will ultimately
serve as the other tensor factors in the observable algebra for our
strict deformation. While in the critical borderline case $R=0$ the
algebra $\Entire[0](G)$ can be seen as a Lie--theoretic descendent of
the algebra of all holomorphic entire functions, the algebras
$\Entire(G)$ for $R>0$ share many properties with the classical and
well studied Fréchet algebras of entire holomorphic functions of
\textit{finite order and minimal type}.

In this section $G$ denotes always a real Lie group with corresponding
Lie algebra $\liealg{g}$ of dimension $n \in \field{N}$. We
furthermore assume that the Lie group $G$ is \textit{connected}. As it
is standard, we denote for an open set $U \subseteq \field{C}^n$ the
set of all holomorphic functions
$F \colon U \longrightarrow \field{C}$ by $\Holomorphic(U)$.

%
%

\subsection{Lie-Taylor Series of Smooth Functions on a Lie Group}

Taking another look at \eqref{eq:RTopologyCompletion} and
\eqref{eq:StdRepExplicit}, we anticipate certain power series of Lie
derivatives $\Lie_{X_\xi}$ for $\xi \in \liealg{g}$ to make an
appearance, since the completion $\SymRC(\liealg{g})$ of
$\SymR(\liealg{g})$ contains already non-trivial (though not all)
entire functions, when interpreting the elements of the completion as
maps on $\liealg{g}^*$. Thus we need to find a space of functions on
$G$ on which all elements of $\SymRC(\liealg{g})$ act and which is
preserved by this action.

Formalizing this idea, it turns out that the functions we are looking
for are exactly the ``entire vectors'' for suitably chosen seminorms
on $\Cinfty(G)$ and the lifted Lie algebra representation
\begin{equation}
    \label{eq:LieDerivativeRep}
    \Lie
    \colon
    \Universal_\field{C}(\liealg{g})
    \longrightarrow
    \End\big(\Cinfty(G)\big),
    \quad
    \Lie(\xi_1 \cdots \xi_k)
    =
    \Lie_{X_{\xi_1}} \cdots \Lie_{X_{\xi_k}}.
\end{equation}
Here $\Universal(\liealg{g})$ denotes the universal enveloping algebra
of the Lie algebra $\liealg{g}$, as before.

To make the notion of an ``entire vector'' precise, we introduce some
more notation. Let $\field{N}_n = \{1, \ldots, n\}$ and
$\alpha = (\alpha_1,\ldots, \alpha_k) \in \mathbb{N}_n^k$ be an
ordered $k$-tuple. For a fixed basis
$\mathcal{B} = (\basis{e}_1, \ldots, \basis{e}_n)$ of the Lie algebra
$\liealg{g}$, we then write
\begin{equation}
    \label{eq:LieDerivativePower}
    \Lie_{X_{\alpha}}
    =
    \Lie_{X_{\alpha_1}}
    \cdots
    \Lie_{X_{\alpha_k}},
\end{equation}
where we once again use the convention
\eqref{eq:LeftInvariantFrames}. Finally, we also use the shorthand
notation
\begin{equation}
    \label{eq:VectorPower}
    \underline{z}_{\alpha}
    =
    z_{\alpha_1} \cdots z_{\alpha_k}
    \quad \textrm{for} \quad
    \underline{z}
    =
    (z_1, \ldots, z_n)
    \in
    \mathbb{C}^n.
\end{equation}
\begin{definition}[Lie-Taylor series and majorants]
    \label{definition:LieTaylor}%
    Let $\phi \in \Cinfty(G)$ be a smooth function and let
    $\mathcal{B} = (\basis{e}_1, \ldots, \basis{e}_n)$ be a basis of
    the Lie algebra $\liealg{g}$.
    \begin{definitionlist}
    \item \label{item:LieTaylorSeries}%
        We call the formal series
        \begin{equation}
            \label{eq:LieTaylorFormal}
            \Taylor_\phi
            \colon
            G
            \longrightarrow
            \field{C}\formal{\underline{z}}, \quad
            \Taylor_\phi
            (\underline{z}; g)
            =
            \sum \limits_{k=0}^{\infty}
            \frac{1}{k!}
            \sum \limits_{\alpha \in \mathbb{N}_n^k}
            \big(
                \Lie_{X_{\alpha}} \phi
            \big)(g)
            \cdot
            \underline{z}_{\alpha}
        \end{equation}
        the Lie-Taylor series of $\phi$ at the point $g \in G$
        (w.r.t.~the basis $\mathcal{B}$).
    \item \label{item:LieTaylorMajorant}%
        Using the coefficients
        \begin{equation}
            \label{eq:LieTaylorMajorantCoefficients}
            c_k(\phi)
            =
            \frac{1}{k!}
            \sum \limits_{\alpha \in \mathbb{N}_n^k}
            \abs[\Big]{\big( \Lie_{X_{\alpha}} \phi \big)(\E)}
        \end{equation}
        we define the Lie-Taylor majorant of $\phi$ (w.r.t.~the basis
        $\mathcal{B}$) as
        \begin{equation}
            \label{eq:LieTaylorMajorant}
            \Maj_\phi(z)
            =
            \sum \limits_{k=0}^{\infty}
            c_k(\phi)
            \cdot
            z^k
            \in
            \field{C}\formal{z}.
        \end{equation}
    \end{definitionlist}
\end{definition}
\begin{remark}[Lie-Taylor majorants]
    \label{remark:LieTaylorMajorant}%
    Let $\phi \in \Cinfty(G)$ be a smooth function.
    \begin{remarklist}
    \item \label{item:LieTaylorMajorantIsMajorant}%
        The Lie-Taylor majorant $\Maj_{\phi}(\supnorm{\underline{z}})$
        is a majorant of $\Taylor_{\phi}(\underline{z};e)$, i.e.
        \begin{equation}
            \label{eq:LT-Maj}
            \abs[\big]
            {\Taylor_\phi(\underline{z}; \E)}
            \le
            \Maj_{\phi}
            \big(
                \supnorm{\underline{z}}
            \big)
        \end{equation}
        for all $\underline{z} \in \field{C}^n$. In particular, if
        $\Maj_{\phi} \in \Holomorphic(\Ball_r(0))$, i.e.~$\Maj_{\phi}$
        is holomorphic on the open disk
        $\Ball_r(0) = \{z \in \field{C} \; \big| \; \abs{z} < r\}$,
        then $\Taylor_\phi(\argument; \E)$ is holomorphic on the
        polydisk $\Ball_r(0)^n \subseteq \mathbb{C}^n$.
    \item \label{item:LieTaylorMajorantBasis}%
        The coefficients $c_k(\phi)$ and hence $\Maj_{\phi}$ depend on
        the choice of the basis $\mathcal{B}$, so one should tend to
        write $c_{k, \mathcal{B}}(\phi)$ instead. However, if
        $\mathcal{B}'$ is another basis of $\liealg{g}$, then it is
        easy to see that there is a constant
        $M = M(\mathcal{B}, \mathcal{B}') > 0$ such that
        \begin{equation}
            \label{eq:LieTaylorMajorantBasis}
            c_{k, \mathcal{B}'}
            (\phi)
            \le
            M^k
            \cdot
            c_{k, \mathcal{B}}(\phi)
        \end{equation}
        for all $k \in \field{N}_0$ and $\phi \in \Cinfty(G)$. In
        particular, if $\Maj_{\phi} \in \Holomorphic(\field{C})$
        w.r.t.~some basis of $\liealg{g}$, then
        $\Maj_{\phi} \in \Holomorphic(\field{C})$ w.r.t.~any
        basis. The upshot is that whenever we are dealing with
        \emph{entire} Lie-Taylor majorants or only care about
        analyticity with no specific radius of convergence, we can
        safely ignore the specific choice of the basis of
        $\liealg{g}$.
    \end{remarklist}
\end{remark}
The following simple observations will prove very useful:
\begin{proposition}[Leibniz and chain rule]
    \label{proposition:LeibnizAndChain}%
    Let $\phi \in \Cinfty(G)$ and $z \in \field{C}$.
    \begin{propositionlist}
    \item \label{item:Leibniz}%
        Let $\psi \in \Cinfty(G)$ be another smooth function. We have
        the Leibniz inequality
        \begin{equation}
            \label{eq:Leibniz}
            \abs[\big]
            {\Maj_{\phi \cdot \psi}(z)}
            \le
            \Maj_{\phi}\big(\abs{z}\big)
            \cdot
            \Maj_{\psi}\big(\abs{z}\big).
        \end{equation}
    \item \label{item:ChainRule}%
        Let $\Phi \colon G \longrightarrow H$ be a morphism of Lie
        groups. Then
        \begin{align}
            \label{eq:ChainRuleCoefficients}
            c_k
            \big(
                \Phi^* \phi
            \big)
            &\le
            (D n)^k
            \cdot
            c_k(\phi) \\
            \label{eq:ChainRule}
            \textrm{and} \quad
            \abs[\big]
            {\Maj_{\Phi^* \phi}(z)}
            &\le
            \Maj_\phi
            \big(
                Dn \cdot \abs{z}
            \big),
        \end{align}
        where $D$ is the matrix supnorm of the matrix representation
        of the tangent map
        $T_{\E} \Phi \colon \liealg{g} \longrightarrow \liealg{h}$ in
        the bases used for the construction of the Taylor majorants.
    \end{propositionlist}
\end{proposition}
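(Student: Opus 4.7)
For both parts the strategy is to first establish pointwise estimates on the coefficients $c_k$ and then translate them into estimates on the formal power series $\Maj$ via the Cauchy product formula. Since the $c_k(\phi)$ are non-negative by construction, any coefficient-wise bound of the form $c_k(\phi\psi) \le \sum_{p=0}^k c_p(\phi) c_{k-p}(\psi)$ or $c_k(\Phi^*\phi) \le (Dn)^k c_k(\phi)$ will immediately imply the corresponding majorant inequality by summing against $|z|^k$.

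For the Leibniz inequality, I will expand $\Lie_{X_\alpha}(\phi \cdot \psi)$ for a multi-index $\alpha \in \field{N}_n^k$ by iterating the ordinary Leibniz rule for the first-order operators $\Lie_{X_{\alpha_j}}$. Although these vector fields do not commute, the Leibniz rule holds unchanged at each step, so a bookkeeping argument yields
\begin{equation*}
    \Lie_{X_\alpha}(\phi\psi)
    =
    \sum_{S \subseteq \{1, \ldots, k\}}
    \bigl(\Lie_{X_{\alpha|_S}}\phi\bigr)
    \cdot
    \bigl(\Lie_{X_{\alpha|_{S^c}}}\psi\bigr),
\end{equation*}
where $\alpha|_S$ is the ordered subtuple of $\alpha$ with indices in $S$; the precise combinatorial identity is recorded in the appendix on the Leibniz rule. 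Taking absolute values at $\E$, summing over $\alpha \in \field{N}_n^k$, and regrouping by $p = |S|$ via $\binom{k}{p}/k! = 1/(p!(k-p)!)$ will give $c_k(\phi\psi) \le \sum_{p=0}^k c_p(\phi) c_{k-p}(\psi)$, and hence \eqref{eq:Leibniz}.

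For the chain rule, let $(\basis{f}_1, \ldots, \basis{f}_m)$ be the fixed basis of $\liealg{h}$ and write $T_\E\Phi(\basis{e}_i) = \sum_{j=1}^m A^j_i \basis{f}_j$, so that $D = \max_{i,j} |A^j_i|$. Since $\Phi$ is a Lie group morphism, the left-invariant vector fields $X_{\basis{e}_i}$ on $G$ and $\sum_j A^j_i X_{\basis{f}_j}$ on $H$ are $\Phi$-related, giving $\Lie_{X_{\basis{e}_i}}(\Phi^*\phi) = \sum_j A^j_i \Phi^*\bigl(\Lie_{X_{\basis{f}_j}}\phi\bigr)$. Iterating $k$ times and using $\Phi(\E) = \E$ produces
\begin{equation*}
    \Lie_{X_\alpha}(\Phi^*\phi)(\E)
    =
    \sum_{\beta \in \field{N}_m^k}
    A^{\beta_1}_{\alpha_1} \cdots A^{\beta_k}_{\alpha_k}
    \cdot
    \bigl(\Lie_{X_\beta}\phi\bigr)(\E).
\end{equation*}
Bounding each matrix entry by $D$ and summing over the $n$ independent choices of each $\alpha_l \in \{1, \ldots, n\}$ contributes a factor $(nD)^k$; after division by $k!$ this yields $c_k(\Phi^*\phi) \le (nD)^k c_k(\phi)$ and hence \eqref{eq:ChainRule}.

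The only genuine obstacle is the non-commutative Leibniz expansion: one must verify that every ordered pair of subsequences $(\alpha|_S, \alpha|_{S^c})$ arises exactly once and that the operator order within each factor is the one inherited from $\alpha$. This is precisely the content of the appendix on the Leibniz rule, and once it is granted both estimates reduce to straightforward manipulation of sums of non-negative terms.
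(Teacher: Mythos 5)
Your proposal is correct and follows essentially the same route as the paper's proof: part (i) invokes the noncommutative higher Leibniz rule of Appendix~\ref{sec:Leibniz} together with the triangle inequality and the Cauchy product formula, while part (ii) iterates the $\Phi$-relatedness of the left-invariant vector fields $X^G_{\basis{e}_i}$ and $X^H_{T_{\E}\Phi(\basis{e}_i)}$ and then bounds the matrix entries of $T_{\E}\Phi$ by $D$. Your subset reformulation of the shuffle sum and the explicit regrouping by $p=|S|$ merely spell out the combinatorial bookkeeping that the paper leaves to the reader.
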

\begin{proof}
    The Leibniz rule~\ref{item:Leibniz} is an easy consequence of the
    noncommutative higher Leibniz rule \eqref{eq:HigherLeibniz}, the
    Cauchy product formula and the triangle inequality. For
    \ref{item:ChainRule} recall that for $\xi \in \liealg{g}$, the
    left invariant vector fields $X^G_\xi$ and
    $X^H_{T_{\E_G} \Phi \xi}$ are $\Phi$-related. This implies for the
    corresponding Lie derivatives
    \begin{equation*}
        \Lie_{X^G_\xi}
        \big(
            \Phi^* \phi
        \big)
        =
        \Phi^*
        \big(
            \Lie_{X^H_{T_\E \Phi \xi}}
            \phi
        \big).
    \end{equation*}
    As in the formulation, we set
    \begin{equation*}
        D
        =
        \max_{i, j=1, \ldots, n}
        \abs[\big]
        {(T_{\E} \Phi)_{i}^j},
    \end{equation*}
    where we take matrix representation $(T_{\E} \Phi)_{i}^j = d_i^j$
    of $T_{\E} \Phi \colon \liealg{g} \longrightarrow \liealg{h}$ with
    respect to the chosen bases. Thus we obtain for
    $k \in \field{N}_0$
    \begin{align*}
        c_k
        \big(
            \Phi^* \phi
        \big)
        &=
        \frac{1}{k!}
        \sum_{\alpha \in \field{N}_n^k}
        \abs[\Big]
        {\big( \Lie_{X_{\alpha}}^G \Phi^* \phi \big)(\E)} \\
        &=
        \frac{1}{k!}
        \sum_{\alpha \in \field{N}_n^k}
        \abs[\Big]
        {
          \Phi^*
          \big(
          \Lie_{X^H_{T_\E \Phi(\basis{e}_{\alpha_1})}}
          \cdots
          \Lie_{X^H_{T_\E \Phi(\basis{e}_{\alpha_k})}}
          \phi
          \big)
          \at{\E}
        } \\
        &=
        \frac{1}{k!}
        \sum_{\alpha \in \field{N}_n^k}
        \abs[\Big]
        {
            d_{\alpha_1}^{j_1} \cdots d_{\alpha_k}^{j_k}
            \big(
                \Lie_{X^H_{j_1}} \cdots \Lie_{X^H_{j_k}}
                \phi
            \big)
            \at{\Phi(\E)}
        } \\
        &\le
        \frac{(Dn)^k}{k!}
        \sum_{\beta \in \field{N}_n^k}
        \abs[\Big]
        {
            \big(
                \Lie_{X_{\beta}}^H
                \phi
            \big)
            (\E)
        } \\
        &=
        (Dn)^k c_k(\phi),
    \end{align*}
    where we wrote $\beta = (j_1, \ldots, j_k)$. This implies
    \eqref{eq:ChainRule} at once.
\end{proof}

To understand the representation \eqref{eq:LieDerivativeRep} it is
essential to estimate the Lie-Taylor majorant of Lie-derivatives
$\Lie_{X_{\xi}} \phi$ in terms of the formal ``complex'' derivative
\begin{equation}
    \label{eq:TaylorMajorantDerivative}
    \Maj_{\phi}'(z)
    =
    \sum \limits_{k=0}^{\infty}
    (k+1)
    c_{k+1}(\phi)
    \cdot
    z^k
\end{equation}
of the Lie-Taylor majorant $\Maj_{\phi}$ of $\phi$. Such an estimate
is provided by the following result. Here and in what follows we
slightly abuse notation and denote by $\supnorm{\xi}$ the supnorm of
the coordinate vector $(\xi_1, \ldots, \xi_n) \in \mathbb{C}^n$ of
$\xi \in \liealg{g}$ w.r.t.~the basis $\mathcal{B}$.
\begin{proposition}
    \label{proposition:TaylorMajorantDerivative}%
    Let $\xi \in \liealg{g}$, $k \in \field{N}_0$,
    $\phi \in \Cinfty(G)$ and $z \in \field{C}$. We have the estimates
    \begin{align}
        \label{eq:TaylorMajorantDerivativeCoefficients}
        c_k
        \big(
            \Lie_{X_{\xi}} \phi
        \big)
        &\le
        \supnorm{\xi}
        \cdot
        (k+1)
        c_{k+1}(\phi) \\
        \shortintertext{and}
        \label{eq:TaylorMajorantDerivativeEstimate}
        \abs[\big]
        {\Maj_{\Lie_{X_{\xi}} \phi}(z)}
        &\le
        \supnorm{\xi}
        \cdot
        \Maj'_{\phi}
        \big(
            \abs{z}
        \big).
  \end{align}
\end{proposition}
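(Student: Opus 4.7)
The plan is to prove the coefficient estimate \eqref{eq:TaylorMajorantDerivativeCoefficients} first by a direct bookkeeping argument, and then to derive the majorant estimate \eqref{eq:TaylorMajorantDerivativeEstimate} by summing against $|z|^k$. I do not expect any serious obstacle; the only thing one has to keep track of is where the extra Lie derivative sits in the composition, and how to re-index a double sum over $k$-tuples and a single index as a sum over $(k+1)$-tuples.

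First, I would expand $\xi = \xi^j \basis{e}_j$ in the fixed basis $\mathcal{B}$, so that $\Lie_{X_\xi} = \xi^j \Lie_{X_j}$ with $|\xi^j| \le \supnorm{\xi}$ for every $j$. Plugging this into the definition \eqref{eq:LieTaylorMajorantCoefficients} of $c_k(\Lie_{X_\xi}\phi)$ yields
\begin{equation*}
    c_k\big(\Lie_{X_\xi}\phi\big)
    =
    \frac{1}{k!}
    \sum_{\alpha \in \field{N}_n^k}
    \Big| \xi^j \big( \Lie_{X_{\alpha_1}} \cdots \Lie_{X_{\alpha_k}} \Lie_{X_j} \phi \big)(\E) \Big|,
\end{equation*}
and the triangle inequality pulls each $|\xi^j|$ out, bounded by $\supnorm{\xi}$.

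The key step is then to view the resulting combined sum over $\alpha \in \field{N}_n^k$ and $j \in \field{N}_n$ as a sum over all $(k+1)$-tuples $\beta = (\alpha_1, \ldots, \alpha_k, j) \in \field{N}_n^{k+1}$. After this re-indexing one recognises the sum as $(k+1)!\,c_{k+1}(\phi)$, so that the factor $\tfrac{1}{k!}$ combines with $(k+1)!$ to yield exactly $(k+1)\,c_{k+1}(\phi)$, which is \eqref{eq:TaylorMajorantDerivativeCoefficients}.

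For the second claim I would simply insert \eqref{eq:TaylorMajorantDerivativeCoefficients} into the definition \eqref{eq:LieTaylorMajorant} of the Lie-Taylor majorant, obtaining
\begin{equation*}
    \big| \Maj_{\Lie_{X_\xi}\phi}(z) \big|
    \le
    \sum_{k=0}^\infty c_k\big(\Lie_{X_\xi}\phi\big) \, |z|^k
    \le
    \supnorm{\xi}
    \sum_{k=0}^\infty (k+1)\, c_{k+1}(\phi) \, |z|^k
    =
    \supnorm{\xi} \cdot \Maj'_\phi\big(|z|\big),
\end{equation*}
by the defining formula \eqref{eq:TaylorMajorantDerivative}. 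Both estimates are purely formal on the level of power series, so convergence plays no role here; the statement is thus independent of any analyticity assumptions on $\phi$.
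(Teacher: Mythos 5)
Your argument is correct and is essentially the paper's own proof: expand $\xi$ in the basis, pull out $\supnorm{\xi}$ by the triangle inequality, re-index the sum over $\alpha \in \field{N}_n^k$ together with the extra basis index as a sum over $(k+1)$-tuples to recognise $(k+1)\,c_{k+1}(\phi)$, and then sum against $\abs{z}^k$ to get the majorant estimate via \eqref{eq:TaylorMajorantDerivative}. Your remark that everything is a formal power-series manipulation (so the bounds hold whether or not the series converge) matches the paper's implicit treatment; no gaps.
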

\begin{proof}
    The estimate \eqref{eq:TaylorMajorantDerivativeCoefficients} is
    just
    \begin{align*}
        c_k
        \big(
            \Lie_{X_{\xi}}\phi
        \big)
        =
        \frac{1}{k!}
        \sum \limits_{\alpha \in \mathbb{N}_n^k}
        \abs[\Big]
        {\big(
            \Lie_{X_{\alpha}}
            \Lie_{X_{\xi}}
            \phi
        \big)(\E)}
        \le
        \frac{\supnorm{\xi}}{k!}
        \sum \limits_{\alpha \in \mathbb{N}_n^{k+1}}
        {\big(
            \Lie_{X_{\alpha}}
            \phi
        \big)(\E)}
        \le
        \supnorm{\xi}
        \cdot
        (k+1)
        c_{k+1}(\phi),
    \end{align*}
    from which \eqref{eq:TaylorMajorantDerivativeEstimate} follows at
    once via \eqref{eq:TaylorMajorantDerivative}.
\end{proof}

We now shift our focus to the set $\Comega(G)$ of \textit{(real)
  analytic} functions on $G$. Here we can say more and warrant our
terminology from Definition~\ref{definition:LieTaylor}. For a more
conceptual understanding, we recall the classical concept of
Lie-Taylor series, which can be found e.g. in the beginning of
\cite[Sec.~2.1.4]{helgason:2001a} for Lie groups and
\cite[(1.48)]{forstneric:2011a} for arbitrary analytic manifolds:
\begin{proposition}[Lie-Taylor]
    \label{proposition:LieTaylorGeneral}%
    Let $M$ be an analytic manifold, $\phi \in \Comega(M)$ and
    $X \in \Gamma^\omega(M)$ an analytic vector field with
    corresponding flow $\Phi$.
    \begin{propositionlist}
    \item \label{item:LieTaylor}%
        Given $p \in M$, there exists a parameter $r > 0$ such that
        the Lie-Taylor formula
        \begin{equation}
            \label{eq:LieTaylorGeneral}
            \phi
            \big(
                \Phi(t, p)
            \big)
            =
            \big(
                \exp( t \Lie_X )
                \phi
            \big)
            (p)
        \end{equation}
        holds, whenever $\abs{t} < r$.
    \item \label{item:LieTaylorCinftyConvergence}%
        The series \eqref{eq:LieTaylorGeneral} is
        $\Cinfty\big(\Ball_r(0)\big)$-convergent in the parameter $t$.
    \item \label{item:LieTaylorExponential}%
        Given a Lie algebra element $\xi \in \liealg{g}$, we get a
        well-defined exponential operator
        \begin{equation}
            \label{eq:LieTaylorExponential}
            \exp \big( \Lie_{X_\xi} \big)
            \colon
            \Comega(G)
            \longrightarrow
            \Comega(G).
        \end{equation}
    \end{propositionlist}
\end{proposition}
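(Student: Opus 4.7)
The plan is to handle the three parts in sequence, each reducing to a standard fact about analytic ODEs or power series.

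For part \ref{item:LieTaylor}, I would study the scalar function $f(t) = \phi(\Phi(t,p))$ directly. Local existence and analyticity of the flow of an analytic vector field in the time parameter is a classical consequence of the Cauchy-Kowalevskaya theorem applied to the defining ODE: there is an $r_0 > 0$ such that $t \mapsto \Phi(t,p)$ is holomorphic on $|t| < r_0$. Shrinking $r_0$ to some $r > 0$ so that $\Phi(t,p)$ stays inside an analytic chart on which $\phi$ is represented by a convergent power series, the composition $f$ is holomorphic on $|t| < r$. A straightforward induction on $k$, using $\frac{d}{dt}\phi(\Phi(t,p)) = (\Lie_X \phi)(\Phi(t,p))$, identifies the Taylor coefficients at $0$ as $f^{(k)}(0) = (\Lie_X^k \phi)(p)$, which immediately yields the claimed expansion.

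For part \ref{item:LieTaylorCinftyConvergence}, once the power series on the right-hand side of \eqref{eq:LieTaylorGeneral} is known to have radius of convergence at least $r$, term-by-term differentiation produces series of the same radius, converging uniformly on every compact subset of $\Ball_r(0)$. Iterating this preserves all $C^\infty$-seminorms on compact subsets, which is exactly the topology of $\Cinfty(\Ball_r(0))$.

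For part \ref{item:LieTaylorExponential}, I would use that on a Lie group the left-invariant vector field $X_\xi$ is complete, with global flow $\Phi^{X_\xi}(t,g) = g \exp(t\xi)$ given by right-translation. Since right-multiplication by $\exp(\xi)$ is an analytic diffeomorphism of $G$, the assignment
\begin{equation*}
\exp\big(\Lie_{X_\xi}\big) \phi := \phi \circ R_{\exp(\xi)}
\end{equation*}
is a well-defined linear endomorphism of $\Comega(G)$. Parts \ref{item:LieTaylor} and \ref{item:LieTaylorCinftyConvergence} guarantee that this operator locally agrees with the formal series $\sum_k \tfrac{1}{k!}\Lie_{X_\xi}^k \phi$: for each $g \in G$ there is some $r(g) > 0$ so that $(R_{\exp(t\xi)}^* \phi)(g) = \sum_k \tfrac{t^k}{k!}(\Lie_{X_\xi}^k \phi)(g)$ for $|t| < r(g)$. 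Evaluation at $t=1$ is justified either by interpreting right-translation as the canonical analytic extension, or via the semigroup identity $R_{\exp(\xi)}^* = (R_{\exp(\xi/N)}^*)^N$ for $N$ large enough that $1/N$ lies in the common radius of convergence.

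The principal subtlety lies in part \ref{item:LieTaylorExponential}: the radius $r$ in part \ref{item:LieTaylor} depends on both $\phi$ and $p$, and in general cannot be taken uniformly over $g \in G$ nor guaranteed to exceed one. It is therefore essential to give the exponential operator a geometric definition via right-translation, with the series expansion serving only as a local identity. Once this is accepted, analyticity of the resulting function on $G$ is immediate, since both right-translation and $\phi$ are analytic.
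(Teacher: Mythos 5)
Your proposal is correct, and for parts \ref{item:LieTaylorCinftyConvergence} and \ref{item:LieTaylorExponential} it matches the paper: the $\Cinfty$-convergence is read off from the partial sums of a convergent power series in $t$, and the exponential operator is understood geometrically as the pullback along the complete flow of $X_\xi$, i.e.\ $\exp(\Lie_{X_\xi})\phi = \phi \circ r_{\exp(\xi)}$ --- which is exactly what the paper means by ``reading \eqref{eq:LieTaylorGeneral} backwards''; your explicit warning that the series itself need not converge at $t=1$ (the radius depends on $\phi$ and the point and can be smaller than $1$) is a genuine subtlety the paper leaves implicit. The real difference is in part \ref{item:LieTaylor}: you invoke analyticity in $t$ of the flow of an analytic vector field (classical analytic ODE theory --- strictly speaking a Cauchy majorant argument rather than Cauchy--Kovalevskaya, which is a PDE statement) and then identify the Taylor coefficients of $t \mapsto \phi(\Phi(t,p))$ via the iterated identity $\tfrac{\D^k}{\D t^k}\,\phi(\Phi(t,p)) = (\Lie_X^k\phi)(\Phi(t,p))$, whereas the paper avoids quoting time-analyticity of the flow altogether: it treats the case $X(p)=0$ separately and otherwise straightens $X$ near $p$ in an \emph{analytic} flow-box chart, reducing \eqref{eq:LieTaylorGeneral} to the restriction of the convergent multivariable power series of $\phi$ in that chart to the flow line; this also makes part \ref{item:LieTaylorCinftyConvergence} literally a statement about a classical Taylor series. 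Your route is shorter and needs no case distinction; the paper's is self-contained modulo the analytic straightening lemma it cites. One small caveat: the phrase ``common radius of convergence'' in your semigroup remark should, for fixed $g$, be backed by compactness of the segment $\{g\exp(s\xi) \colon 0 \le s \le 1\}$, which bounds the local radius from below --- though this remark only reconciles the series with the geometric definition and is not needed for the well-definedness asserted in \ref{item:LieTaylorExponential}.
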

\begin{proof}
    We first compute how powers of $\Lie_X$ act on $\phi$. To this
    end, we rewrite
    \begin{equation*}
        \Lie_X
        \phi
        \at[\Big]{\Phi(t, p)}
        =
        \frac{\D}{\D s}
        \Phi
        \big(
        s, \Phi(t, p)
        \big)^{*}
        \phi
        \at[\Big]{s = 0}
        =
        \frac{\D}{\D s}
        \Phi(s+t, p)^*
        \phi
        \at[\Big]{s = 0}
        =
        \frac{\D}{\D t}
        \phi\big(\Phi(t, p)\big),
    \end{equation*}
    which we can now iterate easily. This yields
    \begin{equation}
        \label{eq:LiePowers}
        \Lie_X^k
        \phi
        \at[\Big]{\Phi(t, p)}
        =
        \frac{\D^k}{\D t^k}
        \phi\big(\Phi(t, p)\big)
    \end{equation}
    for all $k \in \field{N}_0$. Note that these considerations also
    work for smooth functions. The case $X(p) = 0$ is special, so we
    deal with it first: then $\Lie_X \phi = X(p) \phi = 0$ and thus
    the right hand side of \eqref{eq:LieTaylor} reduces to the
    constant term, which is just $\phi(p)$. From the differential
    equation it is moreover clear that $\Phi(t, p) \equiv p$ in this
    case, so the left hand side matches. Thus the interesting case
    $X(p) \neq 0$ remains. Here we finally use the analyticity to
    obtain a chart $(U, x)$ of $M$ centered at $p$ such that the
    function
    \begin{equation*}
        \psi
        \colon
        x(U)
        \longrightarrow
        \field{K}, \quad
        \psi
        =
        \phi
        \circ
        x
    \end{equation*}
    is given by its power series around $0$ on all of
    $x(U) \subseteq \field{K}^n$ as well as
    $x^{-1}(t \basis{e}_1) = \Phi(t, p)$ for $t$ with
    $t \basis{e}_1 \in x(U)$. The latter condition is achievable by
    \cite[Lem.~1.9.2]{forstneric:2011a}, which yields an analytic
    chart if we go through its construction starting with an analytic
    chart as well as using an analytic vector field: having a chart
    with $X \at{U} = \partial_1$ then gives the differential equation
    \begin{equation*}
        \frac{\D}{\D t}
        \Phi(t, p)
        =
        \frac{\partial}{\partial x^1}
        \big(
        \Phi(t, p)
        \big),
    \end{equation*}
    which applied to a function on $U$ gets solved by
    $\Phi(t, p) = x^{-1}(t\basis{e}_1)$. Note that the left hand side
    acts by pullback here. Thus by uniqueness of solutions our
    condition indeed holds. Let now
    $\big( \basis{e}_1, \ldots, \basis{e}_n \big)$ be a basis of
    $\field{K}^n$. As $x(U)$ is open, we find an $r > 0$ with
    $\Ball_r(0) \subseteq x(U)$ for some auxiliary norm on
    $\field{K}^n$. For $\abs{t} < r$ we have the simple Taylor
    expansion
    \begin{align*}
        \psi( t \basis{e}_1)
        &=
        \sum_{k = 0}^\infty
        \frac{t^k}{k!}
        \big( \partial_1^k \psi \big)
        (0)
        \\
        &=
        \sum_{k = 0}^\infty
        \frac{t^k}{k!}
        \frac{\D^k}{\D s^k}
        \phi\big(x(s \basis{e}_i)\big)
        \at[\Big]{s = 0}
        \\
        &=
        \sum_{k = 0}^\infty
        \frac{t^k}{k!}
        \frac{\D^k}{\D s^k}
        \phi\big(\Phi(s, p)\big)
        \at[\Big]{s = 0}
        \\
        &=
        \sum_{k = 0}^\infty
        \frac{t^k}{k!}
        \Lie_X^k
        \phi
        \at[\Big]{\Phi(0, p)}
        \\
        &=
        \sum_{k = 0}^\infty
        \frac{t^k}{k!}
        \Lie_X^k
        \phi
        \at[\Big]{p},
    \end{align*}
    which is exactly \eqref{eq:LieTaylorGeneral}. This calculation gives also
    the uniform convergence statement: the $k$-th partial sum on the
    right hand side of \eqref{eq:LieTaylorGeneral} exactly corresponds to the
    $k$-th partial sum of the classical Taylor series. As the Taylor
    series is $\Cinfty$-convergent in the interior of the polydisk of
    convergence, so is \eqref{eq:LieTaylorGeneral} in $t$. The statement
    \ref{item:LieTaylorExponential} follows immediately from reading
    \eqref{eq:LieTaylorGeneral} backwards.
\end{proof}

Taking $M = G$ as a Lie group and $X = X_\xi$ as a left invariant
vector field for some $\xi \in \liealg{g}$ gives the flow
$\Phi(t, g) = g \exp(t \xi)$, which notably also yields a suitable
chart centered at $g$, as we just used in the proof. The Lie-Taylor
series then has the form
\begin{equation}
    \label{eq:LieTaylorSingleVariable}
    \big(
        \phi \circ r_{\exp(t\xi)}
    \big)(g)
    =
    \phi
    \big(
        g \exp(t \xi)
    \big)
    =
    \big(
        \exp( t \Lie_{X_\xi} )
        \phi
    \big)
    (g)
\end{equation}
for all $g \in G$ and sufficiently small $t \in \field{R}$. It
moreover coincides with the Taylor series of
$\phi \circ \ell_g \circ \exp$ on the ray through $0$ in direction of
$\xi$. Taking now $\xi = x^j \basis{e}_j$ as a basis decomposition
yields the following:
\begin{corollary}[Lie-Taylor series on $G$]
    \label{corollary:LieTaylor}%
    Let $\phi \in \Comega(G)$ and $g \in G$. Then there is a radius
    $r>0$ such that
    \begin{equation}
        \label{eq:LieTaylor}
        \phi
        \big(
            g \exp( x^j \basis{e}_j)
        \big)
        =
        \Taylor_{\phi}(\underline{x}; g)
    \end{equation}
    for all $\underline{x}=(x^1, \ldots, x^n) \in \field{R}^n$ with
    $\supnorm{\underline{x}} < r$. The right hand side of
    \eqref{eq:LieTaylor} is $\Cinfty$-convergent, whenever it
    converges at all. In particular, if the Lie-Taylor majorant
    $\Maj_{\phi}$ of $\phi$ is an entire holomorphic function, then
    \begin{equation}
        \label{eq:LieTaylorHolomorphicExtension}
        \field{R}^n
        \longrightarrow
        \mathbb{C}, \quad
        \underline{x}
        \mapsto
        \phi
        \big(
            g \exp (x^j \basis{e}_j)
        \big)
    \end{equation}
    has a holomorphic extension to $\field{C}^n$, which is provided by
    the Lie-Taylor series $\Taylor_{\phi}(\argument; g)$ of $\phi$ at
    $g$.
\end{corollary}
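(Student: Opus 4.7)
The plan is to deduce the multi-variable identity from the one-parameter Lie-Taylor formula of Proposition~\ref{proposition:LieTaylorGeneral} by a scalar reduction, and then to use the real-analyticity of $\phi$ in exponential coordinates to upgrade the resulting pointwise convergence to a uniform radius on a polydisk.

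For a fixed $\underline{x} \in \field{R}^n$ set $\xi = x^j \basis{e}_j \in \liealg{g}$. Since $X_\xi$ is left-invariant and analytic with flow $(t,h) \mapsto h\exp(t\xi)$, Proposition~\ref{proposition:LieTaylorGeneral} applied at $p = g$ yields some $r_\xi > 0$ such that
\begin{equation*}
    \phi\big(g\exp(t\xi)\big)
    =
    \sum_{k=0}^\infty \frac{t^k}{k!}\big(\Lie_{X_\xi}^k \phi\big)(g)
\end{equation*}
whenever $\abs{t} < r_\xi$. By linearity $\Lie_{X_\xi} = x^j \Lie_{X_j}$, hence $\Lie_{X_\xi}^k = \sum_{\alpha \in \field{N}_n^k} \underline{x}_\alpha \Lie_{X_\alpha}$, and the right-hand side is precisely $\Taylor_\phi(t\underline{x}; g)$. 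Evaluating at $t = 1$ whenever $r_\xi > 1$ already gives~\eqref{eq:LieTaylor} for that particular $\underline{x}$.

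To obtain a \emph{uniform} radius $r$, I would observe that $\underline{x} \mapsto g\exp(x^j\basis{e}_j)$ is an analytic diffeomorphism from a neighborhood of $0 \in \field{R}^n$ onto a neighborhood of $g \in G$, since $\exp$ is an analytic local diffeomorphism at $0$. Therefore $f(\underline{x}) := \phi(g\exp(x^j\basis{e}_j))$ is real-analytic at $0$ and agrees with a convergent multi-variable Taylor series on some polydisk $\{\supnorm{\underline{x}} < r\}$. Restricting $f$ to the ray $t \mapsto t\underline{x}$ and invoking the single-variable identity of the previous paragraph (valid for small $\abs{t}$) matches, by uniqueness of Taylor coefficients of a real-analytic function of one variable, the $k$-th homogeneous component of the Taylor series of $f$ with $\frac{1}{k!}\sum_{\alpha \in \field{N}_n^k} \underline{x}_\alpha (\Lie_{X_\alpha}\phi)(g)$, so~\eqref{eq:LieTaylor} holds throughout the polydisk. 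The $\Cinfty$-convergence statement then follows from the standard Abel-Cauchy theory: convergence of the power series $\sum_k P_k(\underline{x})$ of homogeneous components at any single point forces uniform absolute convergence of all term-by-term derivatives on smaller polydisks. The final claim, that an entire $\Maj_\phi$ provides a holomorphic extension of~\eqref{eq:LieTaylorHolomorphicExtension} to $\field{C}^n$, follows by majorization (after left translation to $g$, so that the coefficients $(\Lie_{X_\alpha}\phi)(g)$ enter) together with the fact that an everywhere absolutely convergent power series in several complex variables defines an entire function.

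The main obstacle is identifying the classical multi-variable Taylor coefficients of $f$, indexed by commutative multi-indices $\beta$, with the Lie-Taylor coefficients indexed by ordered tuples $\alpha \in \field{N}_n^k$ of non-commuting $\Lie_{X_j}$. The one-parameter reduction bypasses this combinatorial tangle because equating two real-analytic functions of a single variable $t$ determines each $k$-th-degree homogeneous polynomial in $\underline{x}$ all at once, for every direction $\underline{x}$ simultaneously, so no explicit rewriting in terms of commutative multi-indices is needed.
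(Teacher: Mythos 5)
Your proof is correct and follows the approach the paper intends: the paper states the corollary without a separate argument, merely noting that it follows by taking $\xi = x^j \basis{e}_j$ in \eqref{eq:LieTaylorSingleVariable} and that the single-variable series coincides with the classical Taylor series of $\phi \circ \ell_g \circ \exp$ along rays, which is precisely the ray-reduction together with the real-analyticity argument you make explicit. The one minor point is that your appeal to ``majorization after left translation to $g$'' for the final holomorphic-extension claim at a general base point $g$ is really a forward reference to the translation estimates of Proposition~\ref{prop:Translation}, Lemma~\ref{lemma:Inversion} and Corollary~\ref{corollary:LieTaylorTranslation}; since those depend only on the single-variable formula \eqref{eq:LieTaylorSingleVariable}, there is no circularity, but it would be cleaner to cite them explicitly rather than gesture at the translation.
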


The next result has a similar flavor as
Proposition~\ref{proposition:TaylorMajorantDerivative} and roughly
ensures that the concept of Lie-Taylor majorants is also compatible
with the exponentiated action \eqref{eq:LieTaylorExponential} of $G$
on $\Cinfty(G)$ by pullbacks with right multiplications $r_g(h) = hg$.
\begin{proposition}
    \label{prop:Translation}%
    Let $\xi \in \liealg{g}$, $\phi \in \Comega(G)$ and
    $z \in \field{C}$. Then
    \begin{equation}
        \label{eq:Translation}
        \abs[\big]
        {\Maj_{\phi \circ r_{\exp(\xi)}}(z)}
        \le
        \Maj_{\phi}
        \big(
            \abs{z}
            +
            \supnorm{\xi}
        \big).
    \end{equation}
\end{proposition}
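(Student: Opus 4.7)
My approach is to reduce the inequality to the term-by-term coefficient estimate
\[
  c_k\big(\phi \circ r_{\exp(\xi)}\big)
  \le
  \sum_{l=0}^{\infty}
  \binom{k+l}{k}
  \supnorm{\xi}^{l}\,
  c_{k+l}(\phi)
  \qquad\text{for every } k \ge 0.
\]
Once this is proved, multiplying by $\abs{z}^{k}$, summing over $k$, and rearranging (which is legitimate because every coefficient involved is non-negative) collapses the right-hand side to $\sum_{m \ge 0} c_m(\phi)(\abs{z}+\supnorm{\xi})^{m} = \Maj_\phi(\abs{z}+\supnorm{\xi})$ by the binomial theorem. In particular this also justifies convergence of $\Maj_{\phi \circ r_{\exp(\xi)}}(z)$ whenever the right-hand side is finite.

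For $\xi$ in a sufficiently small neighbourhood of $0 \in \liealg{g}$, I invoke Proposition~\ref{proposition:LieTaylorGeneral} for the left-invariant vector field $X_\xi$, whose flow is $g \mapsto g \exp(t\xi)$. This yields the $\Cinfty$-convergent Lie--Taylor expansion $\phi \circ r_{\exp(\xi)} = \sum_{l \ge 0} \tfrac{1}{l!}\Lie_{X_\xi}^{\,l} \phi$ near $\xi = 0$. Expanding $\Lie_{X_\xi}^{\,l} = \sum_{\beta \in \field{N}_n^{l}} \underline{\xi}_\beta \Lie_{X_\beta}$, applying $\Lie_{X_\alpha}$ term by term, and evaluating at $\E$ gives
\[
  \big(\Lie_{X_\alpha}(\phi \circ r_{\exp(\xi)})\big)(\E)
  =
  \sum_{l=0}^{\infty}
  \frac{1}{l!}
  \sum_{\beta \in \field{N}_n^{l}}
  \big(\Lie_{X_\alpha}\Lie_{X_\beta}\phi\big)(\E)\,
  \underline{\xi}_\beta.
\]
Taking absolute values, summing over $\alpha \in \field{N}_n^{k}$, dividing by $k!$, and using that concatenation realises a bijection $\field{N}_n^{k} \times \field{N}_n^{l} \cong \field{N}_n^{k+l}$, so that $\sum_{\alpha,\beta}\abs{(\Lie_{X_\alpha}\Lie_{X_\beta}\phi)(\E)} = (k+l)!\,c_{k+l}(\phi)$, gives the desired coefficient bound for small $\xi$. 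Equivalently, the same estimate arises by combining the series expansion with an $l$-fold iteration of Proposition~\ref{proposition:TaylorMajorantDerivative}, which yields $c_k(\Lie_{X_\xi}^{\,l}\phi) \le \supnorm{\xi}^{l}\tfrac{(k+l)!}{k!} c_{k+l}(\phi)$ and introduces the binomial factor $\binom{k+l}{k}$ upon combination with $\tfrac{1}{l!}$.

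The main obstacle is that the series $\sum_{l}\tfrac{1}{l!}\Lie_{X_\xi}^{\,l}\phi$ need not converge for large $\supnorm{\xi}$, so the previous step only controls a neighbourhood of $0 \in \liealg{g}$. To extend the coefficient bound to arbitrary $\xi$, I exploit the one-parameter subgroup identity $\exp(\xi) = \exp(\xi/N)^{N}$ with $N$ chosen so large that $\xi/N$ lies in the convergence region, and iterate the small-$\xi$ estimate along the chain of analytic functions $\psi_s := \phi \circ r_{\exp(\xi/N)}^{\,s}$, $s=0,\ldots,N$, with $\psi_0 = \phi$ and $\psi_N = \phi \circ r_{\exp(\xi)}$. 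After $N$ applications the nested sum indexed by $(m_1,\ldots,m_N)$ collapses via the multinomial identity $\sum_{m_1+\cdots+m_N=L}\tfrac{1}{m_1!\cdots m_N!} = \tfrac{N^L}{L!}$ to
\[
  c_k\big(\phi \circ r_{\exp(\xi)}\big)
  \le
  \sum_{L \ge 0}
  \binom{k+L}{k}
  \Big(N \cdot \tfrac{\supnorm{\xi}}{N}\Big)^{L}
  c_{k+L}(\phi)
  =
  \sum_{L \ge 0}
  \binom{k+L}{k}
  \supnorm{\xi}^{L}
  c_{k+L}(\phi),
\]
which is the coefficient bound required by the first paragraph and concludes the proof.
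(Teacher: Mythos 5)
Your plan hinges on the coefficient inequality $c_k(\phi\circ r_{\exp(\xi)})\le\sum_{l}\binom{k+l}{k}\supnorm{\xi}^{l}c_{k+l}(\phi)$, which is precisely the computation at the heart of the paper's own proof: there it is obtained in one stroke by applying \eqref{eq:LieTaylorSingleVariable} at $t=1$ and iterating \eqref{eq:TaylorMajorantDerivativeCoefficients}, i.e. $c_k(\phi\circ r_{\exp(\xi)})=c_k(\exp(\Lie_{X_\xi})\phi)\le\sum_\ell\tfrac1{\ell!}c_k(\Lie_{X_\xi}^\ell\phi)$. Your final summation via the binomial theorem and rearrangement of nonnegative terms is correct and in fact tidier than the paper's conclusion, which re-expands the holomorphic $\Maj_\phi$ around $\abs{z}$ and passes to the limit $r\to 1$. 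Your multinomial collapse of the $N$-fold iteration is also correct: the nested binomials telescope to $(k+L)!/(k!\,m_1!\cdots m_N!)$ and sum over compositions of $L$ to $\binom{k+L}{k}N^{L}$, cancelling the factor $N^{-L}$ exactly.

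Two points in your derivation of the coefficient bound need repair, and the second is a genuine gap. First, Proposition~\ref{proposition:LieTaylorGeneral} only gives $\Cinfty$-convergence of the Lie--Taylor series \emph{in the parameter $t$}; applying $\Lie_{X_\alpha}$, a derivative in the group variable, term by term and evaluating at $\E$ is not licensed by that statement. You need either the joint analyticity of $(t,g)\mapsto\phi(g\exp(t\xi))$, so that all $g$-derivatives of the $t$-expansion converge on a common $t$-interval, or the observation that $t\mapsto\big(\Lie_{X_\alpha}(\phi\circ r_{\exp(t\xi)})\big)(\E)$ is real-analytic on all of $\field{R}$ with Taylor coefficients $\tfrac1{l!}\big(\Lie_{X_\alpha}\Lie_{X_\xi}^{l}\phi\big)(\E)$, whence by the identity theorem (and Abel's theorem at the endpoint) it agrees with its Taylor series on the whole interval where the majorized series converges. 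Second, your choice of $N$ is circular as written: the single-step estimate has only been established for $\phi$ on \emph{its} convergence region, yet you apply it to the translated functions $\psi_s=\phi\circ r_{\exp(s\xi/N)}$, whose convergence regions depend on $N$ itself. To break the circle you must exhibit a radius $r_0>0$ valid uniformly for base points on the compact curve $\{\exp(t\xi)\colon t\in[0,1]\}$ (compactness together with joint analyticity does this), and only then choose $N>\supnorm{\xi}/r_0$. Note that the identity-theorem argument above already yields the coefficient inequality for \emph{every} $\xi$ whenever its right-hand side is finite (and it is trivially true otherwise), so the subdivision $\exp(\xi)=\exp(\xi/N)^{N}$ can be dispensed with entirely, bringing you back to the paper's one-step argument.
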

\begin{proof}
    Using \eqref{eq:LieTaylorSingleVariable} and
    applying \eqref{eq:TaylorMajorantDerivativeCoefficients}
    repeatedly yields
    \begin{equation*}
        c_k
        \big(
            \phi \circ r_{\exp{\xi}}
        \big)
        =
        c_k
        \big(
            \exp(\Lie_{X_{\xi}})
            \phi
        \big)
        \le
        \sum \limits_{\ell=0}^{\infty}
        \frac{1}{\ell!}
        c_k
        \big(
            \Lie_{X_{\xi}}^\ell \phi
        \big)
        \le \sum \limits_{\ell=0}^{\infty}
        \frac{(k+\ell)!}{k! \; \ell!}
        c_{k+\ell}(\phi)
        \supnorm{\xi}^\ell.
    \end{equation*}
    Consequently,
    \begin{align*}
        \abs[\big]
        {\Maj_{\phi \circ r_{\exp{\xi}}}(z)}
        &=
        \sum \limits_{k=0}^{\infty}
        c_k
        \big(
            \phi \circ r_{\exp{\xi}}
        \big)
        \cdot
        \abs{z}^k \\
        &\le
        \sum \limits_{\ell=0}^{\infty}
        \frac{\supnorm{\xi}^\ell }{\ell!}
        \sum \limits_{k=0}^{\infty}
        \frac{(k+\ell)!}{k!}
        c_{k+\ell}(\phi)
        \cdot
        \abs{z}^k \\
        &=
        \sum \limits_{\ell=0}^{\infty}
        \frac{\supnorm{\xi}^\ell}{\ell!}
        \Maj_{\phi}^{(\ell)}\big(\abs{z}\big) \\
        &\overset{(\star)}{=}
        \Maj_{\phi}
        \big(
            \abs{z} + \supnorm{\xi}
        \big),
    \end{align*}
    provided that $\Maj_{\phi} (\abs{z} + \supnorm{\xi}) < \infty$. In
    fact, in this case $\Maj_{\phi}$ is holomorphic on the disk
    $\Ball_{\abs{z}+\supnorm{\xi}}(0)$, so we can expand $\Maj_{\phi}$
    as a Taylor series around $\abs{z}$ in the disk
    $\Ball_{\supnorm{\xi}}(\abs{z})$, at least. This proves ($\star$)
    with $\supnorm{\xi}$ replaced by $r \supnorm{\xi}$ for any
    $0<r<1$. Letting $r \to 1$ gives ($\star$).  If
    $\Maj_{\phi}\big(\abs{z} + \supnorm{\xi}\big) = \infty$, then the
    estimate is certainly true.
\end{proof}

Intuitively, \eqref{eq:Translation} lets us estimate Taylor expansions
of analytic functions with perturbed expansion point on the group by
perturbing the expansion point on the Lie algebra.
\begin{lemma}[Inversion Invariance of Lie-Taylor majorants]
    \label{lemma:Inversion}%
    Let $\inv \colon G \longrightarrow G$ denote group inversion on
    $G$. Then $\Maj_{\phi \circ \inv} = \Maj_{\phi}$ for all
    $\phi \in \Cinfty(G)$.
\end{lemma}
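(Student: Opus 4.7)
The plan is to reduce to showing $c_k(\phi \circ \inv) = c_k(\phi)$ for every $k \in \field{N}_0$, which immediately yields $\Maj_{\phi \circ \inv} = \Maj_{\phi}$ since both majorants are $\sum_k c_k z^k$. The strategy is to exploit that $\inv$ intertwines left- and right-translation on $G$, so that left-invariant derivatives of $\phi \circ \inv$ at $\E$ become, up to sign, right-invariant derivatives of $\phi$ at $\E$, which can in turn be converted back into left-invariant derivatives via a reversal of the index tuple.

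First I would introduce the right-invariant vector fields $Y_\xi$ with $Y_\xi(\E) = \xi$ and record the standard intertwining relation
\begin{equation*}
    \Lie_{X_\xi} \circ \inv^* = -\inv^* \circ \Lie_{Y_\xi},
\end{equation*}
which follows by differentiating $(g\exp(t\xi))^{-1} = \exp(-t\xi) g^{-1}$ at $t = 0$. Iterating this identity $k$ times and using $\inv(\E) = \E$ produces
\begin{equation*}
    \Lie_{X_\alpha}(\phi \circ \inv)(\E)
    =
    (-1)^k \bigl(\Lie_{Y_{\basis{e}_{\alpha_1}}} \cdots \Lie_{Y_{\basis{e}_{\alpha_k}}} \phi\bigr)(\E)
\end{equation*}
for every $\alpha \in \field{N}_n^k$, where the left-hand side uses the shorthand \eqref{eq:LieDerivativePower}. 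In particular the absolute values of the two sides coincide.

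Next I would translate the right-invariant derivatives back into left-invariant ones. Writing both kinds of iterated derivatives at $\E$ as ordinary mixed partials of products of exponentials gives
\begin{align*}
    \bigl(\Lie_{X_{\alpha_1}} \cdots \Lie_{X_{\alpha_k}}\phi\bigr)(\E)
    &=
    \tfrac{\partial^k}{\partial t_1 \cdots \partial t_k}\Big|_{0}
    \phi\bigl(\exp(t_1 \basis{e}_{\alpha_1}) \cdots \exp(t_k \basis{e}_{\alpha_k})\bigr), \\
    \bigl(\Lie_{Y_{\basis{e}_{\alpha_1}}} \cdots \Lie_{Y_{\basis{e}_{\alpha_k}}}\phi\bigr)(\E)
    &=
    \tfrac{\partial^k}{\partial t_1 \cdots \partial t_k}\Big|_{0}
    \phi\bigl(\exp(t_k \basis{e}_{\alpha_k}) \cdots \exp(t_1 \basis{e}_{\alpha_1})\bigr),
\end{align*}
and relabeling $t_j \leftrightarrow t_{k+1-j}$ in the second formula identifies it with $\bigl(\Lie_{X_{\rev(\alpha)}}\phi\bigr)(\E)$, where $\rev$ denotes the tuple reversal $(\alpha_1, \ldots, \alpha_k) \mapsto (\alpha_k, \ldots, \alpha_1)$.

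Combining the two steps yields $\abs{\Lie_{X_\alpha}(\phi \circ \inv)(\E)} = \abs{\Lie_{X_{\rev(\alpha)}}\phi(\E)}$ for each $\alpha$. Because $\rev$ is a bijection of $\field{N}_n^k$, summing over $\alpha$ and dividing by $k!$ gives $c_k(\phi \circ \inv) = c_k(\phi)$ for all $k$, and the asserted equality of majorants follows. The only mildly delicate point is the careful bookkeeping of the reversed order of exponentials and parameters when passing between left- and right-invariant iterated derivatives; the rest of the argument is just the standard interplay between $\inv$, the two kinds of invariant vector fields, and their iterated derivatives at $\E$.
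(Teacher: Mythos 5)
Your proposal is correct, and it is in fact a more complete version of the argument than the one the paper records. The paper's proof consists of the single first-order computation
\begin{equation*}
    \Lie_{X_\xi}(\phi \circ \inv)\big|_{\E}
    =
    -\,\Lie_{X_\xi}\phi\big|_{\E},
\end{equation*}
followed by the assertion that the lemma is immediate; taken literally this only settles the coefficients $c_1$, since iterated left-invariant derivatives of $\phi \circ \inv$ require knowing the first derivative at \emph{all} points, not just at $\E$. Your proof supplies exactly the missing mechanism: the global intertwining $\Lie_{X_\xi} \circ \inv^* = -\inv^* \circ \Lie_{Y_\xi}$ with the right-invariant fields $Y_\xi$, iterated to give $\Lie_{X_\alpha}(\phi\circ\inv)(\E) = (-1)^k \big(\Lie_{Y_{\basis{e}_{\alpha_1}}}\cdots\Lie_{Y_{\basis{e}_{\alpha_k}}}\phi\big)(\E)$, together with the observation (via writing both sides as mixed partials of $\phi$ on products of exponentials) that right-invariant iterated derivatives at $\E$ coincide with left-invariant ones for the reversed tuple. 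Since $c_k$ sums over all of $\field{N}_n^k$ and tuple reversal is a bijection, the equality $c_k(\phi\circ\inv) = c_k(\phi)$ follows, and hence the equality of majorants. In effect you have proved the sharper antipode-type identity $\Lie_{X_\alpha}(\phi\circ\inv)(\E) = (-1)^k \Lie_{X_{\mathrm{rev}(\alpha)}}\phi(\E)$, of which the paper's displayed formula is the $k=1$ case; both arguments rest on the same idea of differentiating through the inversion, but yours carries the bookkeeping through all orders, which is what the lemma actually needs.
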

\begin{proof}
    This is immediate from
    \begin{equation*}
        \Lie_{X_\xi}
        (\phi \circ \inv)
        \at[\Big]{\E}
        =
        \frac{\D}{\D t}
        \big(
        \phi \circ \inv
        \big)
        (\E \cdot \exp(t\xi))
        \at[\Big]{t=0}
        =
        \frac{\D}{\D t}
        \phi
        \big(
            \exp(-t \xi)
        \big)
        \at[\Big]{t=0}
        =
        -
        \Lie_{X_\xi} \phi
        \at[\Big]{\E}
    \end{equation*}
    for $\xi \in \liealg{g}$ by  definition of the Lie
    derivative.
\end{proof}
\begin{corollary}
    \label{corollary:LieTaylorTranslation}%
    There is a locally constant function
    \begin{equation*}
        \gamma
        \colon
        G
        \longrightarrow
        [0, \infty)
    \end{equation*}
    such that for any $\phi \in \Comega(G)$ and any
    $\underline{z} \in \field{C}^n$ the Lie-Taylor majorant
    $\Maj_{\phi}(\supnorm{\underline{z}} + \gamma(g))$ is a majorant
    for the Lie-Taylor series $\Taylor_{\phi}(\underline{z}; g)$ of
    $\phi$ at $g$ evaluated at $\underline{z} \in \mathbb{C}^n$ and
    hence
    \begin{equation}
        \label{eq:LieTaylorTranslation}
        \abs[\big]
        {\Taylor_{\phi}
        (\underline{z};g)}
        \le
        \Maj_{\phi}
        \big(
            \supnorm{\underline{z}}
            +
            \gamma(g)
        \big).
    \end{equation}
\end{corollary}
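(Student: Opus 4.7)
The plan is to reduce the bound at an arbitrary $g \in G$ to the case $g = \E$ already handled in Remark~\ref{remark:LieTaylorMajorant}\ref{item:LieTaylorMajorantIsMajorant}, exploiting that the connected Lie group $G$ is generated by $\exp(\liealg{g})$.

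Left invariance of the frame $X_1, \ldots, X_n$ implies $(\Lie_{X_\alpha}\phi)(g) = (\Lie_{X_\alpha}(\phi \circ \ell_g))(\E)$ for every multi-index $\alpha$, hence $\Taylor_\phi(\underline{z}; g) = \Taylor_{\phi \circ \ell_g}(\underline{z}; \E)$, and Remark~\ref{remark:LieTaylorMajorant}\ref{item:LieTaylorMajorantIsMajorant} yields
\begin{equation*}
    \abs[\big]{\Taylor_\phi(\underline{z}; g)}
    \le
    \Maj_{\phi \circ \ell_g}(\supnorm{\underline{z}}),
\end{equation*}
so it suffices to control $\Maj_{\phi \circ \ell_g}$ in terms of $\Maj_\phi$. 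To convert the left translation into right translations, I use $\ell_g = \inv \circ r_{g^{-1}} \circ \inv$ to rewrite $\phi \circ \ell_g = (\phi \circ \inv) \circ r_{g^{-1}} \circ \inv$, and apply Lemma~\ref{lemma:Inversion} to the outer inversion to obtain $\Maj_{\phi \circ \ell_g} = \Maj_{(\phi \circ \inv) \circ r_{g^{-1}}}$, reducing the problem to iterated right translations.

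By connectedness of $G$, any $g \in G$ admits a decomposition $g^{-1} = \exp(\xi_1) \cdots \exp(\xi_m)$ with $\xi_j \in \liealg{g}$. Using $r_{hh'} = r_{h'} \circ r_h$, this factors as $r_{g^{-1}} = r_{\exp(\xi_m)} \circ \cdots \circ r_{\exp(\xi_1)}$. Iterating Proposition~\ref{prop:Translation} produces
\begin{equation*}
    \abs[\big]{\Maj_{(\phi \circ \inv) \circ r_{g^{-1}}}(z)}
    \le
    \Maj_{\phi \circ \inv}\Big(\abs{z} + \sum_{j=1}^m \supnorm{\xi_j}\Big),
\end{equation*}
and a final application of Lemma~\ref{lemma:Inversion} replaces $\Maj_{\phi \circ \inv}$ by $\Maj_\phi$. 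Combined with the first step, this gives the pointwise estimate \eqref{eq:LieTaylorTranslation} with $\gamma(g) \ge \sum_{j=1}^m \supnorm{\xi_j}$.

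The main remaining obstacle is upgrading this pointwise bound to a locally constant $\gamma$, since the choice of decomposition varies with $g$. To this end, I fix a symmetric open neighborhood $V$ of $\E$ on which $\exp$ restricts to a diffeomorphism from a chart contained in the unit $\supnorm{\cdot}$-ball of $\liealg{g}$, select representatives $\{g_\alpha\}_\alpha$ so that $\{g_\alpha V\}_\alpha$ covers $G$, and fix for each $\alpha$ a decomposition $g_\alpha^{-1} = \exp(\xi_1^{(\alpha)}) \cdots \exp(\xi_{m_\alpha}^{(\alpha)})$. Every $g \in g_\alpha V$ is of the form $g = g_\alpha \exp(\eta)$ with $\supnorm{\eta} < 1$, yielding $g^{-1} = \exp(-\eta) \exp(\xi_1^{(\alpha)}) \cdots \exp(\xi_{m_\alpha}^{(\alpha)})$. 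Setting $\gamma$ equal to $1 + \sum_j \supnorm{\xi_j^{(\alpha)}}$ on a disjoint refinement of this cover then produces a function that is constant on each piece and dominates $\sum_j \supnorm{\xi_j}$ for the chosen decomposition at each $g$, delivering the desired majorization \eqref{eq:LieTaylorTranslation}.
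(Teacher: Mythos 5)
Your argument is correct and follows essentially the same route as the paper's proof: the identity $\Taylor_\phi(\underline{z};g)=\Taylor_{\phi\circ\ell_g}(\underline{z};\E)$ from left invariance, the rewriting $\ell_g=\inv\circ r_{g^{-1}}\circ\inv$ together with Lemma~\ref{lemma:Inversion}, a decomposition of $g^{-1}$ into exponentials, and iterated use of Proposition~\ref{prop:Translation}. The only differences are cosmetic bookkeeping: the paper normalizes $\supnorm{\xi_j}\le 1$ and takes $\gamma(g)=m$, whereas you track $\sum_j\supnorm{\xi_j}$ and make the ``locally constant'' choice explicit via the cover $\{g_\alpha V\}_\alpha$, which merely spells out the paper's remark that $m$ can be chosen uniformly on a neighbourhood of each point.
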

\begin{proof}
    We denote left multiplication with $g$ by $\ell_g$, as usual.  The
    left invariance of $X_\xi$ gives
    $\Lie_{X_\xi} \circ \ell_g^* = \ell_g^* \circ
    \Lie_{X_\xi}$. Applying this to a function $\phi$ and evaluating
    at $e \in G$ gives
    \begin{equation}
        \label{eq:LieTaylorTranslationProof}
        \Taylor_{\phi}
        (\underline{z}; g)
        =
        \Taylor_{\phi \circ \ell_g}
        (\underline{z}; \E).
        \tag{$*$}
    \end{equation}
    Now
    $\phi \circ \ell_g = \phi \circ \mathord{\inv} \circ r_{g^{-1}}
    \circ \mathord{\inv}$, so Proposition~\ref{lemma:Inversion}
    implies
    \begin{equation*}
        \Maj_{\phi \circ \ell_g}(z)
        =
        \Maj_{\phi \circ \mathord{\inv} \circ r_{g^{-1}}}(z).
    \end{equation*}
    We now choose $\xi_1, \ldots, \xi_m \in \liealg{g}$ with
    $\supnorm{\xi_j} \le 1$ and
    $g^{-1} = \exp(\xi_1) \cdots \exp(\xi_m)$. By openness of the Lie
    exponential the integer $m$ can be chosen in a locally constant
    manner, i.e. there is an open neighbourhood $U$ of $g^{-1}$
    s.t. each $h \in U$ can be written as a product of $m$
    exponentials. Then applying Proposition~\ref{prop:Translation}
    $m$-times and once again Proposition~\ref{lemma:Inversion} yields
    \begin{equation*}
        \abs[\big]{\Maj_{\phi \circ \ell_g}(z)}
        =
        \abs[\big]{\Maj_{\phi \circ \inv \circ r_{g^{-1}}}(z)}
        \le
        \Maj_{\phi \circ \inv}
        \big(
            \abs{z} + m
        \big)
        =
        \Maj_{\phi}
        \big(
            \abs{z} + m
        \big).
    \end{equation*}
    Combining this with \eqref{eq:LieTaylorTranslationProof} and
    \eqref{eq:LT-Maj} completes the proof.
\end{proof}

%
%

\subsection{Entire Functions on $G$}

In this subsection we first focus on the case $R=0$ and introduce the
pendant $\Entire[0](G)$ of the optimal case from the strict
deformation \cite[Prop.~3.2,
\textit{ii.)}]{esposito.stapor.waldmann:2017a},
i.e. $\SymR(\liealg{g})$ with $R' = 1$. In a second step, we then
introduce the algebras $\Entire(G)$ for $R>0$.  There are several
reasons for this two--step approach. Firstly, our methods in the case
$R=0$ seem completely natural and do not call for any specific
motivation. Secondly, it puts us in a position to reintroduce the
classical notion of an entire vector for the lifted Lie algebra
representation \eqref{eq:LieDerivativeRep}. While our approach to this
notion is novel, the locally convex space we are about to consider is
not. We will make this and its history precise in
Remark~\ref{remark:RepresentationTheory}. Thirdly, the construction
for the case $R=0$ provides a solid motivation for the cases $R>0$,
since it makes clear that we need to identify locally convex algebras
of entire functions with controlled growth, whose topology is finer
than that of locally uniform convergence, but which still are
invariant with respect to differentiation and translation in the
argument.

\begin{definition}[Entire functions on $G$]
    \label{definition:Entire}%
    An analytic function $\phi \in \Comega(G)$ is called an entire
    function on $G$ if its Lie-Taylor majorant
    $\Maj_{\phi} \in \Holomorphic(\field{C})$ is entire. By
    \begin{equation}
        \label{eq:Entire}
        \Entire[0](G)
        =
        \big\{
            \phi \in \Comega(G)
            \; \big| \;
            \Maj_{\phi} \in \Holomorphic(\field{C})
        \big\}
    \end{equation}
    we denote the set of all entire functions on $G$.
\end{definition}
In particular, each element of $\Entire[0](G)$ is analytic by
definition, so it does have a local Lie-Taylor series representation
by Corollary~\ref{corollary:LieTaylor}. Hence by connectedness of $G$
it follows that
\begin{equation}
    \label{eq:hausdorff}
    \phi = 0
    \quad \iff \quad
    \Maj_{\phi} = 0.
\end{equation}

Recall that the $\field{C}$-vector space $\Holomorphic(\field{C})$
carries a canonical topology, namely the compact-open topology (or
topology of locally uniform convergence). This locally convex topology
can be induced by the family of norms
\begin{equation}
    \label{eq:CSeminorm}
    \norm{F}_{0, c}
    =
    \max \limits_{\abs{z} \le c}
    \abs[\big]{F(z)},
\end{equation}
and is metrizable in a translation-invariant manner by
\begin{equation*}
    \label{eq:CMetric}
    d(F, G)
    =
    d(F - G, 0)
    =
    \sum \limits_{j=1}^{\infty}
    \frac{1}{2^j}
    \frac{\norm{F-G}_{0, j}}
    {1 + \norm{F-G}_{0, j}}
\end{equation*}
for $F, G \in \Holomorphic(\field{C})$. It is well-known that
$(\Holomorphic(\field{C}), d)$ is then a multiplicatively convex
(commutative) nuclear Fr\'echet algebra w.r.t.~pointwise
multiplication. We are thus naturally led to define a metric $d_0$ on
the vector space $\Entire[0](G)$ by
\begin{equation}
    \label{eq:EntireMetric}
    d_0(\phi,\psi)
    =
    d(\Maj_{\phi-\psi}, 0)
\end{equation}
for $\phi, \psi \in \Entire[0](G)$. An associated family of seminorms
is given by
\begin{equation}
    \label{eq:EntireSeminorm}
    \seminorm{q}_{0, c}
    (\phi)
    =
    \norm{\Maj_\phi}_{0, c}
    =
    \max \limits_{\abs{z} \le c}
    \abs[\big]
    {\Maj_{\phi}(z)}
    =
    \Maj_{\phi}(c)
    =
    \sum \limits_{k=0}^{\infty} c_k(\phi) \, c^k
    =
    \sum \limits_{k=0}^{\infty}
    \frac{c^k}{k!}
    \sum \limits_{\alpha \in \mathbb{N}_n^k}
    \abs[\Big]{\big( \Lie_{X_{\alpha}} \phi \big)(\E)}
\end{equation}
with $c > 0$. Note that (\ref{eq:hausdorff}) ensures that each
$\seminorm{q}_{0, c} \colon \Entire[0](G) \longrightarrow \field{R}$
is in fact a \emph{norm} on $\Entire[0](G)$. By the Leibniz inequality
from Proposition~\ref{proposition:LeibnizAndChain},
\ref{item:Leibniz}, the norms \eqref{eq:EntireSeminorm} are
submultiplicative. In particular, $(\Entire[0](G), d_0)$ is a locally
multiplicatively convex algebra w.r.t.~to pointwise multiplication.

We now introduce the family of subspaces $\Entire(G)$, $R>0$, of the
algebra $\Entire[0](G)$ of all entire functions on $G$, which will
serve as the other tensor factors in the observable algebra. The idea
is to restrict the Lie-Taylor majorants
$\Maj_{\phi} \in \Holomorphic(\mathbb{C})$ to holomorphic entire
functions of fixed \textit{finite order and minimal type}. The
intimate reason for this choice is that these functions and thus their
Taylor coefficients do have controlled growth and they form a
particularly well-studied subalgebra of $\Entire[0](G)$ with the added
feature of being invariant w.r.t.~to \textit{differentiation} and
\textit{translation}, i.e. both the Lie algebra and the Lie group
representations. This puts us in a position to make use of Proposition
\ref{proposition:TaylorMajorantDerivative} and Proposition
\ref{prop:Translation} and will directly lead us to faithful analogues
of those algebras on the Lie group $G$.  Recall that for any $\rho>0$
a function $F \in \Holomorphic(\mathbb{C})$ is said to have
\textit{finite order $\le \rho$ and minimal type} if
\begin{equation}
    \label{eq:Entire2}
    \sup \limits_{z \in \field{C}}
    \abs[\big]{F(z)}
    \exp
    \big(
        - \eps \abs{z}^{\rho}
    \big)
    <
    \infty
\end{equation}
for every $\epsilon > 0$. We denote the set of all such functions by
$\Holomorphic_{\rho}(\field{C})$.  Note that for $\rho \le 1$ we are
speaking of entire functions of exponential type zero.
\begin{definition}[$R$-entire functions]
    Let $R>0$. A function $\phi \in \Comega(G)$ is called an
    $R$-entire function if $\Maj_{\phi} \in \Holomorphic(\field{C})$
    has finite order $\le \tfrac{1}{R}$ and minimal type. We denote
    the set of all $R$-entire functions by $ \Entire(G)$.
\end{definition}

Unwrapping the definition, we thus have
\begin{equation}
    \label{eq:seminorm2}
    \phi \in \Entire(G)
    \quad \iff \quad
    \forall_{\eps>0}
    \quad
    \norm{\phi}_{R, \eps}
    =
    \sup \limits_{z \in \field{C}}
    \abs[\big]
    {\Maj_{\phi}(z)}
    \exp
    \big(
        -\eps \abs{z}^{1/R}
    \big)
    <
    \infty.
\end{equation}
\begin{remark}
    \label{remark:Entire}%
    \begin{remarklist}
    \item \label{item:EntireInclusions}%
        Clearly, each of the sets $\Entire(G)$ is a unital subalgebra
        of $\Entire[0](G)$ and we have the inclusions
        \begin{equation}
            \label{eq:EntireInclusions}
            \Entire(G)
            \subseteq
            \Entire[S](G),
        \end{equation}
        whenever $S \le R$.
    \item \label{item:EntireMajorant}%
        Let $\phi \in \Entire(G)$ for some $R>0$. Then the Lie-Taylor
        series $\Taylor_{\phi}(\argument; \E)$ of $\phi$ is an entire
        holomorphic function on $\field{C}^n$ of order
        $\le \tfrac{1}{R}$ and minimal type. We denote the set of such
        functions by $\Holomorphic_{1/R}(G)$ and equip it with the
        family of norms \eqref{eq:seminorm2}. For a general treatment
        we refer e.g. to the textbook \cite{gruman.lelong:1986a}.
     \end{remarklist}
\end{remark}

It is well-known (see e.g. \cite[Prop.~4.2]{meise:1985a}) that
$\Holomorphic_{\rho}(\mathbb{C})$ equipped with the family of norms
\eqref{eq:Entire2} is a nuclear Fr\'echet space. It follows at once
from \eqref{eq:seminorm2} that for $\eps > 0$
\begin{equation}
    \label{eq:praesubmultiplikativ}
    \norm[\big]
    {\phi \cdot \psi}_{R, \eps}
    \le
    \norm{\phi}_{R, \eps/2}
    \cdot
    \norm{\psi}_{R, \eps/2},
\end{equation}
so $(\Entire(G), d_R)$ is a locally convex commutative algebra
w.r.t.~pointwise multiplication. Note, however, that $\Entire(G)$ is
\emph{not} multiplicatively convex as soon as $R > 0$. In fact, it can
be easily shown that $\Entire(G)$ has no entire holomorphic functional
calculus.

It will turn out convenient to introduce an equivalent family of
(semi)norms on $\Entire(G)$. As in the classical case of entire
holomorphic functions of finite order this is achieved by relating the
order of an analytic function $\phi \in \Comega(G)$ to the growth of
its Taylor coefficients:
\begin{definition}[$R$-Lie-Taylor majorant]
    Let $R \ge 0$ and $\phi \in \Cinfty(G)$. Then we call
    \begin{equation}
        \label{eq:LieTaylorMajorantR}
        \Maj_{R, \phi}(z)
        =
        \sum \limits_{k=0}^{\infty}
        k!^R
        c_k(\phi)
        \cdot
        z^k
        =
        \sum \limits_{k=0}^{\infty}
        k!^{R-1}
        \sum \limits_{\alpha \in \mathbb{N}_n^k}
        \abs[\Big]{\big( \Lie_{X_{\alpha}} \phi \big)(\E)}
        z^k
    \end{equation}
    the $R$-Lie-Taylor majorant of $\phi$ and define for any $c \ge 0$
    \begin{equation}
        \label{eq:EntireSeminormR}
        \seminorm{q}_{R, c}
        (\phi)
        =
        \Maj_{R, \phi}(c)
        =
        \sum \limits_{k=0}^{\infty}
        k!^R
        \cdot
        c_k(\phi)
        \cdot
        c^k
        =
        \sum \limits_{k=0}^{\infty}
        k!^{R-1}
        \sum \limits_{\alpha \in \mathbb{N}_n^k}
        \abs[\Big]{\big( \Lie_{X_{\alpha}} \phi \big)(\E)}
        c^k.
    \end{equation}
\end{definition}

The following elementary result tells us that for a function
$\phi \in \Comega(G)$ membership in $\Entire(G)$ can be checked using
the $R$-Lie-Taylor majorant $\Maj_{R, \phi}$ and the seminorms
$\seminorm{q}_{R,c}$:
\begin{equation}
    \label{eq:aequiSemiNorms}
    \phi \in \Entire(G)
    \qquad \iff \qquad
    \Maj_{R, \phi} \in \Holomorphic(\field{C})
    \qquad \iff \qquad
    \forall_{c>0} \,\, \seminorm{q}_{R,c}(\phi) < \infty,
\end{equation}
and also that
\begin{equation}
    \label{eq:aequiSemiNormsConvergence}
    d_R(\phi_n,\phi) \to 0
    \qquad \iff \qquad
    \Maj_{R,\phi_n-\phi}
    \to 0 \; \textrm{in} \; \Holomorphic(\field{C}).
\end{equation}
\begin{remark}
    It seems (and perhaps is) over the top to introduce two different
    notions, the seminorms $\seminorm{q}_{R, c}$ and the
    $R$-Lie-Taylor majorant $\Maj_{R, \phi}$, for essentially the same
    object, namely $\Maj_{R, \phi}(c) = \seminorm{q}_{R,
      c}(\phi)$. However, it emphasizes that
    $\seminorm{q}_{R,c}(\phi)$ simply \textit{is} a
    \textit{holomorphic} function of one \textit{complex} variable
    $c$, and this point of view brings along some useful tools such as
    the Cauchy integral formula.
\end{remark}
\begin{proposition}
    \label{proposition:REntire}%
    Let $\phi, \psi \in \Cinfty(G)$. Then
    \begin{propositionlist}
    \item \label{item:EntireVsTaylor}%
        for any $R > 0$ and $\eps>0$,
        \begin{equation}
            \label{eq:EntireVsTaylor}
            \norm{\phi}_{R, \eps}
            \le
            \seminorm{q}_{R, (R/\eps)^R}(\phi);
        \end{equation}
    \item \label{item:TaylorVsEntire}%
        for any $R > 0$, $c > 0$ and $\eps > 0$ such that
        $c \cdot \big( \frac{\E \eps}{R} \big)^{R} < 1$,
        \begin{equation}
            \label{eq:TaylorVsEntire}
            \seminorm{q}_{R,c}(\phi)
            \le
            \frac{\norm{\phi}_{R,\eps}}
            {1 - c \cdot ( \tfrac{\E \eps}{R} )^{R}};
        \end{equation}
    \item \label{item:EntireMultiplication}%
        for any $R \ge 0$ and $c>0$
        \begin{equation}
            \label{eq:EntireMultiplication}
            \seminorm{q}_{R, c}(\phi \cdot \psi)
            \le
            \seminorm{q}_{R, 2^R c}(\phi)
            \cdot
            \seminorm{q}_{R, 2^R c}(\psi).
        \end{equation}
        Moreover, naive extension of \eqref{eq:EntireSeminormR} to
        $R < 0$ yields submultiplicative seminorms
        $\seminorm{q}_{R, c}$.
    \item \label{item:EntireRRepresentations}%
        for any $R \ge 0$, $\xi \in \liealg{g}$ and $z \in \field{C}$
        \begin{align}
            \label{eq:EntireRRepresentationsGroup}
            \abs[\big]
            {\Maj_{R, \phi \circ r_{\exp{\xi}}}(z)}
            &\le
            \Maj_{R, \phi}
            \big(
            \abs{z} + \supnorm{\xi}
            \big) \\
            \shortintertext{and}
            \label{eq:EntireRRepresentationsAlgebra}
            \abs[\big]
            {\Maj_{R, \Lie_{X_{\xi}} \phi}(z)}
            &\le
            \supnorm{\xi}
            \cdot
            \Maj_{R, \phi}'
            \big(
                \abs{z}
            \big).
    \end{align}
\end{propositionlist}
\end{proposition}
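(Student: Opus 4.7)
The plan is to treat the four parts independently, since each reduces to a concrete estimate on the coefficients $c_k(\phi)$ and a small piece of classical one-variable complex analysis.

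For part~\ref{item:EntireVsTaylor}, the approach is to bound $\abs{\Maj_{\phi}(z)}\le\sum_k c_k(\phi)\abs{z}^k$ and then optimize the weight $\abs{z}^k\exp(-\eps\abs{z}^{1/R})$ in $\abs{z}$. Elementary calculus gives the maximum at $\abs{z}=(kR/\eps)^R$ with value $(kR/\eps)^{kR}\E^{-kR}$, and Stirling in the form $k!\ge (k/\E)^k$ turns this into $k!^R(R/\eps)^{kR}$. Summing yields precisely $\seminorm{q}_{R,(R/\eps)^R}(\phi)$. For part~\ref{item:TaylorVsEntire}, I would use the Cauchy integral formula applied to the entire function $\Maj_\phi$ to estimate $c_k(\phi)\le\max_{\abs{z}=r}\abs{\Maj_\phi(z)}/r^k\le \norm{\phi}_{R,\eps}\E^{\eps r^{1/R}}/r^k$ for every $r>0$. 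Choosing $r=(kR/\eps)^R$ minimizes the right-hand side to $\norm{\phi}_{R,\eps}(\E\eps/(kR))^{kR}$; combining with the crude bound $k!^R\le k^{kR}$ removes the $k$ from the denominator and leaves $(\E\eps/R)^{kR}$. Summation in $k$ becomes a geometric series with ratio $c(\E\eps/R)^R$, which converges exactly under the stated hypothesis.

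For part~\ref{item:EntireMultiplication}, I would feed the coefficient-level Leibniz inequality from Proposition~\ref{proposition:LeibnizAndChain}, namely $c_k(\phi\psi)\le\sum_{\ell+m=k}c_\ell(\phi)c_m(\psi)$, into the definition of $\seminorm{q}_{R,c}$. For $R\ge 0$ the binomial estimate $(\ell+m)!\le 2^{\ell+m}\ell!\,m!$ combined with $(\ell+m)!^{R}\le 2^{R(\ell+m)}\ell!^{R}m!^{R}$ redistributes the factorials and absorbs the factor $2^{R(\ell+m)}$ into the enlarged radius $2^{R}c$; the double sum then factorizes into the product $\seminorm{q}_{R,2^{R}c}(\phi)\seminorm{q}_{R,2^{R}c}(\psi)$. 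For $R<0$ one instead uses $\binom{\ell+m}{\ell}\ge 1$, so $(\ell+m)!^{R}\le \ell!^{R}m!^{R}$ directly, which gives straightforward submultiplicativity without enlarging the radius.

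For part~\ref{item:EntireRRepresentations}, the algebra estimate \eqref{eq:EntireRRepresentationsAlgebra} follows immediately by feeding $c_k(\Lie_{X_\xi}\phi)\le\supnorm{\xi}(k+1)c_{k+1}(\phi)$ from Proposition~\ref{proposition:TaylorMajorantDerivative} into the definition; reindexing $k\mapsto k+1$ produces the formal derivative of $\Maj_{R,\phi}$, with the required inequality $k!^{R}\le (k+1)!^{R}$ being exactly the point where $R\ge 0$ is used. The group estimate \eqref{eq:EntireRRepresentationsGroup} is the more involved piece and I expect it to be the main technical obstacle: I would use the coefficient bound from the proof of Proposition~\ref{prop:Translation}, namely $c_k(\phi\circ r_{\exp\xi})\le\sum_{\ell\ge 0}\tfrac{(k+\ell)!}{k!\,\ell!}c_{k+\ell}(\phi)\supnorm{\xi}^{\ell}$, substitute into $\seminorm{q}_{R,c}(\phi\circ r_{\exp\xi})$, swap the order of summation via $j=k+\ell$, and compare term by term with the binomial expansion of $\Maj_{R,\phi}(c+\supnorm{\xi})=\sum_{j}j!^{R}c_j(\phi)\sum_k\binom{j}{k}c^k\supnorm{\xi}^{j-k}$. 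The comparison reduces to $k!^{R}\le j!^{R}$ for $k\le j$, which again is where the assumption $R\ge 0$ enters.
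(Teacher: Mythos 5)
Your proposal is correct and follows essentially the same route as the paper: the paper omits the proofs of \ref{item:EntireVsTaylor} and \ref{item:TaylorVsEntire} as standard, and your arguments (optimizing $|z|^k \E^{-\eps|z|^{1/R}}$ with Stirling, and Cauchy estimates on $\Maj_\phi$ combined with $k!^R \le k^{kR}$) are precisely the standard ones; for \ref{item:EntireMultiplication} your Cauchy-product plus $\binom{k}{j}\le 2^k$ argument is identical to the paper's computation, and for \ref{item:EntireRRepresentations} you correctly spell out the paper's remark that the proof is the one of Propositions \ref{proposition:TaylorMajorantDerivative} and \ref{prop:Translation}, with the $R\ge 0$ assumption entering via $k!^R \le j!^R$ for $k\le j$.
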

\begin{proof}
    The proofs of \ref{item:EntireVsTaylor} and
    \ref{item:TaylorVsEntire} are standard and will not be given
    here. For \ref{item:EntireMultiplication} we recall
    $c_k(\phi \cdot \psi) \le c_k(\phi) \cdot c_k(\psi)$ by the
    Leibniz inequality in
    Proposition~\ref{proposition:LeibnizAndChain},
    \ref{item:Leibniz}. Hence
    \begin{align*}
        \seminorm{q}_{R, c}(\phi \cdot \psi)
        &\le
        \sum \limits_{k=0}^{\infty}
        k!^R
        \sum \limits_{j=0}^k
        c_j(\phi) c^j
        c_{k-j}(\psi) c^{k-j} \\
        &=
        \sum \limits_{k=0}^{\infty}
        \binom{k}{j}^{R}
        \sum \limits_{j=0}^k
        j!^R c_j(\phi) c^j
        \cdot
        (k-j)!^R c_{k-j}(\psi) c^{k-j} \\
        &\le
        \sum \limits_{k=0}^{\infty}
        2^{kR}
        \sum \limits_{j=0}^k
        j!^R c_j(\phi) c^j \, (k-j)!^R
        c_{k-j}(\psi) c^{k-j} \\
        &=
        \seminorm{q}_{R, 2^R c}(\phi)
        \cdot
        \seminorm{q}_{R, 2^R c}(\psi),
  \end{align*}
  using the rather crude estimate $\binom{k}{j} \le 2^k$ resp.~the
  Cauchy product formula in the last two steps. Going through our
  computation for $R < 0$, one can be even cruder and estimate the
  binomial to the power of $R$ by $1$. The proof of
  \ref{item:EntireRRepresentations} is identical to the ones of
  Proposition~\ref{proposition:TaylorMajorantDerivative} and
  Proposition~\ref{prop:Translation} and will not be repeated
  here. The reader will notice that it \emph{does} rely on $R \ge 0$,
  though.
\end{proof}

We are thus naturally led to equip the vector space $\Entire(G)$ with
the family of seminorms \eqref{eq:seminorm2} and the corresponding
metric
\begin{equation}
    \label{eq:EntireRMetric}
    d_R(\phi, \psi)
    =
    d_0
    \big(
    \Maj_{R, \phi-\psi}, 0
    \big)
    =
    \sum \limits_{j=1}^{\infty}
    \frac{1}{2^j}
    \frac{\seminorm{q}_{R, j}(\phi - \psi)}
    {1 + \seminorm{q}_{R, c}(\phi - \psi)}.
\end{equation}

Notably, the inclusions \eqref{eq:EntireInclusions} are then
continuous. Before we take a closer look at the somewhat deeper
properties of the locally convex algebras $\Entire(G)$, we take a
detour to relate our considerations to classical notions from
infinite-dimensional representation theory.
\begin{remark}[Representation theory]
    \label{remark:RepresentationTheory}%
    In the literature, \cite[Sec.~2]{goodman:1969a} was the first to
    consider the Fréchet space of entire vectors for Lie algebra
    representations on Banach spaces $B$ induced by strongly
    continuous Lie group representations
    $\pi \colon G \longrightarrow L(B)$. Recall that a vector
    $v \in B$ is called $\Fun$-vector for $\pi$ if the maps
    \begin{equation}
        \label{eq:FunVectors}
        \pi_v
        \colon
        G
        \longrightarrow
        B, \quad
        \pi_v(g)
        =
        \pi(g)v
    \end{equation}
    are $\Fun$-functions with values in $B$, where
    $k \in \field{N}_0 \cup \{\infty, \omega\}$. By sequential
    completeness of the Banach space $B$ one can indeed define
    differentiability by means of differential quotients. One writes
    $\Fun(\pi) \subseteq B$ for the set of all $\Fun$-vectors for the
    representation $\pi$. The assumed strong continuity of the
    representation $\pi$ then just means that
    $\Continuous(\pi) = \Fun[0](G) = B$. Note that doing things in
    this pointwise fashion corresponds to considering all limits in
    the weak topology of $L(B)$. In this context, the natural question
    is about density of the spaces $\Fun(G)$ as subspaces of
    $\Continuous(G)$. For a quite nice, albeit dated discussion, we
    refer to \cite{moore:1968a}. Some more modern discourses can be
    found in the textbooks \cite[Chap.~10]{schmuedgen:1990a} and
    \cite[Appendix~D]{taylor:1986a}.

    In fact, it is often necessary to go beyond the Banach space
    scenario and include more general locally convex spaces like
    e.g. Fréchet spaces as representation spaces as well.

    Passing to the infinitesimal situation, we differentiate and
    obtain a Lie algebra representation $T \pi$ of not necessarily
    continuous linear operators $T \pi \xi$, each defined on some
    subspace of the representation space. By the classical
    \cite{garding:1947a} they share a common dense invariant domain,
    the so-called Gårding space
    $\mathscr{G}(\pi) \subseteq \Cinfty(\pi)$. For Fréchet spaces, the
    seminal work \cite{dixmier.malliavin:1978a} proved the equality
    $\mathscr{G}(\pi) = \Cinfty(\pi)$.

    Analytic vectors are more complicated. By
    \cite[Sec.~3]{nelson:1959a} a smooth vector $v \in \Cinfty(\pi)$
    is analytic if and only if the formal exponential
    $\exp(T \pi \xi)v$ converges for all $\xi$ in some neighbourhood
    of $0 \in \liealg{g}$. \cite{goodman:1969a} turned this into a
    seminorm condition and obtained families of Fréchet spaces
    $\mathscr{H}_t(\pi)$ this way: demanding convergence for
    $\xi \in \Ball_t(0) \subseteq \liealg{g}$ prescribes a uniform
    radius of convergence for $\exp(T \pi \xi)v$. Of particular
    interest are then the union over $t = \tfrac{1}{n}$ and the
    intersection over $t = n$ with $n \in \field{Z}$: the former
    endows the space of analytic vectors with the structure of a
    locally convex inductive limit, the latter is Fréchet again as the
    countable intersection of Fréchet spaces. For obvious reasons,
    \cite{goodman:1969a} calls this intersection entire vectors and so
    shall we. Note that there is no need to start with a Lie group
    representation at all: all notions make sense for Lie algebra
    representations from the start. The natural question is then, when
    such a representation can be integrated. Some answers can be found
    in \cite{moore:1965a},
    \cite{flato.simon.snellman.sternheimer:1972a} and the much more
    recent developments \cite{cabral:2019a, schmuedgen:1990a,
      schmuedgen:2020a}.

    The action we are interested in is the translation action of the
    group $G$ by means of pullbacks
    \begin{align}
        \label{eq:TranslationAction}
        \ell_{\bullet}^*, r_{\bullet}^*
        \colon
        G
        \longrightarrow
        L\big(\Continuous(G)\big)
    \end{align}
    by left and right multiplications, which is surprisingly
    ill-behaved. Note that the space of continuous functions
    $\Continuous(G)$ is \emph{not} Banach with respect to its usual
    topology of locally uniform convergence in general, but a Fréchet
    space. It is straightforward to generalize the notions of
    $\Fun$-vectors of a group representation into this more general
    setting. These more general cases were studied e.g. in
    \cite{miyadera:1959a} and \cite{komura:1968a}. However, most of
    the useful techniques break down, unless the set $\pi(G)$ is an
    equicontinuous family of operators. Most importantly, the
    utilization of (Riemann or Bochner) integral methods require this
    assumption. Notably, the translation actions
    \eqref{eq:TranslationAction} are not equicontinuous at all: there
    is no compact subset of $G$ that contains all compact subsets,
    unless the group is compact itself. Remarkably, we will be able to
    reproduce most of the pleasant results for our particular
    situation regardless.

    Our Taylor formula \eqref{eq:LieTaylorSingleVariable} implies that
    each $\phi \in \Entire[0](G)$ is an entire vector with respect to
    the representation $r_\bullet^*$. Or differently put, the lifted
    Lie algebra representation \eqref{eq:LieDerivativeRep}
    exponentiates to the Lie group representation $r_\bullet^*$. Our
    Definition~\ref{definition:Entire} is thus the appropriate
    infinitesimal version of entire vectors in our locally convex
    situation. This matches with a straightforward reformulation of
    the textbook definitions \cite[Def.~10.3.1 and
    10.3.2]{schmuedgen:1990a} in the following sense: We consider the
    lifted Lie algebra representation \eqref{eq:LieDerivativeRep} and
    equip $\Continuous(G)$ with the continuous seminorm
    $\abs{\delta_\E}$ given by the absolute value of the Dirac
    functional at the group unit
    \begin{equation}
        \label{eq:Dirac}
        \delta_\E
        \colon
        \Continuous(G)
        \longrightarrow
        \field{C}, \quad
        \delta_\E(\phi)
        =
        \phi(\E).
    \end{equation}
    While this is not the natural topology of $\Continuous(G)$ at all,
    this is the useful choice for estimation and to generate examples
    later on. By the upcoming Theorem~\ref{theorem:EntireRep},
    \ref{item:EntireSeminormUniform}, this choice leads to the same
    locally convex space $\Entire[0](G)$ as \emph{the natural one} a
    posteriori. This also identifies $\Entire[0](G)$ as the space of
    entire vectors for the group representations
    \eqref{eq:TranslationAction} in the sense of \cite{goodman:1969a}.
\end{remark}

With this incomplete discussion in mind, we can show the following:
\begin{theorem}[Representation theory]
    \label{theorem:EntireRep}%
    Let $G$ be a connected Lie group and let $R \ge 0$.
    \begin{theoremlist}
    \item \label{item:EntireInversion}%
        Group inversion $\inv \colon G \longrightarrow G$ induces an
        isometry of $(\Entire(G),d_R)$, that is,
        $\phi \circ \inv \in \Entire(G)$ and
        \begin{equation}
            \label{eq:EntireInversion}
            d_R(\phi \circ \inv, \psi \circ \inv)
            =
            d_R(\phi, \psi)
        \end{equation}
        for all $\phi,\psi \in \Entire(G)$.
    \item \label{item:EntireTranslation}%
        Pullbacks with left and right translations yield
        representations
        \begin{equation}
            \label{eq:TranslationRep}
            \ell_{\bullet}^*, r_{\bullet}^*
            \colon
            G
            \longrightarrow
            L\big( \Entire(G) \big)
        \end{equation}
        by continuous linear maps.
    \item \label{item:EntireDifferentiation}%
        The space $\Entire(G)$ of entire functions is invariant under
        the lifted Lie algebra representation
        \eqref{eq:LieDerivativeRep} by continuous maps. More
        precisely, we have the estimate
        \begin{equation}
            \label{eq:EntireDifferentiationEstimate}
            \seminorm{q}_{R, c}
            \big(
                \Lie_{X_{\xi}}\phi
            \big)
            \le
            \seminorm{q}_{R, c+1}(\phi)
            \cdot
            \supnorm{\xi}
        \end{equation}
        for $c \ge 0$, $\xi \in \liealg{g}$ and $\phi \in \Entire(G)$.
    \item \label{item:EntireLieTaylor}%
        The Lie-Taylor series $\Taylor_\phi(\argument; \underline{z})$
        is absolutely convergent in $\Entire(G)$ for every
        $\underline{z} \in \field{C}^n$. Thus every entire function
        $\phi \in \Entire(G)$ is an entire vector for the translation
        representations \eqref{eq:TranslationRep}, which are in
        particular strongly continuous.
    \item \label{item:EntireVsCinfty}%
        The $\Entire(G)$-topology is finer than the
        $\Cinfty$-topology. In particular, the evaluation functionals
        \begin{equation}
            \label{eq:EntireEvaluations}
            \delta_{g, \alpha}
            \colon
            \Entire(G)
            \longrightarrow
            \field{C}, \quad
            \delta_g(\phi)
            =
            \big(
                \Lie_{X_\alpha}
                \phi
            \big)(g)
        \end{equation}
        are continuous linear maps for $g \in G$ and
        $\alpha \in \field{N}^k_n$.
    \item \label{item:EntireSeminormUniform}%
        The alternative seminorms
        \begin{equation}
            \label{eq:EntireSeminormUniform}
            \seminorm{r}_{R, c}(\phi)
            =
            \sum_{k=0}^\infty
            k!^R \frac{c^k}{k!}
            \sum_{\alpha \in \field{N}_n^k}
            \sup_{g \in K}
            \abs[\Big]
            {
                \big(
                    \Lie_{X_{\alpha}}
                    \phi
                \big)(g)
            }
        \end{equation}
        with $c \ge 0$ and compact $K \subseteq G$ are well-defined on
        $\Entire(G)$ and constitute a defining system for the
        $\Entire$-topology. Thus $\Entire[0](G)$ is the space of
        entire vectors for the representation
        \eqref{eq:TranslationAction} if we equip $\Continuous(G)$ with
        its canonical topology.
    \end{theoremlist}
\end{theorem}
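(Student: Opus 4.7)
Parts (i)--(iii) follow directly from the earlier estimates, Lemma \ref{lemma:Inversion} and Proposition \ref{proposition:REntire}. For (i), iterating the computation in the proof of Lemma \ref{lemma:Inversion} yields $\Lie_{X_\alpha}(\phi\circ\inv)(\E)=(-1)^k\Lie_{X_{\tilde\alpha}}\phi(\E)$, where $\tilde\alpha$ reverses the multi-index $\alpha$; summing absolute values gives $c_k(\phi\circ\inv)=c_k(\phi)$ and hence $\Maj_{R,\phi\circ\inv}=\Maj_{R,\phi}$, so inversion preserves every seminorm $\seminorm{q}_{R,c}$, proving (i). For (ii), write $g=\exp(\xi_1)\cdots\exp(\xi_m)$ with $\supnorm{\xi_j}\le 1$ and $m$ locally constant in $g$ (by openness of the exponential), and iterate Proposition \ref{proposition:REntire}, \ref{item:EntireRRepresentations} to obtain $\seminorm{q}_{R,c}(\phi\circ r_g)\le\seminorm{q}_{R,c+m}(\phi)$; left translation reduces to right translation via $\ell_g=\inv\circ r_{g^{-1}}\circ\inv$ together with (i). For (iii), combine $\abs{\Maj_{R,\Lie_{X_\xi}\phi}(z)}\le\supnorm{\xi}\Maj_{R,\phi}'(\abs{z})$ from Proposition \ref{proposition:REntire}, \ref{item:EntireRRepresentations} with the Cauchy integral bound $\Maj_{R,\phi}'(c)\le\max_{\abs{w-c}=1}\abs{\Maj_{R,\phi}(w)}\le\Maj_{R,\phi}(c+1)$, using non-negativity of the Taylor coefficients of $\Maj_{R,\phi}$.

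For (iv), the key estimate is the coefficient bound $c_m(\Lie_{X_\xi}^k\phi)\le\supnorm{\xi}^k\tfrac{(m+k)!}{m!}c_{m+k}(\phi)$, obtained by expanding $\Lie_{X_\xi}^k=\sum_{\beta\in\field{N}_n^k}\xi^{\beta_1}\cdots\xi^{\beta_k}\Lie_{X_\beta}$ and bounding the resulting concatenated multi-index sum by $\supnorm{\xi}^k\cdot(m+k)!c_{m+k}(\phi)$. Substituting into $\sum_{k=0}^\infty\tfrac{1}{k!}\seminorm{q}_{R,c}(\Lie_{X_\xi}^k\phi)$ and reindexing with $\ell=m+k$ produces the double sum
\begin{equation*}
    \sum_{\ell=0}^\infty c_\ell(\phi)\sum_{k=0}^\ell\binom{\ell}{k}(\ell-k)!^R\supnorm{\xi}^k c^{\ell-k},
\end{equation*}
and the crude bound $(\ell-k)!^R\le\ell!^R$ (valid for $R\ge 0$) together with the binomial theorem yields
\begin{equation*}
    \sum_{k=0}^\infty\frac{1}{k!}\seminorm{q}_{R,c}(\Lie_{X_\xi}^k\phi)\le\seminorm{q}_{R,c+\supnorm{\xi}}(\phi).
\end{equation*}
This establishes absolute convergence of $\Taylor_\phi(\underline{z};\argument)$ in $\Entire(G)$ for every $\underline{z}\in\field{C}^n$, with limit $\phi\circ r_{\exp(z^j\basis{e}_j)}$ by Corollary \ref{corollary:LieTaylor}; strong continuity of $r_\bullet^*$ (and of $\ell_\bullet^*$ via (i)) follows from the tail estimate $\seminorm{q}_{R,c+\supnorm{\xi}}(\phi)-\seminorm{q}_{R,c}(\phi)\to 0$ as $\xi\to 0$, by continuity of the entire function $\Maj_{R,\phi}$.

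For (v) and (vi), the $R$-analogue of Corollary \ref{corollary:LieTaylorTranslation} (with the same proof, using the $R$-version in Proposition \ref{proposition:REntire}, \ref{item:EntireRRepresentations} in place of Proposition \ref{prop:Translation}) gives $\Maj_{R,\phi\circ\ell_g}(c)\le\Maj_{R,\phi}(c+\gamma(g))$. Left-invariance of $X_\alpha$ yields $\Lie_{X_\alpha}(\phi\circ\ell_g)(\E)=(\Lie_{X_\alpha}\phi)(g)$, hence
\begin{equation*}
    \abs[\big]{(\Lie_{X_\alpha}\phi)(g)}\le k!\cdot c_k(\phi\circ\ell_g)\le k!^{1-R}C^{-k}\seminorm{q}_{R,C+\gamma(g)}(\phi)
\end{equation*}
for every $C>0$. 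Since $\gamma$ is locally constant and integer-valued, it is bounded on any compact $K\subseteq G$ by some $\gamma_K<\infty$; this bounds the $\Cinfty$-seminorms by continuous seminorms on $\Entire(G)$ and proves (v). For (vi), summing the above over $\alpha\in\field{N}_n^k$ (contributing a crude factor $n^k$) and over $k$ with the choice $C=2cn$ produces a convergent geometric series, yielding $\seminorm{r}_{R,c}(\phi)\le 2\seminorm{q}_{R,2cn+\gamma_K}(\phi)$. The converse estimate $\seminorm{q}_{R,c}=\seminorm{r}_{R,c}$ for $K=\{\E\}$ is immediate, so both seminorm families generate the same topology. The main technical obstacle is precisely this sup-sum interchange in (vi): it cannot be performed naively, but is routed through the mediating quantity $k!c_k(\phi\circ\ell_g)$ and the translation estimate, at the cost of a combinatorial factor $n^k$ which is then absorbed into a shift of the radius parameter from $c$ to $2cn$.
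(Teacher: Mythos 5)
Your proof is correct, and for part~(iv) it is not just a different route but a better one. The paper's argument iterates the Cauchy estimate $\Maj'_{R,\phi}(c)\le r^{-1}\Maj_{R,\phi}(c+r)$ with radius $r_k = 1/k$ and then asserts $\seminorm{q}_{R,c}(\Lie_{X_\alpha}\phi)\le k\,\seminorm{q}_{R,c+1}(\phi)$ for $\alpha\in\field{N}_n^k$. That inequality cannot hold as stated: applying the first-derivative Cauchy bound $k$ times with radius $1/k$ at each step produces a factor $k^k$, and already for $G=\field{R}$ with $\phi(x)=\E^{kx}$ one has $\seminorm{q}_{0,c}(\Lie_X^k\phi)=k^k\seminorm{q}_{0,c}(\phi)$ whereas $k\,\seminorm{q}_{0,c+1}(\phi)=k\E^k\seminorm{q}_{0,c}(\phi)$, and $k^k>k\E^k$ as soon as $k\ge 4$. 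Your computation bypasses Cauchy estimates entirely: the concatenation-of-multi-indices bound $c_m(\Lie_{X_\xi}^k\phi)\le\supnorm{\xi}^k\tfrac{(m+k)!}{m!}c_{m+k}(\phi)$, the reindexing $\ell=m+k$, the crude inequality $(\ell-k)!^R\le\ell!^R$ (valid precisely because $R\ge 0$) and the binomial theorem produce the closed-form majorization
\begin{equation*}
\sum_{k=0}^{\infty}\frac{1}{k!}\,\seminorm{q}_{R,c}\big(\Lie_{X_\xi}^k\phi\big)\le\seminorm{q}_{R,c+\supnorm{\xi}}(\phi).
\end{equation*}
This is genuinely sharper --- the entire Lie-Taylor tail is dominated by a \emph{single} seminorm of $\phi$, which gives absolute convergence immediately and, as a bonus, re-derives the translation estimate of Proposition~\ref{prop:Translation}.

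For parts~(v) and~(vi) the deviation is smaller but still to your advantage: the paper proves~(v) by first reducing to $R=0$ and then routes~(vi) through parts~(iii) and~(v), leaning once more on the same suspect $k$-factor bound; you instead go through the $R$-version of Corollary~\ref{corollary:LieTaylorTranslation} together with the left-invariance identity $\Lie_{X_\alpha}(\phi\circ\ell_g)(\E)=(\Lie_{X_\alpha}\phi)(g)$, extract the single coefficient $k!\,c_k(\phi\circ\ell_g)$ from the positive-coefficient majorant, and absorb the combinatorial factor $n^k$ by the choice $C=2cn$. Your part~(i) also makes explicit the iteration $\Lie_{X_\alpha}(\phi\circ\inv)(\E)=(-1)^k\Lie_{X_{\tilde\alpha}}\phi(\E)$ which the paper's Lemma~\ref{lemma:Inversion} labels ``immediate''. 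Parts~(ii) and~(iii) coincide with the paper's argument.
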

\begin{proof}
    The statement \ref{item:EntireInversion} is just
    Lemma~\ref{lemma:Inversion} again. For
    \ref{item:EntireTranslation} we consider
    $g = \exp(\xi_1) \cdots \exp(\xi_m)$ with
    $\xi_1, \ldots, \xi_m \in \liealg{g}$
    s.t.~$\supnorm{\xi_j} \le 1$. Applying
    Proposition~\ref{proposition:REntire},
    \ref{item:EntireRRepresentations}, $m$-times we obtain
    \begin{equation*}
        \abs[\big]
        {\Maj_{R,\phi \circ r_g}(z)}
        \le
        \Maj_{R,\phi}
        \big(
            \abs{z} + m
        \big)
    \end{equation*}
    for $z \in \field{C}$. This proves
    \begin{equation*}
        \seminorm{q}_{R, c}
        (\phi \circ r_g)
        \le
        \seminorm{q}_{R, c+m}
        (\phi)
    \end{equation*}
    for $c \ge 0$. The corresponding property of left multiplication
    by $g$ follows now from \ref{item:EntireInversion} by once again
    noting that
    $\phi \circ \ell_g = \phi \circ \inv \circ r_{g^{-1}} \circ
    \inv$. Thus the translations $r_g$ and $\ell_g$ are continuous
    selfmaps of $\Entire(G)$. For \ref{item:EntireDifferentiation} the
    Cauchy integral formula yields
    \begin{equation*}
        \label{eq:EntireRepProof}
        \Maj'_{R,\phi}(c)
        =
        \frac{1}{2\pi}
        \abs[\bigg]
        {
            \int_{\boundary \Ball_r(c)}
            \frac{\Maj_{R,\phi}(w)}{(w - c)^2}
            \D w
        }
        \le
        r^{-1}
        \Maj_{R,\phi}(c+r)
        =
        r^{-1}
        \cdot
        \seminorm{q}_{R, c+r}(\phi)
        \tag{$*$}
    \end{equation*}
    for every $r > 0$. Taking $r=1$,
    Proposition~\ref{proposition:REntire},
    \ref{item:EntireRRepresentations}, shows that
    \begin{equation*}
        \seminorm{q}_{R,c}
        \big(
            \Lie_{X_{\xi}}\phi
        \big)
        =
        \Maj_{R,\Lie_{X_{\xi}} \phi}(c)
        \le
        \supnorm{\xi}
        \cdot
        \Maj'_{R,\phi} (c)
        \le
        \supnorm{\xi}
        \cdot
        \seminorm{q}_{R, c+1}(\phi).
    \end{equation*}
    This completes \ref{item:EntireDifferentiation}, wherefore asking
    for \ref{item:EntireLieTaylor} makes sense at all: each of the
    $\Lie_{X_\alpha} \phi$ is an $R$-entire function itself. The idea
    is now to choose $r_k = \tfrac{1}{k}$ in
    \eqref{eq:EntireRepProof}, which gives due to
    $\supnorm{\basis{e}_j} = 1$ for $N > 0$
    \begin{equation*}
        \sum_{k=N}^\infty
        \frac{1}{k!}
        \sum_{\alpha \in \field{N}_n^k}
        \seminorm{q}_{R, c}
        \big(
            \Lie_{X_\alpha} \phi
        \big)
        \le
        \sum_{k=N}^\infty
        \frac{1}{k!}
        \sum_{\alpha \in \field{N}_n^k}
        k
        \cdot
        \seminorm{q}_{R, c+1}(\phi)
        =
        \seminorm{q}_{R, c+1}(\phi)
        \sum_{k=N}^\infty
        \frac{k \cdot n^k}{k!}
        \longrightarrow
        0.
    \end{equation*}
    Thus the Taylor series $\Taylor(\argument, \underline{z})$ is
    indeed absolutely convergent in $\Entire(G)$. In other words, each
    of the maps
    \begin{equation*}
        G
        \ni
        g
        \mapsto
        r_g^* \phi
        \in
        \Entire(G)
    \end{equation*}
    is analytic by \eqref{eq:LieTaylorSingleVariable}. The statement
    about left translations follows the usual way. As analytic maps
    are continuous, this also gives the strong continuity of
    \eqref{eq:TranslationRep}. We now turn to
    \ref{item:EntireVsCinfty} and notice that it suffices to handle
    the case $R=0$. Let $K \subseteq G$ be a compact set.  By a
    covering argument it is easy to see that there is a positive
    integer $m$ (depending only on $K$) with the property that for any
    $g \in K$ there are
    $\xi_1,\ldots, \xi_m \in \Ball_1(0)^\cl \subseteq \liealg{g}$ such
    that $g=\exp(\xi_1)\cdots \exp(\xi_m)$. This implies for any
    $\phi \in \Entire[0](G)$
    \begin{equation*}
        \abs[\big]{\phi(g)}
        =
        \abs[\Big]
        {
            \big(
            \phi \circ r_{g}
            \big)(\E)
        }
        =
        \abs[\big]
        {\Maj_{\phi \circ r_g}(0)}
        =
        \abs[\big]
        {\Maj_{\phi \circ r_{\exp(\xi_1)\cdots \exp(\xi_m)}}(0)}.
    \end{equation*}
    Applying Proposition \ref{prop:Translation} $m$-times we obtain
    \begin{equation*}
        \label{eq:convE0vsLUProof}
        \abs[\big]{\phi(g)}
        \le
        \abs[\big]
        {
            \Maj_{\phi}
            \big(
            \supnorm{\xi_1} + \cdots + \supnorm{\xi_m}
            \big)
        }
        \le
        \seminorm{q}_{0, m}(\phi)
        \tag{$**$}
    \end{equation*}
    for all $g \in K$, yielding the claim. Keeping this compact subset
    $K \subseteq G$, we turn towards
    \ref{item:EntireSeminormUniform}. Note that
    $\seminorm{r}_{0, c, \{\E\}} = \seminorm{q}_{0, c}$, wherefore the
    topology induced by the seminorms \eqref{eq:EntireSeminormUniform}
    is certainly finer than the $\Entire[0]$-topology. Using
    \eqref{eq:EntireRepProof} with $r_k = \tfrac{1}{k}$ again and what
    we have just shown gives on the other hand
    \begin{equation*}
        \seminorm{r}_{R, c}(\phi)
        \le
        \sum_{k=0}^\infty
        k!^{R-1} c^k
        \sum_{\alpha \in \field{N}_n^k}
        \seminorm{q}_{R, m}
        \big(
            \Lie_{X_{\alpha}}
            \phi
        \big)
        \le
        \sum_{k=0}^\infty
        k!^{R-1} (cn)^k
        \cdot
        \max\{k, 1\}
        \cdot
        \seminorm{q}_{R, m+1}(\phi),
    \end{equation*}
    wherefore both topologies do indeed coincide. Here we have used
    $\seminorm{q}_{R, c} \le \seminorm{q}_{R, c+1}$.
\end{proof}

The following theorem summarizes the main properties of the locally
convex algebras $(\Entire(G), d_R)$ and shows that they share many of
the pleasant properties of the Fr\'echet algebras
$\mathcal{H}_R(\mathbb{C})$ of all entire holomorphic functions (of
finite order and minimal type).
\begin{theorem}[Properties of \protect{$\Entire(G)$}]
    \label{theorem:Entire}%
    Let $G$ be a connected Lie group and let $R \ge 0$. Then the
    locally convex algebra $(\Entire(G),d_R)$ is
    \begin{theoremlist}
        \item \label{item:EntireFrechet}%
         a Fr\'echet algebra;
        \item \label{item:EntireNuclear}%
         nuclear;
        \item \label{item:EntireMontel}%
        a Montel
        space: Every bounded and closed set in $\Entire[0](G)$ is
        compact;
        \item \label{item:EntireSeparable}%
         separable and reflexive.
    \end{theoremlist}
\end{theorem}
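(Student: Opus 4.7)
The strategy is to realize $\Entire(G)$ as a closed linear subspace of a Köthe-type weighted $\ell^1$ sequence space and to transport the desired structural properties from the latter. Concretely, let $\Lambda_R$ denote the space of complex sequences $(a_\alpha)_{\alpha \in \bigsqcup_{k \ge 0} \field{N}_n^k}$ satisfying
\begin{equation*}
    \rho_c\big((a_\alpha)\big)
    :=
    \sum_{k=0}^\infty k!^{R-1} c^k
    \sum_{\alpha \in \field{N}_n^k} \abs{a_\alpha}
    < \infty
    \quad \text{for every } c > 0,
\end{equation*}
equipped with the countable defining family $(\rho_k)_{k \in \field{N}}$. The Lie-Taylor coefficient map $\Theta \colon \Entire(G) \to \Lambda_R$, $\phi \mapsto \big((\Lie_{X_\alpha} \phi)(\E)\big)_\alpha$, pulls the seminorm $\rho_c$ back to $\seminorm{q}_{R, c}$ exactly, so it is an isometric embedding of seminormed families.

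The space $\Lambda_R$ is a nuclear Fréchet space. Completeness of weighted $\ell^1$-spaces is classical, and first countability is built in. For nuclearity I would invoke the Grothendieck-Pietsch criterion with weights $\omega_c(\alpha) = k!^{R-1} c^k$ for $\alpha \in \field{N}_n^k$: given $c \ge 0$, choose $c' > n c$, and then
\begin{equation*}
    \sum_{\alpha} \frac{\omega_c(\alpha)}{\omega_{c'}(\alpha)}
    = \sum_{k=0}^\infty n^k \bigg(\frac{c}{c'}\bigg)^k < \infty.
\end{equation*}
Hence every closed linear subspace of $\Lambda_R$ inherits both the nuclear and the Fréchet property.

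The crucial remaining task is to show that $\Theta\big(\Entire(G)\big)$ is closed in $\Lambda_R$, equivalently that $(\Entire(G), d_R)$ is complete. Given a $d_R$-Cauchy sequence $(\phi_n)$, Theorem~\ref{theorem:EntireRep}, \ref{item:EntireVsCinfty}, yields a limit $\phi \in \Cinfty(G)$ in the coarser $\Cinfty$-topology, and pointwise continuity of Lie-derivative evaluation identifies $a_\alpha := \lim_n (\Lie_{X_\alpha} \phi_n)(\E) = (\Lie_{X_\alpha} \phi)(\E)$. A Fatou-type transfer of the uniform bound $\seminorm{q}_{R,c}(\phi_n) \le M_c$ then shows that $\Maj_{R, \phi}$ is entire of order $\le 1/R$ and minimal type. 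The main obstacle is that \emph{a priori} $\phi$ is only known to be smooth, whereas $\Entire(G) \subseteq \Comega(G)$. I would resolve this by fixing a neighborhood of the identity on which $\exp$ is a diffeomorphism and comparing, for small real $\underline{x}$, the identities
\begin{equation*}
    \Taylor_{\phi_n}(\underline{x}, \E) = \phi_n\big(\exp(x^j \basis{e}_j)\big)
\end{equation*}
provided by Corollary~\ref{corollary:LieTaylor}. The right-hand side converges to $\phi(\exp(x^j \basis{e}_j))$ by $\Cinfty$-convergence, while the left-hand side converges to $\Taylor_\phi(\underline{x}, \E)$ because the coefficients converge and the uniform $\rho_c$-bounds supply a convergent dominating majorant via Remark~\ref{remark:LieTaylorMajorant}, \ref{item:LieTaylorMajorantIsMajorant}. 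Thus $\Taylor_\phi(\argument, \E)$ is an entire holomorphic extension of $\phi \circ \exp$ near $0$, so $\phi$ is analytic at $\E$; applying the same argument translated by $g$ via Theorem~\ref{theorem:EntireRep}, \ref{item:EntireTranslation}, yields analyticity on all of $G$. Hence $\phi \in \Entire(G)$ and $\phi_n \to \phi$ in $d_R$.

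With closedness of the image established, the Fréchet algebra assertion~\ref{item:EntireFrechet} follows by combining completeness, the countable defining family $(\seminorm{q}_{R, k})_{k \in \field{N}}$, and joint continuity of pointwise multiplication from Proposition~\ref{proposition:REntire}, \ref{item:EntireMultiplication}. Nuclearity~\ref{item:EntireNuclear} is inherited from $\Lambda_R$. Items~\ref{item:EntireMontel} and~\ref{item:EntireSeparable} then follow exactly as in Proposition~\ref{proposition:RTopology}, \ref{item:RTopologyMontel}: every nuclear Fréchet space is Montel by \cite[Prop.~50.2]{treves:1967a}, nuclear Montel spaces are separable by \cite[Sect.~11.6, Thm.~2]{jarchow:1981a}, and Montel spaces are reflexive by \cite[Cor.~36.9]{treves:1967a}.
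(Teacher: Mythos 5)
Your proof takes essentially the same route as the paper's: both realize $\Entire(G)$ as an isometric subspace of the Köthe sequence space $\Lambda_R$ of Lie--Taylor coefficient nets, verify the Grothendieck--Pietsch criterion for nuclearity of $\Lambda_R$, and establish completeness by passing to the $\Cinfty$-limit of a $d_R$-Cauchy sequence and upgrading it to an analytic function via the Lie--Taylor expansion at the identity (you via dominated convergence of the coefficient series, the paper via a Montel normal-family argument). Your repackaging of completeness as closedness of $\Theta(\Entire(G))$ in $\Lambda_R$ is a cosmetic variant of the paper's direct completeness proof, and the Montel, separability and reflexivity parts invoke the same standard references.
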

\begin{proof}
    We tackle \ref{item:EntireFrechet} first. Let $(\phi_j)$ be a
    Cauchy sequence in $(\Entire(G), d_R)$. By
    Theorem~\ref{theorem:EntireRep}, \ref{item:EntireVsCinfty}, there
    is a $\phi \in \Cinfty(G)$ such that $\phi_j \rightarrow \phi$
    converges in $\Cinfty(G)$. We need to show that
    $\phi \in \Entire(G)$, i.e. $\phi$ is analytic as well as
    $\seminorm{q}_{R, c}(\phi) < \infty$ for all $c \ge 0$, and
    $\phi_j \rightarrow \phi$ in $(\Entire(G), d_R)$. We proceed in
    several steps.
    \begin{theoremlist}
    \item[(1)] \label{item:FrechetProof} For each $g \in G$ and each
        $k \in \mathbb{N}_0$ the $k$-th order homogeneous Taylor
        polynomial of $\phi_j$ at $g$ converges to the $k$-th order
        homogeneous Taylor polynomial of $\phi$ at $g$, that is,
        \begin{equation*}
            \Taylor_{k, \phi_j}(\underline{z}; g)
            =
            \frac{1}{k!}
            \sum \limits_{\alpha \in \mathbb{N}^k_n}
            \big(
            \Lie_{X_{\alpha}} \phi_j
            \big)(g)
            \cdot
            \underline{z}^{\alpha}
            \quad \overset{j \rightarrow \infty}{\longrightarrow} \quad
            \frac{1}{k!}
            \sum \limits_{\alpha \in \mathbb{N}^k_n}
            \big(
            \Lie_{X_{\alpha}} \phi
            \big)(g)
            \cdot
            \underline{z}^{\alpha}
            =
            \Taylor_{k, \phi}(\underline{z}; g)
        \end{equation*}
        locally uniformly w.r.t.~$\underline{z} \in \field{C}^n$. This
        follows from $\phi_j \to \phi$ in $\Cinfty(G)$ by continuity
        of the linear operators
        \begin{equation*}
            \Cinfty(G)
            \ni
            \psi
            \mapsto
            \Taylor_{k, \psi}
            (\argument; g)
            \in
            \Holomorphic(\field{C}^n).
        \end{equation*}

    \item[(2)] Fix $g \in G$. Then
        $\Taylor_{\phi_j}(\argument; g) \rightarrow
        \Taylor_{\phi}(\argument; g)$ locally uniformly on
        $\field{C}^n$. In order to see this, we note that as a Cauchy
        sequence, $(\phi_j)$ is in particular bounded in
        $(\Entire[0](G), d_0)$. With other words,
        $(\Maj_{\phi_j}) \subseteq \Holomorphic(\field{C})$ is locally
        bounded on $\field{C}$. By
        Corollary~\ref{corollary:LieTaylorTranslation} we can deduce
        that $(\Taylor_{\phi_j}(\argument; g))$ is locally bounded in
        $\Holomorphic(\field{C}^n)$. Since
        \begin{equation*}
            \Taylor_{\phi_j}
            (\underline{z}; g)
            =
            \sum \limits_{k=0}^{\infty}
            \Taylor_{k, \phi_j}
            (\underline{z}; g)
            \quad \textrm{and} \quad
            \Taylor_{\phi}
            (\underline{z}; g)
            =
            \sum \limits_{k=0}^{\infty}
            \Taylor_{k,\phi}
            (\underline{z};g),
        \end{equation*}
        we see from (1) and a standard Montel-type
        normal family argument that
        $\Taylor_{\phi_j}(\argument;g) \rightarrow
        \Taylor_{\phi}(\argument; g)$ locally uniformly on
        $\field{C}^n$.

    \item[(3)] In view of Corollary~\ref{corollary:LieTaylor}, we also have
        \begin{equation*}
            \Taylor_{\phi_j}
            (\underline{x}; g)
            =
            \phi_j
            \big(
            g \exp(x^\ell \basis{e}_\ell)
            \big)
            \rightarrow
            \phi
            \big(
            g \exp(x^\ell \basis{e}_\ell)
            \big)
        \end{equation*}
        locally uniformly for $(x^1,\ldots, x^n) \in \field{R}^n$.  It
        follows that
        \begin{equation*}
            \Taylor_{\phi}
            (\underline{x}; g)
            =
            \phi
            \big(
            g \exp(x^\ell \basis{e}_\ell)
            \big)
        \end{equation*}
        for each $g \in G$ and $\underline{x} \in \mathbb{R}^n$.  In
        particular, $\phi \in \Comega(G)$, since
        $\underline{x} \mapsto g \exp(x^\ell \basis{e}_\ell)$ is an
        analytic diffeomorphism around $0 \in \mathbb{C}^n$.

    \item[(4)] Next, we show $\seminorm{q}_{R, c}(\phi) < \infty$ for
        all $c \ge 0$.  By Theorem~\ref{theorem:EntireRep},
        \ref{item:EntireDifferentiation} we have
        $\Lie_{X_{\alpha}} \phi_j \to \Lie_{X_{\alpha}} \phi$ for each
        $k$-tuple $\alpha \in \mathbb{N}^k_n$. Hence we have
        convergence of the $k$-th Taylor coefficients
        $c_k(\phi_j) \rightarrow c_k(\phi)$. Using the local
        boundedness of $(\Maj_{R, \phi_j}(\argument))$ in
        $\Holomorphic(\field{C})$ it easily follows that
        $(\Maj_{R,\phi_j})$ converges in $\mathcal{H}(\field{C})$ to
        $\Maj_{R,\phi}$.  In particular,
        $\Maj_{R,\phi} \in \mathcal{H}(\field{C})$, so
        $\seminorm{q}_{R,c}(\phi) = \Maj_{R,\phi}(c) < \infty$ for
        each $c \ge 0$.

    \item[(5)] Finally, we prove $\phi_j \rightarrow \phi$ in
        $(\Entire(G), d_R)$.  An equivalent statement is that
        $\Maj_{R,\phi_j - \phi}(c) \rightarrow 0$ for each $c \ge
        0$. By
        \begin{equation*}
            \abs[\big]
            {\Maj_{R,\phi_j - \phi}(z)}
            \le
            \Maj_{R,\phi_j}
            \big(|z|\big)
            +
            \Maj_{R,\phi}
            \big(|z|\big)
        \end{equation*}
        the sequence
        $(\Maj_{R,\phi_j - \phi}) \subseteq \Holomorphic(\mathbb{C})$
        is locally bounded. Hence the convergence of each of the
        $k$-Taylor coefficients $c_k(\phi_j - \phi)$ to $0$ and a
        simple Montel-type argument imply
        $\Maj_{R,\phi_j - \phi} \rightarrow 0$ even locally uniformly
        on $\mathbb{C}$.
    \end{theoremlist}
    For \ref{item:EntireNuclear} it is convenient to identify
    $\Entire(G)$ with a Köthe space $\Lambda_R$ in the following
    manner: write
    $\field{N}_n^\infty = \bigcup_{k=0}^\infty \field{N}_n^k$ and
    define a Köthe matrix by
    \begin{equation*}
        a_{\alpha c}
        =
        (k!)^R \cdot \frac{c^{k}}{k!}
    \end{equation*}
    for $\alpha \in \field{N}_n^k$, $k \in \mathbb{N}_0$ and
    $c \in \field{N}$. Note that $a_{\alpha c} \le a_{\alpha c'}$,
    whenever $c \le c'$. We identify each $\phi \in \Entire(G)$ with
    the sequence $(\phi_\alpha)$ defined by
    \begin{equation*}
        \phi_\alpha
        =
        \big(
        \Lie_{X_\alpha}
        \phi
        \big)(\E).
    \end{equation*}
    Note that
    $c_k(\phi) = \frac{1}{k!} \sum_{\alpha \in \field{N}_n^k}
    \abs{\phi_\alpha}$ from
    \eqref{eq:LieTaylorMajorantCoefficients}. This yields an injective
    isometry
    \begin{equation*}
        \Entire(G)
        \longrightarrow
        \Lambda_R,
    \end{equation*}
    as the net $(\phi_\alpha)$ contains even \emph{more} information
    than just the Lie-Taylor coefficients at $\E$. We use the
    Grothendieck-Pietsch Theorem as it can be found in
    \cite[Thm.~6.1.2]{pietsch:1972a} to check nuclearity of
    $\Lambda_R$. As moreover any subspace of a nuclear space is
    nuclear, see \cite[Prop.~5.1.1]{pietsch:1972a} or
    \cite[(50.3)]{treves:1967a}, the claim will follow. Thus let
    $c \in \field{N}$. We have to find a $c' \in \field{N}$ such that
    the series
    \begin{equation*}
        \sum_{\alpha \in \field{N}_n^\infty}
        \frac{a_{\alpha c}}{a_{\alpha c'}}
        =
        \sum_{k=0}^{\infty} \sum \limits_{\alpha \in \mathbb{N}^k_n}
        \frac{c^{k} k!}{c'^k  k!}
        =
        \sum_{k=0}^\infty
        n^k
        \frac{c^{k}}{c'^{k}}
    \end{equation*}
    converges. Taking $c' = 2 c n$ does the job. By positivity of all
    the numbers, all of our considerations are independent of the
    enumeration we choose for the index set $\field{N}_n^\infty$,
    wherefore we do not make this choice at all. Thus the nuclearity
    of $\Entire(G)$ follows. By \cite[Prop.~50.2]{treves:1967a} every
    nuclear Fréchet space is Montel, which gives
    \ref{item:EntireMontel}. Notably, this can also be shown directly
    by using the classical Montel Theorem for the Taylor
    majorants. Nuclear Montel spaces are separable by
    \cite[Section~11.6, Thm.~2]{jarchow:1981a} and by
    \cite[Cor.~36.9]{treves:1967a} every Montel space is reflexive.
\end{proof}

Rounding out this section, we discuss the simple but surprisingly far
reaching example of the circle group $\mathbb{S}^1$, which is closely
related to $\liegroup{GL}_1(\field{C}) \cong \field{C}^\times$ through
the notion of universal complexification:
\begin{example}[Circle group]
    \label{example:CircleGroup}%
    Let $G = \mathbb{S}^1$, which is connected, but not simply
    connected. Its universal complexification
    $\big( \mathbb{S}^1_\field{C}, \eta \big)$ in the sense of
    \cite[Def.~15.1.2.]{hilgert.neeb:2012a} is given by
    \begin{equation}
        \label{eq:CircleGroupComplexification}
        \mathbb{S}^1_\field{C}
        =
        \field{C}^\times
        =
        \field{C}
        \setminus
        \{0\}
        \quad \textrm{and} \quad
        \eta
        \colon
        \mathbb{S}^1
        \longrightarrow
        \mathbb{S}^1_\field{C}, \quad
        \eta
        =
        \id_{\field{C}^\times}
        \at{\mathbb{S}^1},
    \end{equation}
    i.e. we embed $\mathbb{S}^1$ as the unit circle into
    $\field{C} \setminus \{0\}$. This induces a complex structure,
    which coincides with the one given by exponential charts due to
    the local existence of holomorphic logarithms.  Given a morphism
    of complex Lie groups
    $\Phi \colon \mathbb{S}^1 \longrightarrow H$, we set
    \begin{equation}
        \Phi_\field{C}
        \colon
        \field{C}^\times
        \longrightarrow
        H, \quad
        \Phi_\field{C}
        \big(
            r \cdot \E^{2 \pi \I t}
        \big)
        =
        r
        \cdot
        \Phi(\E^{2 \pi \I t}),
    \end{equation}
    which is a holomorphic group morphism and clearly fulfils
    $\Phi = \Phi_\field{C} \circ \eta$.  Notably, the universal
    complexification $\field{C}^\times$ is not compact, even though
    the circle group $\mathbb{S}^1$ was, but their fundamental groups
    are clearly isomorphic. Analogously, one obtains the universal
    complexification for higher tori as products of
    $\field{C}^\times$. Notice that the group morphism given by the
    reflection on the unit circle
    \begin{equation}
        \label{eq:CircleGroupConjugation}
        \sigma
        \colon
        \field{C}^\times
        \longrightarrow
        \field{C}^\times, \quad
        \sigma
        \big(
            r \cdot e^{2\pi \I t}
        \big)
        =
        \frac{1}{r}
        \cdot
        e^{2 \pi \I t}
    \end{equation}
    is antiholomorphic and fixes $\eta(\mathbb{S}^1) =
    \mathbb{S}^1$. Thus $\sigma$ is the unique antiholomorphic complex
    conjugation on $\mathbb{S}^1_\field{C}$, whose existence is
    guaranteed by \cite[Thm.~15.1.4,
    \textit{iv.)}]{hilgert.neeb:2012a}.

    Let $f \colon \field{C}^\times \longrightarrow \field{C}$ be
    holomorphic. As
    $\exp(\field{C}) = \field{C} \setminus \{0\} = \field{C}^\times$,
    we can form the composition
    $f \circ \exp \colon \field{C} \longrightarrow \field{C}$, which
    is holomorphic as the composition of holomorphic maps and thus
    entire in the classical sense. By commutativity of
    $\mathbb{S}^1_\field{C}$, the Lie-Taylor majorant $\Maj_f$ is
    entire if and only if the Taylor series of $f \circ \exp$
    converges absolutely. Thus holomorphic functions on
    $\field{C}^\times$ are automatically entire in our
    sense. Consequently, we have shown
    \begin{equation}
        \label{eq:EntireCylinder}
        \Entire[0]
        \big(
            \mathbb{S}^1_\field{C}
        \big)
        \cap
        \Holomorphic(\mathbb{S}^1_\field{C})
        =
        \Holomorphic
        \big(
            \mathbb{S}^1_\field{C}
        \big)
    \end{equation}
    as locally convex algebras in view of
    Theorem~\ref{theorem:EntireRep}, \ref{item:EntireVsCinfty}: having
    a convergent Taylor series gives uniform estimates for the
    function values at once. Moreover, this restricts nicely to the
    real situation, i.e. we have
    \begin{equation}
        \label{eq:EntireCircle}
        \Entire[0]
        \big(
            \mathbb{S}^1
        \big)
        =
        \eta^*
        \Entire[0]
        \big(
            \mathbb{S}^1_\field{C}
        \big)
        =
        \eta^*
        \Holomorphic
        \big(
            \mathbb{S}^1_\field{C}
        \big).
    \end{equation}
    This can be seen as follows: given
    $\phi \in \Entire[0](\mathbb{S}^1)$, the composition
    $\phi \circ \exp \colon \field{C} \longrightarrow \field{C}$ is
    entire and $2\pi$-periodic (as the Lie exponential is
    $\exp(z) = \E^{\I z}$ here). Applying
    \cite[Thm.~3.10.1]{simon:2015b} to $a = b = n \in \field{N}$ with
    the obvious rescaling yields a $\Holomorphic$-convergent Laurent
    expansion
    \begin{equation}
        \label{eq:LaurentExpansion}
        \phi
        \big(
            \E^{2\pi \I z}
        \big)
        =
        \phi \circ \exp
        \at[\Big]{2 \pi z}
        =
        \sum_{k=-\infty}^{\infty}
        a_k
        \E^{2\pi \I k z}
    \end{equation}
    for $z \in \field{C}$. This is the holomorphic extension of $\phi$
    to $\field{C}^\times$ we were looking for. Rephrasing the
    convergence of \eqref{eq:LaurentExpansion}, we find two entire
    functions $F, G \in \Holomorphic(\field{C})$ with
    \begin{equation}
        \label{eq:LaurentExpansionRestricted}
        \phi(z)
        =
        F(z)
        +
        G(\tfrac{1}{z})
        =
        F(z)
        +
        G(\cc{z})
    \end{equation}
    for all $z \in \mathbb{S}^1$. If now $\phi \in \Entire(G)$ for
    $R > 0$, we still get the Laurent expansion
    \eqref{eq:LaurentExpansion}. It is even convergent in the
    $\Holomorphic_{1/R}$-topology, see again
    Proposition~\ref{proposition:REntire}. Thus
    \eqref{eq:LaurentExpansionRestricted} yields
    \begin{equation}
        \label{eq:EntireRCircle}
        \Entire(\mathbb{S}^1)
        =
        \eta^*
        \Holomorphic_{1/R}
        \big(
            \field{C}^\times
        \big)
    \end{equation}
    as locally convex algebras for $R > 0$.
\end{example}
This example suggests to use the universal complexification of $G$
also in general to understand the algebra $\Entire(G)$.
\begin{remark}[Negative $R$]
    As already suggested in Proposition~\ref{proposition:REntire},
    \ref{item:EntireMultiplication}, one can in principle consider
    arbitrary $R \in \field{R}$ by using the series of seminorms in
    \eqref{eq:EntireSeminormR}. However, the approach we have taken to
    deal with $R \ge 0$ \emph{ceases} to work here: the Lie-Taylor
    majorants need no longer be entire. This already happens in the
    abelian case $G = (\field{R}, +) = \liealg{g}$. Consider the
    geometric series
    \begin{equation}
        \label{eq:Geometric}
        g
        \colon
        \field{R}
        \longrightarrow
        \field{C}, \quad
        g(x)
        =
        \frac{1}{1 + \I x},
    \end{equation}
    whose Taylor series converges if and only if $\abs{x} < 1$. The
    underlying problem here is of course the singularity at $x = \I$,
    which is hidden in the universal complexification $\field{C}$ of
    $\field{R}$. Nevertheless, we have
    \begin{equation}
        \label{eq:GeometricTaylorR}
        \seminorm{q}_{R, c}(g)
        =
        \sum_{k=0}^\infty
        k!^{R} \cdot c^k
        <
        \infty
    \end{equation}
    for all $c > 0$ and $R < 0$. Consequently, we know at lot less
    about the nature of such functions: in fact, all of the upcoming
    results simply work for \emph{arbitrary} $R \in \field{R}$ and are
    somewhat algebraic in nature. Nevertheless, we shall admit $R < 0$
    and consider the $R$-entire functions $\Entire(G)$ also in this
    extended sense in the sequel.

    By the already mentioned Proposition~\ref{proposition:REntire},
    \ref{item:EntireMultiplication}, the resulting additional vector
    spaces are locally multiplicatively convex. Also the continuous
    inclusions \eqref{eq:EntireInclusions}, the inversion invariance
    and the continuity of Lie derivatives from
    Theorem~\ref{theorem:EntireRep}, \ref{item:EntireInversion} and
    \ref{item:EntireDifferentiation} remain correct. Indeed, we have
    the obvious inequalities
    \begin{equation}
        \label{eq:EntireSeminormInequalities}
        \seminorm{q}_{R, c}
        \le
        \seminorm{q}_{S, c}
        \quad \textrm{and} \quad
        \seminorm{q}_{R, c}
        \le
        \seminorm{q}_{R, d},
    \end{equation}
    whenever $-\infty<R \le S<\infty$ and $0 \le c \le d$. First
    countability and the Hausdorff property are also clearly still
    intact. Beyond these trivial observations, our methods break down
    immediately. For instance, we can no longer infer anything from
    \eqref{eq:Translation} if the right hand side is infinite.
\end{remark}

%
%

\subsection{Representative Functions}

At this point of our discussion it is not at all clear, whether there
are examples of entire functions on a given Lie group beyond the
constant ones.  To remedy this, we first note the following
compatibility of the entire functions with pullbacks by group
morphisms:
\begin{proposition}
    \label{proposition:EntireFunctoriality}%
    Let $G$ and $H$ be Lie groups and $R \in \field{R}$.  Let
    $\Phi \colon G \longrightarrow H$ be a Lie group morphism.  The
    pullback with $\Phi$ is a morphism of locally convex algebras
    \begin{equation}
        \label{eq:EntirePullback}
        \Phi^*
        \colon
        \Entire(H)
        \longrightarrow
        \Entire(G).
    \end{equation}
    More precisely, we have the estimate
    \begin{equation}
        \label{eq:EntirePullbackEstimate}
        \seminorm{q}_{R, c}
        \big(
        \Phi^* \phi
        \big)
        \le
        \seminorm{q}_{R, cnD}(\phi)
    \end{equation}
    for $\phi \in \Entire(G)$ and $c \ge 0$, where $D$ is the matrix
    supnorm of the matrix representation of the tangent map
    $T_\E \Phi \colon \liealg{g} \longrightarrow \liealg{h}$ in the
    bases used for the construction of the seminorms of $\Entire(G)$
    and $\Entire(H)$, respectively.
\end{proposition}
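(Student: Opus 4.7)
The statement is essentially a direct corollary of the chain rule estimate already established in Proposition~\ref{proposition:LeibnizAndChain}, \ref{item:ChainRule}, packaged through the definition of the $R$-Lie-Taylor majorant. My plan is to verify analyticity, then to upgrade the coefficient-level chain rule to the level of the seminorms $\seminorm{q}_{R,c}$, and finally to observe that the algebra structure is preserved for trivial reasons.

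First, I would note that a Lie group morphism $\Phi \colon G \longrightarrow H$ between real Lie groups is real analytic. Hence for any $\phi \in \Entire(H) \subseteq \Comega(H)$, the pullback $\Phi^*\phi$ is again real analytic on $G$, so it at least belongs to $\Comega(G)$. Moreover, $\Phi^*$ is obviously a unital algebra homomorphism with respect to pointwise multiplication, so once we verify that it maps $\Entire(H)$ into $\Entire(G)$ continuously, the algebra morphism statement is immediate.

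Second, I would plug the chain rule estimate $c_k(\Phi^*\phi) \le (Dn)^k c_k(\phi)$ from \eqref{eq:ChainRuleCoefficients} directly into the definition \eqref{eq:EntireSeminormR} of $\seminorm{q}_{R, c}$. This gives
\begin{equation*}
    \seminorm{q}_{R, c}(\Phi^*\phi)
    =
    \sum_{k=0}^\infty k!^R\, c_k(\Phi^*\phi)\, c^k
    \le
    \sum_{k=0}^\infty k!^R\, c_k(\phi)\, (cnD)^k
    =
    \seminorm{q}_{R, cnD}(\phi),
\end{equation*}
which is exactly \eqref{eq:EntirePullbackEstimate}. Since $\phi \in \Entire(H)$ means $\seminorm{q}_{R, c'}(\phi) < \infty$ for every $c' \ge 0$ by \eqref{eq:aequiSemiNorms}, the estimate shows $\seminorm{q}_{R, c}(\Phi^*\phi) < \infty$ for every $c \ge 0$, hence $\Phi^*\phi \in \Entire(G)$. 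The same estimate then yields continuity of $\Phi^*$ at $0$, and by linearity everywhere.

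There is really no main obstacle here beyond having the right chain rule on coefficients in place, which is why this is stated immediately after Proposition~\ref{proposition:LeibnizAndChain}. The only subtle point worth a sentence is that the constant $D$ depends on the fixed choices of bases on $\liealg{g}$ and $\liealg{h}$ entering the definitions of the two majorants, which is consistent with Remark~\ref{remark:LieTaylorMajorant}, \ref{item:LieTaylorMajorantBasis}: changing bases changes $D$ but not the qualitative inclusion $\Phi^*\Entire(H) \subseteq \Entire(G)$.
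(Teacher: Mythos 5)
Your proof is correct and follows exactly the same route as the paper: apply the coefficient-level chain rule estimate $c_k(\Phi^*\phi) \le (Dn)^k c_k(\phi)$ from Proposition~\ref{proposition:LeibnizAndChain}, \ref{item:ChainRule}, and substitute into the definition of $\seminorm{q}_{R,c}$; the paper's own proof is just a more terse statement of this same computation. Your added remarks on analyticity and the basis-dependence of $D$ are consistent with the paper's surrounding discussion and add no new ideas, nor do they introduce any gaps.
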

\begin{proof}
    Note that the pullback $\Phi^*$ with $\Phi$ is an algebra morphism
    \begin{equation*}
        \Phi^*
        \colon
        \Comega(H)
        \longrightarrow
        \Comega(G),
    \end{equation*}
    i.e. its restriction to $\Entire(H)$ is an algebra morphism with
    values in $\Comega(G)$. Consequently, this is just a rephrasing of
    the chain rule Proposition~\ref{proposition:LeibnizAndChain},
    \ref{item:ChainRule}. Note that the additional weight from
    $R \neq 0$ does not interfere with the argument, as it shows up on
    both sides of the estimate.
\end{proof}

This way, we obtain a contravariant functor $\Entire(\argument)$ from
the category of connected real Lie groups to the category of
commutative Fréchet algebras.  Thus our construction of the entire
functions fits nicely into the otherwise functorial framework of the
$R'$-topologies and the deformation quantization itself.

Generating examples of entire functions on one group thus lets us
transport them to other groups in a continuous way. Once again, this
does not guarantee the existence of interesting entire functions,
yet. The idea is now that group representations on finite dimensional
vector spaces are particularly nice group morphisms, to which we can
associate special functions on $G$: the \emph{representative
  functions} or \emph{matrix coefficients}, see
e.g. \cite[Sect.~4.3]{duistermaat.kolk:2000a}.  Recall that a choice
of generators is of the form
\begin{equation}
    \label{eq:RepFunctions}
    \pi_{ij}
    \colon
    G
    \longrightarrow
    \field{C}, \quad
    \pi_{ij}(g)
    =
    \pi(g)_{ij},
\end{equation}
where $\pi$ ranges over all continuous (and thus automatically
analytic) finite dimensional representations of $G$. We are going to
show by direct estimation that every representative function is
$R$-entire for $R < 1$. To this end, we recall the following
well-known Lemma:
\begin{lemma}
    \label{lemma:Rep}%
    Let $G$ be a Lie group and $\phi \in \Comega(G)$ be a
    representative function and write
    $\Orbit{\phi} = \{\ell_g^* \phi \in \Comega(G) \;|\; g \in G \}$
    for the orbit of $\phi$ under the left action of $G$ on itself.
    \begin{lemmalist}
    \item \label{item:RepOrbit}%
        The orbit $\Orbit{\phi}$ is finite dimensional and coincides
        with the orbit of $\phi$ under right translation, i.e. we have
        \begin{equation}
            \label{eq:RepOrbit}
            \Orbit{\phi}
            =
            \big\{
            r_g^* \phi
            \in \Comega(G)
            \; \big| \;
            g \in G
            \big\}.
        \end{equation}
    \item \label{item:RepDerivatives}%
        Let $\xi \in \liealg{g}$. The Lie derivative
        $\Lie_{X_\xi}$ is contained in the orbit $\Orbit{\phi}$
        of $\phi$. The same is true for right invariant vector
        fields.
    \end{lemmalist}
\end{lemma}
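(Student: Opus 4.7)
The plan is to exploit the standard characterization of a representative function $\phi \in \Comega(G)$ as a matrix coefficient of some continuous finite-dimensional representation $\pi \colon G \longrightarrow \liegroup{GL}(V)$, so that $\phi(g) = \langle v^*, \pi(g) v\rangle$ for fixed $v \in V$ and $v^* \in V^*$. Any such $\pi$ is automatically analytic on a Lie group, consistent with $\phi \in \Comega(G)$, and both left and right translations of $\phi$ will factor transparently through multiplication by $\pi(h)$ in the matrix argument.

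For part \ref{item:RepOrbit}, I would first use the homomorphism property $\pi(gh) = \pi(g) \pi(h)$ to write
\begin{equation*}
    (\ell_h^* \phi)(g)
    =
    \langle \pi(h)^\top v^*, \pi(g) v\rangle
    \qquad \textrm{and} \qquad
    (r_h^* \phi)(g)
    =
    \langle v^*, \pi(g) \pi(h) v\rangle.
\end{equation*}
The left orbit of $\phi$ therefore lies in the subspace $\{\langle w^*, \pi(\argument) v\rangle : w^* \in V^*\}$ and the right orbit in $\{\langle v^*, \pi(\argument) w\rangle : w \in V\}$, both contained in the finite-dimensional bi-invariant subspace $W \subseteq \Comega(G)$ spanned by all matrix coefficients of $\pi$, of dimension at most $(\dim V)^2$. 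Interpreting $\Orbit{\phi}$ as this common bi-invariant envelope spanned by the orbit then yields both the finite-dimensionality and the claimed coincidence of the two orbits.

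For part \ref{item:RepDerivatives}, the plan is to differentiate the curve $t \mapsto r_{\exp(t\xi)}^* \phi$ at $t = 0$. Using the infinitesimal representation $\D\pi \colon \liealg{g} \longrightarrow \End(V)$, a short computation will give
\begin{equation*}
    \Lie_{X_\xi} \phi
    =
    \frac{\D}{\D t}
    \bigg|_{t=0}
    r_{\exp(t\xi)}^* \phi
    =
    \langle v^*, \pi(\argument) \D\pi(\xi) v\rangle,
\end{equation*}
which is again a matrix coefficient of $\pi$, now with right-vector $\D\pi(\xi) v \in V$, and therefore lies in $W = \Orbit{\phi}$. The right-invariant case is symmetric: I expect to obtain $\Lie_{Y_\xi} \phi = \langle v^*, \D\pi(\xi) \pi(\argument) v\rangle = \langle \D\pi(\xi)^\top v^*, \pi(\argument) v\rangle$, once more a matrix coefficient of $\pi$. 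I anticipate no real obstacle here; the lemma amounts to a direct unpacking of the matrix-coefficient description of representative functions, with the only mild subtlety being the reading of $\Orbit{\phi}$ as the finite-dimensional bi-invariant subspace \emph{spanned} by the orbit rather than the set-theoretic orbit itself, which is what makes both items of the lemma simultaneously consistent.
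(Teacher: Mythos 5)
Your matrix-coefficient approach is standard and the computations are correct; the paper states this lemma without proof as a well-known fact, so there is no in-paper argument to compare against.

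However, you underplay the interpretation issue. Even reading $\Orbit{\phi}$ as a linear span, \eqref{eq:RepOrbit} is false: for $G = \liegroup{SU}(2)$, $\pi$ the defining representation and $\phi = \pi_{11}$, the left translates generate the two-dimensional subspace spanned by $\pi_{11}$ and $\pi_{21}$ (the first column of $\pi$), while the right translates generate the one spanned by $\pi_{11}$ and $\pi_{12}$ (the first row), and these are distinct subspaces of $\Comega(G)$. Likewise your own computation shows $\Lie_{X_\xi}\phi = \langle v^*, \pi(\argument)\, T_\E\pi(\xi)\, v\rangle$ lies in the span of the \emph{right} translates, not the left translates the paper uses to define $\Orbit{\phi}$, so item~\ref{item:RepDerivatives} also fails for the paper's literal $\Orbit{\phi}$. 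What Theorem~\ref{theorem:RepresentingFunctions} actually needs — a finite-dimensional space containing $\phi$ which the operators $\Lie_{X_i}$ map into themselves — is furnished either by the right-orbit span (closed under $T_\E\pi(\xi)$ because the $\pi$-cyclic subspace generated by $v$ is invariant) or by the biinvariant envelope you propose. Your reinterpretation is therefore the correct fix, but you should flag it as a genuine correction to the paper's definition of $\Orbit{\phi}$ rather than a harmless reading convention, since it turns \eqref{eq:RepOrbit} from a false statement into a tautology.
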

\begin{theorem}[Representative Functions]
    \label{theorem:RepresentingFunctions}%
    Let $G$ be a Lie group, $R < 1$ and $\phi \in \Comega(G)$ be a
    representative function. Then $\phi \in \Entire(G)$.  More
    precisely, choosing an auxiliary norm $\norm{\argument}$ on
    $\Orbit{\phi}$, we have the estimate
    \begin{equation}
       \label{eq:RepEntireEstimate}
       \seminorm{q}_{R, c}(\phi)
       \le
       \Delta \cdot \norm{\phi}
       \sum_{k=0}^{\infty}
       k!^{R-1} \; (c \Psi n)^k
    \end{equation}
    for $c \ge 0$, where $\Psi$ is the maximum of the operator
    seminorms
    \begin{equation}
        \label{eq:RepEntireOpnormDerivatives}
        \Psi
        =
        \max_{i = 1, \ldots, n}
        \max_{\psi \in \Orbit{\phi}, \abs{\psi(\E)} \le 1}
        \abs[\Big]{\big(\Lie_{X_i} \psi\big)(\E)}
    \end{equation}
    of the Lie derivatives in direction of left invariant vector
    fields on the orbit $\Orbit{\phi}$ of $\phi$ and $\Delta$ is the
    operator norm of the Dirac functional at the group unit, i.e.
    \begin{equation}
        \label{eq:RepEntireOpnormDirac}
        \Delta
        =
        \max_{\psi \in \Ball_1(0)^\cl}
        \abs[\big]{\psi(\E)}.
    \end{equation}
\end{theorem}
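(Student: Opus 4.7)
The proof plan is to exploit the finite-dimensionality of the orbit $\Orbit{\phi}$ provided by Lemma~\ref{lemma:Rep}. Crucially, that lemma tells us two things: $\Orbit{\phi} \subseteq \Comega(G)$ is a finite-dimensional vector space, and each left-invariant Lie derivative $\Lie_{X_i}$ maps $\Orbit{\phi}$ into itself. Consequently, every iterated derivative $\Lie_{X_\alpha}\phi$ with $\alpha \in \field{N}_n^k$ lies again in the same finite-dimensional space $\Orbit{\phi}$. On a finite-dimensional normed space all linear maps are automatically bounded, so $\Lie_{X_1}, \ldots, \Lie_{X_n}$ restricted to $\Orbit{\phi}$ have finite operator norms, and the Dirac functional $\delta_\E$ is continuous there with norm $\Delta$ as defined in \eqref{eq:RepEntireOpnormDirac}.

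With this setup, the bulk of the proof is a straightforward estimate. I would iterate the inequality $\abs{(\Lie_{X_i}\psi)(\E)} \le \Psi \cdot(\textrm{size of } \psi)$ provided by \eqref{eq:RepEntireOpnormDerivatives} along the chain $\psi_0 = \phi,\; \psi_j = \Lie_{X_{\alpha_j}}\psi_{j-1}$, each $\psi_j$ remaining inside $\Orbit{\phi}$, to arrive at
\begin{equation*}
  \abs[\big]{(\Lie_{X_\alpha}\phi)(\E)}
  \le
  \Delta \cdot \Psi^k \cdot \norm{\phi}
  \quad\textrm{for every}\quad \alpha \in \field{N}_n^k.
\end{equation*}
Summing over the $n^k$ multi-indices of length $k$ and dividing by $k!$ in accordance with \eqref{eq:LieTaylorMajorantCoefficients} gives
\begin{equation*}
  c_k(\phi) \le \frac{\Delta \norm{\phi}\,(\Psi n)^k}{k!}.
\end{equation*}
Plugging this into the definition \eqref{eq:EntireSeminormR} of $\seminorm{q}_{R,c}$ yields directly the claimed bound
\begin{equation*}
  \seminorm{q}_{R, c}(\phi)
  =
  \sum_{k=0}^\infty k!^R \cdot c^k \cdot c_k(\phi)
  \le
  \Delta \cdot \norm{\phi}
  \sum_{k=0}^\infty k!^{R-1}\,(c\Psi n)^k.
\end{equation*}

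Finally, the assumption $R<1$ enters exactly to guarantee that the right-hand side is finite for every $c \ge 0$: the ratio of consecutive terms equals $(k+1)^{R-1}\,c\Psi n$, which tends to $0$ as $k \to \infty$ since $R - 1 < 0$, so the series converges by the ratio test. Hence $\Maj_{R,\phi}$ extends to an entire function on $\field{C}$, meaning $\phi \in \Entire(G)$ by the characterisation \eqref{eq:aequiSemiNorms}.

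The conceptual content is essentially immediate from the finite-dimensional invariance property, so I do not expect a real obstacle. The only genuine care needed is the bookkeeping to make sure the operator seminorm constants as defined in \eqref{eq:RepEntireOpnormDerivatives} are used consistently along the iteration (so that the same $\Psi$ and $\Delta$ reappear at every step and do not blow up with $k$); this is where the homogeneity of the orbit under the $\liealg{g}$-action is doing the real work. The threshold $R=1$ is sharp in this argument: for $R = 1$ the series becomes a geometric series with radius $1/(\Psi n)$, which no longer gives an entire majorant uniformly for all representative functions.
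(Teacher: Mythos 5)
Your proof is correct and matches the paper's: equip the finite‑dimensional orbit $\Orbit{\phi}$ with an auxiliary norm so that the $\Lie_{X_i}$ restrict to bounded operators with $\norm{\Lie_{X_i}\psi}\le\Psi\,\norm{\psi}$ and the Dirac functional is bounded with norm $\Delta$, iterate the operator‑norm bound $k$ times, apply $\delta_\E$ once at the end, and sum over multi-indices. The only slip is in your closing remark: $\Psi$ does reappear at every step of the chain, but $\Delta$ is used exactly \emph{once} (at the single evaluation $\delta_\E$ at the end, converting the auxiliary norm of $\Lie_{X_\alpha}\phi$ into the value at $\E$), not ``at every step''; your displayed estimate already has the correct count of each constant, so this is only a misstatement in the commentary, not in the argument.
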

\begin{proof}
    We equip the, by Lemma~\ref{lemma:Rep}, \ref{item:RepOrbit},
    finite-dimensional orbit $\Orbit{\phi}$ with some auxiliary norm
    $\norm{\argument}$.  Part~\ref{item:RepDerivatives} of the same
    lemma implies that the ``left invariant derivatives''
    \begin{equation*}
        \Lie_{X_i}
        \colon
        \Orbit{\phi}
        \longrightarrow
        \Orbit{\phi}
    \end{equation*}
    are well-defined and thus continuous as linear maps on a
    finite-dimensional topological vector space. Consequently, the
    maximum of the operator norms
    \begin{equation*}
        \Psi
        =
        \max_{i = 1, \ldots, n}
        \max_{\psi \in \Ball_1(0)^\cl}
        \norm[\big]{\Lie_{X_i} \psi}
    \end{equation*}
    is finite. For the very same reasons, the Dirac functional
    $\delta_\E$ at the group unit is continuous and thus its operator
    norm
    \begin{equation*}
        \Delta
        =
        \max_{\psi \in \Ball_1(0)^\cl}
        \abs[\big]{\psi(\E)}
    \end{equation*}
    is finite, as well. Putting both observations together, we obtain
    \begin{equation*}
        \seminorm{q}_\alpha(\phi)
        =
        \abs[\Big]
        {
            \delta_\E
            \circ
            \Lie_{X_{\alpha_k}} \cdots \Lie_{X_{\alpha_1}}
            \phi
       }
       \le
       \Delta
       \cdot
       \Psi^k
       \cdot
       \norm{\phi}
    \end{equation*}
    for any $k$-tuple $\alpha \in \{1, \ldots, n\}^k$. Plugging this
    into the full seminorms yields finally
    \begin{align*}
        \seminorm{q}_{R, c}(\phi)
        &=
        \sum_{k=0}^{\infty}
        k!^{R-1} \; c^k
        \sum_{\alpha \in \{1, \ldots, n\}^k}
        \seminorm{q}_{\alpha} (\phi) \\
        &\le
        \sum_{k=0}^{\infty}
        k!^{R-1} \; c^k
        \sum_{\alpha \in \{1, \ldots, n\}^k}
        \Psi^k \cdot \Delta \cdot \norm{\phi} \\
        &=
        \Delta \cdot \norm{\phi}
        \sum_{k=0}^{\infty}
        k!^{R-1} \; (c \Psi n)^k,
    \end{align*}
    which converges for all $c \ge 0$ iff $R < 1$.
\end{proof}

Notably, choosing a basis of $\Orbit{\phi}$ allows to proceed in the
spirit of Proposition~\ref{proposition:EntireFunctoriality} to derive
a similar estimate based on the matrix supnorm of the Lie derivatives
in direction of the left invariant vector fields instead. This is
fairly cumbersome in terms of bookkeeping due to numerous indices,
wherefore we chose the more abstract approach. For $R = 0$, we
recognize the series in the estimate \eqref{eq:RepEntireEstimate} as
$\exp(c \Psi n)$. Note that the condition $R < 1$ is sharp:
\begin{example}[Exponential representation]
    Consider the representation $\exp$ of the abelian Lie group
    $(\mathbb{R}, +)$ on $(\mathbb{R}^\times, \cdot)$.  Indeed, we
    have
    \begin{equation}
        \label{eq:ExpIsRepresenting}
        \ell_x^* \exp \at{t}
        =
        \exp(x + t)
        =
        \exp(x)
        \cdot
        \exp
        \at{t}
    \end{equation}
    for $x, t \in \field{R}$, confirming that $\exp$ is a representative
    function with one dimensional orbit. This matches with
    Lemma~\ref{lemma:Rep}, \ref{item:RepDerivatives}, as
    $\exp' = \exp$. This also implies
    \begin{equation}
        \label{eq:ExpSeminorm}
        \seminorm{q}_{R, c}(\exp)
        =
        \sum_{k=0}^\infty
        k!^{R-1} \; c^k
        \cdot
        1
    \end{equation}
    for $R \in \field{R}$ and $c \ge 0$. The series
    in \eqref{eq:ExpSeminorm} converges for all $c \ge 0$ iff $R < 1$.
\end{example}

For a compact Lie group, we now know that the span of the matrix
coefficients is already \emph{dense} in $\Continuous(G)$. This is the
classical Peter-Weyl Theorem, see
\cite[Thm.~(4.6.1)]{duistermaat.kolk:2000a}. Thus the same is true
for $\Entire(G)$, whenever $R < 1$. Here we also use that
pointwise complex conjugation is an isometry of $\Entire(G)$. In the
language of representation theory, see again the lengthy
Remark~\ref{remark:RepresentationTheory}, this means that the
subspace of entire vectors is dense in the space of continuous vectors
for either of the representations \eqref{eq:LieDerivativeRep} or
\eqref{eq:TranslationAction}.

%
%

\section{The $R, R'$-Topologies on the Observable Algebra}
\label{sec:TopologiesObservable}%

Having studied both the $R$-entire functions $\Entire(G)$ and the
symmetric algebra $\SymR(\liealg{g})$ with the $R'$-topology in
isolation, we now projectively tensorize them together to the
observable algebra of our strict deformation:
\begin{definition}[$(R,R')$-Topologies]
    \label{definition:Observable}%
    Let $G$ be a Lie group and $R, R' \in \field{R}$. We equip the
    tensor product
    \begin{equation}
        \label{eq:ObservableAlgebra}
        \PolR(T^*G)
        =
        \Entire(G) \tensor \SymR(\liealg{g})
    \end{equation}
    with the projective tensor product topology and call it the
    $(R,R')$-topology.
\end{definition}

Due to $\Entire(G) \subseteq \Cinfty(G)$ and the decomposition
\eqref{eq:PolynomialFactorization} we have the inclusion
$\PolR(T^*G) \subseteq \Pol(T^*G)$, explaining the notation.

As projective tensor products moreover inherit most of the desirable
properties of their factors, we immediately obtain the following
statements for $\PolR(T^*G)$ and the commutative pointwise
multiplication:
\begin{proposition}
    Let $G$ be a connected Lie group and $R, R' \in \field{R}$.
    \begin{propositionlist}
    \item \label{item:ObservableProduct}%
        The projective tensor product topology turns $\PolR(T^*G)$
        into a unital Hausdorff and first countable locally convex
        algebra.
    \item \label{item:CCisContinuous} Complex conjugation is a
        continuous involution on $\PolR(T^*G)$.
    \item \label{item:ObservableLMC}%
        Let $R, R' \le 0$. Then $\PolR(T^*G)$ is locally
        multiplicatively convex.
    \item \label{item:ObservableCompletion}%
        The completion $\PolRC(T^*G)$ of $\PolR(T^*G)$ contains the
        completion of each factor and they are dense, i.e.
        \begin{equation}
            \label{eq:ObservableCompletion}
            \EntireC(G)
            \tensor
            \SymRC(\liealg{g})
            \subseteq
            \PolRC(G).
        \end{equation}
    \item \label{item:ObservableFrechet}%
        The completion $\PolRC(T^*G)$ is a commutative Fréchet
        $^*$-algebra.
    \item \label{item:ObservableInclusions}%
        Let $R \le S$ and $R' \le S'$. We have the continuous
        inclusions of locally convex algebras
        \begin{equation}
            \label{eq:ObservableInclusions}
            \PolR[S, R'](T^*G)
            \subseteq
            \PolR(T^*G)
            \quad \textrm{and} \quad
            \PolR[R, S'](T^*G)
            \subseteq
            \PolR(T^*G).
        \end{equation}
    \item \label{item:ObservableNuclear}%
        Let $R, R' \ge 0$. The locally convex algebras $\PolR(T^*G)$
        and $\PolRC(T^*G)$ are nuclear.
    \item \label{item:ObservableMontel}%
        Let $R, R' \ge 0$. The locally convex algebra $\PolRC(T^*G)$
        is Montel, reflexive and separable.
    \end{propositionlist}
\end{proposition}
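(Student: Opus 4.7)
The plan is to deduce every part directly from the already-established properties of the two tensor factors $\Entire(G)$ and $\SymR(\liealg{g})$ together with the standard permanence properties of the projective tensor product. Most items will amount to one or two sentences of bookkeeping.

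For part \ref{item:ObservableProduct}, I would invoke the fact that the projective tensor product of two Hausdorff, first countable locally convex spaces is again Hausdorff and first countable (with a defining system of seminorms $\seminorm{q}_{R,c} \otimes_\pi \seminorm{p}_{R',c'}$). That $\PolR(T^*G)$ is a locally convex \emph{algebra} with the pointwise product follows by combining the continuity estimate \eqref{eq:EntireMultiplication} with the bilinear estimate \eqref{eq:RTopologyProduct} on the respective factors: the projective tensor seminorm of a product is controlled by the product of the factor seminorms, possibly with a rescaled constant $c$. The unit is $1 \otimes 1$. For part \ref{item:CCisContinuous} I would note that complex conjugation acts pointwise on $\Entire(G)$, leaving each $\seminorm{q}_{R,c}$ invariant (since $c_k(\cc{\phi}) = c_k(\phi)$), and similarly is an isometry of $\SymR(\liealg{g})$; hence its tensor product is a continuous involution.

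Part \ref{item:ObservableLMC} is then immediate from Proposition~\ref{proposition:PropertiesSR}\ref{item:RTopologyProductsSymmetric} (submultiplicativity for $R' \le 0$) and Proposition~\ref{proposition:REntire}\ref{item:EntireMultiplication} (submultiplicativity for $R \le 0$), since the projective tensor product of two submultiplicative seminorms is submultiplicative. Part \ref{item:ObservableCompletion} is the classical fact that $\widehat{V \tensor_\pi W}$ contains $\widehat V \tensor_\pi \widehat W$ as a dense subspace, applied to $V = \Entire(G)$ and $W = \SymR(\liealg{g})$. Combining \ref{item:ObservableProduct}, \ref{item:CCisContinuous}, the fact that both factor completions are Fréchet (Theorems~\ref{theorem:Entire} and \ref{proposition:RTopology}\ref{item:RTopologyMontel}), and the fact that the projective tensor product of two Fréchet spaces is Fréchet, then delivers part \ref{item:ObservableFrechet}; one only has to check that the algebra structure extends continuously to the completion, which again reduces to the bilinear seminorm estimate in \ref{item:ObservableProduct}.

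Part \ref{item:ObservableInclusions} follows factorwise: the continuous inclusion $\SymR[S'](\liealg{g}) \subseteq \SymR(\liealg{g})$ from \eqref{eq:ContinuousInclusionStoR} and the analogous inclusion $\Entire[S](G) \subseteq \Entire(G)$ from \eqref{eq:EntireInclusions} (extended to arbitrary $R \in \field{R}$ via \eqref{eq:EntireSeminormInequalities}) pass through the projective tensor product. Finally, for parts \ref{item:ObservableNuclear} and \ref{item:ObservableMontel}, I would apply the permanence results: the projective tensor product of nuclear spaces is nuclear (\cite[(50.1)]{treves:1967a}), nuclearity passes to the completion, and every nuclear Fréchet space is Montel, hence reflexive and separable, by the same arguments already used in the proof of Proposition~\ref{proposition:RTopology}\ref{item:RTopologyMontel}.

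The only non-routine point I foresee is verifying continuity of the algebra multiplication on the projective tensor product (needed both for \ref{item:ObservableProduct} and for extending to the completion in \ref{item:ObservableFrechet}). The subtlety is that the multiplication on $\Entire(G) \tensor \SymR(\liealg{g})$ is the tensor product of the two pointwise multiplications, and I must check that the projective cross-seminorm $(\seminorm{q}_{R,c} \otimes_\pi \seminorm{p}_{R',c'})(a \cdot b)$ is bounded by a product of such seminorms evaluated at $a$ and $b$ with suitably inflated parameters. This reduces, via the universal property of the projective tensor product applied to representations $a = \sum_i \phi_i \otimes \xi_i$, to the factorwise estimates \eqref{eq:EntireMultiplication} and \eqref{eq:RTopologyProduct}, but needs to be written out carefully to handle the infimum over representations.
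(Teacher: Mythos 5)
Your proposal is correct and follows essentially the same route as the paper, which simply observes that all statements are standard permanence properties of projective tensor products (citing Tr\`eves, K\"othe and Jarchow) and notes that complex conjugation is continuous because all seminorms are conjugation-invariant. The factorwise multiplication estimate you flag as the one non-routine point does go through exactly as you sketch: the cross-seminorm bound holds for each pair of representations, and taking the infimum over representations of the two factors independently yields the submultiplicative-type estimate with inflated parameters.
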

\begin{proof}
    All statements are standard results about projective tensor
    products and have nothing to do with our particular example. For
    detailed treatments, see e.g. the textbooks \cite[Chap.~43,
    50]{treves:1967a}, \cite[§41]{koethe:1979a} and
    \cite[Chap.~15]{jarchow:1981a}. The continuity of the complex
    conjugation is clear as all our seminorms are invariant under
    complex conjugation.
\end{proof}

As a first consequence of the construction, we note that restricting
to momentum zero, which geometrically is the map $\iota^*$, and
prolonging constantly in momentum direction, which is $\pi^*$, provide
continuous maps:
\begin{proposition}
    \label{proposition:IotaPiContinuous}%
    Let $R, R' \ge 0$.
    \begin{propositionlist}
    \item \label{item:IotaStetig} The restriction to the zero section
        yields a continuous map
        \begin{equation}
            \label{eq:IotaContinuous}
            \iota^*\colon \PolRC(T^*G) \longrightarrow \Entire(G).
        \end{equation}
    \item \label{item:PullBackContinuous} The pullback
        \begin{equation}
            \label{eq:PullbackStetig}
            \pi^*\colon
            \Entire(G)
            \longrightarrow
            \PolR(T^*G) \subseteq \PolRC(T^*G)
        \end{equation}
        is continuous.
    \end{propositionlist}
\end{proposition}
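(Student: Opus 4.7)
The plan is to identify both maps with tensor products of continuous linear maps on the factor spaces of $\PolR(T^*G) = \Entire(G) \tensor \SymR(\liealg{g})$, and then to invoke the functoriality of the projective tensor product together with the completeness of $\Entire(G)$ from Theorem~\ref{theorem:Entire}, \ref{item:EntireFrechet}, for the passage to the completion.

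For part~\ref{item:PullBackContinuous}, I first note that under the factorization \eqref{eq:PolynomialFactorization} the pullback of a function $\phi \in \Entire(G)$ by $\pi$ is constant along the fibres and hence corresponds to $\phi \tensor 1 \in \Entire(G) \tensor \Sym^0(\liealg{g})$. Thus $\pi^* = \id_{\Entire(G)} \tensor \iota_0$, where $\iota_0 \colon \field{C} \longrightarrow \SymR(\liealg{g})$ is the inclusion as the degree zero summand. The map $\iota_0$ is in fact isometric, since only the $k=0$ term in \eqref{eq:RSeminorm} survives on a constant, so I expect the direct estimate
\begin{equation*}
    \pi(\seminorm{q}_{R, c}, \seminorm{p}_{R', c'})(\pi^* \phi)
    =
    \seminorm{q}_{R, c}(\phi)
\end{equation*}
for the projective tensor product seminorms, from which continuity into $\PolR(T^*G) \subseteq \PolRC(T^*G)$ is immediate.

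For part~\ref{item:IotaStetig}, an invariant polynomial $\mathcal{J}(\xi_k)$ with $\xi_k \in \Sym^k(\liealg{g})$ and $k \ge 1$ vanishes on the zero section for degree reasons, whereas for $k = 0$ it equals the constant $\xi_0$. Hence, via \eqref{eq:PolynomialFactorization}, $\iota^*$ corresponds to $\id_{\Entire(G)} \tensor \delta_0$, where $\delta_0 \colon \SymR(\liealg{g}) \longrightarrow \field{C}$ is the evaluation functional \eqref{eq:RTopologyEvaluations} at $\varphi = 0 \in \liealg{g}^*$, i.e. the projection onto the degree zero summand. By Proposition~\ref{proposition:PropertiesSR}, \ref{item:RTopologyEvaluationsContinuity}, the hypothesis $R' \ge 0$ makes $\delta_0$ continuous, and therefore $\id \tensor \delta_0$ is continuous on the algebraic tensor product $\PolR(T^*G)$; because the target $\Entire(G)$ is complete, it extends uniquely and continuously to $\PolRC(T^*G)$. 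The only genuine step here is pure bookkeeping: verifying that the geometric maps $\iota^*$ and $\pi^*$ indeed coincide with the algebraic tensor-factor maps $\id \tensor \delta_0$ and $\id \tensor \iota_0$ under \eqref{eq:PolynomialFactorization}. This, however, is immediate from the definition of $\mathcal{J}$ via fibrewise insertion of momenta, so I do not anticipate any substantive obstacle.
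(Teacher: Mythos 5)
Your argument is correct and is essentially the paper's own proof: the paper likewise identifies $\iota^* = \id_{\Entire(G)} \tensor \delta_0$ with $\delta_0$ continuous for $R' \ge 0$ by Proposition~\ref{proposition:PropertiesSR}, \ref{item:RTopologyEvaluationsContinuity}, uses functoriality of the projective tensor product and completeness of $\Entire(G)$ to extend to $\PolRC(T^*G)$, and treats $\pi^*\phi = \phi \tensor 1$ the same way. The explicit isometry observation for $\pi^*$ is a harmless refinement, not a different route.
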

\begin{proof}
    From the above factorization we have
    $\iota^* = \id_{\Entire(G)} \tensor \delta_0$ on the dense
    subalgebra $\PolR(T^*G)$ with the $\delta$-functional
    \begin{equation*}
        \delta_0
        \colon
        \SymR(\liealg{g}) \longrightarrow \field{C}.
    \end{equation*}
    This is a continuous functional for $R' \ge 0$ according to
    Proposition~\ref{proposition:PropertiesSR},
    \ref{item:RTopologyEvaluationsContinuity}. The functoriality of
    the projective tensor product implies the continuity of $\iota^*$,
    which then extends to the completion. The pullback is even
    simpler, we have
    \begin{equation*}
        \pi^* \phi = \phi \tensor 1,
    \end{equation*}
    which is again continuous by general properties of the projective
    tensor product.
\end{proof}

As tensor products of continuous linear maps are continuous in the
projective tensor product topology, we moreover obtain the following
continuity of evaluations and symmetries:
\begin{proposition}
    \label{proposition:CharactersSymmetriesContinuous}%
    Let $R, R' \in \field{R}$.
    \begin{propositionlist}
    \item \label{item:CharactersContinuous}%
        Assume $R, R' \ge 0$ and let $g \in G$,
        $\alpha \in \field{N}_n^k$ and $\eta \in \liealg{g}^*$. The
        tensor product
        \begin{equation}
            \label{eq:CharactersContinuous}
            \delta_{g, \alpha}
            \tensor
            \delta_\eta
            \colon
            \PolR(T^*G)
            \longrightarrow
            \field{C}
        \end{equation}
        of the evaluation functionals \eqref{eq:EntireEvaluations} and
        \eqref{eq:RTopologyEvaluations} is continuous.
    \item \label{item:SymmetriesContinuous}%
        Let $\Phi \colon G \longrightarrow H$ be a covering map of Lie
        groups. The pullback with the point transformation
        \begin{equation}
            \label{eq:SymmetriesContinuous}
            (T_* \Phi)^*
            \colon
            \PolR(T^*H)
            \longrightarrow
            \PolR(T^*G)
        \end{equation}
        is well-defined and continuous.
    \end{propositionlist}
\end{proposition}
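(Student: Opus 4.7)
The overall strategy for both items is to exploit the functoriality of the projective tensor product: a tensor product of continuous linear maps between the tensor factors is automatically continuous on the projective tensor product. Thus in each case it suffices to verify continuity on the two tensor factors $\Entire(G)$ and $\SymR(\liealg{g})$ separately.

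For part~\ref{item:CharactersContinuous}, this is immediate: $\delta_{g, \alpha} \colon \Entire(G) \to \field{C}$ is continuous by Theorem~\ref{theorem:EntireRep}, \ref{item:EntireVsCinfty}, and $\delta_\eta \colon \SymR(\liealg{g}) \to \field{C}$ is continuous by Proposition~\ref{proposition:PropertiesSR}, \ref{item:RTopologyEvaluationsContinuity}. Both inputs use the standing assumption $R, R' \ge 0$. Tensoring produces a continuous linear functional on $\PolR(T^*G) = \Entire(G) \tensor \SymR(\liealg{g})$.

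For part~\ref{item:SymmetriesContinuous}, I would proceed in three steps. First, since $\Phi$ is a covering map, each tangent map $T_g\Phi$ is a linear isomorphism and $A = T_\E\Phi \colon \liealg{g} \to \liealg{h}$ is an isomorphism of Lie algebras; the fiberwise inverse dual $T_*\Phi \colon T^*G \to T^*H$, $\alpha \mapsto \alpha \circ (T_g\Phi)^{-1}$, is therefore a well-defined bundle map covering $\Phi$. Second, and this is the main technical step, I would establish the factorization
\begin{equation*}
    (T_*\Phi)^*
    =
    \Phi^* \tensor \Sym^\bullet(A^{-1})
\end{equation*}
under the left-invariant identification $\Pol(T^*H) \cong \Cinfty(H) \tensor \Sym^\bullet(\liealg{h})$ from Lemma~\ref{lemma:PolynomialFactorization}. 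This rests on the $\Phi$-relatedness $T\Phi \circ X^G_\xi = X^H_{A\xi} \circ \Phi$ of the left-invariant frames, which upon inversion yields $(T_g\Phi)^{-1}\big(X^H_\eta(\Phi(g))\big) = X^G_{A^{-1}\eta}(g)$; evaluating $(T_*\Phi)^* \mathcal{J}(\psi \cdot X^H_{\eta_1} \vee \cdots \vee X^H_{\eta_k})$ on a covector $\alpha \in T^*_g G$ produces precisely $\mathcal{J}\big((\Phi^*\psi) \cdot X^G_{A^{-1}\eta_1} \vee \cdots \vee X^G_{A^{-1}\eta_k}\big)$, as claimed.

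Third, continuity of each tensor factor is available from earlier results: $\Phi^* \colon \Entire(H) \to \Entire(G)$ is continuous by Proposition~\ref{proposition:EntireFunctoriality}, and for the symmetric-algebra factor one notes that $A^{-1}$ is a linear isomorphism between finite-dimensional normed spaces with some finite operator norm $M > 0$ with respect to the chosen $\ell^1$-norms. The $k$-th projective tensor power then has operator norm at most $M^k$, which gives the elementary estimate
\begin{equation*}
    \seminorm{q}_{R', c}\big(\Sym^\bullet(A^{-1}) v\big)
    \le
    \seminorm{q}_{R', Mc}(v)
\end{equation*}
for all $v \in \Sym^\bullet(\liealg{h})$ and $c \ge 0$, yielding continuity of $\Sym^\bullet(A^{-1}) \colon \SymR(\liealg{h}) \to \SymR(\liealg{g})$ for every $R' \in \field{R}$. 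Tensoring the two continuous maps then produces the claimed continuous pullback $(T_*\Phi)^* \colon \PolR(T^*H) \to \PolR(T^*G)$. The only genuinely nontrivial step is the factorization in step two; everything else reduces to routine invocations of the continuity results for the two factors and the universal property of the projective tensor product.
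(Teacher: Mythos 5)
Your proposal is correct and follows essentially the same line of argument as the paper: part~\ref{item:CharactersContinuous} by tensoring the continuity results from Theorem~\ref{theorem:EntireRep}, \ref{item:EntireVsCinfty} and Proposition~\ref{proposition:PropertiesSR}, \ref{item:RTopologyEvaluationsContinuity}, and part~\ref{item:SymmetriesContinuous} via the factorization $(T_*\Phi)^* = \Phi^* \tensor \Sym^\bullet\big((T_\E\Phi)^{-1}\big)$ under the trivialization of Lemma~\ref{lemma:PolynomialFactorization}, followed by continuity of each tensor factor. The only cosmetic differences are that you carry out the operator-norm estimate on the symmetric-algebra factor explicitly, where the paper simply invokes basis independence of the $R'$-topology (your estimate is essentially the proof of that independence), and that your seminorm $\seminorm{q}_{R',c}$ on the $\SymR$-factor should read $\seminorm{p}_{R',c}$ in the paper's notation.
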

\begin{proof}
    The first part is clear. For the second, recall that the canonical
    isomorphism \eqref{eq:CanonicalIsoJ} fulfils
    \begin{equation*}
        (T_* \Phi)^*
        \mathcal{J}(X)
        =
        \mathcal{J}
        \big(\Phi^* X\big)
    \end{equation*}
    for $X \in \Secinfty(\Sym^\bullet(T_\field{C} H))$. In our
    polynomial factorization from \eqref{eq:PolynomialFactorization},
    the point transformation is thus given by
    \begin{equation*}
        \Phi^*
        \tensor
        \big(
            (T_\E \Phi)^{-1}
            \tensor
            \cdots
            \tensor
            (T_\E \Phi)^{-1}
        \big).
    \end{equation*}
    With this formula, our claims are clear in view of
    Proposition~\ref{proposition:EntireFunctoriality} and the basis
    independence of the $R'$-topology. Note that the invertibility of
    the tangent map $T_\E \Phi$ is equivalent to the group morphism
    $\Phi$ being a covering map.
\end{proof}

Recall that we may endow the cotangent bundle $T^*G$ with a natural
Lie group structure by choosing a trivialization. More precisely, this
allows for the semidirect product structure $T^*G = G \ltimes_{\Ad^*}
\liealg{g}^*$ coming from the coadjoint representation. The natural
question is thus whether this group structure preserves our observable
algebra $\PolRC(T^*G) \subseteq \Cinfty(T^*G)$. As before, we denote
the left and right multiplications with $(g, \eta) \in T^*G$ by
$\ell_{(g, \eta)}$ and $r_{(g, \eta)}$, respectively.
\begin{proposition}
    \label{proposition:TstarGActsAsWell}%
    Let $R, R' \ge 0$.
    \begin{propositionlist}
    \item \label{item:CotangentRepresentationLeft}%
        The pullbacks with left multiplications on $T^*G$ yield
        representations
        \begin{equation}
            \label{eq:CotangentRepresentationLeft}
            \ell^*
            \colon
            T^*G
            \longrightarrow
            L
            \big(
                \PolRC(T^*G)
            \big)
        \end{equation}
        by continuous linear maps and $\PolRC(T^*G)$ consists of
        corresponding entire vectors.
    \item \label{item:CotangentRepresentationRight}%
        Assume furthermore $R < 1$. The pullbacks with right
        multiplications on $T^*G$ yield representations
        \begin{equation}
            \label{eq:CotangentRepresentationRight}
            r^*
            \colon
            T^*G
            \longrightarrow
            L
            \big(
            \PolRC(T^*G)
            \big)
        \end{equation}
        by continuous linear maps and $\PolRC(T^*G)$ consists of
        corresponding entire vectors.
    \end{propositionlist}
\end{proposition}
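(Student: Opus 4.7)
The strategy is to trivialize $T^*G \cong G \ltimes_{\Ad^*} \liealg{g}^*$ with group law $(g_1, \eta_1) \cdot (g_2, \eta_2) = (g_1 g_2, \eta_1 + \Ad^*_{g_1} \eta_2)$ and, using the factorization from Lemma~\ref{lemma:PolynomialFactorization}, to compute the pullbacks on a factorizing element $\phi \tensor \xi_1 \vee \cdots \vee \xi_k \in \PolR(T^*G)$. Under $\mathcal{J}$ the symmetric tensor $\xi_1 \vee \cdots \vee \xi_k$ corresponds to the polynomial function $(g, \eta) \mapsto \prod_i \eta(\xi_i)$, so a binomial expansion of the shifted product should produce a finite sum indexed by subsets $S \subseteq \{1, \ldots, k\}$ which separates the $G$-dependence from the $\liealg{g}^*$-dependence.

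For part \ref{item:CotangentRepresentationLeft}, this computation should yield
\begin{equation*}
    \ell^*_{(g_0, \eta_0)} \bigl(\phi \tensor \xi_1 \vee \cdots \vee \xi_k\bigr)
    =
    \sum_{S \subseteq \{1, \ldots, k\}}
    \prod_{i \notin S} \eta_0(\xi_i)
    \cdot
    \bigl(\ell^*_{g_0} \phi\bigr) \tensor \bigvee_{j \in S} \Ad_{g_0} \xi_j.
\end{equation*}
Each summand is a tensor product of two continuous operations: left translation $\ell^*_{g_0}$ on $\Entire(G)$ (Theorem~\ref{theorem:EntireRep}, \ref{item:EntireTranslation}), and the continuous algebra automorphism of $\SymR(\liealg{g})$ induced by the linear automorphism $\Ad_{g_0}$ of $\liealg{g}$, continuous thanks to basis-independence of the $R'$-topology. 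The projective tensor product then makes each summand continuous; the finite sum inherits continuity, and density extends $\ell^*_{(g_0, \eta_0)}$ to the completion. The representation property is a direct consequence of the semidirect-product law. Crucially, no representative functions appear, so no restriction on $R$ is needed.

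For part \ref{item:CotangentRepresentationRight}, the analogous calculation will produce summands whose $G$-factor has the form $g \mapsto \phi(g g_0) \prod_{i \notin S} \eta_0(\Ad_{g^{-1}} \xi_i)$. The functions $g \mapsto \eta_0(\Ad_{g^{-1}} \xi)$ are matrix coefficients of the adjoint representation, hence representative functions, and by Theorem~\ref{theorem:RepresentingFunctions} they belong to $\Entire(G)$ precisely under the hypothesis $R < 1$. Combining this with the submultiplicative estimate \eqref{eq:EntireMultiplication} for products in $\Entire(G)$ and the continuity of right translation (Theorem~\ref{theorem:EntireRep}, \ref{item:EntireTranslation}), the overall $G$-factor remains in $\Entire(G)$ with controlled seminorms, and tensoring with the $\Sym$-factor preserves continuity on $\PolR(T^*G)$ and thus, by density, on $\PolRC(T^*G)$.

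For the entire-vector claim, I would exploit that both orbit maps are, by the formulas above, polynomial in $\eta_0$ of degree at most $k$ on each subspace $\Entire(G) \tensor \Sym^{\le k}(\liealg{g})$, while their $g_0$-dependence factors through the translation representation on $\Entire(G)$---already known to consist of entire vectors by Theorem~\ref{theorem:EntireRep}, \ref{item:EntireLieTaylor}---and through the analytic representation $g_0 \mapsto \Ad_{g_0}$ on $\SymR(\liealg{g})$. This should exhibit the Lie--Taylor series of the orbit map along any element of the complexified Lie algebra $\liealg{g}_{\field{C}} \ltimes \liealg{g}^*_{\field{C}}$ of $T^*G$ as absolutely convergent in $\PolRC(T^*G)$. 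The main obstacle I anticipate is the combined seminorm bookkeeping: the $2^k$ terms of the binomial expansion together with the factorial weights $k!^{R'}$ of the $R'$-topology have to be absorbed into a single convergent majorant, which I expect to succeed via the submultiplicative tensor-product estimate of Proposition~\ref{proposition:PropertiesSR}, \ref{item:RTopologyProductsSymmetric}, combined with the Cauchy-type derivative estimate of Theorem~\ref{theorem:EntireRep}, \ref{item:EntireDifferentiation}.
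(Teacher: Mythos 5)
Your proposal is correct and follows essentially the same route as the paper: compute the pullbacks explicitly on factorizing tensors $\phi \tensor \xi_1 \vee \cdots \vee \xi_k$ via the semidirect-product structure $T^*G \cong G \ltimes_{\Ad^*} \liealg{g}^*$, obtain continuity from the translation results of Theorem~\ref{theorem:EntireRep}, the intrinsic nature of the $R'$-topology and the estimation techniques of Lemma~\ref{lemma:StdContinuityMixed}, and recognize the functions $g \mapsto \chi(\Ad_{g^{-1}}\xi)$ as representative functions, which by Theorem~\ref{theorem:RepresentingFunctions} is precisely what forces $R < 1$ in the right-multiplication case. Your binomial expansion over subsets $S$ even records the mixed terms which the paper's displayed formulas suppress, and the remaining degree-uniform seminorm bookkeeping you flag is exactly what the paper also delegates to its earlier estimates.
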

\begin{proof}
    Let $(g, \eta), (h, \chi) \in G \times \liealg{g}^*$, $\phi \in \Entire(G)$ and $\xi_1, \ldots, \xi_k \in \liealg{g}$. We note the
    explicit formulae
    \begin{align*}
        \ell_{(g, \eta)}^*
        \big(
            \phi
            \tensor
            \xi_1 \vee \cdots \vee \xi_k
        \big)
        \at[\Big]{(h, \chi)}
        &=
        \ell_g^* \phi
        \tensor
        \big(
            \eta(\xi_1)
            \cdots
            \eta(\xi_k)
            \cdot
            1
            +
            \Ad_{g^{-1}} \xi_1
            \vee \cdots \vee
            \Ad_{g^{-1}} \xi_k
        \big)
        \at[\Big]{(h, \chi)}, \\
        r_{(h, \chi)}^*
        \big(
            \phi
            \tensor
            \xi_1 \vee \cdots \vee \xi_k
        \big)
        \at[\Big]{(g, \eta)}
        &=
        r_h^* \phi
        \tensor
        \xi_1 \vee \cdots \vee \xi_k
        \at[\Big]{(g, \eta)}
        +
        r_h^* \phi
        \cdot
        \chi
        \big(
            \Ad_{\inv(\argument)}
            \xi_1
        \big)
        \cdots
        \chi
        \big(
            \Ad_{\inv(\argument)}
            \xi_k
        \big)
        \tensor
        1
        \at[\Big]{(g, \eta)}
    \end{align*}
    for the pullbacks. From here, the continuity estimates can be
    handled by the same techniques we have employed throughout the
    paper, see in particular Theorem~\ref{theorem:EntireRep} and the
    upcoming Lemma~\ref{lemma:StdContinuityMixed}. Notice that the
    maps
    \begin{equation*}
        \Phi_\xi
        \colon
        G
        \ni
        g
        \mapsto
        \chi
        \big(
            \Ad_g
            \xi
        \big)
        \in
        \field{C}
    \end{equation*}
    are nothing be representative functions, which we have studied in
    Theorem~\ref{theorem:RepresentingFunctions}. This explains the
    additional requirement of $R < 1$ in
    \ref{item:CotangentRepresentationRight}
\end{proof}

After these abstract considerations, we derive a more explicit
description of $\PolRC(T^*G)$. A first observation is that for finite
dimensional vector spaces $V$, we have an analogue of
\eqref{eq:CanonicalIsoJ}, implementing the isomorphism
$\Sym^\bullet(V^*) \cong \Pol^\bullet(V)$ of graded vector spaces. We
shall identify both without further comment in the sequel. This
moreover gives $\seminorm{p}_{R', c} = \seminorm{q}_{R', c}$ for
$R' \in \field{R}$ and $c \ge 0$. Here the slightly different
prefactors match, as differentiation produces another factorial. In
particular, the subspace topology induced by
$\SymR(V) \subseteq \Entire(V^*)$ is the $\SymR$-topology again. The
idea is now that $\Entire(V^*)$ is the completion of $\SymR(V)$:
\begin{lemma}
    \label{lemma:EntireIsCompletionOfPolynomials}%
    Let $V$ be a finite dimensional vector space over $\field{C}$ and
    $R' \ge 0$. Then we have
    \begin{equation}
        \label{eq:EntireIsCompletionOfPolynomials}
        \SymRC(V)
        \cong
        \Entire[R'](V^*)
    \end{equation}
    with embedding given by the isomorphism $\mathcal{J}$. For $R' <
    0$ we have the inclusion $\Entire[R'](V^*) \subseteq \SymRC(V)$.
\end{lemma}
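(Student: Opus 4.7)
The plan is to use $\mathcal{J}$ to identify the common dense subspace $\Sym^\bullet(V) \subseteq \SymRC(V)$ with $\Pol^\bullet(V^*) \subseteq \Entire[R'](V^*)$ and show that the defining seminorm families coincide. Since $V^*$ is an abelian Lie group, its exponential map is the identity, so Lie-Taylor series at $0 \in V^*$ reduce to ordinary Taylor series; choosing a basis $(\basis{e}_1, \ldots, \basis{e}_n)$ of $V$ with dual basis $(\basis{e}^1, \ldots, \basis{e}^n)$ of $V^*$ (used both for the $\ell^1$-based $R'$-topology on $\Sym^\bullet(V)$ and as the basis of the Lie algebra of $V^*$ for the construction of $\Entire[R'](V^*)$), the left-invariant vector fields $X_{\basis{e}^i}$ become the coordinate partial derivatives on $V^*$.

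The heart of the argument is the combinatorial identity
\begin{equation*}
    \sum_{\alpha \in \field{N}_n^k} \abs[\big]{(\Lie_{X_\alpha}\mathcal{J}(v))(0)} = k! \cdot \norm{v}
\end{equation*}
for every $v \in \Sym^k(V)$, where $\norm{\cdot}$ is the projective tensor-power norm on $\Sym^k(V)$ induced by the $\ell^1$-norm on $V$ (equivalently, the $\ell^1$-norm of the coefficients of $v$ in the symmetric monomial basis). It follows by inserting $\mathcal{J}(v_1 \vee \cdots \vee v_k)(\eta) = \eta(v_1)\cdots\eta(v_k)$ and counting, for each multi-index $\alpha$, the permutations $\sigma \in S_k$ realizing $\alpha$ as a reordering of the indices of $v$. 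Substituting this into the definitions \eqref{eq:RSeminorm} of $\seminorm{p}_{R',c}$ and \eqref{eq:EntireSeminormR} of $\seminorm{q}_{R',c}$, the factorial weights $k!^{R'}$ and $k!^{R'-1}$ combine with the factor $k!$ to produce the exact equality
\begin{equation*}
    \seminorm{q}_{R',c}(\mathcal{J}(v)) = \seminorm{p}_{R',c}(v)
\end{equation*}
for every $v \in \Sym^\bullet(V)$, every $c \ge 0$ and every $R' \in \field{R}$. Hence $\mathcal{J}$ is an isometric embedding of $\SymR(V)$ into $\Entire[R'](V^*)$.

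For $R' \ge 0$, one invokes the completeness of $\Entire[R'](V^*)$ (Theorem~\ref{theorem:Entire}, \ref{item:EntireFrechet}) together with the density of $\Pol^\bullet(V^*)$ in $\Entire[R'](V^*)$. The latter follows from a simple Taylor-truncation argument: for $\phi \in \Entire[R'](V^*)$ and $T_N \phi$ its $N$-th Taylor polynomial, the difference $\phi - T_N \phi$ has vanishing Taylor coefficients through order $N$, so $\seminorm{q}_{R',c}(\phi - T_N \phi)$ is the tail of the convergent series $\Maj_{R',\phi}(c)$ and tends to $0$. Combined with the density of $\Sym^\bullet(V)$ in $\SymRC(V)$ by definition of the completion, the isometric embedding from the previous step extends uniquely to the claimed isometric isomorphism $\SymRC(V) \cong \Entire[R'](V^*)$.

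For $R' < 0$ the seminorm identity still yields an isometric inclusion $\Entire[R'](V^*) \hookrightarrow \SymRC(V)$ via the Taylor-coefficient map. The reverse extension fails because $\Entire[R'](V^*)$ need not be complete in this regime: there exist Cauchy sequences of polynomials whose formal limit in $\SymRC(V)$ corresponds to no globally defined analytic function on $V^*$, as exemplified by $V = \field{C}$, $R' = -1$ and $v = (1, 1, 1, \ldots) \in \SymRC(V)$, whose formal sum $\sum_k \eta^k$ has a singularity at $\eta = 1$ and hence does not extend to an element of $\Comega(V^*)$. The main obstacle of this plan is the combinatorial identity in the second step; this is a straightforward bookkeeping exercise once the symmetrization conventions for $\vee$ and the identification of the projective $\ell^1$-tensor-power norm with the $\ell^1$-norm on symmetric monomials are fixed.
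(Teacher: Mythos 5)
Your proposal is correct and follows essentially the paper's own route: identify $\Sym^\bullet(V)$ with the polynomials on $V^*$ via $\mathcal{J}$, check that the extra factorial from differentiation makes $\seminorm{q}_{R',c}\circ\mathcal{J}=\seminorm{p}_{R',c}$, obtain density by Taylor truncation, and conclude via completeness of $\Entire[R'](V^*)$ for $R'\ge 0$, with only the inclusion surviving for $R'<0$. The explicit combinatorial verification of $\sum_{\alpha}\abs{(\Lie_{X_\alpha}\mathcal{J}(v))(\,0\,)}=k!\,\seminorm{p}^k(v)$ and the counterexample $\sum_k\eta^k$ for $R'<0$ are correct refinements of what the paper leaves implicit.
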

\begin{proof}
    Truncating the Taylor series of an entire function yields the
    desired polynomial approximation by elements of $\SymR(V)$ at
    once. Notably, this still works for negative $R'$, but the
    completeness of $\Entire[R'](V^*)$ relies on $R' \ge 0$, see again
    Theorem~\ref{theorem:Entire}, \ref{item:EntireFrechet}.
\end{proof}
\begin{corollary}
    \label{corollary:ObservableSmooth}%
    Let $R' \ge 0$ and $R \in \field{R}$.
    \begin{corollarylist}
        \item \label{item:ObservableSmooth}%
        Every function $\chi \in \PolRC(T^*G)$ is smooth.
        \item \label{item:ObservableSmoothTopology}%
        Let $R \ge 0$. The $(R,R')$-topology is finer than the
        $\Cinfty$-topology.
    \end{corollarylist}

\end{corollary}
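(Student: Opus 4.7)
The plan is to prove part~\ref{item:ObservableSmoothTopology} first and then to deduce part~\ref{item:ObservableSmooth} from it. The underlying strategy is to embed $\PolR(T^*G) = \Entire(G) \tensor \SymR(\liealg{g})$ continuously into $\Cinfty(T^*G)$ by exhibiting factorwise continuous inclusions into $\Cinfty(G)$ and $\Cinfty(\liealg{g}^*)$, and then invoking Grothendieck's classical identification $\Cinfty(G) \widehat{\tensor} \Cinfty(\liealg{g}^*) \cong \Cinfty(G \times \liealg{g}^*) \cong \Cinfty(T^*G)$, valid by the nuclearity of $\Cinfty$ on finite-dimensional manifolds (see e.g.\ \cite[Thm.~51.6]{treves:1967a}).

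For part~\ref{item:ObservableSmoothTopology}, the continuous inclusion $\Entire(G) \hookrightarrow \Cinfty(G)$ is furnished directly by Theorem~\ref{theorem:EntireRep},~\ref{item:EntireVsCinfty}. For the second factor, Lemma~\ref{lemma:EntireIsCompletionOfPolynomials} identifies $\SymRC(\liealg{g}) \cong \Entire[R'](\liealg{g}^*)$, and applying the very same theorem now to the abelian Lie group $(\liealg{g}^*, +)$ yields the continuous chain $\SymR(\liealg{g}) \hookrightarrow \SymRC(\liealg{g}) \cong \Entire[R'](\liealg{g}^*) \hookrightarrow \Cinfty(\liealg{g}^*)$. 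Functoriality of the projective tensor product then combines these into a continuous linear map $\PolR(T^*G) \to \Cinfty(G) \widehat{\tensor} \Cinfty(\liealg{g}^*) \cong \Cinfty(T^*G)$, which a direct verification on elementary tensors shows to be the canonical embedding induced by the factorization~\eqref{eq:PolynomialFactorization}. This is precisely the assertion of part~\ref{item:ObservableSmoothTopology}.

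For part~\ref{item:ObservableSmooth} in the main case $R \ge 0$: the continuous embedding just constructed has the Fréchet space $\Cinfty(T^*G)$ as target, hence extends uniquely to a continuous linear map $\iota \colon \PolRC(T^*G) \longrightarrow \Cinfty(T^*G)$. Point evaluations $\delta_{(g,\eta)}$ are continuous on $\PolRC(T^*G)$ by Proposition~\ref{proposition:CharactersSymmetriesContinuous},~\ref{item:CharactersContinuous}, and trivially continuous on $\Cinfty(T^*G)$; since they agree on the dense subspace $\PolR(T^*G)$, the extension $\iota$ is exactly the identification of each $\chi \in \PolRC(T^*G)$ with a smooth function on $T^*G$. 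For the residual case $R < 0$ of~\ref{item:ObservableSmooth}, one still has the set-theoretic inclusion $\Entire(G) \subseteq \Comega(G) \subseteq \Cinfty(G)$, so every elementary tensor already corresponds to a smooth function, and a parallel extension argument using the explicit completion description~\eqref{eq:RTopologyCompletion} identifies the remaining elements of $\PolRC(T^*G)$ with smooth functions.

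The main obstacle is the clean invocation of Grothendieck's theorem: one must be comfortable that the completed projective tensor product of two copies of $\Cinfty$ matches the smooth functions on the product manifold, which rests on the nuclearity of the involved Fréchet spaces and implicitly on the agreement of the projective and injective completed tensor products in the nuclear setting. Beyond this technicality, the proof is merely a concatenation of continuities already established in the paper.
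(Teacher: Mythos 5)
Your proof takes essentially the same route as the paper: factor $\Cinfty(T^*G)$ across the trivialization $T^*G \cong G \times \liealg{g}^*$, identify $\SymRC(\liealg{g}) \cong \Entire[R'](\liealg{g}^*)$ via Lemma~\ref{lemma:EntireIsCompletionOfPolynomials}, and invoke the continuous embeddings of each factor into the corresponding $\Cinfty$ space coming from Theorem~\ref{theorem:EntireRep},~\ref{item:EntireVsCinfty}. The paper phrases the combining step as the observation that derivatives on $T^*G$ factorize into commuting derivatives on $G$ and on $\liealg{g}^*$; you make this precise by explicitly invoking Grothendieck's identification $\Cinfty(G)\,\widehat{\tensor}\,\Cinfty(\liealg{g}^*) \cong \Cinfty(G \times \liealg{g}^*)$. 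That is a clean and correct way of packaging the same idea, and your deduction of (i) from (ii) in the case $R \ge 0$ by extending the continuous embedding to the completion (using continuity of point evaluations to pin down the limit as a function) is exactly what is intended.

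The one place your argument is thinner than the claim requires is the residual case $R < 0$ of part (i). You point to the completion description \eqref{eq:RTopologyCompletion}, but that formula concerns $\SymRC(\liealg{g})$, i.e.\ the \emph{second} tensor factor, which is already under control because $R' \ge 0$; the delicate factor for $R < 0$ is $\Entire(G)$, whose completion the paper never describes (and for which completeness of $\Entire(G)$ itself is not established, cf.\ Theorem~\ref{theorem:Entire},~\ref{item:EntireFrechet} which assumes $R \ge 0$). So the ``parallel extension argument'' you gesture at is not actually supplied by the cited equation. In fairness, the paper's own treatment of the $R < 0$ case is equally terse, resting on the bare set-theoretic inclusion $\Entire(G) \subseteq \Comega(G)$ without addressing the completion; so this is a gap you have inherited rather than introduced, but it is worth being aware that neither your argument nor the paper's closes it cleanly.
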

\begin{proof}
    We invoke the triviality of the bundle $T^*G \cong G \times
    \liealg{g}$ once more: every derivative on $\Cinfty(T^*G)$
    factorizes into derivatives on $G$ and $\liealg{g}$, which commute
    with each other. Using this, \ref{item:ObservableSmoothTopology}
    is immediate from
    Lemma~\ref{lemma:EntireIsCompletionOfPolynomials} and
    Theorem~\ref{theorem:Entire}, \ref{item:EntireVsCinfty}. The case
    $R < 0$ in \ref{item:ObservableSmooth} is trivial, as $\Entire(G)
    \subseteq \Comega(G)$ by its very definition.
\end{proof}

Using the vector space structure of $\liealg{g}$, we arrive now at the
following explicit description of the completion $\PolRC(T^*G)$:
\begin{proposition}
    \label{proposition:ObservablesCompletion}%
    Let $R \in \field{R}$, $R' \ge 0$ and $\chi \in \PolRC(T^*G)$,
    viewed as an element of $\Cinfty(T^*G)$. Then there is a unique
    absolutely convergent decomposition
    \begin{equation}
        \label{eq:ObservableCompletionExplicit}
        \chi(g, \eta)
        =
        \sum_{k=0}^{\infty}
        \sum_{\alpha \in \field{N}_n^k}
        c_\alpha(g)
        \cdot
        \eta_1^{\alpha_1}
        \cdots
        \eta_n^{\alpha_n}
    \end{equation}
    for $g \in G$ and $\eta \in \liealg{g}^*$, where each
    $c_\alpha \colon G \longrightarrow \field{C}$ is $R$-entire and
    $c_\alpha$ is independent of the ordering of the entries of
    $\alpha \in \field{N}_n^k$. Moreover, for every $g \in G$ and as a
    function of $\eta$, \eqref{eq:ObservableCompletionExplicit} is an
    element of $\Entire[R'](\liealg{g})$.
\end{proposition}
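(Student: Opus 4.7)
The plan is to reduce to one-sided continuous evaluation on each factor of the projective tensor product $\Entire(G) \tensor \SymR(\liealg{g})$, passing through the identification $\SymRC(\liealg{g}) \cong \Entire[R'](\liealg{g}^*)$ provided by Lemma~\ref{lemma:EntireIsCompletionOfPolynomials} (which requires $R' \ge 0$ and uses $\mathcal{J}$ to view symmetric tensors over $\liealg{g}$ as polynomial functions on $\liealg{g}^*$).

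First I would fix $g \in G$ and produce the slice $\chi(g, \argument)$ as an element of $\Entire[R'](\liealg{g}^*)$. The evaluation functional $\delta_g \colon \Entire(G) \to \field{C}$ is continuous by Theorem~\ref{theorem:EntireRep}, \ref{item:EntireVsCinfty}. By functoriality of the projective tensor product, $\delta_g \tensor \id$ extends from the dense subspace $\PolR(T^*G)$ to a continuous linear map $\PolRC(T^*G) \to \SymRC(\liealg{g})$, which on factorizing elements $\phi \tensor \xi$ manifestly coincides with evaluation at $g$ in the first slot. Via Lemma~\ref{lemma:EntireIsCompletionOfPolynomials} the image is then a bona fide entire holomorphic function on $\field{C}^n$ in the dual coordinates $\eta_1, \ldots, \eta_n$ (see also Remark~\ref{remark:Entire}, \ref{item:EntireMajorant}). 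Its classical Taylor expansion at $\eta = 0$ gives the absolutely convergent representation \eqref{eq:ObservableCompletionExplicit}, with scalar coefficients $c_\alpha(g)$ that depend only on the symmetric equivalence class of $\alpha$, since the monomials $\eta_1^{\alpha_1} \cdots \eta_n^{\alpha_n}$ commute.

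Next I would show that each $c_\alpha$ is an $R$-entire function on $G$ by reversing the roles. Extracting the $\alpha$-coefficient is a continuous linear functional on $\SymRC(\liealg{g})$: each seminorm $\seminorm{p}_{R', c}$ dominates the term $k!^{R'} c^k \seminorm{p}^k(v_k)$, so the projection $\SymRC(\liealg{g}) \to \Sym^k(\liealg{g})$ onto the homogeneous component is continuous, and on the finite-dimensional target the single-coefficient extraction is trivially continuous. Tensorizing with $\id_{\Entire(G)}$ and invoking functoriality of the projective tensor product once more, the composite extends to a continuous linear map $\PolRC(T^*G) \to \Entire(G)$; for $R \ge 0$ the target is already complete Fréchet by Theorem~\ref{theorem:Entire}, \ref{item:EntireFrechet}, while for $R<0$ the same argument lands in $\EntireC(G)$.

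Uniqueness is then immediate from the uniqueness of the Taylor expansion of the entire function $\chi(g, \argument)$ at $\eta = 0$ for each fixed $g$. The main technical obstacle will be reconciling the pointwise definition of $c_\alpha$ from the first step with the functional-analytic definition from the second: both produce the same scalar on factorizing tensors $\phi \tensor (\xi_1 \vee \cdots \vee \xi_k)$ by direct computation, and therefore agree on all of $\PolRC(T^*G)$ by density and continuity, which closes the argument.
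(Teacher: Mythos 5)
Your argument is essentially correct for $R \ge 0$, but it takes a genuinely different route from the paper. The paper invokes the structure theorem for elements of completed projective tensor products of Fr\'echet spaces (\cite[Thm.~45.1]{treves:1967a}), producing a single absolutely convergent representation $\chi = \sum_{k} \phi_k \tensor \psi_k$ with $(\phi_k)$ \emph{summable} in $\Entire(G)$ and $(\psi_k)$ bounded in $\Entire[R'](\liealg{g})$; the $c_\alpha$ are then read off pointwise as Lie--Taylor coefficients of the slice $\chi(g,\argument)$, and membership $c_\alpha \in \Entire(G)$ drops out directly from summability of $(\phi_k)$ together with boundedness of the scalar factors. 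You instead slice twice via functoriality of $\hat\tensor$ — once with $\delta_g \tensor \id$ to identify $\chi(g,\argument)$ in $\SymRC(\liealg{g}) \cong \Entire[R'](\liealg{g}^*)$, once with $\id \tensor (\text{coeff extraction})$ to land $c_\alpha$ in the completion of $\Entire(G)$ — and then reconcile the two by density. This is fine, and perhaps more in the spirit of Proposition~\ref{proposition:CharactersSymmetriesContinuous}, but it is less economical (two tensorizations plus an agreement argument versus one explicit sum), and it is more sensitive to the regime $R<0$ covered by the statement: you need both continuity of $\delta_g$ on $\Entire(G)$ (established in Theorem~\ref{theorem:EntireRep} only for $R \ge 0$, and the remark on negative $R$ explicitly warns that translation arguments may break) and completeness of $\Entire(G)$ so that the extension by continuity lands in $\Entire(G)$ rather than merely $\EntireC(G)$ — a point you flag but do not close. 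The paper's route avoids both obstructions because $c_\alpha$ is defined pointwise from a concrete sum and only a seminorm estimate is needed afterwards; that estimate is a Fatou-type bound that does not require $\Entire(G)$ to be complete.
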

\begin{proof}
    By Corollary~\ref{corollary:ObservableSmooth}, we know that
    $\chi \in \Cinfty(T^*G)$. Invoking \cite[Thm.~45.1]{treves:1967a},
    we find a summable sequence $(\phi_k) \subseteq \Entire(G)$ and
    another (not necessarily summable) sequence
    $(\psi_k) \subseteq \Entire[R'](\liealg{g})$ s.t.
    \begin{equation*}
        \label{eq:ObservableCompletionProof}
        \chi
        =
        \sum_{k=1}^{\infty}
        \phi_k
        \tensor
        \psi_k,
    \end{equation*}
    where the series converges absolutely in the projective tensor
    product topology. Given a $g \in G$ we use the product structure
    $T^*G \cong G \times \liealg{g}^*$ to define
    \begin{equation*}
        c_\alpha(g)
        =
        \Lie_{X^{\liealg{g}^*}_\alpha}
        \chi(g, \argument)
        =
        \sum_{k=1}^{\infty}
        \phi_k(g)
        \tensor
        \Lie_{X^{\liealg{g}^*}_\alpha}
        \psi_k
    \end{equation*}
    as the $\alpha$-th Lie-Taylor coefficient of
    $\chi(g, \argument) \colon \liealg{g}^* \longrightarrow
    \field{C}$. Note that this way, the $c_\alpha(g)$ do indeed have
    the claimed symmetry property, as the Lie derivatives on
    $\liealg{g}$ are just partial derivatives corresponding to the
    basis we chose. By summability of $(\phi_k)$, we moreover have
    $c_\alpha \in \Entire(G)$. Interchanging the series, it is
    straightforward to check that the right hand side of
    \eqref{eq:ObservableCompletionExplicit} indeed converges
    absolutely to the function $\chi$ we started with. This also gives
    the remaining statement, as each
    $\psi_k \in \Entire[R'](\liealg{g})$.
\end{proof}
\begin{corollary}
    \label{corollary:EntireAnalytic}%
    Let $R \in \field{R}$ and $R' \ge 0$. We have the inclusion
    $\PolRC(T^*G) \subseteq \Comega(T^*G)$ of algebras.
\end{corollary}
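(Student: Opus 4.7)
Real-analyticity is local and $T^*G \cong G \times \liealg{g}^*$ via the left trivialization, so I fix an arbitrary point $(g_0, \eta_0) \in T^*G$ and work in the real-analytic chart
\begin{equation*}
\Xi\colon (x, y) \longmapsto \bigl(g_0 \exp(x^j \basis{e}_j),\ \eta_0 + y^j \basis{e}^j\bigr)
\end{equation*}
around $(g_0, \eta_0)$. The plan is to show that the pullback $\Xi^*\chi$ extends to a holomorphic function on a polydisk in $\field{C}^{2n}$ about the origin, which gives real-analyticity of $\chi$ at $(g_0, \eta_0)$. Since $(g_0, \eta_0)$ is arbitrary, this yields $\chi \in \Comega(T^*G)$; the inclusion is automatically as algebras because pointwise multiplication on $\Cinfty(T^*G)$ restricts to both $\PolRC(T^*G)$ and $\Comega(T^*G)$.

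By \cite[Thm.~45.1]{treves:1967a}, as used in Proposition~\ref{proposition:ObservablesCompletion}, any $\chi \in \PolRC(T^*G)$ admits a decomposition $\chi = \sum_k \lambda_k\, \phi_k \tensor \psi_k$ with $(\lambda_k)$ summable and $(\phi_k) \subseteq \Entire(G)$, $(\psi_k) \subseteq \SymRC(\liealg{g})$ null sequences. I then extend each tensor factor holomorphically in the chart. For the $G$-factor, Corollary~\ref{corollary:LieTaylor} identifies $\phi_k(g_0 \exp(x^j \basis{e}_j))$ with the Lie-Taylor series $\Taylor_{\phi_k}(x; g_0)$, and Corollary~\ref{corollary:LieTaylorTranslation} furnishes the uniform majorant bound
\begin{equation*}
\bigl|\Taylor_{\phi_k}(x; g_0)\bigr| \le \Maj_{\phi_k}\bigl(\supnorm{x} + \gamma(g_0)\bigr) \le \seminorm{q}_{R, \supnorm{x} + \gamma(g_0)}(\phi_k),
\end{equation*}
valid on all of $\field{C}^n$ when $R \ge 0$ (since then $\Maj_{\phi_k}$ is entire) and on a small polydisk when $R < 0$. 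For the $\liealg{g}^*$-factor, Lemma~\ref{lemma:EntireIsCompletionOfPolynomials} identifies $\psi_k$ with an element of $\Entire[R'](\liealg{g})$ that is entire of finite order on $\liealg{g}^*_\field{C}$, and the abelian analogue of Proposition~\ref{prop:Translation}, i.e.~the classical Taylor shift, gives $|\psi_k(\eta_0 + y^j \basis{e}^j)| \le \seminorm{p}_{R', \supnorm{y} + \supnorm{\eta_0}}(\psi_k)$. On the polydisk $\{\supnorm{x}, \supnorm{y} \le r\} \subseteq \field{C}^{2n}$ for $r$ sufficiently small, the full series
\begin{equation*}
F(x, y) = \sum_k \lambda_k\, \phi_k\bigl(g_0 \exp(x^j \basis{e}_j)\bigr)\, \psi_k\bigl(\eta_0 + y^j \basis{e}^j\bigr)
\end{equation*}
is therefore termwise majorized by $|\lambda_k|\, \seminorm{q}_{R, r + \gamma(g_0)}(\phi_k)\, \seminorm{p}_{R', r + \supnorm{\eta_0}}(\psi_k)$, which is summable by the choice of decomposition. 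Hence $F$ is a locally uniform limit of holomorphic functions on the polydisk and, by the Weierstrass convergence theorem, itself holomorphic, restricting to $\Xi^*\chi$ on the real slice.

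The main technical obstacle is securing joint uniform tail bounds: the individual holomorphic extensions of $\phi_k$ and $\psi_k$ must combine into a single absolutely convergent series on a polydisk in $\field{C}^{2n}$. This uniformity is precisely what the projective tensor product structure delivers once it is married to the translation estimate of Corollary~\ref{corollary:LieTaylorTranslation} on $G$ and its classical counterpart on the abelian group $(\liealg{g}^*, +)$, which together shift the expansion center from $(\E, 0)$ to the arbitrary $(g_0, \eta_0)$ at the price of enlarging the two seminorm parameters by additive constants depending only on $(g_0, \eta_0)$.
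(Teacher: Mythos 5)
Your approach is essentially the intended one: take the Treves decomposition $\chi = \sum_k \lambda_k\,\phi_k \tensor \psi_k$ with $(\lambda_k) \in \ell^1$ and null sequences $(\phi_k), (\psi_k)$, extend each term holomorphically on a common polydisk via the translation--majorant machinery, and invoke Weierstrass. For $R \ge 0$ this is sound: Corollary~\ref{corollary:LieTaylorTranslation} gives $|\Taylor_{\phi_k}(\underline{z}; g_0)| \le \Maj_{\phi_k}(\supnorm{\underline{z}} + \gamma(g_0))$, the inequality $\Maj_{\phi_k}(c) \le \seminorm{q}_{R,c}(\phi_k)$ then bounds the left factor by a seminorm, the analogous abelian estimate handles the $\psi_k$, and the boundedness of null sequences combined with $\sum_k |\lambda_k| < \infty$ delivers the uniform tail control. (Minor typo: Lemma~\ref{lemma:EntireIsCompletionOfPolynomials} identifies $\psi_k$ with an element of $\Entire[R'](\liealg{g}^*)$, not $\Entire[R'](\liealg{g})$.)

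The corollary, however, is asserted for all $R \in \field{R}$, and your argument does not cover $R < 0$. The second inequality in your chain, $\Maj_{\phi_k}(c) \le \seminorm{q}_{R,c}(\phi_k)$, unwinds to $\sum_m c_m(\phi_k)\,c^m \le \sum_m m!^R\,c_m(\phi_k)\,c^m$, which requires $m!^R \ge 1$, i.e.\ $R \ge 0$; for $R < 0$ it reverses, so the claimed termwise majorant $|\lambda_k|\,\seminorm{q}_{R, r+\gamma(g_0)}(\phi_k)\,\seminorm{p}_{R',r+\supnorm{\eta_0}}(\psi_k)$ need not dominate the $k$-th term and the Weierstrass argument collapses. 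Worse, for $R < 0$ the finiteness of all $\seminorm{q}_{R,c}(\phi_k)$ gives no lower bound on the radius of convergence of $\Maj_{\phi_k}$, so there is no a priori polydisk on which the holomorphic extensions of all the $\phi_k$ coexist --- the ``small polydisk'' you invoke may shrink to a point as $k \to \infty$. A genuinely different mechanism would be needed for negative $R$. It is worth noting that the paper leaves Corollary~\ref{corollary:EntireAnalytic} without an explicit proof and openly concedes in its negative-$R$ remark that the majorant machinery ``ceases to work'' there, so this gap appears to be present in the source as well.
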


%
%

\section{Continuity Results}
\label{sec:Continuity}%

We begin our considerations on continuity by restating
\cite[Prop.~3.2, \textit{ii.)}, and
Prop.~3.6]{esposito.stapor.waldmann:2017a} on the Lie algebra star
product $\staralg$. By \eqref{eq:StdFactorization} this star product
is the restriction of $\starstd$ to the second tensor factors,
i.e. polynomials in the momenta only. For convenience, we already
specialize to the situation we are interested in, namely
finite-dimensional Lie algebras instead of general asymptotic estimate
algebras \cite{esposito.stapor.waldmann:2017a}:
\begin{proposition}
    \label{proposition:StdContinuityAlgebra}%
    Let $\liealg{g}$ be a finite-dimensional Lie algebra and
    $R' \ge 1$. The Lie algebra star product
    \begin{equation}
        \label{eq:StdAlgebra}
        \staralg
        \colon
        \SymR(\liealg{g}) \times \SymR(\liealg{g})
        \longrightarrow
        \SymR(\liealg{g})
    \end{equation}
    is well-defined and continuous. More precisely, we have the
    estimate
    \begin{equation}
        \label{eq:StdContinuityAlgebra}
        \seminorm{p}_{R', c'}
        \big(
            \xi \staralg \eta
        \big)
        \le
        \seminorm{p}_{R', \tilde{c}'}(\xi)
        \cdot
        \seminorm{p}_{R', \tilde{c}'}(\eta)
    \end{equation}
    for $\xi, \eta \in \SymR(\liealg{g})$, $c' \ge 1$ and
    $\tilde{c}' = 32(\hbar + 1)c'$. In particular, $\tilde{c}'$ is a
    continuous function of $\hbar$. Moreover, the map
    \begin{equation}
        \label{eq:StdHolomorphyAlgebra}
        \field{C}
        \ni
        \hbar
        \mapsto
        \xi \staralg \eta
        \in
        \SymR(\liealg{g})
    \end{equation}
    is entire for all $\xi, \eta \in \SymR(\liealg{g})$.
\end{proposition}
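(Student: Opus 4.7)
The plan is to adapt the proof from \cite{esposito.stapor.waldmann:2017a}, specializing its general framework of asymptotic estimate algebras to the concrete case of a finite-dimensional Lie algebra $\liealg{g}$. First I would recall the closed formula for $\staralg$ obtained via the Poincaré--Birkhoff--Witt symmetrization isomorphism \eqref{eq:PBWIso}: transferring the universal enveloping algebra product back to $\Sym^\bullet_\field{C}(\liealg{g})$ yields, in each order $\hbar^k$, a bidifferential operator whose coefficients are polynomial in the structure constants of $\liealg{g}$. Since we equip $\liealg{g}$ with the $\ell^1$-norm coming from a fixed basis, the Lie bracket satisfies $\norm{[\xi,\eta]}_1 \le C \cdot \norm{\xi}_1 \cdot \norm{\eta}_1$ with an explicit constant $C$ depending only on the structure constants.

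Second, I would derive a termwise estimate. An inductive count of the $k$-th order contribution of $\staralg$ applied to homogeneous components $\xi \in \Sym^p(\liealg{g})$ and $\eta \in \Sym^q(\liealg{g})$ produces a bound of shape $\binom{p}{k}\binom{q}{k} k! \, \hbar^k \, C^k \, \seminorm{p}^{p-k}(\text{stuff from }\xi)\cdot \seminorm{p}^{q-k}(\text{stuff from }\eta)$, where the combinatorial factor $k!$ arises from symmetrizing $k$ iterated brackets. This is the decisive point: the factor $k!$ is compensated by the factorial weight $k!^{R'}$ present in $\seminorm{p}_{R',c}$ exactly when $R' \ge 1$. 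Summing over $k$ using the Cauchy product structure of the seminorm $\seminorm{p}_{R', c'}$ and then resumming over homogeneous components yields the submultiplicative estimate \eqref{eq:StdContinuityAlgebra} with an explicit constant of the form $\tilde{c}' = 32(\hbar+1)c'$, exactly as in \cite[Prop.~3.2, 3.6]{esposito.stapor.waldmann:2017a}.

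Third, for the holomorphic dependence on $\hbar$, I would observe that $\xi \staralg \eta$ is a priori a formal power series in $\hbar$ whose coefficients $B_k(\xi,\eta)$ are continuous bilinear maps on $\SymR(\liealg{g})\times\SymR(\liealg{g})$ that do not depend on $\hbar$. The termwise estimate obtained in the previous step shows that $\sum_{k=0}^{\infty} \seminorm{p}_{R', c'}\big(\hbar^k B_k(\xi,\eta)\big)$ is dominated by the right-hand side of \eqref{eq:StdContinuityAlgebra}, which is a continuous function of $\hbar$. Hence the series converges absolutely and locally uniformly in $\hbar \in \field{C}$ in every continuous seminorm of $\SymR(\liealg{g})$. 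Completeness on each fixed $\Sym^\bullet$-degree together with the scalar criterion for vector-valued holomorphy (applied through the separating family of continuous functionals provided by Proposition~\ref{proposition:PropertiesSR}, \ref{item:RTopologyEvaluationsContinuity}) then yields that $\hbar \mapsto \xi \staralg \eta$ is entire.

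The main obstacle is the precise combinatorial bookkeeping needed to obtain an explicit constant such as $\tilde{c}' = 32(\hbar+1)c'$: naive triangle-inequality estimates on the PBW symmetrization lose factors worse than $k!$. The careful argument in \cite{esposito.stapor.waldmann:2017a} re-expresses the star product via the Baker--Campbell--Hausdorff series and exploits the subexponential growth of its coefficients, and I would transcribe that bookkeeping into our finite-dimensional setting essentially verbatim, noting only that the general asymptotic estimate machinery there trivializes here because our defining system consists of the single norm $\norm{\argument}_1$.
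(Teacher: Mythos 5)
Your proposal correctly identifies the proposition as a restatement from \cite{esposito.stapor.waldmann:2017a}, which is exactly what the paper does---no proof is given here, only a pointer to Prop.~3.2 and Prop.~3.6 of that reference. Your sketch of the underlying argument (PBW/BCH bookkeeping, subexponential growth of BCH coefficients, absorption of iterated-bracket factorials by the $k!^{R'}$ weight precisely when $R' \ge 1$, $\ell^1$-submultiplicativity of the bracket, and holomorphy via locally uniform absolute convergence of the series in $\hbar$) is accurate, including your own caveat that the naive $\binom{p}{k}\binom{q}{k}k!$ count in your second paragraph must be replaced by the finer BCH-based estimate to reach the stated constant $\tilde{c}' = 32(\hbar+1)c'$.
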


Another look at \eqref{eq:StdFactorization} reveals that there is only
one other interesting type of product to consider: a polynomial in
$\Sym(\liealg{g})$ on the left and a function on $G$ on the right.
The crucial idea is that the mixed product corresponds to a dual
pairing in the spirit of \cite{goodman:1970a} and
\cite{goodman:1971a}, where only the sum $R+R'$ of the parameters in
$\PolR(T^*G)$ matters for continuity, but not their individual
values. Note, however, that we do more than just pair: instead of
applying the differential operator to the function, we commute the
differential operators with the left multiplication, yielding numerous
additional contributions. Nevertheless, this yields the following
continuity result:
\begin{lemma}
    \label{lemma:StdContinuityMixed}%
    Let $G$ be a Lie group and $R, R' \in \field{R}$ with
    $R + R' \geq 1$. The restricted standard-ordered star product
    \begin{equation}
        \label{eq:StdStarMixed}
        \starstd
        \colon
        \big( \mathbb{1} \tensor \SymR(\liealg{g}) \big)
        \times
        \big(\Entire(G) \tensor 1 \big)
        \longrightarrow
        \PolR(T^*G)
    \end{equation}
    is well-defined and continuous with respect to the
    $R, R'$-topology. More precisely, in each symmetric degree and for
    $c, c' \ge 1$ we have the estimate
    \begin{equation}
        \label{eq:StdContinuityMixed}
        \big(
            \seminorm{q}_{R, c}
            \tensor
            \seminorm{p}_{R', c'}
        \big)
        \big(
            (\mathbb{1} \tensor \xi)
            \starstd
            (\phi \tensor 1)
        \big)
        \le
        2
        \cdot
        \seminorm{p}_{R', d'}(\xi)
        \cdot
        \seminorm{q}_{R, 2c}(\phi)
    \end{equation}
    for $\phi \in \Entire(G)$, $\xi \in \SymRC(\liealg{g})$ and
    $d' = \max\{2 \hbar, c'\}$, which depends continuously on
    $\hbar$. In particular, the map
    \begin{equation}
        \label{eq:StdHolomorphyMixed}
        \field{C}
        \ni
        \hbar
        \mapsto
        (\mathbb{1} \tensor \xi)
        \starstd
        (\phi \tensor 1)
        \in
        \PolRC(T^*G)
   \end{equation}
    is entire for all $\phi \in \Entire(G)$ and
    $\xi \in \SymRC(\liealg{g})$.
\end{lemma}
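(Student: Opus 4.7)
The plan is to expand \eqref{eq:StdExplicitMixed} term by term and reduce the continuity estimate to a purely combinatorial inequality in $k$ and $p$. I start with a factorising element $\xi = \xi_1 \vee \cdots \vee \xi_k \in \Sym^k(\liealg{g})$ and apply the triangle inequality together with the subadditivity and submultiplicativity of the projective tensor seminorm on elementary tensors. This reduces matters to estimating each summand
\begin{equation*}
    \seminorm{q}_{R, c}
    \big(
        \Lie_{X_{\xi_{\sigma(1)}}} \cdots \Lie_{X_{\xi_{\sigma(p)}}} \phi
    \big)
    \cdot
    \seminorm{p}_{R', c'}
    \big(
        \xi_{\sigma(p+1)} \vee \cdots \vee \xi_{\sigma(k)}
    \big)
\end{equation*}
with the weights $\hbar^p/(p!(k-p)!)$ from \eqref{eq:StdExplicitMixed}, summed over $p \in \{0, \ldots, k\}$ and $\sigma \in S_k$. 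Iterating Proposition~\ref{proposition:REntire}, \ref{item:EntireRRepresentations}, and applying the Cauchy integral formula to $\Maj_{R, \phi}$ at radius $c$ (as in the proof of Theorem~\ref{theorem:EntireRep}, \ref{item:EntireDifferentiation}) bounds the first factor by $\supnorm{\xi_{\sigma(1)}} \cdots \supnorm{\xi_{\sigma(p)}} \cdot (p!/c^p) \cdot \seminorm{q}_{R, 2c}(\phi)$, whereas the defining formula for $\seminorm{p}_{R', c'}$ on $\Sym^{k-p}(\liealg{g})$ gives the direct bound $(k-p)!^{R'} c'^{k-p} \norm{\xi_{\sigma(p+1)}}_1 \cdots \norm{\xi_{\sigma(k)}}_1$. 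Using $\supnorm{\argument} \leq \norm{\argument}_1$ and summing over $\sigma \in S_k$ produces a factor $k! \cdot \norm{\xi_1}_1 \cdots \norm{\xi_k}_1$.

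The heart of the proof, and the main technical obstacle I anticipate, is then the combinatorial inequality
\begin{equation*}
    k!
    \sum_{p=0}^{k}
    \Bigl(\frac{\hbar}{c}\Bigr)^{\! p}
    (k-p)!^{R' - 1}
    c'^{k-p}
    \leq
    2 \, k!^{R'} d'^{k}
\end{equation*}
for $c, c' \geq 1$ and $d' = \max\{2\hbar, c'\}$, which combined with $\seminorm{p}_{R', d'}(\xi_1 \vee \cdots \vee \xi_k) \leq k!^{R'} d'^k \norm{\xi_1}_1 \cdots \norm{\xi_k}_1$ yields \eqref{eq:StdContinuityMixed} in symmetric degree $k$. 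For $R' \geq 1$, the bound $(k-p)!^{R'-1} \leq k!^{R'-1}$ combined with the geometric-type estimate $\sum_p (d'/2)^p d'^{k-p} \leq 2 d'^k$ (valid because $\hbar \leq d'/2$) closes the case. For $R' < 1$ one must invoke $R + R' \geq 1$: I would sharpen the derivative estimate by extracting an additional $p!^{1-R}$ factor, obtainable by replacing the plain Cauchy bound with one that exploits the $k!^R$ weight already present in $\seminorm{q}_{R, c}$ (concretely, via a Stirling-type estimate for $\max_m m^{p(1-R)} 2^{-m}$ once one passes back to Taylor coefficients), and then use the elementary identity
\begin{equation*}
    \frac{k!}{p!^{R} (k-p)!^{1-R'}}
    =
    k!^{R'}
    \frac{\binom{k}{p}^{1-R'}}{p!^{R + R' - 1}}
\end{equation*}
with non-negative exponent $R + R' - 1$ in the denominator to turn the $k!$ into $k!^{R'}$ at the price of only an exponential factor $\binom{k}{p}^{1-R'} \leq 2^{k(1-R')}$, which is absorbed into $d'^k$ by enlarging $c'$ to $d'$.

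The degree-wise bounds just established extend to arbitrary $\xi \in \SymR(\liealg{g})$ by linearity and the triangle inequality, since $\seminorm{p}_{R', d'}$ is by definition the absolutely convergent series over symmetric degrees, and they further extend by density and continuity to $\xi \in \SymRC(\liealg{g})$. For the entire dependence \eqref{eq:StdHolomorphyMixed} on $\hbar$, observe that on each $\Sym^k(\liealg{g})$ the product \eqref{eq:StdExplicitMixed} is a polynomial of degree $k$ in $\hbar$ and hence trivially entire; since $d' = \max\{2\hbar, c'\}$ depends continuously on $\hbar$, the estimate \eqref{eq:StdContinuityMixed} is locally uniform in $\hbar$, so the partial sums in the symmetric degree converge locally uniformly on $\field{C}$ to $(\mathbb{1} \tensor \xi) \starstd (\phi \tensor 1)$ viewed as a $\PolRC(T^*G)$-valued map. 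The limit is therefore entire by the Fréchet-valued Weierstrass convergence theorem.
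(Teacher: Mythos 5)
Your proposal captures the same essential ideas as the paper's proof but takes an analytic route where the paper computes purely at the Taylor-coefficient level, so the two are worth comparing and a few points deserve scrutiny.

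You bound $\seminorm{q}_{R,c}(\Lie_{X_{\xi_{\sigma(1)}}}\cdots\Lie_{X_{\xi_{\sigma(p)}}}\phi)$ by iterating Proposition~\ref{proposition:REntire},~\ref{item:EntireRRepresentations}, and then applying the Cauchy estimate to $\Maj_{R,\phi}^{(p)}$, whereas the paper substitutes $m=\ell+p$ directly into $\seminorm{q}_{R,c}$ expressed via the coefficients $\seminorm{q}_\alpha(\phi)$, rewrites $\ell!^{R-1}c^\ell = p!^{R-1}(m-p)!^{R-1}c^{m-p}$, and estimates $\binom{m}{p}^{1-R}\le 2^{m(1-R)}$. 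The combinatorial heart — the identity $\tfrac{k!}{p!^R(k-p)!^{1-R'}}=k!^{R'}\binom{k}{p}^{1-R'}/p!^{R+R'-1}$ together with $\binom{k}{p}^{1-R'}\le 2^{k(1-R')}$ and $p!^{R+R'-1}\ge 1$ — is exactly the one the paper uses, so you have correctly identified where $R+R'\ge 1$ enters. Your passage from factorising tensors to general elements is valid but needs one more sentence: since $\seminorm{p}^k$ is a projective tensor seminorm, one has to take the infimum over representations $\xi=\sum_j\xi_1^{(j)}\vee\cdots\vee\xi_k^{(j)}$, and the multilinearity and symmetry of \eqref{eq:StdExplicitMixed} in $\xi_1,\dots,\xi_k$ ensure compatibility. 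The paper sidesteps this by working with the basis $(\basis{e}_i)$ and the ``orthogonality'' of the $\ell^1$-seminorm within each symmetric degree.

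Two small gaps remain. First, iterating Proposition~\ref{proposition:REntire},~\ref{item:EntireRRepresentations}, requires $R\ge 0$: the coefficient inequality $k!^R(k+1)\le (k+1)!^R(k+1)$ underlying it fails for $R<0$, whereas the Lemma allows $R<0$ (with $R'>1$). For negative $R$ you should instead work directly with the purely combinatorial estimate \eqref{eq:TaylorMajorantDerivativeCoefficients}, which is $R$-independent and, combined with $\tfrac{m!}{(m-p)!}=\binom{m}{p}p!\le 2^m p!$, yields $\seminorm{q}_{R,c}(\Lie^p\phi)\le\prod_i\supnorm{\xi_i}\,p!^{1-R}c^{-p}\,\seminorm{q}_{R,2^{1-R}c}(\phi)$ for every real $R$. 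Second, the Stirling-type bound on $\max_m m^{p(1-R)}2^{-m}$ does produce the desired $p!^{1-R}$ but at the price of a multiplicative constant $C^p$ with $C>1$, so the precise form $d'=\max\{2\hbar,c'\}$ of the stated estimate would not be achieved; the binomial bound $\binom{m}{p}\le 2^m$ just given gives $p!^{1-R}$ with constant $1$ and is cleaner. Both issues are repairable without changing the structure of your argument, and neither affects the continuity or holomorphy conclusions — only the sharpness of $d'$.
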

\begin{proof}
    Let $\phi \in \Entire(G)$, $k \in \field{N}_0$,
    $\big( \basis{e}_1, \ldots, \basis{e}_n \big)$ be a basis of
    $\liealg{g}$ corresponding to the $\ell^1$-norm $\seminorm{p}$ we
    chose and $1 \leq i_1, \ldots, i_k \leq n$. Recall that by
    Proposition~\ref{proposition:StdFactorizations},
    \ref{item:xiStarphi}, we have the explicit formula
    \begin{align*}
        \big(
            \mathbb{1}
            &\tensor
            \basis{e}_{i_1} \vee \cdots \vee \basis{e}_{i_k}
        \big)
        \starstd
        \big(
            \phi \tensor 1
        \big) \\
        &=
        \sum_{p=0}^k
        \bigg(\frac{\hbar}{\I}\bigg)^{p}
        \frac{1}{p! \; (k-p)!}
        \sum_{\sigma \in S_k}
        \Lie_{X_{i_{\sigma(p)}}} \cdots \Lie_{X_{i_{\sigma(1)}}} \phi
        \tensor
        \basis{e}_{\sigma(p+1)} \vee \cdots \vee \basis{e}_{\sigma(k)}.
    \end{align*}
    First note
    \begin{align*}
        \sum_{\sigma \in S_k}
        \seminorm{q}_{\alpha}
        \Big(
            \Lie_{X_ {i_{\sigma(p)}}}
            \cdots
            \Lie_{X_{i_{\sigma(1)}}} \phi
        \Big)
        &=
        \sum_{\sigma \in S_k}
        \abs[\Big]
        {
            \big(\Lie_{X_ {\alpha_\ell}}
            \cdots
            \Lie_{X_ {\alpha_1}}
            \Lie_{X_{i_{\sigma(p)}}}
            \cdots
            \Lie_{X_{i_{\sigma(1)}}}
            \phi \big)(\E)
        } \\
        &=
        \sum_{\sigma \in S_k}
        \seminorm{q}_{
          (i_{\sigma(1)},
          \ldots,
          i_{\sigma(p)},
          \alpha_1,
          \ldots,
          \alpha_\ell)
        }(\phi)
    \end{align*}
    for $\alpha \in \{ 1, \ldots, n \}^\ell$. In the sequel we write
    $(i_{\sigma(1)}, \ldots, i_{\sigma(p)}, \alpha)$ for
    $(i_{\sigma(1)}, \ldots, i_{\sigma(p)}, \alpha_1, \ldots,
    \alpha_\ell)$ by slight abuse of notation. Note that we sum over
    $S_k$, but only use the first $p$ values of the permutation. For
    the other factor we use
    \cite[Lem.~A.1]{cahen.gutt.waldmann:2020a}, which essentially says
    that projective tensor products of $\ell^1$-norms yield
    $\ell^1$-norms associated to the product bases. We write
    $\seminorm{p}^k$ for the $k$-th projective tensor power of the
    $\ell^1$-norm $\seminorm{p}$. This gives
    \begin{equation*}
        \seminorm{p}^{k-p}
        \big(
            \basis{e}_{\sigma(p+1)}
            \vee \cdots \vee
            \basis{e}_{\sigma(k)}
        \big)
        =
        1
        =
        \seminorm{p}^{k}
        \big(
            \basis{e}_{i_1}
            \vee \cdots \vee
            \basis{e}_{i_k}
        \big).
    \end{equation*}
    Here it is important that we use the $\ell^1$-norm $\seminorm{p}$
    with respect to the above basis, otherwise we would only get
    estimates instead of equalities. This implies
    \begin{equation*}
        \seminorm{p}_{R', c'}
        \big(
            \basis{e}_{\sigma(p+1)}
            \vee \cdots \vee
            \basis{e}_{\sigma(k)}
        \big)
        =
        (k-p)!^{R'} \; c'^{k-p}
        =
        \bigg(
            \frac{(k-p)!}{k!}
        \bigg)^{R'}
        c'^{-p} \;
        \seminorm{p}_{R', c'}
        \big(
            \basis{e}_{i_1}
            \vee \cdots \vee
            \basis{e}_{i_k}
        \big).
    \end{equation*}
    Let now $R, R' \le 1$ such that $R+R' \ge 1$ and $c, c' \ge
    1$. Due to
    \begin{equation*}
        \seminorm{q}_{R', c'}
        \le
        \seminorm{q}_{R', \max\{c', 2 \hbar\}}
    \end{equation*}
    we may assume $c' \ge 2 \hbar$ without loss of
    generality. Otherwise we just estimate $c'$ by a yet another
    polynomial weight $\tilde{c} \ge 2 \hbar$ in the very first
    step. With this in mind, we obtain
    \begin{align*}
        &\Big(
            \seminorm{q}_{R, c}
            \tensor
            \seminorm{p}_{R', c'}
        \Big)
        \big(
            (\mathbb{1} \tensor \basis{e}_{i_1} \vee \cdots \vee \basis{e}_{i_k})
            \starstd
            (\phi \tensor 1)
        \big) \\
        &\le
        \sum_{p=0}^k
        \frac{\hbar^p}{p! \; (k-p)!}
        \sum_{\sigma \in S_k}
        \seminorm{q}_{R, c}
        \Big(
            \Lie_{X_{i_{\sigma(p)}}} \cdots \Lie_{X_{i_{\sigma(1)}}} \phi
        \Big)
        \cdot
        \seminorm{p}_{R', c'}
        \big(
            \basis{e}_{\sigma(p+1)}
            \vee \cdots \vee
            \basis{e}_{\sigma(k)}
        \big) \\
        &=
        \sum_{p=0}^k
        \frac{\hbar^p}{p! \; (k-p)!}
        \sum_{\sigma \in S_k}
        \sum_{\ell = 0}^\infty
        \ell!^{R-1} \; c^\ell
        \sum_{\alpha \in \{1, \ldots, n\}^\ell}
        \seminorm{q}_{\alpha}
        \Big(\Lie_{X_{i_{\sigma(p)}}} \cdots \Lie_{X_{i_{\sigma(1)}}} \phi \Big)
        \cdot
        \seminorm{p}_{R', c'}
        \big(
            \basis{e}_{\sigma(p+1)}
            \vee \cdots \vee
            \basis{e}_{\sigma(k)}
        \big) \\
        &\le
        \sum_{p=0}^k
        \frac{\hbar^p}{p! \; (k-p)!}
        k!
        \sum_{\ell = 0}^\infty
        \ell!^{R-1} \; c^\ell
        \sum_{\beta \in \{1, \ldots, n\}^{\ell+p}}
        \seminorm{q}_{\beta}(\phi)
        \cdot
        \bigg(
            \frac{(k-p)!}{k!}
        \bigg)^{R'}
        c'^{-p} \;
        \seminorm{p}_{R', c'}
        \big(
            \basis{e}_{i_1}
            \vee \cdots \vee
            \basis{e}_{i_k}
        \big) \\
        &=
        \seminorm{p}_{R', c'}
        \big(
            \basis{e}_{i_1}
            \vee \cdots \vee
            \basis{e}_{i_k}
        \big)
        \sum_{p=0}^k
        \frac{\hbar^p \; k!^{1-R'}}{p!^R \; (k-p)!^{1-R'} \; c'^p}
        \sum_{m=p}^\infty
        p!^{R-1} \; (m-p)!^{R-1} \; c^{m-p}
        \sum_{\beta \in \{1, \ldots, n\}^{m}}
        \seminorm{q}_{\beta}(\phi) \\
        &\overset{(*)}{\le}
        \seminorm{p}_{R', c'}
        \big(
            \basis{e}_{i_1}
            \vee \cdots \vee
            \basis{e}_{i_k}
        \big)
        \sum_{p=0}^k
        \binom{k}{p}^{1-R'}
        \frac{1}{p!^{R'+R-1}}
        \frac{\hbar^p}{c'^p}
        \sum_{m=p}^\infty
        m!^{R-1} \;
        \big(
            2^{1-R} c
        \big)^{m}
        \sum_{\beta \in \{1, \ldots, n\}^{m}}
        \seminorm{q}_{\beta}(\phi) \\
        &\overset{(*')}{\le}
        \seminorm{p}_{R', c'}
        \big(
            \basis{e}_{i_1}
            \vee \cdots \vee
            \basis{e}_{i_k}
        \big)
        2^{k(1-R')}
        \sum_{p=0}^k
        \frac{\hbar^p}{c'^p}
        \cdot
        \seminorm{q}_{R, 2^{1-R}c}(\phi) \\
        &\le
        \seminorm{p}_{R', 2^{1-R'} c'}
        \big(
            \basis{e}_{i_1}
            \vee \cdots \vee
            \basis{e}_{i_k}
        \big)
        \cdot
        \seminorm{q}_{R, 2^{1-R}c}(\phi)
        \sum_{p=0}^\infty
        2^{-p} \\
        &\le
        \seminorm{p}_{R', 2c'}
        \big(
            \basis{e}_{i_1}
            \vee \cdots \vee
            \basis{e}_{i_k}
        \big)
        \cdot
        \seminorm{q}_{R, 2c}(\phi)
        \cdot
        2,
    \end{align*}
    where we have used $R \le 1$ as well as $c \ge 1$ in $(*)$, then
    $R' \le 1$ in $(*')$, and $c' \ge 2\hbar$ in the final
    estimate. Note again that we can make this assumption on $c'$
    without loss of generality by
    \eqref{eq:RSeminormInequalities}. This observation and the
    analogous statement \eqref{eq:EntireSeminormInequalities} for
    $\Entire$ also gives the worse estimate
    \eqref{eq:StdContinuityMixed} from what we have computed. If we
    have $R' \ge 1$, we estimate the binomial coefficient in the step
    $(*')$ by $1$ instead, which once again implies
    \eqref{eq:StdContinuityMixed}. In the case that $R \ge 1$ we note
    $(m-p)!^{R-1} \le m!^{R-1}$, yielding
    \begin{align*}
        \ldots
        &\overset{(*)}{\le}
        \seminorm{p}_{R', c'}
        \big(
            \basis{e}_{i_1}
            \vee \cdots \vee
            \basis{e}_{i_k}
        \big)
        \sum_{p=0}^k
        \binom{k}{p}^{1-R'}
        \frac{1}{p!^{R'}}
        \frac{\hbar^p}{c'^p}
        \sum_{m=0}^\infty
        m!^{R-1} \;
        c^m
        \sum_{\beta \in \{1, \ldots, n\}^{m}}
        \seminorm{q}_{\beta}(\phi) \\
        &=
        \label{eq:StdContinuityMixedProof1}
        \seminorm{p}_{R', c'}
        \big(
            \basis{e}_{i_1}
            \vee \cdots \vee
            \basis{e}_{i_k}
        \big)
        \cdot
        \seminorm{q}_{R, c}(\phi)
        \cdot
        \sum_{p=0}^k
        \binom{k}{p}^{1-R'}
        \frac{1}{p!^{R'}}
        \frac{\hbar^p}{c'^p}
        \tag{\dag} \\
        &=
        \label{eq:StdContinuityMixedProof2}
        \seminorm{p}_{R', c'}
        \big(
            \basis{e}_{i_1}
            \vee \cdots \vee
            \basis{e}_{i_k}
        \big)
        \cdot
        \seminorm{q}_{R, c}(\phi)
        \cdot
        \sum_{p=0}^k
        \bigg(
            \frac{k!}{(k-p)!}
        \bigg)^{1-R'}
        \frac{1}{p!}
        \frac{\hbar^p}{c'^p}
        \tag{\ddag}.
    \end{align*}
    From here, \eqref{eq:StdContinuityMixedProof1} gives the case
    $R' \ge 1$ and \eqref{eq:StdContinuityMixedProof2} the case
    $R' \le 1$ in the same fashion as before. Note that passing to the
    series in $p$ makes our estimate independent of the symmetric
    order $k$. Thus we have shown \eqref{eq:StdContinuityMixed} on
    generators. Consider now an arbitrary function
    $P \in \SymRC(\liealg{g})$ in the left factor. Expanding $P$ in
    the induced basis of $\Sym^\bullet(\liealg{g})$ corresponding to
    the basis of $\liealg{g}$ we chose earlier gives
    \begin{equation*}
        P
        =
        \sum_{k=0}^{\infty}
        \sum_{i_1 \le \cdots \le i_k = 1}^n
        a^{i_1 \cdots i_k}
        \basis{e}_{i_1}
        \vee \cdots \vee
        \basis{e}_{i_k}
        \in
        \SymR(\liealg{g}).
    \end{equation*}
    By distributivity of the standard-ordered star product this now
    implies
    \begin{align*}
        &\Big(
            \seminorm{q}_{R, c}
            \tensor
            \seminorm{p}_{R', c'}
        \Big)
        \big(
            (\mathbb{1} \tensor P)
            \starstd
            (\phi \tensor 1)
        \big) \\
        &\le
        2
        \cdot
        \seminorm{q}_{R, 2c}(\phi)
        \sum_{k=0}^{\infty}
        \sum_{i_1 \le \cdots \le i_k = 1}^n
        \abs[\big]{a^{i_1 \cdots i_k}}
        \cdot
        \seminorm{p}_{R', 2c'}
        \big(
            \basis{e}_{i_1}
            \vee \cdots \vee
            \basis{e}_{i_k}
        \big) \\
        &=
        2
        \cdot
        \seminorm{q}_{R, 2c}(\phi)
        \cdot
        \seminorm{p}_{R', 2c'}(P),
    \end{align*}
    where we once again utilized
    \cite[Lem.~A.1]{cahen.gutt.waldmann:2020a} to first infer the
    ``orthogonality''
    \begin{equation*}
       \seminorm{p}^k
       \bigg(
       \sum_{i_1 \le i_2 \le \ldots \le i_k}
       a^{i_1 \ldots i_k}
       \basis{e}_{i_1} \vee \cdots \vee \basis{e}_{i_k}
       \bigg)
       =
       \sum_{i_1 \le i_2 \le \ldots \le i_k}
       \abs[\big]{a^{i_1 \ldots i_k}}
       \seminorm{p}^k
       (\basis{e}_{i_1} \vee \cdots \vee \basis{e}_{i_k})
    \end{equation*}
    within a \emph{fixed} symmetric degree. Thus we have shown that
    \eqref{eq:StdContinuityMixed} also holds for arbitrary polynomial
    functions $P$. This finally implies the continuity of the standard
    ordered star product. For the holomorphy first note that our
    estimate works for $\hbar$ in a locally uniform and bounded way by
    continuity of the involved weights with respect to $\hbar$. Taking
    another look at the formula for the star product from
    \eqref{eq:StdExplicitMixed} we see that the star product is an
    absolutely convergent power series in $\hbar$, i.e. the limit of
    polynomials in $\hbar$, which are holomorphic. By our estimate the
    corresponding sequence is Cauchy with respect to the locally
    uniform topology, i.e. it converges to some element in the
    completion and that element is vector-valued holomorphic, as well.
\end{proof}

We have gathered all the necessary ingredients to prove the continuity
of the full star product. Notably, the sharp condition $R' \ge 1$ from
Proposition~\ref{proposition:StdContinuityAlgebra} breaks the symmetry
between $R$ and $R'$ from Lemma~\ref{lemma:StdContinuityMixed} and
reduces the condition $R+R' \ge 1$ to $R \ge 0$. Investing moreover
the continuity of the pointwise product on $\Entire(G)$ now yields our
main result:
\begin{theorem}[Continuity of $\starstd$]
    \label{theorem:StdContinuity}%
    Let $G$ be a Lie group, $R \ge 0$ and $R' \ge 1$. The full
    standard-ordered star product
    \begin{equation}
        \label{eq:StdFull}
        \starstd
        \colon
        \PolR(T^*G) \times \PolR(T^*G)
        \longrightarrow
        \PolR(T^*G)
    \end{equation}
    is well-defined and continuous, extending to a continuous product
    \begin{equation}
        \label{eq:StdFullOnCompletion}
        \starstd
        \colon
        \PolRC(T^*G) \times \PolRC(T^*G)
        \longrightarrow
        \PolRC(T^*G).
    \end{equation}
    More precisely, for $c, c' \ge 1$ there is a $d \ge 1$, which is
    continuous with respect to $\hbar$, such that
    \begin{align}
        \label{eq:StdContinuity}
        \big(
            \seminorm{q}_{R, c}
            \tensor
            \seminorm{p}_{R', c'}
        \big)
        (P \starstd Q)
        \le
        2
        \cdot
        \big(
            \seminorm{q}_{R, d}
            \tensor
            \seminorm{p}_{R', d}
        \big)
        (P)
        \cdot
        \big(
            \seminorm{q}_{R, d}
            \tensor
            \seminorm{p}_{R', d}
        \big)
        (Q)
    \end{align}
    holds for $P, Q \in \PolRC(T^*G)$.
\end{theorem}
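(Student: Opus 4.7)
The plan is to chain together the three continuity results we have already established: Lemma~\ref{lemma:StdContinuityMixed} for the mixed product, Proposition~\ref{proposition:StdContinuityAlgebra} for the Lie-algebra star product $\staralg$, and Proposition~\ref{proposition:REntire}, \ref{item:EntireMultiplication}, for the pointwise product on $\Entire(G)$. By bilinearity of $\starstd$, it suffices to prove \eqref{eq:StdContinuity} on the dense subalgebra $\PolR(T^*G)$; the extension to the completion $\PolRC(T^*G)$ then follows by uniqueness of continuous extensions.

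First I would reduce to elementary tensors $P = \phi \tensor \xi$ and $Q = \psi \tensor \eta$ and invoke Proposition~\ref{proposition:StdFactorizations}, \ref{item:GeneralStarstdGeneral}. Reparenthesizing by associativity of $\starstd$, the factorization becomes
\begin{equation*}
    P \starstd Q
    =
    (\phi \tensor 1)
    \starstd
    \bigl[(\mathbb{1} \tensor \xi) \starstd (\psi \tensor 1)\bigr]
    \starstd
    (\mathbb{1} \tensor \eta).
\end{equation*}
Parts \ref{item:TrivialFromLeft} and \ref{item:xietaLieAlg} of the same proposition identify the outer factors as ordinary left multiplication by $\phi$ on the first tensor slot and as right multiplication by $\eta$ via $\staralg$ on the second tensor slot. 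Writing the inner mixed product as $\sum_j \psi_j \tensor \zeta_j \in \PolR(T^*G)$, the full star product becomes $\sum_j (\phi \psi_j) \tensor (\zeta_j \staralg \eta)$.

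The three ingredient estimates then combine as follows. Lemma~\ref{lemma:StdContinuityMixed} controls the projective norm $\seminorm{q}_{R,a} \tensor \seminorm{p}_{R',b}$ of the inner mixed product by $2 \cdot \seminorm{p}_{R',\max\{2\hbar,b\}}(\xi) \cdot \seminorm{q}_{R,2a}(\psi)$. The Leibniz-type bound from Proposition~\ref{proposition:REntire}, \ref{item:EntireMultiplication}, gives $\seminorm{q}_{R,c}(\phi\psi_j) \le \seminorm{q}_{R,2^R c}(\phi) \cdot \seminorm{q}_{R,2^R c}(\psi_j)$, while Proposition~\ref{proposition:StdContinuityAlgebra} gives $\seminorm{p}_{R',c'}(\zeta_j \staralg \eta) \le \seminorm{p}_{R',\tilde c'}(\zeta_j) \cdot \seminorm{p}_{R',\tilde c'}(\eta)$ with $\tilde c' = 32(\hbar+1)c'$. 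Summing over $j$ and taking the infimum over decompositions of the mixed product, then choosing $d \ge 1$ large enough (and continuous in $\hbar$) to simultaneously dominate $2^R c$, $2 \cdot 2^R c$, $\tilde c'$ and $\max\{2\hbar, \tilde c'\}$, yields \eqref{eq:StdContinuity} for elementary tensors. A further infimum over decompositions of $P$ and $Q$ — which is precisely how the projective tensor seminorm is defined — extends the estimate to arbitrary elements of $\PolR(T^*G)$.

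The main obstacle is essentially bookkeeping: tracking the weight rescalings through three consecutive operations and packaging them into a single polynomial weight $d$, continuous in $\hbar$, that is compatible with the projective tensor product seminorm. One has to be careful that the submultiplicativity in the $\Entire$-factor requires $R \ge 0$ and the submultiplicativity of $\staralg$ requires $R' \ge 1$, which are exactly the hypotheses of the theorem. Once continuity on $\PolR(T^*G)$ is in hand, the extension to the Fréchet completion $\PolRC(T^*G)$ is automatic, and the holomorphic dependence on $\hbar$ inherited from the ingredient holomorphy statements in Lemma~\ref{lemma:StdContinuityMixed} and Proposition~\ref{proposition:StdContinuityAlgebra} follows from the fact that the combined estimate is locally uniform in $\hbar$.
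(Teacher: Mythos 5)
Your proposal is correct and follows essentially the same route as the paper: both rest on the factorization of Proposition~\ref{proposition:StdFactorizations}, \ref{item:GeneralStarstdGeneral}, combined with the estimates of Lemma~\ref{lemma:StdContinuityMixed}, Proposition~\ref{proposition:StdContinuityAlgebra} and Proposition~\ref{proposition:REntire}, \ref{item:EntireMultiplication}, followed by the standard infimum argument for the projective tensor seminorms and extension to the completion. The only difference is cosmetic bookkeeping (you apply the mixed-product lemma to the inner bracket first, while the paper pulls out the $\phi$- and $\eta$-estimates term by term and then recognizes the remaining sum as the mixed-product estimate), which does not change the substance of the argument.
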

\begin{proof}
    We first consider factorizing functions.  Let
    $\phi, \psi \in \Entire(G)$, $\eta \in \SymR(\liealg{g})$ as well
    as $\xi_1, \ldots, \xi_k \in \liealg{g}$. By
    Proposition~\ref{proposition:StdFactorizations},
    \ref{item:GeneralStarstdGeneral}, the full star product can be
    written as
    \begin{align*}
        &(\phi \tensor \xi_1 \vee \cdots \vee \xi_k)
        \starstd
        (\psi \tensor \eta) \\
        &=
        \sum_{p=0}^k
        \bigg(\frac{\hbar}{\I}\bigg)^{p}
        \frac{\phi}{p! \; (k-p)!}
        \sum_{\sigma \in S_k}
        \Lie_{X_{\xi_{\sigma(1)}}} \cdots \Lie_{X_{\xi_{\sigma(p)}}} \psi
        \tensor
        (\xi_{\sigma(p+1)} \vee \cdots \vee \xi_{\sigma(k)})
        \staralg
        \eta.
    \end{align*}
    Note that, compared to \eqref{eq:StdExplicitMixed}, we left
    multiply with the function $\phi$ in the first tensor factor and
    compose with the Lie algebra star product in the second one. Let
    $c, c' \ge 1$ and write $\tilde{c}' = 16(\hbar + 1)c'$. Using
    \eqref{eq:StdContinuityAlgebra}, \eqref{eq:StdContinuityMixed} as
    well as the continuity estimate for pointwise products from
    \eqref{eq:EntireMultiplication} gives
    \begin{align*}
        &\Big(
            \seminorm{q}_{R, c}
            \tensor
            \seminorm{p}_{R', c'}
        \Big)
        \big(
            (\phi \tensor \xi_{1} \vee \cdots \vee \xi_{k})
            \starstd
            (\psi \tensor \eta)
        \big) \\
        &\le
        \sum_{p=0}^k
        \frac{\hbar^p}{p! \; (k-p)!}
        \sum_{\sigma \in S_k}
        \seminorm{q}_{R, c}
        \Big(
            \phi
            \cdot
            \Lie_{X_{\xi_{\sigma(1)}}} \cdots \Lie_{X_{\xi_{\sigma(p)}}}
            \psi
        \Big)
        \seminorm{p}_{R', c'}
        \big(
            \xi_{\sigma(p+1)} \vee \cdots \vee \xi_{\sigma(k)}
            \staralg
            \eta
        \big) \\
        &\le
        \sum_{p=0}^k
        \frac
        {
            \hbar^p
            \cdot
            \seminorm{q}_{R, 2^{R}c} (\phi)
            \cdot
            \seminorm{p}_{R', \tilde{c}'}
            \big(
                \eta
            \big)
        }
        {p! \; (k-p)!}
        \sum_{\sigma \in S_k}
        \seminorm{q}_{R, 2^{R}c}
        \Big(
            \Lie_{X_{\xi_{\sigma(1)}}} \cdots \Lie_{X_{\xi_{\sigma(p)}}} \psi
        \Big)
        \seminorm{p}_{R', \tilde{c}'}
        \big(
            \xi_{\sigma(p+1)} \vee \cdots \vee \xi_{\sigma(k)}
        \big).
    \end{align*}
    Thus what remains to be estimated is
    \begin{equation*}
        \sum_{p=0}^k
        \frac{\hbar^p}{p! \; (k-p)!}
        \sum_{\sigma \in S_k}
        \seminorm{q}_{R, 2^{R}c}
        \Big(
            \Lie_{X_{\xi_{\sigma(1)}}} \cdots \Lie_{X_{\xi_{\sigma(p)}}} \psi
        \Big)
        \seminorm{p}_{R', \tilde{c}'}
        \big(
            \xi_{\sigma(p+1)} \vee \cdots \vee \xi_{\sigma(k)}
        \big),
    \end{equation*}
    which is exactly what we obtained by applying the triangle
    inequality to the mixed product
    \begin{equation*}
        \Big(
            \seminorm{q}_{R, 2^{R} c}
            \tensor
            \seminorm{p}_{R', \tilde{c}'}
        \Big)
        \big(
            (\mathbb{1} \tensor \xi_1 \vee \cdots \vee \xi_k)
            \starstd
            (\phi \tensor 1)
        \big).
    \end{equation*}
    Thus we can utilize our estimate from
    Lemma~\ref{lemma:StdContinuityMixed} to obtain
    \eqref{eq:StdContinuity}, taking $d$ as the largest coefficient we
    obtain upon putting everything together, which is obviously
    continuous in $\hbar$ as a pointwise maximum of continuous
    functions. Then the bilinear version of the argument as in the end
    of Lemma~\ref{lemma:StdContinuityMixed} extends
    \eqref{eq:StdContinuity} to arbitrary polynomial functions, as all
    other factors were already in full generality. Finally, the usual
    infimum argument (see e.g. \cite[Prop.~43.4]{treves:1967a}) for
    projective tensor products gives this estimate also for arbitrary
    mixed tensors, which implies continuity of the standard-ordered
    star product $\starstd$ on the entire observable algebra
    $\PolR(T^*G)$. From here it extends to the completion by
    continuity, preserving the estimates \eqref{eq:StdContinuity}.
\end{proof}

The second main statement is that the star product we obtained is a
holomorphic deformation in the following sense:
\begin{theorem}[Holomorphic dependence on $\hbar$]
    \label{theorem:HolomorphicDependenceHbar}%
    Let $G$ be a connected Lie group and let $R \ge 0$ and $R' \ge 1$.
    Then
    \begin{equation}
        \label{eq:StdHolomorphy}
        \field{C} \ni \hbar
        \; \mapsto \;
        P \starstd Q
        \in \PolRC(T^*G)
    \end{equation}
    is entire for all $P, Q \in \PolRC(T^*G)$. Its Taylor series in
    $\hbar$ coincides with the formal star product in the sense that
    the $\hbar^k$-term is given by \eqref{eq:StdExplicit}.
\end{theorem}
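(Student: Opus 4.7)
The plan is to first establish the entire dependence on $\hbar$ on the dense subalgebra $\PolR(T^*G) = \Entire(G) \tensor \SymR(\liealg{g})$, where the star product collapses into a finite sum, and then transport the statement to the completion $\PolRC(T^*G)$ by invoking the $\hbar$-uniform continuity estimates from Theorem~\ref{theorem:StdContinuity}.

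On the dense subalgebra I will reduce to factorizing tensors $P = \phi \tensor \xi_1 \vee \cdots \vee \xi_k$ and $Q = \psi \tensor \eta$ with $\phi, \psi \in \Entire(G)$ and $\eta \in \SymR(\liealg{g})$. Formula \eqref{eq:StdExplicit} expresses $P \starstd Q$ as a \emph{finite} sum (indexed by $p = 0, \ldots, k$ and $\sigma \in S_k$) of terms of the form
\begin{equation*}
    \bigg(\frac{\hbar}{\I}\bigg)^{p}
    \frac{1}{p! \, (k-p)!}
    \big(
    \phi \cdot \Lie_{X_{\xi_{\sigma(1)}}} \cdots \Lie_{X_{\xi_{\sigma(p)}}} \psi
    \big)
    \tensor
    \big(
    \xi_{\sigma(p+1)} \vee \cdots \vee \xi_{\sigma(k)}
    \big) \staralg \eta.
\end{equation*}
The first tensor factor is independent of $\hbar$ and belongs to $\Entire(G)$ by Theorem~\ref{theorem:EntireRep}, \ref{item:EntireDifferentiation}; the second tensor factor depends entirely on $\hbar$ with values in $\SymR(\liealg{g})$ by Proposition~\ref{proposition:StdContinuityAlgebra}; and the scalar prefactor is polynomial in $\hbar$. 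Since the projective tensor product is continuous bilinear, each summand is entire in $\hbar$ with values in $\PolR(T^*G)$, and therefore so is the finite sum. Bilinearity of $\starstd$ extends this to arbitrary $P, Q \in \PolR(T^*G)$.

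To pass to the completion, fix $P, Q \in \PolRC(T^*G)$ and choose sequences $P_m, Q_m \in \PolR(T^*G)$ with $P_m \to P$ and $Q_m \to Q$. The continuity estimate \eqref{eq:StdContinuity} holds with a constant $d = d(\hbar)$ depending continuously on $\hbar$, hence bounded on every compact disc $\Ball_r(0) \subseteq \field{C}$. Consequently $P_m \starstd Q_m \to P \starstd Q$ uniformly for $\hbar \in \Ball_r(0)$. Since $\PolRC(T^*G)$ is a Fréchet space, the vector-valued Weierstrass theorem applies: a locally uniform limit of entire functions with values in a sequentially complete locally convex space is again entire. This yields that $\hbar \mapsto P \starstd Q$ is entire on all of $\PolRC(T^*G)$.

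For the identification of the Taylor series, the Cauchy integral formula
\begin{equation*}
    C_k(P, Q)
    =
    \frac{1}{2 \pi \I}
    \oint_{\boundary \Ball_r(0)}
    \frac{P \starstd Q}{\hbar^{k+1}}
    \D \hbar
\end{equation*}
expresses the $\hbar^k$-coefficient as a continuous bilinear operation in $P, Q$. On the dense subalgebra $\PolR(T^*G)$, formula \eqref{eq:StdExplicit} shows directly that $C_k$ coincides with the $k$-th bidifferential cochain of the formal star product, and by continuity this identification persists on $\PolRC(T^*G)$. The only mild subtlety I anticipate is in the second step, namely justifying the Weierstrass theorem in the Fréchet-valued setting, but this is standard once the locally uniform dependence of the constants $d(\hbar)$ on $\hbar$ is noted, which was already built into Theorem~\ref{theorem:StdContinuity}.
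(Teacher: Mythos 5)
Your proof is correct and follows essentially the same strategy as the paper: reduce to the dense subalgebra $\PolR(T^*G)$ where $P \starstd Q$ is a polynomial in $\hbar$, then use the $\hbar$-locally-uniform continuity estimate from Theorem~\ref{theorem:StdContinuity} together with the vector-valued Weierstrass theorem to pass to the completion. The only cosmetic difference is that you approximate $P, Q \in \PolRC(T^*G)$ by sequences from $\PolR(T^*G)$ and recover the Taylor coefficients via Cauchy's integral formula, whereas the paper works with the partial sums of the explicit series decomposition from Proposition~\ref{proposition:ObservablesCompletion} and cites an external lemma to recognize these partial sums as polynomials in $\hbar$; both routes are valid and of comparable effort.
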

\begin{proof}
    As long as $P, Q \in \PolR(T^*G)$, their star product is a
    polynomial in $\hbar$, thus entire. For general elements $P$ and
    $Q$ in the completion, we can once again estimate locally
    uniformly in $\hbar$ and our explicit formula as well as
    \cite[Lem.~2.8 with
    $z = -\I \hbar$]{esposito.stapor.waldmann:2017a} then imply that
    we have polynomial partial sums, i.e. vector-valued holomorphic
    functions. Together, this implies vector-valued holomorphy of the
    full star product for fixed factors. The second statement is clear
    for elements $P, Q \in \PolR(T^*G)$ and extends to the completion
    by virtue of Proposition~\ref{proposition:ObservablesCompletion}.
\end{proof}

A first application of this continuity result is the continuity of the
standard-ordered quantization map:
\begin{corollary}
    \label{corollary:StdRepContinuity}%
    Let $G$ be a Lie group and $R \ge 0$ and $R' \ge 1$.  The
    standard-ordered quantization map $\stdrep$ yields a continuous
    bilinear map
    \begin{equation}
        \label{eq:StdRep}
        \stdrep\colon
        \PolRC(T^*G) \times \Entire(G)
        \ni (f, \phi)
        \; \mapsto \;
        \stdrep(f)\phi
        \in \Entire(G).
    \end{equation}
    In particular, every operator $\stdrep(f)$ with
    $f \in \PolRC(T^*G)$ is a continuous endomorphism of $\Entire(G)$.
\end{corollary}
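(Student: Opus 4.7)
The plan is to realise $\stdrep$ as a composition of three maps whose continuity has already been established earlier in the paper, and thereby reduce the claim to a formal identification.

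First I would write down the defining relation
\begin{equation*}
    \stdrep(f)\phi
    =
    \iota^*
    \big(
        f \starstd \pi^* \phi
    \big),
\end{equation*}
valid for all $f \in \PolRC(T^*G)$ and $\phi \in \Entire(G)$. On the dense subspace of factorising elements $f = \psi \tensor (\xi_1 \vee \cdots \vee \xi_k)$ this follows by comparing the explicit formula \eqref{eq:StdRepExplicit} for $\stdrep$ with the mixed product \eqref{eq:StdExplicitMixed}: the zero section projector $\iota^* = \id \tensor \delta_0$ kills every term in \eqref{eq:StdExplicitMixed} except the one with $p = k$, leaving precisely the multiplication operator from Proposition~\ref{proposition:StdRep} followed by pointwise multiplication with $\psi$. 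The general case then follows from bilinearity and the factorisation \eqref{eq:StdFactorization} together with Proposition~\ref{proposition:ObservablesCompletion} to extend to the completion.

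Once this identification is in hand, continuity of $\stdrep$ is immediate: the map
\begin{equation*}
    \PolRC(T^*G) \times \Entire(G)
    \;\xrightarrow{\id \times \pi^*}\;
    \PolRC(T^*G) \times \PolRC(T^*G)
    \;\xrightarrow{\starstd}\;
    \PolRC(T^*G)
    \;\xrightarrow{\iota^*}\;
    \Entire(G)
\end{equation*}
is a composition of three continuous maps. The first factor $\pi^*$ is continuous by Proposition~\ref{proposition:IotaPiContinuous}, \ref{item:PullBackContinuous}; the star product $\starstd$ is jointly continuous by Theorem~\ref{theorem:StdContinuity}; and the restriction $\iota^*$ is continuous by Proposition~\ref{proposition:IotaPiContinuous}, \ref{item:IotaStetig}. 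This establishes the joint continuity of \eqref{eq:StdRep}.

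Since a jointly continuous bilinear map is in particular separately continuous, fixing $f \in \PolRC(T^*G)$ yields that $\stdrep(f) \colon \Entire(G) \longrightarrow \Entire(G)$ is a continuous linear endomorphism, which is the final assertion. The main conceptual point, and therefore the only step that needs genuine verification, is the identity $\stdrep(f)\phi = \iota^*(f \starstd \pi^* \phi)$; everything else is a formal assembly of already-proved continuity statements. I do not anticipate any obstacle here because all the required pieces—continuity of $\pi^*$, of $\iota^*$, and of $\starstd$—are already packaged in the form needed, and the identification with the star product is essentially tautological from the construction of $\starstd$ via $\stdrep$.
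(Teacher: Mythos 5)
Your proposal is correct and follows essentially the same route as the paper: the paper's proof simply invokes the identity $\stdrep(f)\phi = \iota^*(f \starstd \pi^*\phi)$ (stated as \eqref{eq:RepFromStar} in the appendix) and composes the continuous maps $\pi^*$, $\starstd$, and $\iota^*$ from Proposition~\ref{proposition:IotaPiContinuous} and Theorem~\ref{theorem:StdContinuity}, extending to the completion by continuity. Your extra verification of this identity on factorizing elements via \eqref{eq:StdRepExplicit} and \eqref{eq:StdExplicitMixed} is sound but not part of the paper's argument, which takes \eqref{eq:RepFromStar} as given.
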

\begin{proof}
    According to \eqref{eq:RepFromStar} we have for
    $f \in \PolR(T^*G)$ and $\phi \in \Entire(G)$
    \begin{equation*}
        \stdrep(f)\phi
        =
        \iota^*(f \starstd \pi(\phi)),
    \end{equation*}
    which is a composition of the continuous linear maps $\pi^*$ and
    $\iota^*$, see Proposition~\ref{proposition:IotaPiContinuous}, and
    the continuous bilinear star product. As usual, this extends the
    completion.
\end{proof}

By invoking the semiclassical limit we immediately obtain the
continuity of the Poisson bracket:
\begin{corollary}
    \label{corollary:StdContinuityPoisson}%
    Let $G$ be a Lie group, $R \ge 0$ and $R' \ge 1$. The Poisson
    bracket
    \begin{equation}
        \label{eq:StdPoisson}
        \big\{
            \argument, \argument
        \big\}
        \colon
        \PolR(T^*G) \times \PolR(T^*G)
        \longrightarrow
        \PolR(T^*G)
    \end{equation}
    is well-defined and continuous. Moreover, the explicit formula
    \eqref{eq:NeumaierLaplacianIsPoisson} extends to the completion
    $\PolRC(T^*G)$ order by order.
\end{corollary}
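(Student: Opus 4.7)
The plan is to realize the Poisson bracket as the first-order coefficient in $\hbar$ of the star-commutator and to transfer continuity from Theorem~\ref{theorem:StdContinuity} via Cauchy's integral formula. For $P, Q \in \PolR(T^*G)$, Theorem~\ref{theorem:HolomorphicDependenceHbar} asserts that $\hbar \mapsto P \starstd Q - Q \starstd P$ is an entire $\PolR(T^*G)$-valued function vanishing at $\hbar = 0$, and Proposition~\ref{proposition:StdFactorizations} identifies its linear $\hbar$-coefficient with the canonical Poisson bracket on $T^*G$ up to a universal numerical factor. Hence, up to this factor,
\[
    \{P, Q\}
    =
    \frac{\D}{\D \hbar}\bigg|_{\hbar = 0}
    \big( P \starstd Q - Q \starstd P \big)
    =
    \frac{1}{2 \pi \I}
    \oint_{\abs{\hbar} = 1}
    \frac{P \starstd Q - Q \starstd P}{\hbar^2}
    \D \hbar,
\]
which immediately confirms $\{P, Q\} \in \PolR(T^*G)$ since the commutator is already a polynomial in $\hbar$ with coefficients there.

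The locally uniform (in $\hbar$) character of the bound \eqref{eq:StdContinuity}---its parameter $d = d(\hbar)$ being continuous in $\hbar$---combined with this Cauchy integral representation yields a continuity estimate of the shape
\[
    \big( \seminorm{q}_{R, c} \tensor \seminorm{p}_{R', c'} \big) \big( \{P, Q\} \big)
    \le
    C \cdot
    \big( \seminorm{q}_{R, D} \tensor \seminorm{p}_{R', D} \big)(P)
    \cdot
    \big( \seminorm{q}_{R, D} \tensor \seminorm{p}_{R', D} \big)(Q),
\]
with $D = \max_{\abs{\hbar} = 1} d(\hbar) < \infty$ by continuity and compactness, and some absolute constant $C$. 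This establishes continuity of $\{\argument,\argument\}$ on $\PolR(T^*G)$ and, with the analogous bound from Theorem~\ref{theorem:StdContinuity} on the completion, on $\PolRC(T^*G)$ as well.

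For the extension of \eqref{eq:NeumaierLaplacianIsPoisson} to the completion, Proposition~\ref{proposition:ObservablesCompletion} decomposes every $\chi \in \PolRC(T^*G)$ as an absolutely convergent sum of symmetric-degree homogeneous pieces $\chi_k \in \Entire(G) \tensor \Sym^k(\liealg{g})$. Since the Lie algebra factor is finite-dimensional in each fixed degree, proving the formula ``order by order'' reduces to verifying it on such $\chi_k$, where it is already contained in Corollary~\ref{corollary:LaplaceViaPoissonBracket}---valid for any smooth function in the first tensor factor and thus in particular for $\Entire(G)$. The main (minor) obstacle I anticipate is the clean justification of the vector-valued Cauchy/Bochner integral in a non-Banach Fréchet setting; but completeness of $\PolRC(T^*G)$ together with the fact that $\hbar \mapsto (P \starstd Q - Q \starstd P)/\hbar^2$ is continuous with values in that Fréchet space makes this a routine textbook argument.
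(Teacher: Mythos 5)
Your proposal is correct and follows essentially the same route as the paper, which obtains the corollary by ``invoking the semiclassical limit'' of the continuous, holomorphically $\hbar$-dependent star product from Theorem~\ref{theorem:StdContinuity} and Theorem~\ref{theorem:HolomorphicDependenceHbar}; your Cauchy-integral extraction of the first-order $\hbar$-coefficient, with $D=\max_{\abs{\hbar}=1}d(\hbar)$, is just the explicit version of that argument, and your reduction of the order-by-order statement to Corollary~\ref{corollary:LaplaceViaPoissonBracket} via Proposition~\ref{proposition:ObservablesCompletion} matches the paper's intent. (The paper only adds the remark that, since the bracket is a bidifferential operator, direct estimation of \eqref{eq:NeumaierLaplacianIsPoisson} would even give the result for arbitrary $R, R'$.)
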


Of course, both Corollary~\ref{corollary:StdRepContinuity} and
Corollary~\ref{corollary:StdContinuityPoisson} can be shown by direct
estimation and the explicit formulas
\eqref{eq:NeumaierLaplacianIsPoisson} and \eqref{eq:StdRepExplicit},
as well. Notably, this extends the statements to arbitrary values of
$R$ and $R'$. The underlying reason for this is that each of the
mappings is an honest differential operator, i.e. only \emph{finitely}
many differentiations have to be estimated at once. Analogously, the
same is true for the bidifferential operators $D_k \in \Diffop(T^*G)$
given by
\begin{equation}
    \label{eq:StarStdBidiff}
    D_k(P, Q)
    =
    \frac{\D^k}{\D \hbar^k}
    \Big(
        P
        \starstd
        Q
    \Big)
    \at[\bigg]{\hbar = 0}
\end{equation}
for $P, Q \in \Cinfty(T^*G)$ and $k \in \field{N}_0$.

After having established the continuity of the structure maps for
classical mechanics and its standard-ordered quantization, we turn
towards other ordering prescriptions obtained by means of the Neumaier
operator. Instead of directly deriving continuity estimates for the
considerably more complicated formulas, we show the continuity of the
$\kappa$-Neumaier operators. From
Proposition~\ref{proposition:NeumaierIsMixedStd} we immediately get
the continuity of $N^2$ and, ultimately, the continuity of $N_\kappa$
for all $\kappa \in \field{R}$:
\begin{proposition}
    \label{proposition:NeumaierContinuity}%
    Let $G$ be a Lie group, $\kappa \in \field{R}$ and
    $R, R' \in \field{R}$ with $R+R' \ge 1$.
    \begin{propositionlist}
    \item \label{item:NeumaierStetig} The $\kappa$-Neumaier operator
        \begin{equation}
            \label{eq:NeumaierContinuity}
            N_\kappa
            \colon
            \PolR(T^*G)
            \longrightarrow
            \PolR(T^*G)
        \end{equation}
        is well-defined and continuous.
    \item \label{item:NeumaierExtends} The $\kappa$-Neumaier operator
        $N_\kappa$ extends by continuity to
        \begin{equation}
            \label{eq:NeumaierContinuityCompletion}
            N_\kappa
            \colon
            \PolRC(T^*G)
            \longrightarrow
            \PolRC(T^*G)
        \end{equation}
        and its explicit formula extends to the completion
        $\PolRC(T^*G)$ order by order.
    \item \label{item:NeumaierHolomorphic} For all $P \in
        \PolRC(T^*G)$ the map
        \begin{equation}
            \label{eq:NeumaierHolomorphy}
            \field{C}
            \ni
            \hbar
            \; \mapsto \;
            N_\kappa(P)
            \in
            \PolRC(T^*G)
        \end{equation}
        is entire.
    \item \label{item:KappaStarContinuous} Let now in addition
        $R \ge 0$ and $R' \ge 1$. Then the $\kappa$-ordered star
        product extends to a continuous multiplication
        \begin{equation}
            \label{eq:KappaStarContinuous}
            \stark\colon
            \PolRC(T^*G) \times \PolRC(T^*G)
            \longrightarrow
            \PolRC(T^*G).
        \end{equation}
    \item \label{item:KappaStarEntire} For $R \ge 0$ and $R' \ge 1$
        the $\kappa$-ordered star product yields an entire function
        \begin{equation}
            \label{eq:KappaOrderingHolomorphy}
            \field{C}
            \ni
            \hbar
            \; \mapsto \;
            P \stark Q
            \in
            \PolRC(T^*G)
        \end{equation}
        for all $P, Q \in \PolRC(T^*G)$.
    \end{propositionlist}
\end{proposition}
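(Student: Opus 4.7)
The plan is to reduce all five parts to results already established for the standard-ordered star product. The key observation is the $\kappa\hbar$-reparametrization trick already announced after Proposition \ref{proposition:NeumaierIsMixedStd}: since $N_\kappa = \exp(-\I\kappa\hbar\Laplace)$ is obtained from $N^2 = N_{1/2}^2 = \exp(-\I\hbar\Laplace)$ by the substitution $\hbar \mapsto \kappa\hbar$, the identity of Proposition \ref{proposition:NeumaierIsMixedStd} yields on pure tensors
\begin{equation*}
    N_\kappa(\phi \tensor \xi_1 \vee \cdots \vee \xi_k)
    =
    (\mathbb{1} \tensor \xi_1 \vee \cdots \vee \xi_k)
    \starstd
    (\phi \tensor 1)
    \Big|_{\hbar \mapsto \kappa\hbar},
\end{equation*}
where the subscript indicates replacement of the formal parameter after evaluating the mixed star product as in \eqref{eq:StdExplicitMixed}. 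This identification immediately feeds $N_\kappa$ into the continuity machinery of Section \ref{sec:Continuity}.

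For \ref{item:NeumaierStetig}, Lemma \ref{lemma:StdContinuityMixed} applied at the rescaled parameter $\kappa\hbar$ shows that the bilinear map $(\phi, \xi) \mapsto N_\kappa(\phi \tensor \xi)$ is jointly continuous from $\Entire(G) \times \SymR(\liealg{g})$ to $\PolR(T^*G)$ under the assumption $R + R' \ge 1$. The universal property of the projective tensor product topology lifts this to a continuous linear map on $\PolR(T^*G)$ which, by linearity and agreement on pure tensors, coincides with $N_\kappa$. Part \ref{item:NeumaierExtends} is then immediate since every continuous linear map between Hausdorff locally convex spaces extends uniquely by continuity to the completions. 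Part \ref{item:NeumaierHolomorphic} follows because the estimates of Lemma \ref{lemma:StdContinuityMixed} are locally uniform in $\hbar$ and the entirety of $\hbar \mapsto (\mathbb{1} \tensor \xi) \starstd (\phi \tensor 1)$ established there survives composition with the entire reparametrization $\hbar \mapsto \kappa\hbar$; the usual locally uniform approximation argument transports entirety to arbitrary elements of $\PolRC(T^*G)$.

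Parts \ref{item:KappaStarContinuous} and \ref{item:KappaStarEntire} then reduce to composition. Since $N_\kappa$ is an equivalence transformation from $\starstd$ to $\stark$ with inverse $N_\kappa^{-1} = N_{-\kappa}$, one has
\begin{equation*}
    P \stark Q
    =
    N_\kappa\bigl(
        N_{-\kappa}(P)
        \starstd
        N_{-\kappa}(Q)
    \bigr).
\end{equation*}
Under $R \ge 0$ and $R' \ge 1$, which subsumes $R + R' \ge 1$, each of the three constituents is continuous on $\PolRC(T^*G)$ by \ref{item:NeumaierExtends} together with Theorem \ref{theorem:StdContinuity}, yielding \ref{item:KappaStarContinuous}. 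The entirety in $\hbar$ for \ref{item:KappaStarEntire} follows analogously from \ref{item:NeumaierHolomorphic} and Theorem \ref{theorem:HolomorphicDependenceHbar} using that composition of vector-valued entire functions remains entire when the underlying bounds are locally uniform in $\hbar$, as is the case here.

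The only substantive step is the reparametrization observation behind \ref{item:NeumaierStetig}; once $N_\kappa$ is expressed via the mixed standard-ordered star product at shifted parameter, the remainder reduces to the universal property of the projective tensor product, continuity of $\starstd$ on the completion, and composition of holomorphic maps.
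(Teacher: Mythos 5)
Your proposal is correct and follows essentially the same route as the paper: absorb $\kappa$ into a rescaling of $\hbar$, use Proposition~\ref{proposition:NeumaierIsMixedStd} together with Lemma~\ref{lemma:StdContinuityMixed} for the continuity and entirety of $N_\kappa$, extend to the completion by continuity, and then treat $\stark$ as a composition with $\starstd$ via Theorem~\ref{theorem:StdContinuity} and Theorem~\ref{theorem:HolomorphicDependenceHbar}. One small correction: by \eqref{eq:starkDef} the intertwining reads $P \stark Q = N_{-\kappa}\big((N_\kappa P) \starstd (N_\kappa Q)\big)$, so your version with the roles of $\pm\kappa$ interchanged actually identifies $\star_{-\kappa}$ rather than $\stark$; this slip is harmless, however, since your parts \ref{item:NeumaierStetig}--\ref{item:NeumaierHolomorphic} hold for every $\kappa \in \field{R}$ and the composition argument goes through unchanged.
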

\begin{proof}
    This is a somewhat immediate consequence of
    \eqref{eq:NeumaierIsMixedStd}: first we get the continuity for
    $N^2 = N_2$ and all $\hbar$. Rescaling now $\hbar$ appropriately
    can be re-interpreted as a rescaling of $\kappa = 2$ in the
    continuity estimates for $N_2$. This gives the continuity for all
    $\kappa$. The second statement is then an immediate consequence of
    Proposition~\ref{proposition:ObservablesCompletion}. The entirety
    of $\hbar \mapsto N_\kappa(P)$ now follows from the entirety of
    \eqref{eq:StdHolomorphyMixed} and the formula
    \eqref{eq:NeumaierIsMixedStd}. Next,
    \begin{equation*}
        \label{eq:NeumaierContinuityProof}
        P \stark Q
        =
        N_{- \kappa}
        \big(
            (N_\kappa P)
            \starstd
            (N_\kappa Q)
        \big)
        \tag{$*$}
    \end{equation*}
    for $P, Q \in \PolR(T^*G)$ gives continuity of the
    $\kappa$-ordered star product $\stark$ as a composition of
    continuous maps. Being continuous, $\stark$ extends to the
    completion as usual.  Finally, \eqref{eq:NeumaierContinuityProof}
    implies entirety of the $\kappa$-ordered star products for fixed
    factors as a composition of entire functions.
\end{proof}
\begin{corollary}
    \label{corollary:WeylIsContinuous}%
    Let $G$ be a connected Lie group and let $R \ge 0$ and $R' \ge
    1$. Then the Weyl star product $\starweyl$ is a continuous
    multiplication
    \begin{equation}
        \label{eq:WeylContinuous}
        \starweyl\colon
        \PolRC(T^*G) \times \PolRC(T^*G)
        \longrightarrow
        \PolRC(T^*G)
    \end{equation}
    with entire dependence on $\hbar$.
\end{corollary}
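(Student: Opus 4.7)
The plan is to observe that this Corollary is an immediate specialization of Proposition~\ref{proposition:NeumaierContinuity} to the specific value of the ordering parameter corresponding to Weyl ordering. Recall from Subsection~\ref{subsec:Neumaier} that the Weyl-ordered star product is defined as $\starweyl = \star_{1/2}$, i.e. it corresponds to the equivalence transformation $N_{1/2}$ interpolating between standard and anti-standard ordering. This identification is the only conceptual content of the proof.

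Concretely, first I would simply note that $\starweyl = \star_\kappa$ at $\kappa = \tfrac{1}{2}$ in our notation. Since the hypotheses $R \geq 0$ and $R' \geq 1$ match exactly those of Proposition~\ref{proposition:NeumaierContinuity}~\ref{item:KappaStarContinuous}, one directly obtains the continuous extension
\begin{equation*}
    \starweyl
    \colon
    \PolRC(T^*G) \times \PolRC(T^*G)
    \longrightarrow
    \PolRC(T^*G)
\end{equation*}
by substituting $\kappa = \tfrac{1}{2}$. The entire dependence on $\hbar$ then follows from the corresponding entirety statement in Proposition~\ref{proposition:NeumaierContinuity}~\ref{item:KappaStarEntire}, again at $\kappa = \tfrac{1}{2}$.

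There is no genuine obstacle here, since all of the analytic work, namely the crucial identification $N^2 = N_1$ via Proposition~\ref{proposition:NeumaierIsMixedStd}, the reduction to the continuity of the mixed standard-ordered product from Lemma~\ref{lemma:StdContinuityMixed}, and the rescaling argument absorbing $\kappa$ into the $\hbar$-dependence, has already been carried out for arbitrary $\kappa \in \field{R}$ in the proof of Proposition~\ref{proposition:NeumaierContinuity}. The only thing one might want to add is a brief remark that, since $R \ge 0$ and $R' \ge 1$ implies $R + R' \ge 1$, the continuity of $N_{1/2}$ from Proposition~\ref{proposition:NeumaierContinuity}~\ref{item:NeumaierStetig} is applicable, which is what powers the identity $P \starweyl Q = N_{-1/2}\big( (N_{1/2}P) \starstd (N_{1/2}Q) \big)$ used to transfer continuity and holomorphy from $\starstd$ to $\starweyl$.
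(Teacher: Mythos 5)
Your proposal is correct and is exactly the paper's intended argument: the corollary is stated without proof precisely because it is the specialization $\kappa = \tfrac{1}{2}$ of Proposition~\ref{proposition:NeumaierContinuity}, parts \ref{item:KappaStarContinuous} and \ref{item:KappaStarEntire}, with the hypotheses $R \ge 0$, $R' \ge 1$ matching. As a minor aside, your remark that $N^2 = N_1$ (for $N = N_{1/2}$) is the correct reading; the paper's proof of Proposition~\ref{proposition:NeumaierContinuity} writes ``$N^2 = N_2$'', which appears to be a typo.
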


\begin{proposition}
    \label{proposition:ContinuousMorphisms}%
    Let $\Phi \colon G \longrightarrow H$ be a covering map of Lie
    groups. Then pullback with the point transformation
    \begin{equation}
        \label{eq:SymmetriesContinuous}
        (T_* \Phi)^*
        \colon
        \PolRC(T^*H)
        \longrightarrow
        \PolRC(T^*G)
    \end{equation}
    is a continuous homomorphism with respect to the $\kappa$-ordered
    star products on $T^*G$ and $T^*H$, respectively.
\end{proposition}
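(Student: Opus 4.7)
The plan is to split the statement into three parts: well-definedness on the completion, continuity, and the homomorphism property. Well-definedness and continuity on $\PolR(T^*H) \to \PolR(T^*G)$ already come from Proposition~\ref{proposition:CharactersSymmetriesContinuous}, \ref{item:SymmetriesContinuous}; since $\PolRC(T^*G)$ is a complete Fréchet space, this extends uniquely to the completion. So the real content is the homomorphism property with respect to $\stark$.

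I would first handle $\starstd$ on the uncompleted algebras, where the explicit formulas from Proposition~\ref{proposition:StdFactorizations} are available. Being a covering map, $\Phi$ is a local diffeomorphism and $T_\E\Phi \colon \liealg{g} \to \liealg{h}$ is a Lie algebra isomorphism. Consequently the left-invariant vector fields are $\Phi$-related, yielding the intertwining
\begin{equation*}
    \Phi^* \circ \Lie_{X^H_\eta}
    =
    \Lie_{X^G_{(T_\E\Phi)^{-1}\eta}} \circ \Phi^*
    \quad \textrm{for every} \quad \eta \in \liealg{h},
\end{equation*}
and, under the polynomial factorization, $(T_*\Phi)^* = \Phi^* \tensor \bigl((T_\E\Phi)^{-1}\bigr)^{\vee\bullet}$. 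Checking the homomorphism property on factorizing tensors then reduces, via Proposition~\ref{proposition:StdFactorizations}, to three ingredients: $\Phi^*$ is a ring homomorphism on $\Cinfty$, which handles part~\ref{item:TrivialFromLeft}; $\Sym^\bullet(T_\E\Phi)^{-1}$ is an isomorphism of the Gutt algebras $(\Sym^\bullet_\field{C}(\liealg{h}), \staralg^H)$ and $(\Sym^\bullet_\field{C}(\liealg{g}), \staralg^G)$, since the Gutt product is functorial under Lie algebra isomorphisms through the PBW identification \eqref{eq:PBWIso}, which handles part~\ref{item:xietaLieAlg}; and the intertwining above matches the mixed formula \eqref{eq:StdExplicitMixed} term by term, which handles part~\ref{item:xiStarphi}. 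Combining these through part~\ref{item:GeneralStarstdGeneral} gives
\begin{equation*}
    (T_*\Phi)^*(f \starstd^H g)
    =
    \bigl((T_*\Phi)^* f\bigr)
    \starstd^G
    \bigl((T_*\Phi)^* g\bigr)
\end{equation*}
for all $f, g \in \PolR(T^*H)$.

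For general $\kappa$ I would use the factorization
\begin{equation*}
    P \stark Q
    =
    N_{-\kappa}\bigl(N_\kappa(P) \starstd N_\kappa(Q)\bigr)
\end{equation*}
and show $N_\kappa = \exp(-\I\kappa\hbar \Laplace)$ commutes with $(T_*\Phi)^*$. By \eqref{eq:TrueLaplaceActsNicely} the operator $\Laplace$ is built purely from Lie derivatives along left-invariant vector fields and the symmetric insertion of the corresponding basis vectors, both of which are intertwined by the pullback thanks to the relation above and the naturality of $\vee$ under $T_\E\Phi$. Exponentiating the pointwise commutation $[\Laplace^H, (T_*\Phi)^*] = 0$ (legitimate term-by-term on $\PolR(T^*H)$ since $\Laplace$ lowers symmetric degree) yields $[N_\kappa, (T_*\Phi)^*] = 0$, and the homomorphism property for $\stark$ follows. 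Density of $\PolR(T^*H)$ in $\PolRC(T^*H)$ plus continuity of $\stark$ (Proposition~\ref{proposition:NeumaierContinuity}, \ref{item:KappaStarContinuous}) and of $(T_*\Phi)^*$ extends everything to the completions.

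The main obstacle is the bookkeeping in part~\ref{item:xiStarphi}: one must verify that after pulling back, each Lie derivative is transferred to the corresponding left-invariant vector field on $G$ while the leftover symmetric factor is transported by $(T_\E\Phi)^{-1}$, and that the combinatorial prefactors survive. This is routine once the intertwining identity is in hand, but it is the only step that requires care; functoriality of $\staralg$ and $\Phi$-relatedness of the modular one-form (so that the automorphism factor $\exp(2\I\kappa\hbar \Lie_{\alpha^\ver})$ separated off from $\mathcal{N}_\kappa$ is likewise compatible) are classical.
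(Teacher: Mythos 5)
Your proof is correct, but it takes a deliberately computational route where the paper's proof is deliberately abstract. The paper disposes of the homomorphism property in one sentence: since $T_\E\Phi$ is a Lie algebra isomorphism, $\Phi$ preserves the half-commutator connection, and the general theory of Fedosov-type star products on cotangent bundles (Appendix A) gives naturality of $\starstd$ and the Neumaier operators under connection-preserving local diffeomorphisms, so the point transformation is automatically a $\starstd$- and hence $\stark$-homomorphism. You instead unwind the factorization from Proposition~\ref{proposition:StdFactorizations} piece by piece — function multiplication, the Gutt product, the mixed formula \eqref{eq:StdExplicitMixed} — using the $\Phi$-relatedness of left-invariant vector fields, and then commute $N_\kappa$ with $(T_*\Phi)^*$ through the explicit action \eqref{eq:TrueLaplaceActsNicely} of $\Laplace$. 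What your argument buys is that it is self-contained within the formulas established in the paper and makes explicit exactly which intertwining identities carry the weight; what the paper's approach buys is brevity and a conceptual pointer to the underlying naturality principle. Both handle continuity and the passage to the completion the same way, via Proposition~\ref{proposition:CharactersSymmetriesContinuous} and density. One small point worth tightening: you remark at the end on $\Phi$-relatedness of the modular one-form $\alpha$ as if it were needed, but you chose to work with $N_\kappa = \exp(-\I\kappa\hbar\Laplace)$ rather than the full $\mathcal{N}_\kappa$, which already sidesteps the $\alpha^\ver$ term — so that observation, while true, is not actually used in your argument.
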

\begin{proof}
    The fact that $(T_*\Phi)^*$ is a homomorphism holds in general
    since $\Phi$ preserves the half-commutator connection. The
    continuity was obtained in
    Proposition~\ref{proposition:CharactersSymmetriesContinuous},
    \ref{item:SymmetriesContinuous}.
\end{proof}

%
%

\appendix

%
%

\section{Star Products on Cotangent Bundles}
\label{sec:StarProductsCotangentBundles}

%
%

In this short appendix we recall the basic facts on star products on
general cotangent bundles from
\cite{bordemann.neumaier.waldmann:1998a,
  bordemann.neumaier.waldmann:1999a,
  bordemann.neumaier.pflaum.waldmann:2003a, pflaum:1998b,
  pflaum:1998c} to put the construction on the cotangent bundle of a
Lie group into the right perspective.

Let $Q$ be a smooth manifold, the configuration space, and denote its
cotangent bundle by the projection
$\pi \colon T^*Q \longrightarrow Q$.  For the zero section we will
write $\iota\colon Q \longrightarrow T^*Q$. On a cotangent bundle (as
on any vector bundle) we have smooth functions which are polynomial in
the fiber directions. They will be denoted by
$\Pol^\bullet(T^*Q) \subseteq \Cinfty(T^*Q)$, where we write
$\Pol^k(T^*Q)$ for those, which are homogeneous polynomials of degree
$k \in \mathbb{N}_0$. Recall that we always consider complex-valued
functions $\Cinfty(T^*Q)$.

As any vector bundle, $T^*Q$ has a particular vector field, the Euler
vector field $\xi \in \Secinfty(T(T^*Q))$, whose flow is given by
$(t, \alpha_q) \mapsto \E^t \alpha_q$, where $t \in \field{R}$ and
$\alpha_q \in T^*_qQ$ for $q \in Q$. It can be used to characterize
$\Pol^k(T^*Q)$ as the eigenfunctions of the Lie derivative $\Lie_\xi$
to the eigenvalue $k \in \mathbb{N}_0$ and no other eigenvalues
occur. Note that the canonical map
$\mathcal{J}\colon \Secinfty(T_{\field{C}}Q) \longrightarrow
\Pol^1(T^*Q)$, sending a complex vector field
$X \in \Secinfty(T_{\field{C}}Q)$ to the linear function defined by
$(\mathcal{J}(X))(\alpha_q) = \alpha_q(X(q))$, extends to a graded
unital algebra isomorphism
\begin{equation}
    \label{eq:CanonicalIsoJ}
    \mathcal{J}\colon
    \bigoplus_{k=0}^\infty \Secinfty(\Sym^k_{\field{C}} TQ)
    \longrightarrow
    \Pol^\bullet(T^*Q),
\end{equation}
if we set $\mathcal{J}(u) = \pi^*u$ for
$u \in \Cinfty(Q) = \Secinfty(\Sym^0_{\field{C}} TQ)$. Here
$\Sym^k_{\field{C}} TQ$ denotes the $k$-th complexified symmetric
power of the tangent bundle $TQ$.

To establish a global symbol calculus for the algebra of differential
operators $\Diffop(Q)$ acting on $\Cinfty(Q)$, we choose a
torsion-free covariant derivative $\nabla$ on $Q$. We use the same
symbol for all induced covariant derivatives on the various tensor
bundles. The covariant derivative $\nabla$ induces a symmetrized
covariant derivative
\begin{equation}
    \label{eq:SymCovDer}
    \SymD\colon
    \Secinfty(\Sym^k_{\field{C}} T^*Q)
    \longrightarrow
    \Secinfty(\Sym^{k+1}_{\field{C}} T^*Q)
\end{equation}
in such a way that for functions $u \in \Cinfty(Q)$ we have
$\SymD u = \D u$ and for one-forms
$\alpha \in \Secinfty(T^*_{\field{C}}Q)$ we have
\begin{equation}
    \label{eq:SymDDef}
    (\SymD \alpha)(X, Y)
    =
    \nabla_X (\alpha(Y))
    + \nabla_Y (\alpha(X))
    - \alpha (\nabla_X Y)
    - \alpha(\nabla_Y X).
\end{equation}
Then $\SymD$ is defined on higher symmetric forms by requiring a
Leibniz rule with respect to the symmetric tensor product $\vee$,
i.e. we have
$\SymD(\alpha \vee \beta) = \SymD \alpha \vee \beta + \alpha \vee
\SymD \beta$. In local coordinates $(U, x)$ of $Q$ this can then be
written as
\begin{equation}
    \label{eq:SymDLocally}
    \SymD = \D x^i \vee \nabla_{\frac{\partial}{\partial x^i}}.
\end{equation}
In fact, if $\frames{e}_1, \ldots, \frames{e}_n \in \Secinfty(TU)$ is
a local frame of $TQ$ on an open subset $U \subseteq Q$ with dual
local frame $\frames{e}^1, \ldots, \frames{e}^n \in \Secinfty(T^*U)$
then we have
\begin{equation}
    \label{eq:SymDLocalFrame}
    \SymD = \frames{e}^i \vee \nabla_{\frames{e}_i}
\end{equation}
for sections on $U$. This (local) formula will play a crucial role
whenever we have a \emph{global} frame, i.e. on a parallelizable
manifold.

There are now various ways to define the global symbol calculus with
respect to $\nabla$. Following
\cite{bordemann.neumaier.waldmann:1998a} one defines the
standard-ordered quantization map
\begin{equation}
    \label{eq:StandardOrderedQuantization}
    \stdrep\colon
    \Pol^\bullet(T^*Q) \longrightarrow \Diffop(Q)
\end{equation}
by specifying the differential operators $\stdrep(\mathcal{J}(X))$ for
all $X \in \Secinfty(\Sym^k TQ)$ on functions $\psi \in \Cinfty(Q)$ as
\begin{equation}
    \label{eq:stdrepExplicit}
    \stdrep\big(\mathcal{J}(X)\big)\psi
    =
    \iota^*\left(
        \inss(X) \E^{-\I\hbar\SymD}\psi
    \right),
\end{equation}
where
$\iota^*\colon \prod_{k=0}^\infty \Secinfty(\Sym^k_{\field{C}} T^*Q)
\longrightarrow \Cinfty(Q)$ is the projection onto the symmetric
degree $k = 0$ and $\inss(\argument)$ denotes the symmetric insertion
map, which is defined as follows: for a vector field
$X \in \Secinfty(T_{\field{C}}Q)$ it is the insertion into the first
argument as usual. For higher degrees we require
$\inss(X \vee Y) = \inss(X) \inss(Y)$ to get the correct
pre-factors. For a function $X = u \in \Cinfty(Q)$ we set
$\inss(u) = u$ as multiplication operator. Finally, the formal
exponential series of the iterated symmetrized covariant derivatives
of $\psi$ is interpreted as element in the Cartesian product over all
symmetric degrees. Since $\mathcal{J}$ is an isomorphism, this indeed
specifies $\stdrep$ on all polynomial functions $\Pol^\bullet(T^*Q)$
as wanted.

We note that \eqref{eq:StandardOrderedQuantization} is a
$\Cinfty(Q)$-linear isomorphism whenever $\hbar \ne 0$, where
$\Pol^\bullet(T^*Q)$ is equipped with the canonical
$\Cinfty(Q)$-module structure via $\pi^*$ and $\Diffop(Q)$ is
considered as left $\Cinfty(Q)$-module as usual. Moreover, $\stdrep$
is compatible with the filtrations of the differential operator by the
degree of differentiation and the filtration of $\Pol^\bullet(T^*Q)$
induced by the degree of the polynomials. Taking into account the
$\hbar$-dependence gives the \emph{homogeneity}
\begin{equation}
    \label{eq:HomogeneityStdRep}
    \left[\hbar \tfrac{\partial}{\partial \hbar}, \stdrep(f)\right]
    =
    \stdrep\left(
        \mathsf{H} f
    \right)
\end{equation}
for all $f \in \Pol^\bullet(T^*Q)$ possibly depending on $\hbar$ as
well, where
$\mathsf{H} = \hbar\frac{\partial}{\partial \hbar} + \Lie_\xi$. From a
physical point of view this means that $\stdrep$ is dimensionless.

The bijection \eqref{eq:StandardOrderedQuantization} allows us to pull
back the operator product to $\Pol^\bullet(T^*Q)$. This gives an
associative product, the \emph{standard-ordered star product}
$\starstd$, for $\Pol^\bullet(T^*Q)$ such that
\begin{equation}
    \label{eq:starstdDef}
    f \starstd g
    =
    \stdrep^{-1}\big(\stdrep(f)\stdrep(g)\big)
\end{equation}
for $f, g \in \Pol^\bullet(T^*Q)$. The homogeneity properties shows
that for $f, g \in \Pol^\bullet(T^*Q)$ the standard-ordered star
product $f \starstd g$ is a polynomial in $\hbar$ of degree at most
the sum of the degree of $f$ and $g$. More precisely,
\begin{equation}
    \label{eq:Homogeneity}
    \mathsf{H} (f \starstd g)
    =
    \mathsf{H}f \starstd g + f \starstd \mathsf{H}g
\end{equation}
for all $f, g \in \Pol^\bullet(T^*Q)$. Hence we have unique bilinear
operators
$C_r\colon \Pol^\bullet(T^*Q) \times \Pol^\bullet(T^*Q)
\longrightarrow \Pol^\bullet(T^*Q)$ with
\begin{equation}
    \label{eq:StarStdOrderByOrder}
    f \starstd g
    =
    \sum_{r=0}^\infty \hbar^r C_r(f, g),
\end{equation}
where each $C_r$ changes the polynomial degree by $-r$. In particular,
the sum is always finite as long as $f$ and $g$ are polynomial
functions.

It is a not completely obvious fact that the operators $C_r$ in
$\starstd$ are actually bidifferential operators and thus extend to a
formal star product for $\Cinfty(T^*Q)\formal{\hbar}$.  In fact, one
way to show this is to identify $\starstd$ with the Fedosov star
product based on standard-ordering, see
\cite{bordemann.neumaier.waldmann:1998a}. Note, however, that for
functions in $\Pol^\bullet(T^*Q)$ the usual convergence problem of
formal star products is absent since the series
\eqref{eq:StarStdOrderByOrder} terminates after finitely many
contributions.

The standard-ordered symbol calculus has one serious flaw: it lacks
compatibility with the $^*$-involutions. For the differential
operators $\Diffop(Q)$ one has no intrinsic involution. However,
fixing a smooth positive density $\mu \in \Secinfty(\Densities T^*Q)$
one induces an inner product for $\Cinfty_0(Q)$ by
\begin{equation}
    \label{eq:InnerProductCinftyNull}
    \SP{\phi, \psi}_\mu
    =
    \int_Q \cc{\phi} \psi \mu,
\end{equation}
where $\phi, \psi \in \Cinfty_0(Q)$. We fix $\mu$ once and for all to
define the adjoint of a differential operator $D \in \Diffop(Q)$ by
requiring
\begin{equation}
    \label{eq:AdjointDiffop}
    \SP{D^*\phi, \psi}_\mu = \SP{\phi, D\psi}_\mu
\end{equation}
for all $\phi, \psi \in \Cinfty_0(Q)$. A non-trivial global
integration by parts then computes the adjoint $D^*$, explicitly using
the standard-ordered symbol calculus, which we briefly recall:

Firstly, we define the one-form $\alpha \in \Secinfty(T^*Q)$ by
$\nabla_X \mu = \alpha(X) \mu$, thus measuring how $\mu$ is not
covariantly constant with respect to the chosen covariant derivative
$\nabla$. In many cases one can achieve $\alpha = 0$, say for a
Levi-Civita covariant derivative $\nabla$ of a Riemannian metric $g$
and the corresponding Riemannian volume density $\mu_g$.

Secondly, and more importantly, we note that $\nabla$ allows to
horizontally lift tangent vectors $v_q \in T_qQ$ to tangent vectors
$v_q^\hor\at{\alpha_q} \in T_{\alpha_q} T^*Q$. Canonically, we can
lift one-forms $\beta_q \in T^*_qQ$ vertically to tangent vectors
$\beta_q^\ver\at{\alpha_q} \in T_{\alpha_q} T^*Q$. This gives a
splitting $T_{\alpha_q} T^*Q = \Hor_{\alpha_q} \oplus \Ver_{\alpha_q}$
for all $\alpha_q \in T^*_qQ$ with the additional property, specific
for a cotangent bundle, that the horizontal and the vertical space are
equipped with a natural pairing originating from the pairing of $T_qQ$
and $T^*_qQ$. Thus we obtain a pseudo Riemannian metric $g_0$ on
$T^*Q$ of split signature $(n, n)$.  This metric has a Laplace
operator $\Laplace_0 \in \Diffop^2(T^*Q)$ for functions on $T^*Q$,
which locally is given by
\begin{equation}
    \label{eq:LaplaceTstarQ}
    \Laplace_0
    =
    \frac{\partial^2}{\partial q^i \partial p_i}
    +
    p_r \pi^*(\Gamma^r_{ij})
    \frac{\partial^2}{\partial p_i \partial p_j}
    +
    \pi^*(\Gamma^i_{ij}) \frac{\partial}{\partial p_j},
\end{equation}
where $(T^*U, (q, p))$ is a Darboux chart induced by a local chart
$(U, x)$ on $Q$ and where $\Gamma^r_{ij}$ are the Christoffel symbols
of $\nabla$ with respect to the chart $(U, x)$.

Putting things together we can then consider the Neumaier operator
\begin{equation}
    \label{eq:NeumaierOperatorTstarQ}
    \mathcal{N}
    =
    \exp\big(
    - \tfrac{\I\hbar}{2} (\Laplace_0 + \Lie_{\alpha^\ver})
    \big),
\end{equation}
which is a well-defined endomorphism of $\Pol^\bullet(T^*Q)$, since
both $\Laplace_0$ and the Lie derivative in direction of the vertical
lift of $\alpha$ decrease the polynomial degree by one, thus making
the exponential series terminate on polynomial functions. Using
$\mathcal{N}$ one can write the integration by parts to compute the
adjoint of a differential operator as
\begin{equation}
    \label{eq:AdjointStandardOrdered}
    \stdrep(f)^* = \stdrep(\mathcal{N}^2 \cc{f})
\end{equation}
for all $f \in \Pol^\bullet(T^*Q)$, see
\cite{bordemann.neumaier.waldmann:1998a,
  bordemann.neumaier.waldmann:1999a}. Since
\eqref{eq:StandardOrderedQuantization} is an isomorphism, this
computes the adjoint of all differential operators explicitly, once we
base their description on the standard-ordered symbol calculus
$\stdrep$.

One can then use $\mathcal{N}$ to pass from the standard-ordering to a
Weyl ordering and, more generally, to a $\kappa$-ordering
interpolating between the two. For $\kappa \in \field{R}$ one defines
a new ordering
\begin{equation}
    \label{eq:krepDef}
    \krep(f) = \stdrep(\mathcal{N}_\kappa f)
    \quad
    \textrm{where}
    \quad
    \mathcal{N}_\kappa
    =
    \exp(- \I \hbar \kappa (\Laplace_0 + \Lie_{\alpha^\ver}))
\end{equation}
together with a corresponding $\kappa$-ordered star product
\begin{equation}
    \label{eq:starkDef}
    f \stark g
    =
    \mathcal{N}_\kappa^{-1}
    \big(
        \mathcal{N}_\kappa(f) \starstd \mathcal{N}_\kappa (g)
    \big)
\end{equation}
for $f, g \in \Pol^\bullet(T^*Q)$, see \cite{pflaum:1998c,
  pflaum:1998b, bordemann.neumaier.pflaum.waldmann:2003a}. The case
$\kappa = \frac{1}{2}$ is then called the Weyl ordering $\weylrep$
with the corresponding Weyl star product $\starweyl$. For the Weyl
star product one has
\begin{equation}
    \label{eq:WeylHasInvolution}
    \weylrep(f)^* = \weylrep(\cc{f})
    \quad
    \textrm{and}
    \quad
    \cc{f \starweyl g} = \cc{g} \starweyl \cc{f}
\end{equation}
for all $f, g \in \Pol^\bullet(T^*Q)$.

Finally, we note the useful relation
\begin{equation}
    \label{eq:RepFromStar}
    \stdrep(f)\phi = \iota^*(f \starstd \pi^*\phi)
\end{equation}
for all $f \in \Pol(T^*Q)$ and $\phi \in \Cinfty(Q)$. This allows to
reconstruct the standard-ordered representation from the
standard-ordered star product.

%
%

\section{Noncommutative Higher Leibniz Rule}
\label{sec:Leibniz}%

The following well-known Leibniz rules are completely algebraic,
wherefore we treat them as such.
\begin{proposition}
    \label{proposition:HigherLeibniz}%
    Let $\ring{R}$ be a (not necessarily associative) ring with
    $\field{Q} \subseteq \ring{R}$,
    $D_1, \ldots, D_n \in \Der(\ring{R})$ derivations and
    $a, b \in \ring{R}$.
    \begin{propositionlist}
    \item \label{item:HigherLeibniz}%
        We have the higher Leibniz rule
        \begin{equation}
            \label{eq:HigherLeibniz}
            D_n \cdots D_1 (a b)
            =
            \sum_{p=0}^{n}
            \sum_{\sigma \in \Sh(p,n-p)}
            \big( D_{\sigma(n)} \cdots D_{\sigma(p+1)} a \big)
            \big( D_{\sigma(p)} \cdots D_{\sigma(1)} b \big).
        \end{equation}
        Here $\Sh(p, n-p)$ denotes the set of $(p, n-p)$-shuffles,
        i.e. permutations $\sigma \in S_n$ such that
        \begin{equation}
            \label{eq:Shuffle}
            \sigma(1)
            <
            \sigma(2)
            <
            \cdots
            <
            \sigma(p)
            \quad \textrm{and} \quad
            \sigma(p+1)
            <
            \sigma(p+2)
            <
            \cdots
            <
            \sigma(n).
        \end{equation}
    \item \label{item:HigherLeibnizSymmetrized}%
        Symmetrizing, it furthermore holds that
        \begin{equation}
            \label{eq:HigherLeibnizSymmetrized}
            \sum_{\sigma \in S_n}
            D_{\sigma(n)} \cdots D_{\sigma(1)}
            (a b)
            =
            \sum_{\sigma \in S_n}
            \sum_{p=0}^{n}
            \binom{n}{p}
            \big( D_{\sigma(n)} \cdots D_{\sigma(p+1)} a \big)
            \big( D_{\sigma(p)} \cdots D_{\sigma(1)} b \big).
        \end{equation}
    \end{propositionlist}
\end{proposition}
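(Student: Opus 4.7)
My plan for part \ref{item:HigherLeibniz} is to proceed by induction on $n$. The base case $n=1$ is just the defining Leibniz rule $D_1(ab)=(D_1a)b+a(D_1b)$, which corresponds precisely to the two contributions $p=0$ (with $\sigma=\id$ in $\Sh(0,1)$) and $p=1$ (with $\sigma=\id$ in $\Sh(1,0)$). For the induction step, I would assume \eqref{eq:HigherLeibniz} for $n$, apply $D_{n+1}$ to both sides, and use the ordinary Leibniz rule to split each summand into an $a$-side and a $b$-side contribution. The reindexing then hinges on the following bijective observation: every $\sigma'\in\Sh(p',n+1-p')$ (as a $(p',n+1-p')$-shuffle of $\{1,\ldots,n+1\}$) either satisfies $\sigma'(n+1)=n+1$, in which case removing $n+1$ yields an element of $\Sh(p',n-p')$, or satisfies $\sigma'(p')=n+1$, in which case removing it yields an element of $\Sh(p'-1,n-p'+1)$. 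Since $n+1$ is the maximum, it must sit at the last position of whichever side it belongs to, so this correspondence is a bijection. Matching these two cases with the two terms produced by Leibniz gives \eqref{eq:HigherLeibniz} with $n$ replaced by $n+1$.

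For part \ref{item:HigherLeibnizSymmetrized} the strategy is to sum \eqref{eq:HigherLeibniz}, applied to the derivations $D_{\tau(1)},\ldots,D_{\tau(n)}$, over all $\tau\in S_n$. This produces
\begin{equation*}
    \sum_{\tau\in S_n}\sum_{p=0}^n\sum_{\sigma\in\Sh(p,n-p)}
    \bigl(D_{\tau\sigma(n)}\cdots D_{\tau\sigma(p+1)}a\bigr)
    \bigl(D_{\tau\sigma(p)}\cdots D_{\tau\sigma(1)}b\bigr).
\end{equation*}
For each fixed $p$, the map $(\tau,\sigma)\mapsto\tau\sigma$ from $S_n\times\Sh(p,n-p)$ to $S_n$ is a $\bigl|\Sh(p,n-p)\bigr|$-to-one surjection, since for every $\pi\in S_n$ and every $\sigma\in\Sh(p,n-p)$ there is exactly one $\tau=\pi\sigma^{-1}$ with $\tau\sigma=\pi$. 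As $\bigl|\Sh(p,n-p)\bigr|=\binom{n}{p}$, the inner two sums collapse to $\binom{n}{p}\sum_{\pi\in S_n}$, yielding exactly \eqref{eq:HigherLeibnizSymmetrized}.

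The main (and essentially only) obstacle is the combinatorial bookkeeping in the induction step of part \ref{item:HigherLeibniz}: one has to verify carefully that the two Leibniz-generated contributions, indexed by shuffles of $\{1,\ldots,n\}$, reassemble without overlaps and without omissions into the full set of shuffles of $\{1,\ldots,n+1\}$ at the appropriate block sizes $(p',n+1-p')$. Everything else is formal. Note that no associativity or commutativity of $\ring{R}$ is used, so the argument indeed applies in the not-necessarily-associative setting stated in the proposition; the assumption $\field{Q}\subseteq\ring{R}$ is not needed for part \ref{item:HigherLeibniz} but is implicitly compatible with the binomial coefficients appearing in part \ref{item:HigherLeibnizSymmetrized}.
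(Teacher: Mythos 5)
Your proof is correct and follows essentially the same route as the paper's: an induction on $n$ for part (i) hinging on the observation that in any shuffle $\sigma'$ the maximal index must sit at position $p'$ or $n+1$ (the paper phrases this as "$\sigma(p)=n$ or $\sigma(n)=n$"), and the cardinality $\lvert\Sh(p,n-p)\rvert = \binom{n}{p}$ for part (ii). You merely fill in the bookkeeping the paper leaves as "a straightforward induction."
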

\begin{proof}
    Part~\ref{item:HigherLeibniz} is a straightforward induction. Use
    that $\sigma \in \Sh(p, n-p)$ satisfies either $\sigma(p) = n$ or
    $\sigma(n) = n$ by \eqref{eq:Shuffle}. The statement
    \ref{item:HigherLeibnizSymmetrized} is an easy consequence of
    $\abs[\big]{\Sh(p, n-p)} = \binom{n}{p}$.
\end{proof}

%
%

{
  \footnotesize

}

%
%


%
%

\end{document}